\newtheorem{definition}{Definition}[section]
\newtheorem{prop}{Proposition}[section]
\newtheorem{theorem}{Theorem}[section]
\newtheorem{lemma}{Lemma}[section]
\newtheorem{remark}{Remark}[section]
\newtheorem{problem}{Problem}[section]
\numberwithin{equation}{section}
\date{}
\newcounter{phase}[algorithm]
\newlength{\phaserulewidth}
\newcommand{\setphaserulewidth}{\setlength{\phaserulewidth}}
\newcommand{\Addresses}{{
\bigskip
\footnotesize

Jian-Feng Cai, \textsc{Department of Mathematics, The Hong Kong University of Science and Technology, Clear Water Bay, Kowloon, Hong Kong SAR, China}\par\nopagebreak
\textit{E-mail address}, Jian-Feng Cai: \texttt{jfcai@ust.hk}

\medskip

Zhiqiang Xu, \textsc{State Key Laboratory of Mathematical Sciences, Academy of Mathematics and Systems Science, Chinese Academy of Sciences, Beijing 100190, China;    School of Mathematical Sciences, University of Chinese Academy of Sciences, Beijing 100049, China. }
\par\nopagebreak
\textit{E-mail address}, Zhiqiang Xu: \texttt{xuzq@lsec.cc.ac.cn}

\medskip

Zili Xu, \textsc{School of Mathematical Sciences, East China Normal University, Shanghai 200241, China;
Shanghai Key Laboratory of PMMP, East China Normal University, Shanghai 200241, China; and
Key Laboratory of MEA, Ministry of Education, East China Normal University, Shanghai 200241, China}\par\nopagebreak
\textit{E-mail address}, Zili Xu: \texttt{zlxu@math.ecnu.edu.cn}
}}
\begin{document}
\baselineskip 14pt
\bibliographystyle{plain}

\title{
Generalized Interlacing Families: New Error Bounds for CUR Matrix Decompositions
 }

\author{Jian-Feng Cai, Zhiqiang Xu, and Zili Xu}

\maketitle

\begin{abstract}

This paper introduces the concept of generalized interlacing families of polynomials, which extends the classical interlacing polynomial method to handle polynomials of varying degrees. We establish a fundamental property for these families, proving the existence of a polynomial with a desired degree whose smallest root is greater than or equal to the smallest root of the expected polynomial. Applying this framework to the generalized CUR matrix approximation problem, we derive a theoretical upper bound on the spectral norm of a residual matrix, expressed in terms of the largest root of the expected polynomial. We then explore two important special cases: the classical CUR matrix  decompositions and the row subset selection problem. For classical CUR matrix decompositions, we derive an explicit upper bound for the largest root of the expected polynomial. This yields a tighter spectral norm error bound for the residual matrix compared to many existing results. Furthermore, we present a deterministic polynomial-time algorithm for solving the classical CUR problem under certain matrix conditions. For the row subset selection problem, we establish the first known spectral norm error bound. This paper extends the applicability of interlacing families and deepens the theoretical foundations of CUR matrix decompositions and related approximation problems.

\end{abstract}

{
  \hypersetup{linkcolor=black}
  \tableofcontents
}


\section{Introduction}

The method of interlacing polynomials was originally introduced by Marcus, Spielman, and Srivastava \cite{inter1,inter2,inter3} in their resolution to the Kadison-Singer problem \cite{inter2}. 
Since its inception, this technique has found powerful applications across various areas of mathematics, including the construction of Ramanujan graphs \cite{inter1,inter4,GM21}, Bourgain-Tzafriri's restricted invertibility principle \cite{inter3,ravi1}, matrix discrepancy problem \cite{KLS20,XZZ22,Bow2023}, and asymmetric traveling salesman problem \cite{AG14}.

The key component in this method is to show that a given set of real-rooted degree-$d$ polynomials with positive leading coefficients forms an interlacing family. 
If these polynomials form an interlacing family, then for any $j\in\{1,2,\ldots,d\}$, there exists  a polynomial from the family whose $j$-th largest root is bounded below by that of the expected polynomial of this family.
This framework offers a powerful new approach for proving the existence and construction of combinatorial or algebraic objects whose associated expected polynomials have well-bounded roots.

This paper introduces a novel concept called {\em generalized interlacing families}. 
Unlike classical interlacing families, which are typically restricted to polynomials of the same degree, our generalized framework allows polynomials to have different degrees.
This relaxation of the degree constraint thus possesses a broader applicability, which enables its application to the CUR matrix decompositions.

Matrix approximation is a critical step in processing high-dimensional data. A standard technique is singular value decomposition (SVD), which produces low-rank approximations by using the top singular vectors. However, SVD cannot leverage the sparsity of the input matrix, and its components often lack interpretability. The CUR matrix decomposition addresses this by selecting actual rows and columns from the input matrix, producing more interpretable and structurally meaningful approximations.
In this paper, we focus on the CUR matrix decomposition problem. We apply the framework of generalized interlacing families to derive new upper bounds on the approximation error, highlighting the strength and flexibility of this new approach.

\subsection{CUR matrix decompositions}

We first introduce some notations.
For an integer $n$ we write $[n]:=\{1,\ldots,n\}$.
A subset $S\subset[n]$ is called a $k$-subset if its cardinality is $k$. 
We use $\binom{[n]}{k}$ to denote the set of all $k$-subsets of $[n]$, i.e.,
\begin{equation*}
\binom{[n]}{k}:=\{ S\subset[n]: |S|=k\}.	
\end{equation*}
For a matrix $ \boldsymbol{A}\in\mathbb{R}^{n\times d}$, we use $ \boldsymbol{A}_{S,W}$ to denote the submatrix of $ \boldsymbol{A}$ consisting of rows indexed in the set $S$ and columns indexed in the set $W$. 
For simplicity, if $S=[n]$ then we write $ \boldsymbol{A}_{S,W}$ as $ \boldsymbol{A}_{:,W}$. 
Similarly, if $W=[d]$ then we write $ \boldsymbol{A}_{S,W}$ as $ \boldsymbol{A}_{S,:}$. 
Denote by $ \boldsymbol{A}^{\dagger}\in \mathbb{R}^{d\times n}$  the Moore-Penrose pseudoinverse of $ \boldsymbol{A}\in \mathbb{R}^{n\times d}$, and we denote by $\boldsymbol{A}(i,j)$ the $(i,j)$-th entry of $\boldsymbol{A}$.

The main idea of the classical CUR matrix decomposition is to approximate a target matrix $\boldsymbol{A}\in\mathbb{R}^{n\times d}$ by a low rank matrix 
$\boldsymbol{C} \cdot \boldsymbol{U}\cdot  \boldsymbol{R}$,
where $\boldsymbol{C}:=\boldsymbol{A}_{:,W}$ is a column submatrix, $\boldsymbol{R}:=\boldsymbol{A}_{S,:}$ is a row submatrix, and $\boldsymbol{U}\in\mathbb{R}^{|W|\times |S|}$ is an appropriately chosen matrix. 
This idea can be traced back to Penrose in 1956 \cite{Penrose}, and has since been developed extensively over the last few decades \cite{CXX1,CXX2,Gol65,BG65,GZT95,GTZ97,GTZ972,GT01,DM05,DKM06,DMM08,MD09,GM16}.

Two common choices for the middle matrix $\boldsymbol{U}$ are $\boldsymbol{U}:=(\boldsymbol{A}_{:,W})^{\dagger} \boldsymbol{A} (\boldsymbol{A}_{S,:})^{\dagger}$ \cite{MD09,SE16} and $\boldsymbol{U}:=(\boldsymbol{A}_{S,W})^{\dagger}$ \cite{GTZ97,GTZ972}.
In either case, if $\mathrm{rank}(\boldsymbol{U})=\mathrm{rank}(\boldsymbol{A})$, then the CUR decomposition exactly reconstructs the target matrix, i.e., $\boldsymbol{A}=\boldsymbol{A}_{:,W}\cdot \boldsymbol{U}\cdot  \boldsymbol{A}_{S,:}$ \cite{KH20,CC10,AHKS19,WZZ16}.
In practical applications, the goal is to identify small subsets $S\subset[n]$ and $W\subset[d]$ that minimize the approximation error.
Empirical results indicate that choosing $\boldsymbol{U}=(\boldsymbol{A}_{:,W})^{\dagger} \boldsymbol{A} (\boldsymbol{A}_{S,:})^{\dagger}$ often results in a small approximation error. 
However, computing $(\boldsymbol{A}_{:,W})^{\dagger} \boldsymbol{A} (\boldsymbol{A}_{S,:})^{\dagger}$ requires matrix-vector products, which can be prohibitively expensive.
Therefore, in this paper we primarily focus on the case $\boldsymbol{U}=(\boldsymbol{A}_{S,W})^{\dagger}$. 
The classical CUR problem can be formulated as follows.

\begin{problem}{\rm (Classical CUR problem)}\label{pr1}
Given a matrix $ \boldsymbol{A}\in\mathbb{R}^{n\times d}$ and two positive integers $k,l\leq \mathrm{rank}(\boldsymbol{A})$, find a $k$-subset $S\subset[n]$ and an $l$-subset $W\subset[d]$ such that the following residual
\begin{equation*}
\Vert \boldsymbol{A}- \boldsymbol{A}_{:,W}( \boldsymbol{A}_{S,W})^{\dagger}  \boldsymbol{A}_{S,:}\Vert_{2}
\end{equation*}
is minimized over all possible $S\in \binom{[n]}{k}$ and $W\in \binom{[d]}{l}$.

\end{problem}

In this paper, we also propose the generalized CUR decomposition. 
It allows us to select columns and rows from arbitrary source matrices, offering a more flexible approach to matrix approximation.

\begin{problem}{\rm (Generalized CUR problem)}\label{pr2}
Given a target matrix $ \boldsymbol{A}\in\mathbb{R}^{n\times d}$, three arbitrary source matrices 
\begin{equation*}
\boldsymbol{C}\in\mathbb{R}^{n\times d_{{C}}},
\quad\boldsymbol{U}\in\mathbb{R}^{n_{{R}}\times d_C},
\quad 	\boldsymbol{R}\in\mathbb{R}^{n_{{R}}\times d},
\end{equation*}
and two positive integers $k,l$, find a $k$-subset $S\subset[n_R]$ and an $l$-subset $W\subset[d_C]$ such that the following residual
\begin{equation*}
\Vert \boldsymbol{A}- \boldsymbol{C}_{:,W}( \boldsymbol{U}_{S,W})^{\dagger}  \boldsymbol{R}_{S,:}\Vert_{2}
\end{equation*}
is minimized over all possible $S\in \binom{[n_R]}{k}$ and $W\in \binom{[d_C]}{l}$.

\end{problem}

The generalized CUR problem encompasses a wide range of problems in matrix approximation theory and signal processing.
In the following, we highlight several interesting problems that fall under this framework.
\begin{enumerate}[(i)]
\item
If $\boldsymbol{C}=\boldsymbol{U}=\boldsymbol{R}=\boldsymbol{A}$, then Problem \ref{pr2} simplifies to Problem \ref{pr1}.
If $\boldsymbol{U}=\boldsymbol{C}$, $\boldsymbol{R}=\boldsymbol{A}$ and $S=[n]$, then Problem \ref{pr2} simplifies to the generalized column subset selection (GCSS) problem, which aims to minimize $\Vert \boldsymbol{A}- \boldsymbol{C}_{:,W}( \boldsymbol{C}_{:,W})^{\dagger}  \boldsymbol{A}\Vert_{2}$
over all possible $k$-subsets $W\subset[d_C]$. 
If we further let $\boldsymbol{C}=\boldsymbol{A}$, then GCSS problem becomes the classical column subset selection problem.	
The GCSS problem covers many other problems, such as sparse coding and distributed column subset selection \cite{FEGK13,FGK13,LBRN06,OF97,OMG22,CXX2}.

\item
If $\boldsymbol{A}=\sum_{i=1}^{m}\boldsymbol{a}_i\boldsymbol{a}_i^T\in\mathbb{R}^{n\times n}$ is semidefinite positive, $\boldsymbol{C}=\boldsymbol{R}^T=[\boldsymbol{a}_1,\ldots,\boldsymbol{a}_{m}]\in\mathbb{R}^{n\times m}$, $\boldsymbol{U}=\frac{|S|}{m}\cdot \boldsymbol{I}_{m}$ and $W=S$, then Problem \ref{pr2}  reduces to approximating a semidefinite positive matrix $\boldsymbol{A}$ by minimizing 
$$\bigg\Vert \sum_{i=1}^{m}\boldsymbol{a}_i\boldsymbol{a}_i^T-\frac{m}{|S|} \sum_{i\in S}\boldsymbol{a}_{i}  \boldsymbol{a}_{i}^T\bigg\Vert_{2}$$
over all possible $k$-subsets $S\subset[m]$. 
This problem is closely related to the study of spectral sparsifiers of graphs \cite{Sri13} and the approximation of John's decomposition  \cite{Vershynin,You3}.

\item 
If $\boldsymbol{U}=\boldsymbol{C}\in\mathbb{R}^{n\times k}$, $\boldsymbol{R}=\boldsymbol{A}\in\mathbb{R}^{n\times d}$ and $W=[k]$, then Problem \ref{pr2} simplifies to the following row subset selection problem:
\begin{problem}{\rm (Row subset selection problem)}\label{pr3}
Given a positive integer $k$, a target matrix $ \boldsymbol{A}\in\mathbb{R}^{n\times d}$ and a source matrix $\boldsymbol{C}\in\mathbb{R}^{n\times k}$, find a $k$-subset $S\subset[n]$ such that the following residual
\begin{equation*}
\Vert \boldsymbol{A}- \boldsymbol{C}( \boldsymbol{C}_{S,:})^{\dagger}  \boldsymbol{A}_{S,:}\Vert_{2}
\end{equation*}
is minimized over all possible $S\in \binom{[n]}{k}$.

\end{problem}
This problem has been recently studied in the context of  linear regression \cite{DW17}, where $\boldsymbol{A}=\boldsymbol{a}\in\mathbb{R}^{n}$ is taken as a vector.
The authors in \cite[Theorem 5]{DW17} showed that if $\mathrm{rank}(\boldsymbol{C}_{S,:})=k$ for each $k$-subset $S\subset[n]$, then the expectation of $\Vert \boldsymbol{a}- \boldsymbol{C}( \boldsymbol{C}_{S,:})^{\dagger}  \boldsymbol{a}_{S,:}\Vert_{2}^2$ under volume sampling over all possible $k$ rows of $\boldsymbol{C}$ is equal to $(k+1)\cdot \Vert \boldsymbol{a}- \boldsymbol{C} \boldsymbol{C}^{\dagger}  \boldsymbol{a}_{}\Vert_{2}^2$. 
This finding led to a new volume sampling algorithm for linear regression.
The Frobenius norm version of Problem \ref{pr3} was subsequently investigated in \cite{CK24,Epperly1},
where an efficient algorithm called adaptive randomized pivoting was proposed.
We refer the reader to these works for further details.

\end{enumerate}
\subsection{Our contributions}

In this paper we introduce a new concept called generalized interlacing families (see Definition \ref{Generalized interlacing family}). 
It extends the classical interlacing family introduced by 
Marcus, Spielman and Srivastava in \cite{inter1,inter2,inter3}
to the case where the polynomials in the family can have different degrees.
Analogous to results in the classical setting, we demonstrate the existence of a polynomial within a generalized interlacing family that shares the same degree as the expected polynomial and whose smallest root is bounded below by that of the expected polynomial (see Theorem \ref{generalized interlacing family}).

We apply this generalized framework to the generalized CUR problem (Problem \ref{pr2}). Our primary focus is on a special case where $|S|=|W|=k\leq \mathrm{rank}(\boldsymbol{U})$, which ensures that the middle matrix $\boldsymbol{U}_{S,W}\in\mathbb{R}^{k\times k}$ is always square. We then prove that the characteristic polynomials associated with this problem form a generalized interlacing family. This result enables us to derive an upper bound on the spectral norm of a residual matrix by leveraging the smallest root of the expected polynomial.
This upper bound is, to our knowledge, the first known theoretical bound for a residual matrix in generalized CUR decompositions.

We begin by introducing some notations and definitions.

\begin{definition}\label{def-p-SW}
Let $\boldsymbol{A}\in\mathbb{R}^{n\times d}$, $\boldsymbol{C}\in\mathbb{R}^{n\times d_{{C}}}$,  $\boldsymbol{U}\in\mathbb{R}^{n_{{R}}\times d_C}$ and $\boldsymbol{R}\in\mathbb{R}^{n_{{R}}\times d}$. 
Let $k$ be an integer satisfying $0\leq k\leq \mathrm{rank}(\boldsymbol{U})$. 
For any $k$-subset $S\subset[n_R]$ and for any $k$-subset $W\subset[d_C]$, 
we define
\begin{equation*}
p_{S,W}(x;\boldsymbol{A},\boldsymbol{C},\boldsymbol{U},\boldsymbol{R})
:=(-1)^{d-k}\cdot 
\det\left[\begin{matrix}
\boldsymbol{I}_n  & \boldsymbol{0} & \boldsymbol{C}_{:,W} & \boldsymbol{A}  \\
\boldsymbol{0}& \boldsymbol{0}_{} & \boldsymbol{U}_{S,W}  & \boldsymbol{R}_{S,:}  \\
(\boldsymbol{C}_{:,W})^{T}  &  (\boldsymbol{U}_{S,W})^T &   \boldsymbol{0} &  \boldsymbol{0} \\
\boldsymbol{A}^{T}  & (\boldsymbol{R}_{S,:})^{T} & \boldsymbol{0} & -x\cdot  \boldsymbol{I}_d
\end{matrix}\right].
\end{equation*}
We also define the expected polynomial
\begin{equation*}
\begin{aligned}
P_{k}(x;\boldsymbol{A},\boldsymbol{C},\boldsymbol{U},\boldsymbol{R})
=
\sum_{S\in\binom{[n_R]}{k},W\in\binom{[d_C]}{k}}
p_{S,W}(x;\boldsymbol{A},\boldsymbol{C},\boldsymbol{U},\boldsymbol{R}).
\end{aligned}
\end{equation*}

\end{definition}

\begin{remark}
As will be shown in Proposition \ref{P-exp-base-gcur1}, $p_{S,W}(x;\boldsymbol{A},\boldsymbol{C},\boldsymbol{U},\boldsymbol{R})$ is either identically zero or a polynomial of degree $d-k+\mathrm{rank}(\boldsymbol{U}_{S,W})$ with a positive leading coefficient. 
In particular, if $\boldsymbol{U}_{S,W}\in\mathbb{R}^{k\times k}$ is invertible, 
i.e., $\mathrm{rank}(\boldsymbol{U}_{S,W})=k$, then 
$p_{S,W}(x;\boldsymbol{A},\boldsymbol{C},\boldsymbol{U},\boldsymbol{R})$ can be rewritten as
\begin{equation*}
p_{S,W}(x;\boldsymbol{A},\boldsymbol{C},\boldsymbol{U},\boldsymbol{R})	
=\det[\boldsymbol{U}_{S,W}]^2\cdot 
\det[x\cdot \boldsymbol{I}_d+
(\boldsymbol{A}- \boldsymbol{C}_{:,W}( \boldsymbol{U}_{S,W})^{-1}  \boldsymbol{R}_{S,:})^T
(\boldsymbol{A}- \boldsymbol{C}_{:,W}( \boldsymbol{U}_{S,W})^{-1}  \boldsymbol{R}_{S,:})].
\end{equation*}
Hence, for any pair $(S,W)\in \binom{[n_R]}{k}\times \binom{[d_C]}{k}$ with $\mathrm{rank}(\boldsymbol{U}_{S,W})=k$, we have
\begin{equation*}
\mathrm{minroot}\ p_{S,W}(x;\boldsymbol{A},\boldsymbol{C},\boldsymbol{U},\boldsymbol{R})
=-\|\boldsymbol{A}- \boldsymbol{C}_{:,W}( \boldsymbol{U}_{S,W})^{-1}  \boldsymbol{R}_{S,:}\|_2^2.
\end{equation*}

\end{remark}

\subsubsection{Generalized CUR matrix decompositions}

As will be shown in Lemma   \ref{forms GIF}, the polynomial set
\begin{equation*}
\mathcal{P}_k:=
\Big\{
p_{S,W}(x;\boldsymbol{A},\boldsymbol{C},\boldsymbol{U},\boldsymbol{R})
\Big\}_{S\in\binom{[n_R]}{k}, W\in\binom{[d_C]}{k}}
\end{equation*}
 forms a generalized interlacing family 
 (see Definition \ref{Generalized interlacing family}).
Consequently, 
based on the property of generalized interlacing families (see Theorem \ref{generalized interlacing family}),
we have the following theorem.
Theorem \ref{gcur-th2} states that the smallest root of the expected polynomial for the set $\mathcal{P}_k$, namely $P_k(x;\boldsymbol{A},\boldsymbol{C},\boldsymbol{U},\boldsymbol{R})$, provides a lower bound for the smallest root of one degree-$d$ polynomial belonging to the set $\mathcal{P}_k$.
This implies an upper bound for a residual matrix of generalized CUR matrix decomposition.

\begin{theorem}
\label{gcur-th2}
Let $\boldsymbol{A}\in\mathbb{R}^{n\times d}$, $\boldsymbol{C}\in\mathbb{R}^{n\times d_{{C}}}$,  $\boldsymbol{U}\in\mathbb{R}^{n_{{R}}\times d_C}$ and $\boldsymbol{R}\in\mathbb{R}^{n_{{R}}\times d}$  be given matrices. 
Let $k$ be an integer satisfying $1\leq k\leq \mathrm{rank}(\boldsymbol{U})$.
Then there is a $k$-subset $\widehat{S}\subset[n_R]$ and a $k$-subset $\widehat{W}\subset[d_C]$ such that $\boldsymbol{U}_{\widehat{S},\widehat{W}}\in\mathbb{R}^{k\times k}$ is invertible and
\begin{equation*}
\mathrm{minroot}\ p_{\widehat{S},\widehat{W}}(x;\boldsymbol{A},\boldsymbol{C},\boldsymbol{U},\boldsymbol{R})
\geq \mathrm{minroot}\ P_{k}(x;\boldsymbol{A},\boldsymbol{C},\boldsymbol{U},\boldsymbol{R}).
\end{equation*}
Consequently, we have
\begin{equation}\label{meq71}
\begin{aligned}
&\Vert \boldsymbol{A}- \boldsymbol{C}_{:,\widehat{W}}( \boldsymbol{U}_{\widehat{S},\widehat{W}})^{-1}  \boldsymbol{R}_{\widehat{S},:}\Vert_{2}^2
\leq \mathrm{maxroot}\ P_{k}(-x;\boldsymbol{A},\boldsymbol{C},\boldsymbol{U},\boldsymbol{R}).
\end{aligned}
\end{equation}

\end{theorem}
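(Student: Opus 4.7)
The plan is to combine two ingredients that have already been set up in the paper: Lemma \ref{forms GIF}, which asserts that $\mathcal{P}_k$ forms a generalized interlacing family with expected polynomial $P_k(x;\boldsymbol{A},\boldsymbol{C},\boldsymbol{U},\boldsymbol{R})$, and Theorem \ref{generalized interlacing family}, the main structural property of such families which guarantees the existence of a member of the family of the same degree as the expected polynomial whose smallest root lower-bounds that of the expected polynomial. The only genuine work left is (i) identifying the degree of $P_k$, so that the degree condition in Theorem \ref{generalized interlacing family} pins down an invertible middle block, and (ii) translating the smallest-root inequality into the spectral-norm inequality \eqref{meq71}.

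First I would fix the degree of $P_k$. Since $k \leq \mathrm{rank}(\boldsymbol{U})$, there exists at least one pair $(S,W)\in\binom{[n_R]}{k}\times\binom{[d_C]}{k}$ with $\mathrm{rank}(\boldsymbol{U}_{S,W})=k$; by the remark following Definition \ref{def-p-SW} the corresponding $p_{S,W}$ has exact degree $d$ with positive leading coefficient $\det(\boldsymbol{U}_{S,W})^2$, while every other $p_{S,W}$ has degree at most $d$ with nonnegative leading coefficient. Summing over $(S,W)$, the top-degree coefficients cannot cancel, so $\deg P_k = d$. I would then apply Theorem \ref{generalized interlacing family} to the family $\mathcal{P}_k$ with target degree $d$; this produces $(\widehat{S},\widehat{W})$ such that $\deg p_{\widehat{S},\widehat{W}}=d$ and
\begin{equation*}
\mathrm{minroot}\ p_{\widehat{S},\widehat{W}}(x;\boldsymbol{A},\boldsymbol{C},\boldsymbol{U},\boldsymbol{R})\ \geq\ \mathrm{minroot}\ P_k(x;\boldsymbol{A},\boldsymbol{C},\boldsymbol{U},\boldsymbol{R}).
\end{equation*}
By the degree formula in the remark, $\deg p_{\widehat{S},\widehat{W}}=d$ forces $\mathrm{rank}(\boldsymbol{U}_{\widehat{S},\widehat{W}})=k$, so $\boldsymbol{U}_{\widehat{S},\widehat{W}}\in\mathbb{R}^{k\times k}$ is invertible, which is the first conclusion of the theorem.

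For the second conclusion, I would use the closed form from the remark. Setting $\boldsymbol{M}:=\boldsymbol{A}-\boldsymbol{C}_{:,\widehat{W}}(\boldsymbol{U}_{\widehat{S},\widehat{W}})^{-1}\boldsymbol{R}_{\widehat{S},:}$, the remark gives $p_{\widehat{S},\widehat{W}}(x)=\det(\boldsymbol{U}_{\widehat{S},\widehat{W}})^2\cdot\det(x\,\boldsymbol{I}_d+\boldsymbol{M}^T\boldsymbol{M})$, whence $\mathrm{minroot}\ p_{\widehat{S},\widehat{W}}(x)=-\lambda_{\max}(\boldsymbol{M}^T\boldsymbol{M})=-\|\boldsymbol{M}\|_2^2$. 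Also, substituting $x\mapsto -x$ yields $\mathrm{maxroot}\ P_k(-x;\boldsymbol{A},\boldsymbol{C},\boldsymbol{U},\boldsymbol{R})=-\mathrm{minroot}\ P_k(x;\boldsymbol{A},\boldsymbol{C},\boldsymbol{U},\boldsymbol{R})$. Chaining these with the displayed root inequality gives $-\|\boldsymbol{M}\|_2^2\geq -\mathrm{maxroot}\ P_k(-x;\boldsymbol{A},\boldsymbol{C},\boldsymbol{U},\boldsymbol{R})$, which rearranges to \eqref{meq71}.

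The main conceptual obstacle is already absorbed in Lemma \ref{forms GIF} and Theorem \ref{generalized interlacing family}, where the notion of interlacing must be extended to polynomials of varying degrees. At the level of the present theorem the only subtle point is making sure the degree-$d$ polynomial produced by the generalized interlacing theorem is not a spurious zero-rank member of $\mathcal{P}_k$; this is forced by the degree identity $\deg p_{S,W}=d-k+\mathrm{rank}(\boldsymbol{U}_{S,W})$ in the remark, and it is precisely this forcing that yields the invertibility of $\boldsymbol{U}_{\widehat{S},\widehat{W}}$ needed to invoke the closed-form factorization.
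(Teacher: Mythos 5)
Your proposal is correct and follows essentially the same route as the paper: form the generalized interlacing family (Lemma \ref{forms GIF}), apply Theorem \ref{generalized interlacing family} to obtain a degree-$d$ member $p_{\widehat{S},\widehat{W}}$ whose smallest root is bounded below, read invertibility of $\boldsymbol{U}_{\widehat{S},\widehat{W}}$ off the degree formula $\deg p_{S,W}=d-k+\mathrm{rank}(\boldsymbol{U}_{S,W})$, and then use the closed-form factorization of $p_{\widehat{S},\widehat{W}}$ to translate the root inequality into the spectral-norm bound. The only presentational difference is that you verify $\deg P_k=d$ inline (the paper records this in Proposition \ref{P-exp-base-gcur2} and cites Proposition \ref{P-exp-base-gcur1}(ii),(iii) where you cite the remark after Definition \ref{def-p-SW}), but the substance is the same.
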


\begin{remark}
We will show that $P_{k}(-x;\boldsymbol{A},\boldsymbol{C},\boldsymbol{U},\boldsymbol{R})$  has only real roots when $1\leq k\leq \mathrm{rank}(\boldsymbol{U})$.
In particular, this polynomial admits simplifications in the settings of the classical CUR matrix problem and the row subset selection problem (see equation \eqref{eq:2025:xu94} and \eqref{eq:2025:xu94-newcase}, respectively). 
We will use these simplified forms to derive error bounds for these two problems. 
In the general case, the formula \eqref{eq:P-exp-base-qpsw} in Proposition \ref{P-exp-base-gcur2} enables numerical estimation of the largest root of $P_{k}(-x;\boldsymbol{A},\boldsymbol{C},\boldsymbol{U},\boldsymbol{R})$.
Developing a theoretical estimation for this largest root remains an important direction for future research.	
\end{remark}

\begin{remark}
The concept of generalized interlacing families allows the polynomials in the family to have different degrees, which broadens the applicability of the method of interlacing polynomials.
For example, when attempting to apply the method of interlacing polynomials to Problem \ref{pr2}, a natural approach would be to consider the following set of degree-$d$ polynomials:
\begin{equation*}
\mathcal{Q}_k^{}:=\Big\{
\det[x\cdot \boldsymbol{I}_d+
(\boldsymbol{A}- \boldsymbol{C}_{:,W}( \boldsymbol{U}_{S,W})^{\dagger}  \boldsymbol{R}_{S,:})^T
(\boldsymbol{A}- \boldsymbol{C}_{:,W}( \boldsymbol{U}_{S,W})^{\dagger}  \boldsymbol{R}_{S,:})]
\Big\}_{S\in\binom{[n_R]}{k}, W\in\binom{[d_C]}{k}}.
\end{equation*}
However, it is not generally known or  guaranteed that this polynomial set forms a classical interlacing family when $\boldsymbol{U}$ contains singular $k \times k$ submatrices. 
This uncertainty prevents the direct application of classical interlacing methods.
To overcome this issus, we adopt an alternative approach by working with the polynomial set $\mathcal{P}_k^{}$ instead.
Specifically, for each pair $(S,W)$ with $\mathrm{rank}(\boldsymbol{U}_{S,W})<k$, we replace the original polynomial from $\mathcal{Q}_k^{}$
(i.e. $\det[x\cdot \boldsymbol{I}_d+
(\boldsymbol{A}- \boldsymbol{C}_{:,W}( \boldsymbol{U}_{S,W})^{\dagger}  \boldsymbol{R}_{S,:})^T
(\boldsymbol{A}- \boldsymbol{C}_{:,W}( \boldsymbol{U}_{S,W})^{\dagger}  \boldsymbol{R}_{S,:})]$) with $p_{S,W}(x;\boldsymbol{A},\boldsymbol{C},\boldsymbol{U},\boldsymbol{R})$.
Although the polynomials in $\mathcal{P}_k^{}$ have different degrees,
 their roots exhibit a desired interlacing property. Motivated by this observation, we introduce the definition of {\em generalized interlacing families},
which allows us to use the expected polynomial $P_{k}(x;\boldsymbol{A},\boldsymbol{C},\boldsymbol{U},\boldsymbol{R})$ to derive an error bound for Problem \ref{pr2}.
\end{remark}

\subsubsection{Classical CUR matrix decompositions}

We next focus on the classical CUR matrix decompositions (Problem \ref{pr1}), i.e., $\boldsymbol{C}=\boldsymbol{U}=\boldsymbol{R}=\boldsymbol{A}$.
In this case, we will demonstrate in Proposition \ref{P-exp-base-gcur-x} that 
 $P_{k}(-x;\boldsymbol{A},\boldsymbol{A},\boldsymbol{A},\boldsymbol{A})$ can be expressed in terms of the flip operator $\mathcal{R}_{x,d}^+$ and the Laguerre derivative operator $\partial_x\cdot x\cdot \partial_x$. Specifically, we have 
\begin{equation}\label{eq:2025:xu94}
P_{k}(-x;\boldsymbol{A},\boldsymbol{A},\boldsymbol{A},\boldsymbol{A})
=\frac{(-1)^{d-k}}{(k!)^2}\cdot  \mathcal{R}_{x,d}^+\cdot (\partial_x\cdot x\cdot \partial_x)^k \cdot \mathcal{R}_{x,d}^+\   \det[x\cdot \boldsymbol{I}_d-\boldsymbol{A}^T\boldsymbol{A}].
\end{equation}
Here, $\partial_x$ denotes the derivative operator $\frac{\partial}{\partial_x}$, and $\mathcal{R}_{x,d}^+$ is the flip operator defined as $\mathcal{R}_{x,d}^+\ p(x) :=x^d\cdot p(1/x)$ for any polynomial $p(x)$ of degree at most $d$. 
This simplified formula \eqref{eq:2025:xu94} facilitates the estimation of the largest root of $P_{k}(-x;\boldsymbol{A},\boldsymbol{A},\boldsymbol{A},\boldsymbol{A})$ by employing the polynomial multiplicative convolution and barrier function method introduced in \cite{inter3,inter5}.

Building upon the properties of generalized interlacing families and expression \eqref{eq:2025:xu94}, we establish in the following theorem an upper bound on the approximation error
 $\Vert \boldsymbol{A}- \boldsymbol{A}_{:,\widehat{W}}( \boldsymbol{A}_{\widehat{S},\widehat{W}})^{-1}  \boldsymbol{A}_{\widehat{S},:}\Vert_{2}^2$,  where the index sets $\widehat{S}$ and $\widehat{W}$ are selected via the method of interlacing polynomials. A detailed comparison between our bound and existing results can be found in Section~\ref{Comparison with previous work}.

\begin{theorem}
\label{th1}
Assume that $\boldsymbol{A}\in\mathbb{R}^{n\times d}$ is a rank-$t$ matrix with $\|\boldsymbol{A}\|_2\leq 1$, and let $k$ be an integer satisfying $2\leq k\leq  t-1$.
Let $\lambda_i$ be the $i$-th largest eigenvalue of $\boldsymbol{A}^T\boldsymbol{A}$.
Assume that $\lambda_1>\lambda_k$. 
Then 
there is a $k$-subset $\widehat{S}\subset[n]$ and a $k$-subset $\widehat{W}\subset[d]$  such that $\boldsymbol{A}_{\widehat{S},\widehat{W}}$ is invertible and
\begin{equation}
\label{xueq11}
\begin{aligned}
\Vert \boldsymbol{A}- \boldsymbol{A}_{:,\widehat{W}}( \boldsymbol{A}_{\widehat{S},\widehat{W}})^{-1}  \boldsymbol{A}_{\widehat{S},:}\Vert_{2}^2
\leq 	
\frac{(t-k+1)t}{(1-\sqrt{\alpha_k}	)^2} \cdot  \|\boldsymbol{A}-\boldsymbol{A}_k\|_2^2.
\end{aligned}
\end{equation}
Here, $\boldsymbol{A}_k\in\mathbb{R}^{n\times d}$ is the best rank-$k$ approximation to $\boldsymbol{A}$, and $\alpha_{k}\in(0,1)$ is defined as
\begin{equation*}
\alpha_{k}=\frac{\frac{1}{\lambda_{k+1}}-\frac{1}{k}\sum_{i=1}^{k}\frac{1}{\lambda_i}}{\frac{1}{\lambda_{k+1}}-1}.
\end{equation*}

\end{theorem}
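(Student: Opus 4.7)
The plan is to chain Theorem~\ref{gcur-th2}, specialized to $\boldsymbol{C}=\boldsymbol{U}=\boldsymbol{R}=\boldsymbol{A}$, with the operator identity \eqref{eq:2025:xu94}, reducing the inequality \eqref{xueq11} to a root estimate for the polynomial obtained by applying two flip operators and $k$ iterates of the Laguerre derivative $\partial_x x\partial_x$ to the characteristic polynomial of $\boldsymbol{A}^T\boldsymbol{A}$. Theorem~\ref{gcur-th2} immediately produces $k$-subsets $\widehat{S}\subset[n]$ and $\widehat{W}\subset[d]$ with $\boldsymbol{A}_{\widehat{S},\widehat{W}}$ invertible and
\begin{equation*}
\bigl\|\boldsymbol{A}-\boldsymbol{A}_{:,\widehat{W}}(\boldsymbol{A}_{\widehat{S},\widehat{W}})^{-1}\boldsymbol{A}_{\widehat{S},:}\bigr\|_2^2 \;\leq\; \mathrm{maxroot}\,P_k(-x;\boldsymbol{A},\boldsymbol{A},\boldsymbol{A},\boldsymbol{A}),
\end{equation*}
so the task is to upper bound the right-hand side, which by \eqref{eq:2025:xu94} coincides (up to a nonzero multiplicative constant) with the largest root of $\mathcal{R}_{x,d}^+\,(\partial_x x\partial_x)^k\,\mathcal{R}_{x,d}^+\,\det[x\boldsymbol{I}_d-\boldsymbol{A}^T\boldsymbol{A}]$.

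Next I would evaluate the two flips and the iterated Laguerre derivative in sequence. The inner flip sends $\det[x\boldsymbol{I}_d-\boldsymbol{A}^T\boldsymbol{A}]=x^{d-t}\prod_{i=1}^{t}(x-\lambda_i)$ to $h_1(x):=\prod_{i=1}^{t}(1-\lambda_i x)$, a degree-$t$ polynomial with roots $1/\lambda_1<\cdots<1/\lambda_t$. Because $\partial_x x\partial_x$ acts on coefficients as $a_i x^i\mapsto i^2 a_i x^{i-1}$, repeated application of Rolle's theorem (combined with the real-rootedness preservation of the operators $q\mapsto xq$ and $\partial_x$) shows that $h_2:=(\partial_x x\partial_x)^k h_1$ is a degree-$(t-k)$ polynomial with strictly positive real roots $\mu_1\leq\cdots\leq\mu_{t-k}$. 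The outer flip then produces $\mathcal{R}_{x,d}^+h_2(x)=c\,x^{d-t+k}\prod_{i=1}^{t-k}(1-\mu_i x)$ for some nonzero constant $c$, whose largest root equals $1/\mu_1=1/\mathrm{minroot}\,h_2$. Combined with $\|\boldsymbol{A}-\boldsymbol{A}_k\|_2^2=\lambda_{k+1}$, the theorem becomes equivalent to the lower bound
\begin{equation*}
\mathrm{minroot}\,(\partial_x x\partial_x)^k h_1 \;\geq\; \frac{(1-\sqrt{\alpha_k})^2}{(t-k+1)\,t\,\lambda_{k+1}}.
\end{equation*}

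For this lower bound I would invoke the barrier-function framework of \cite{inter3,inter5}. For a polynomial $p$ with strictly positive real roots, the lower barrier $\Phi_p(z):=-p'(z)/p(z)$, evaluated at $z\in(0,\mathrm{minroot}\,p)$, certifies a lower bound on $\mathrm{minroot}\,p$ whenever a certain monotone quadratic inequality involving $\Phi_p(z)$, $z\Phi_p(z)$, and $\deg p$ is satisfied; a direct calculation shows that one application of $\partial_x x\partial_x$ admits a shift rule $(z,\Phi_p(z))\mapsto(z',\Phi_{(\partial_x x\partial_x)p}(z'))$ that preserves this admissibility when the shift is calibrated suitably. Starting from $\Phi_{h_1}(z)=\sum_{i=1}^{t}\lambda_i/(1-\lambda_i z)$, the trajectory through $k$ iterates of $\partial_x x\partial_x$ is governed by the arithmetic mean $\tfrac{1}{k}\sum_{i=1}^{k}1/\lambda_i$ appearing in $\alpha_k$, and the terminal admissibility inequality, solved at a point $z^{(k)}$ calibrated to $\lambda_{k+1}$, yields the displayed bound. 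The hardest part will be the precise bookkeeping through the $k$ iterations, verification of admissibility at each step, and extraction of the exact factors $(t-k+1)\,t$ and $(1-\sqrt{\alpha_k})^2$ from the terminal quadratic inequality, whose solvability is guaranteed by the hypotheses $\|\boldsymbol{A}\|_2\leq 1$ and $\lambda_1>\lambda_k$ (which together force $\alpha_k\in(0,1)$).
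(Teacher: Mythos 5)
Your first reduction is exactly the paper's: invoke Theorem~\ref{gcur-th2} with $\boldsymbol{C}=\boldsymbol{U}=\boldsymbol{R}=\boldsymbol{A}$, then replace $P_k(-x;\boldsymbol{A},\boldsymbol{A},\boldsymbol{A},\boldsymbol{A})$ by the operator expression \eqref{eq:2025:xu94} via Proposition~\ref{P-exp-base-gcur-x}, so the whole burden falls on upper bounding
$\mathrm{maxroot}\ \mathcal{R}_{x,d}^+(\partial_x x\partial_x)^k\mathcal{R}_{x,d}^+\det[x\boldsymbol{I}_d-\boldsymbol{A}^T\boldsymbol{A}]$,
or equivalently lower bounding $\mathrm{minroot}\,(\partial_x x\partial_x)^k\,\mathcal{R}_{x,d}^+\det[x\boldsymbol{I}_d-\boldsymbol{A}^T\boldsymbol{A}]$. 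Your manipulation of the flips and degree counting there is correct.

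The gap is in the second half. You assert that a barrier shift rule exists for a single application of the Laguerre derivative $\partial_x x\partial_x$ and that iterating it $k$ times and ``calibrating'' will produce the factors $(t-k+1)t$ and $(1-\sqrt{\alpha_k})^2$, but you neither state the shift rule precisely nor verify it, and you explicitly postpone ``the precise bookkeeping through the $k$ iterations, verification of admissibility at each step, and extraction of the exact factors.'' That bookkeeping \emph{is} the proof; no such shift rule for $\partial_x x\partial_x$ appears in \cite{inter2,inter3}, and it is not obvious that one can be calibrated to reproduce both constants simultaneously from a single trajectory. The paper does not do this. Instead it factors the Laguerre operator algebraically as $(\partial_x x\partial_x)^k=\partial_x^k\,x^k\,\partial_x^k$ and then treats the two halves by \emph{different} mechanisms: the outer $\partial_x^k(x^k\cdot)$ is handled by the finite free multiplicative convolution of \cite{inter5} (Lemmas~\ref{lemma3}--\ref{lemma4-new}), which yields the loss factor $1+k(t-k)$; the inner $\partial_x^k$ is handled by the standard barrier argument for the plain derivative (Lemma~\ref{barrier-lemma}, Lemma~\ref{minroot-new}), which yields the factor $\frac{t}{k}\cdot\frac{1}{(1-\sqrt{\alpha_k})^2}\lambda_{k+1}$. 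The product $(1+k(t-k))\cdot\frac{t}{k}=\frac{t}{k}+(t-k)t\le(t-k+1)t$ gives the stated constant. So the $(t-k+1)t$ term comes from a convolution bound and the $(1-\sqrt{\alpha_k})^{-2}$ term from a barrier bound — two separate estimates, not one unified trajectory. Without either the decoupling or a proven Laguerre shift rule, your sketch does not establish \eqref{xueq11}.
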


By exploiting the rank-one update formula for the residual matrix,
we next develop a deterministic polynomial-time algorithm for the classical CUR matrix problem, under the assumption that the matrix $\boldsymbol{A}$ has no singular square submatrix of size at most $k$.
The following theorem states that the output of Algorithm \ref{alg1} can attain the error bound \eqref{meq71} in Theorem \ref{gcur-th2} up to a  computational error.

\begin{theorem}
\label{th3}
Assume that $\boldsymbol{A}\in\mathbb{R}^{n\times d}$.
Let $k$ be an integer satisfying $1\leq k\leq \mathrm{rank}(\boldsymbol{A})$.
Assume that $\boldsymbol{A}_{S,W}$ is invertible for any $S\subset[n]$ and $W\subset[d]$ with $|S|=|W|\leq k$.
{For any $\epsilon>0$, }
Algorithm \ref{alg1} can output a $k$-subset $\widehat{S}=\{i_1,\ldots,i_k\}\subset[n]$ and a $k$-subset $\widehat{W}=\{j_1,\ldots,j_k\}\subset[d]$ such that 
\begin{equation}\label{eq:2025:xu32}
\Vert \boldsymbol{A}- \boldsymbol{A}_{:,\widehat{W}}( \boldsymbol{A}_{\widehat{S},\widehat{W}})^{-1}  \boldsymbol{A}_{\widehat{S},:}\Vert_{2}^2
\leq 2k\varepsilon+ \mathrm{maxroot}\     P_{k}(-x;\boldsymbol{A},\boldsymbol{A},\boldsymbol{A},\boldsymbol{A}).
\end{equation}
The time complexity of Algorithm~\ref{alg1} is $O(kn^2d^2 + knd^{w+1} \log(d \vee \tfrac{1}{\varepsilon}))$, where $a \vee b := \max\{a, b\}$ for real numbers $a$ and $b$, and $w \in (2, 2.373)$ denotes the matrix multiplication exponent.

\end{theorem}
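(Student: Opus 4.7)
The plan is to realize Algorithm \ref{alg1} as a derandomized greedy selection driven by the generalized interlacing family $\mathcal{P}_k$ (Lemma \ref{forms GIF}), and to exploit a rank-one update of the residual matrix to keep the per-round work polynomial.

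\textbf{Greedy selection on conditional expected polynomials.} I would build $\widehat{S}=\{i_1,\ldots,i_k\}$ and $\widehat{W}=\{j_1,\ldots,j_k\}$ one index at a time. After $t$ rounds, with partial sets $S_t, W_t$ of size $t$, consider the conditional expected polynomial
\begin{equation*}
Q_{S_t,W_t}(x):=\sum_{\substack{S\in\binom{[n]}{k},\,S\supset S_t\\ W\in\binom{[d]}{k},\,W\supset W_t}} p_{S,W}(x;\boldsymbol{A},\boldsymbol{A},\boldsymbol{A},\boldsymbol{A}).
\end{equation*}
The invertibility hypothesis forces every summand in $\mathcal{P}_k$ to have degree exactly $d$, so $Q_{S_t,W_t}$ is an honest sum of same-degree real-rooted polynomials. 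Two successive applications of the generalized interlacing property (Theorem \ref{generalized interlacing family})---first conditioning on the new row index, then on the new column index---yield $i_{t+1}\notin S_t$ and $j_{t+1}\notin W_t$ with
\begin{equation*}
\mathrm{maxroot}\ Q_{S_{t+1},W_{t+1}}(-x)\ \le\ \mathrm{maxroot}\ Q_{S_t,W_t}(-x).
\end{equation*}
After $k$ rounds, $Q_{\widehat{S},\widehat{W}}=p_{\widehat{S},\widehat{W}}$, whose maxroot at $-x$ equals $\Vert \boldsymbol{A}-\boldsymbol{A}_{:,\widehat{W}}(\boldsymbol{A}_{\widehat{S},\widehat{W}})^{-1}\boldsymbol{A}_{\widehat{S},:}\Vert_2^2$ by the remark following Definition \ref{def-p-SW}, giving \eqref{eq:2025:xu32} in the error-free case.

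\textbf{Rank-one update and fast root approximation.} The residual $\boldsymbol{E}_t:=\boldsymbol{A}-\boldsymbol{A}_{:,W_t}(\boldsymbol{A}_{S_t,W_t})^{-1}\boldsymbol{A}_{S_t,:}$ evolves by a rank-one perturbation when one row and one column index are appended. The Schur complement, valid because $\boldsymbol{A}_{S_{t+1},W_{t+1}}$ is invertible by hypothesis, gives
\begin{equation*}
\boldsymbol{E}_{t+1}=\boldsymbol{E}_t-\frac{(\boldsymbol{E}_t)_{:,j_{t+1}}\,(\boldsymbol{E}_t)_{i_{t+1},:}}{\boldsymbol{E}_t(i_{t+1},j_{t+1})},
\end{equation*}
which costs $O(nd)$ per update. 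Moreover, $Q_{S_t,W_t}(-x)$ admits, up to a nonvanishing scalar, a determinantal representation of degree $d$ in $\boldsymbol{E}_t^T\boldsymbol{E}_t$, so its largest root can be located to precision $\varepsilon$ by bisection on $\det[x\boldsymbol{I}_d-\boldsymbol{E}_t^T\boldsymbol{E}_t]$, each evaluation costing $O(d^{w})$ by fast matrix multiplication.

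\textbf{Error and complexity accounting.} At round $t$ the algorithm evaluates the $O(nd)$ candidate updates and picks one whose numerically estimated maxroot lies within $\varepsilon$ of the minimum; the two sub-selections per round (row, then column) over $k$ rounds accumulate to at most $2k\varepsilon$ in the final bound. Rank-one updates total $O(k\cdot nd\cdot nd)=O(kn^2d^2)$, and root approximations total $O(k\cdot nd\cdot d^{w}\log(d\vee 1/\varepsilon))=O(knd^{w+1}\log(d\vee 1/\varepsilon))$, matching the stated complexity.

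\textbf{Main obstacle.} The delicate point is the bipartite conditioning: the generalized-interlacing statement is phrased for a single indexing set, so to obtain a good pair $(i_{t+1},j_{t+1})$ I must apply it twice and verify that the family remaining after row-conditioning still constitutes a generalized interlacing family with respect to the column index (inherited from Lemma \ref{forms GIF} by restriction). A secondary technical nuisance is tracking the $\varepsilon$-slack additively---rather than multiplicatively---through the $2k$ sub-selections; this hinges on the fact that interlacing implies the maxroot of the best summand is bounded by the maxroot of the sum, so a $\varepsilon$-approximate maximizer of maxroot translates to a $\varepsilon$-additive loss, not a compounding one.
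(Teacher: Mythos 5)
Your proposal reproduces the paper's approach: greedy descent over the generalized interlacing family, rank-one residual update via Lemma~\ref{Linear algebra-lemma-2}, Sturm-sequence bisection for root isolation, and the same complexity bookkeeping. Two small but substantive imprecisions are worth flagging. First, the root you isolate is not the largest root of $\det[x\boldsymbol{I}_d-\boldsymbol{E}_t^T\boldsymbol{E}_t]$ but of $\mathcal{R}_{x,d}^+\cdot(\partial_x\cdot x\cdot\partial_x)^{k-l}\cdot\mathcal{R}_{x,d}^+$ applied to that characteristic polynomial; by Lemma~\ref{lemma2-f-SW}~(ii), $\mathrm{minroot}\,f_{\widetilde S,\widetilde W}$ equals the negative of the maxroot of this transformed polynomial, and Lemma~\ref{Linear algebra-lemma-6} makes the operator an $O(1)$ coefficient reshuffle so the characteristic-polynomial cost $O(d^w\log d)$ dominates. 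Second, the $2k\varepsilon$ slack does not arise as ``$\varepsilon$ from each of $2k$ sub-selections.'' The algorithm searches over pairs $(i,j)$ jointly, and each of the $k$ joint comparisons incurs $2\varepsilon$---once because the approximate root of the chosen pair is within $\varepsilon$ of its true root, and once because the approximate root of the theoretically optimal pair is within $\varepsilon$ of its true root (the chain in \eqref{meq55}). Had you actually run two separate sub-selections per round (row, then column), each with its own $\varepsilon$-approximate argmax, the worst-case slack would be $4\varepsilon$ per round, i.e.\ $4k\varepsilon$ total. The bipartite tree of Lemma~\ref{forms GIF} is only the theoretical scaffolding that guarantees a good pair $(i^*,j^*)$ exists; the algorithm skips the intermediate odd level and compares pairs directly, which is precisely what rescues the factor of two.
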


\begin{remark}
For any matrix $\boldsymbol{A} \in \mathbb{R}^{n \times d}$, we can construct a perturbed matrix 
${\boldsymbol{B}}=\boldsymbol{A}+t\cdot \boldsymbol{E}$, where $\boldsymbol{E}$ is a totally positive matrix, i.e., the determinant of each square submatrix is positive.
When $t>0$ is sufficiently small,
the perturbed matrix ${\boldsymbol{B}}$ contains no singular square submatrix of size at most $k$.
We then apply Theorem \ref{th3} to ${\boldsymbol{B}}$ to obtain two $k$-subsets $\widehat{S}\subset[n]$ and $\widehat{W}\subset[d]$.
Numerical experiments suggest that if $\boldsymbol{A}_{\widehat{S}, \widehat{W}}$ is invertible, 
then the approximation error $\Vert \boldsymbol{A}- \boldsymbol{A}_{:,\widehat{W}}( \boldsymbol{A}_{\widehat{S},\widehat{W}})^{-1}  \boldsymbol{A}_{\widehat{S},:}\Vert_{2}^2$ is comparable to $\Vert {\boldsymbol{B}}- {\boldsymbol{B}}_{:,\widehat{W}}( {\boldsymbol{B}}_{\widehat{S},\widehat{W}})^{-1}  {\boldsymbol{B}}_{\widehat{S},:}\Vert_{2}^2$, and can therefore also be effectively bounded.
 A rigorous theoretical understanding of this observation is left for future work.
\end{remark}

\subsubsection{Row subset selection problem}

We next focus on the row subset selection problem (Problem \ref{pr3}), i.e., $\boldsymbol{U}=\boldsymbol{C}\in\mathbb{R}^{n\times k}$, $\boldsymbol{R}=\boldsymbol{A}\in\mathbb{R}^{n\times d}$, where $k$ is the sampling size.
We consider a special case when $\mathrm{rank}(\boldsymbol{C})=k$.
In this case, we will demonstrate in Proposition \ref{P-exp-base-gcur-x-newcase} that 
 $P_{k}(-x;\boldsymbol{A},\boldsymbol{C},\boldsymbol{C},\boldsymbol{A})$ can be expressed as 
\begin{equation}\label{eq:2025:xu94-newcase}
P_{k}(-x;\boldsymbol{A},\boldsymbol{C},\boldsymbol{C},\boldsymbol{A})
=\frac{(-1)^{d}\det\boldsymbol{C}^T\boldsymbol{C}}{k!}\cdot  \mathcal{R}_{x,d}^+\cdot  \partial_x^k\cdot x^k \cdot \mathcal{R}_{x,d}^+\   \det[x\cdot \boldsymbol{I}_d-\boldsymbol{A}^T(\boldsymbol{I}_n-\boldsymbol{C}\boldsymbol{C}^{\dagger})\boldsymbol{A}].
\end{equation}
Based on this simplified formula,
we establish the approximation error bound for the row subset selection problem (Problem \ref{pr3}).

\begin{theorem}
\label{th1-newcase}
Assume that $\boldsymbol{A}\in\mathbb{R}^{n\times d}$ and $\boldsymbol{C}\in\mathbb{R}^{n\times k}$ satisfying $\mathrm{rank}(\boldsymbol{C})=k$.
Let $r=\mathrm{rank}(\boldsymbol{A}-\boldsymbol{C}\boldsymbol{C}^{\dagger}\boldsymbol{A})$.
Then there is a $k$-subset $\widehat{S}\subset[n]$ such that $\boldsymbol{C}_{\widehat{S},:}\in\mathbb{R}^{k\times k}$ is invertible and
\begin{equation}
\label{xueq11-newcase}
\begin{aligned}
\Vert \boldsymbol{A}- \boldsymbol{C}( \boldsymbol{C}_{\widehat{S},:})^{-1}  \boldsymbol{A}_{\widehat{S},:}\Vert_{2}^2
\leq 	
(1+ kr) \cdot  \|\boldsymbol{A}-\boldsymbol{C}\boldsymbol{C}^{\dagger}\boldsymbol{A}\|_2^2.
\end{aligned}
\end{equation}

\end{theorem}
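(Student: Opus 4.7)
The plan is to reduce the theorem to an algebraic estimate on an explicit univariate polynomial and then settle that estimate by iterated Rolle's theorem. Applying Theorem~\ref{gcur-th2} with $\boldsymbol{U}=\boldsymbol{C}$ and $\boldsymbol{R}=\boldsymbol{A}$---and noting that $d_C=k$ forces $\widehat{W}=[k]$---produces a $k$-subset $\widehat{S}\subset[n]$ such that $\boldsymbol{C}_{\widehat{S},:}$ is invertible and
\[
\|\boldsymbol{A}-\boldsymbol{C}(\boldsymbol{C}_{\widehat{S},:})^{-1}\boldsymbol{A}_{\widehat{S},:}\|_2^2 \;\leq\; \mathrm{maxroot}\ P_{k}(-x;\boldsymbol{A},\boldsymbol{C},\boldsymbol{C},\boldsymbol{A}).
\]
Set $\boldsymbol{M}:=\boldsymbol{A}^{T}(\boldsymbol{I}_n-\boldsymbol{C}\boldsymbol{C}^{\dagger})\boldsymbol{A}$. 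Since $\boldsymbol{I}_n-\boldsymbol{C}\boldsymbol{C}^{\dagger}$ is an orthogonal projection, $\boldsymbol{M}$ is positive semidefinite, $\mathrm{rank}(\boldsymbol{M})=r$, and its largest eigenvalue equals $\mu_1:=\|\boldsymbol{A}-\boldsymbol{C}\boldsymbol{C}^{\dagger}\boldsymbol{A}\|_2^2$; I denote the nonzero eigenvalues by $\mu_1\geq\mu_2\geq\cdots\geq\mu_r>0$ (the case $r=0$ gives residual zero and is trivial).

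Next I would unpack the simplified formula~\eqref{eq:2025:xu94-newcase}. A direct computation gives $\mathcal{R}_{x,d}^{+}\det[x\boldsymbol{I}_d-\boldsymbol{M}]=\prod_{i=1}^{r}(1-\mu_i x)=:h(x)$, and then
\[
\partial_{x}^{k}\bigl[x^{k}h(x)\bigr]\;=\;\sum_{j=0}^{r}(-1)^{j}\,\frac{(k+j)!}{j!}\,e_{j}(\mu_1,\ldots,\mu_r)\,x^{j},
\]
where $e_j$ denotes the $j$-th elementary symmetric polynomial. Applying $\mathcal{R}_{x,d}^{+}$ once more reverses the coefficient vector and pulls out a factor $x^{d-r}$, so up to a nonzero scalar the positive roots of $P_{k}(-x;\boldsymbol{A},\boldsymbol{C},\boldsymbol{C},\boldsymbol{A})$ coincide with those of the monic degree-$r$ polynomial
\[
G(\tau)\;:=\;\sum_{j=0}^{r}(-1)^{j}\binom{k+j}{k}\,e_{j}(\mu_1,\ldots,\mu_r)\,\tau^{r-j}.
\]
Writing its roots as $\tau_1\geq\tau_2\geq\cdots\geq\tau_r$, we obtain $\mathrm{maxroot}\,P_{k}(-x;\cdots)=\tau_1$, and the theorem reduces to proving $\tau_1\leq(1+kr)\mu_1$.

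The decisive step---which I expect to be the main technical obstacle---is the domination $\tau_i\geq\mu_i$ for every $i$. I would prove this by iterated Rolle's theorem applied to the real-rooted polynomial $x^{k}h(x)$, whose $k+r$ sorted roots are $\underbrace{0,\ldots,0}_{k},\,1/\mu_1\leq\cdots\leq 1/\mu_r$. A standard induction on the number of differentiations shows that the $i$-th smallest root of $\partial_{x}^{k}[x^{k}h(x)]$ lies in $(0,\,1/\mu_i]$, and passing to reciprocals yields $\tau_i\geq\mu_i$; the care required is to track how the $k$-fold root at the origin is consumed one step at a time. Granted this, Vieta's formula applied to the monic $G$ gives $\sum_{i=1}^{r}\tau_i=(k+1)\sum_{i=1}^{r}\mu_i$, whence
\[
\tau_1\;=\;\sum_{i=1}^{r}\tau_i-\sum_{i=2}^{r}\tau_i\;\leq\;(k+1)\sum_{i=1}^{r}\mu_i-\sum_{i=2}^{r}\mu_i\;=\;(k+1)\mu_1+k\sum_{i=2}^{r}\mu_i\;\leq\;(1+kr)\mu_1,
\]
using $\tau_i\geq\mu_i$ in the first inequality and $\mu_i\leq\mu_1$ together with the presence of at most $r-1$ terms in the last. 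Combining with the first paragraph yields~\eqref{xueq11-newcase}.
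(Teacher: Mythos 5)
Your proof is correct, and it takes a genuinely different and more elementary route than the paper. Both arguments start identically---apply Theorem~\ref{gcur-th2} (forcing $\widehat{W}=[k]$) and then use the simplification~\eqref{eq:2025:xu94-newcase}---so everything hinges on bounding the largest root of $\mathcal{R}_{x,d}^+\cdot\partial_x^k\cdot x^k\cdot\mathcal{R}_{x,d}^+\,\det[x\boldsymbol{I}_d-\boldsymbol{A}^T(\boldsymbol{I}_n-\boldsymbol{C}\boldsymbol{C}^{\dagger})\boldsymbol{A}]$. The paper does this through Lemma~\ref{lemma4-new}, which in turn rests on the finite free multiplicative convolution machinery of Marcus--Spielman--Srivastava: Lemma~\ref{lemma3} expresses $\partial_x^k(x^k q)$ as $q\boxtimes_r\partial_x^k(x^k(x-1)^r)$, Lemma~\ref{lemma4} uses Vieta on the universal polynomial $\partial_x^k(x^k(x-1)^r)$, and Lemma~\ref{inter5multi} transfers the bound via $\mathrm{maxroot}(p\boxtimes_d q)\leq\mathrm{maxroot}(p)\cdot\mathrm{maxroot}(q)$. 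You instead avoid the convolution entirely: the iterated-Rolle interlacing $a_i\leq b_i\leq a_{i+1}$ applied $k$ times to $x^k h(x)$ gives the termwise domination $\sigma_i\leq 1/\mu_i$ (with $\sigma_1>0$ because $\partial_x^k[x^k h](0)=k!\,h(0)=k!\neq 0$), and Vieta on the monic reversed polynomial $G$ gives $\sum\tau_i=(k+1)\sum\mu_i$ exactly; the two facts combine in one line. Your route is self-contained and, as a bonus, actually proves the stronger pointwise inequality $\tau_i\geq\mu_i$ for every $i$ rather than just a bound on the extremal root, whereas the paper's Lemma~\ref{lemma4-new} is a more portable statement that decouples the universal polynomial from the specific spectrum and slots naturally into the barrier-method analysis of Section~6.1. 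Both yield the same constant $1+kr$.
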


\begin{remark}
When $\boldsymbol{A}=\boldsymbol{a}\in\mathbb{R}^{n}$ is taken as a vector, we have $d=r=1$, so Theorem \ref{th1-newcase} recovers the result in \cite[Theorem 5]{DW17}. When $\boldsymbol{A}$ is a general matrix, Theorem \ref{th1-newcase} provides the first known spectral norm error bound for Problem  \ref{pr3}.
\end{remark}

We now summarize the main contributions of this paper. 
\begin{enumerate}[\rm (i)]
	\item First, we introduce the concept of generalized interlacing families, which extends the interlacing family framework developed by Marcus, Spielman, and Srivastava \cite{inter1,inter2,inter3}.  This generalization broadens the scope of applications for interlacing families.
	\item Second, we employ the framework of generalized interlacing families to derive an upper bound for a residual matrix in generalized CUR matrix decompositions. While this upper bound is expressed in terms of the largest root of the  polynomial $P_{k}(-x;\boldsymbol{A},\boldsymbol{C},\boldsymbol{U},\boldsymbol{R})$, it represents, to our knowledge, the first known theoretical upper bound for a residual matrix in generalized CUR decompositions.
	\item Finally, we focus on two special cases: the classical CUR matrix problem ($\boldsymbol{C}=\boldsymbol{U}=\boldsymbol{R}=\boldsymbol{A}\in\mathbb{R}^{n\times d}$) and the row subset selection problem ($\boldsymbol{U}=\boldsymbol{C}\in\mathbb{R}^{n\times k}$, $\boldsymbol{R}=\boldsymbol{A}\in\mathbb{R}^{n\times d}$). 
For the classical CUR matrix problem, we derive an explicit upper bound for the largest root of the polynomial $P_{k}(-x;\boldsymbol{A},\boldsymbol{A},\boldsymbol{A},\boldsymbol{A})$. This result leads directly to an explicit upper bound for a residual matrix in classical CUR decompositions, which we demonstrate to be tighter than existing bounds in numerous instances.
For the row subset selection problem, we similarly bound the largest root of $P_{k}(-x;\boldsymbol{A},\boldsymbol{C},\boldsymbol{C},\boldsymbol{A})$, yielding the first known spectral norm error bound for this problem when $\boldsymbol{A}$ is a general matrix.

\end{enumerate}

\subsection{Comparison with previous work}
\subsubsection{Classical CUR matrix problem}
\label{Comparison with previous work}

The CUR matrix decomposition has gained much attention in both numerical linear algebra \cite{Gol65,BG65,GE96,GTZ97,GTZ972,ADMMWG15} and and theoretical computer science  \cite{DM05,DKM06,DMM08,MD09,GM16,BW14,WZ13,WZZ16}.
The works of Goreinov, Zamarashkin, and Tyrtyshnikov \cite{GZT95,GTZ97,GTZ972,GT01} initiated much of the modern development for the classical CUR decomposition problem (Problem \ref{pr1}). 
In some formulations, the standard pseudoinverse $( \boldsymbol{A}_{S,W})^{\dagger}$ of the middle matrix is substituted with its $\tau$-pseudoinverse \cite{GTZ97,GTZ972}. 
Specifically, for a real number $\tau>0$ and a matrix $\boldsymbol{M}\in\mathbb{R}^{n\times n}$ with the singular value decomposition $\boldsymbol{M}=\boldsymbol{U}\mathrm{diag}(\sigma_1,\ldots,\sigma_n)\boldsymbol{V}^T$, the $\tau$-pseudoinverse $\boldsymbol{M}_{\tau}^{\dagger}$ of $\boldsymbol{M}$ is defined by $\boldsymbol{V}\mathrm{diag}(\widehat{\sigma}_1,\ldots,\widehat{\sigma}_n)\boldsymbol{U}^T$, where 
\begin{equation*}
\widehat{\sigma}_i=
\begin{cases}
\frac{1}{{\sigma}_i}&\text{if ${\sigma}_i\geq \tau$,}\\
0&\text{if ${\sigma}_i< \tau$.}
\end{cases}
\end{equation*}
Assume that $\boldsymbol{A}\in\mathbb{R}^{n\times d}$ and $\boldsymbol{F}\in\mathbb{R}^{n\times d}$ satisfy $\mathrm{rank}(\boldsymbol{A}-\boldsymbol{F})\leq k$ and $\|\boldsymbol{F}\|_2\leq \epsilon$. 
Then the author in \cite{GZT95} showed that 
there is a $k$-subset $\widehat{S}\subset[n]$ and a $k$-subset $\widehat{W}\subset[d]$ such that
\begin{equation}\label{eqxu7}
\Vert \boldsymbol{A}- \boldsymbol{A}_{:,\widehat{W}}  ( \boldsymbol{A}_{\widehat{S},\widehat{W}})^{\dagger}_{\epsilon} \boldsymbol{A}_{\widehat{S},:}\Vert_{2}^2
\leq 	
\|\boldsymbol{F}\|_2^2 \cdot \big(1+T^2_{k,n} \big) \cdot  \big(1+ (\sqrt{T_{k,n}}+\sqrt{T_{k,d}} )^2\big)^2,
\end{equation}
where $T_{k,n}$ is defined as
\begin{equation*}
T_{k,n}:=\frac{1}{\min\limits_{\boldsymbol{U}\in\mathbb{R}^{n\times k}, \boldsymbol{U}^T\boldsymbol{U}=\boldsymbol{I}_k} \max\limits_{S\subset[n],|S|=k} \sigma_{\min}(\boldsymbol{U}_{S,:})}
\end{equation*}
for any positive integers $k,n$ with $k\leq n$.
Later, the authors in \cite{Mik14,OZ18} showed that there is a $k$-subset $\widehat{S}\subset[n]$ and a $k$-subset $\widehat{W}\subset[d]$ such that
\begin{equation}\label{eqxu1}
\Vert \boldsymbol{A}- \boldsymbol{A}_{:,\widehat{W}}( \boldsymbol{A}_{\widehat{S},\widehat{W}})^{\dagger}  \boldsymbol{A}_{\widehat{S},:}\Vert_{2}^2
\leq 	
\|\boldsymbol{F}\|_2^2  \cdot  \big(1+T^2_{k,n} \big)\cdot \big(1+T_{k,d} \big)^2.	
\end{equation}
It has been shown in \cite{GTZ972, HongPan} that for any $1\leq k\leq n$, we have
\begin{equation}\label{eq:Tkn}
T_{k,n}\leq \sqrt{1+k(n-k)}.
\end{equation} 
In \cite{Xu25}, the author employs the method of interlacing polynomials to present a simple proof of (\ref{eq:Tkn}) with a slight improvement.
By setting $\boldsymbol{F}=\boldsymbol{A}-\boldsymbol{A}_k$, both error bounds \eqref{eqxu7} and \eqref{eqxu1} are
\begin{equation}\label{xueq11-1}
O(k^2(n-k)(d-k))	\cdot \|\boldsymbol{A}-\boldsymbol{A}_k\|_2^2.	
\end{equation}
Here, $\boldsymbol{A}_k\in {\mathbb R}^{n\times d}$ is the best rank-$k$ approximation to $\boldsymbol{A}$.
Recently, the authors in \cite{OZ18} showed that there exists a $k$-subset $\widehat{S}\subset[n]$ and a $k$-subset $\widehat{W}\subset[d]$ such that 
\begin{equation}\label{eqxu8}
\Vert \boldsymbol{A}- \boldsymbol{A}_{:,\widehat{W}}( \boldsymbol{A}_{\widehat{S},\widehat{W}})^{-1}  \boldsymbol{A}_{\widehat{S},:}\Vert_{F}^2
\leq 	
(k+1)^2\cdot  \|\boldsymbol{A}-\boldsymbol{A}_k\|_F^2.	
\end{equation}
Later, Cortinovis and Kressner \cite{CK20} proposed a deterministic polynomial-time algorithm to output $\widehat{S}$ and $\widehat{W}$ such that \eqref{eqxu8} is satisfied.
Note that 
\begin{equation*}
\|\boldsymbol{A}-\boldsymbol{A}_k\|_F^2=\sum_{j=k+1}^{t}\sigma^2_{j}\leq (t-k) \cdot 	\sigma^2_{k+1}=(t-k) \cdot\|\boldsymbol{A}-\boldsymbol{A}_k\|_2^2,
\end{equation*}
where $t=\mathrm{rank}(\boldsymbol{A})$ and $\sigma_{j} $ denotes the $j$-th largest singular value of $\boldsymbol{A}$.
Therefore, the error bound in \eqref{eqxu8} implies the following spectral norm error bound:
\begin{equation}\label{eqxu9}
\Vert \boldsymbol{A}- \boldsymbol{A}_{:,\widehat{W}}( \boldsymbol{A}_{\widehat{S},\widehat{W}})^{-1}  \boldsymbol{A}_{\widehat{S},:}\Vert_{2}^2
\leq 	
(k+1)^2\cdot (t-k)\cdot  \|\boldsymbol{A}-\boldsymbol{A}_k\|_2^2.	
\end{equation}
This bound improves upon the earlier results in \eqref{xueq11-1}. 
Comparing \eqref{eqxu9} with our error bound \eqref{xueq11}, we see that our method yields a tighter spectral norm error when the condition $t\leq (k+1)^2( 1-\sqrt{\alpha_k}	)^2 $ holds.

\subsubsection{Matrix approximation using the method of interlacing families}
\label{related work-CXX}

 Recently, we applied the method of interlacing polynomials to the classical column selection problem \cite{CXX1}.  
Recall that the aim of the classical column selection problem is to approximate a given matrix $\boldsymbol{A}\in\mathbb{R}^{n\times d}$ by minimizing $\Vert \boldsymbol{A}- \boldsymbol{A}_{:,W}( \boldsymbol{A}_{:,W})^{\dagger}  \boldsymbol{A}\Vert_{2}$
over all possible $k$-subsets $W\subset[d]$.
We showed that there is a $k$-subset $\widehat{W}\subset[d]$ such that
\begin{equation*}
\begin{aligned}
\|\boldsymbol{A}- \boldsymbol{A}_{:,\widehat{W}}( \boldsymbol{A}_{:,\widehat{W}})^{\dagger}  \boldsymbol{A}\|_2^2
&=\mathrm{maxroot}\ \det\big[x\cdot \boldsymbol{I}_d-
\big(\boldsymbol{A}- \boldsymbol{A}_{:,\widehat{W}}( \boldsymbol{A}_{:,\widehat{W}})^{\dagger}  \boldsymbol{A}\big)^T
\big(\boldsymbol{A}- \boldsymbol{A}_{:,\widehat{W}}( \boldsymbol{A}_{:,\widehat{W}})^{\dagger}  \boldsymbol{A}\big)\big]
\\&
\leq \mathrm{maxroot}\ P_k(x;\boldsymbol{A}),
\end{aligned}
\end{equation*}
where the expected polynomial $P_k(x;\boldsymbol{A})$ is defined as
\begin{equation*}
P_k(x;\boldsymbol{A}):=\sum_{W\in\binom{[d]}{k}}\det\big[(\boldsymbol{A}_{:,W})^T\boldsymbol{A}_{:,W}\big]\cdot
\det\big[x\cdot \boldsymbol{I}_d-
\big(\boldsymbol{A}- \boldsymbol{A}_{:,W}( \boldsymbol{A}_{:,W})^{\dagger}  \boldsymbol{A}\big)^T
\big(\boldsymbol{A}- \boldsymbol{A}_{:,W}( \boldsymbol{A}_{:,W})^{\dagger}  \boldsymbol{A}\big)\big].	
\end{equation*}
Furthermore, we showed in \cite[Proposition 3.6]{CXX1} that $P_k(x;\boldsymbol{A})$ can be expressed as
\begin{equation}\label{meq33}
P_k(x;\boldsymbol{A})=\frac{(-1)^{k}}{k!}\cdot  \mathcal{R}_{x,d}^+\cdot \partial_x^k\cdot  \mathcal{R}_{x,d}^+\   \det[x\cdot \boldsymbol{I}_d-\boldsymbol{A}^T\boldsymbol{A}].	
\end{equation}
Comparing \eqref{meq33} with \eqref{eq:2025:xu94}, we observe that the polynomial $P_{k}(-x;\boldsymbol{A},\boldsymbol{A},\boldsymbol{A},\boldsymbol{A})$ defined in \eqref{eq:2025:xu94} can be viewed as a variant of $P_k(x;\boldsymbol{A})$, where the $k$-th order differential operator $\partial_x^k$ (appearing between the two flip operators $\mathcal{R}_{x,d}^+$ in \eqref{meq33}) is replaced by the $k$-th order Laguerre derivative operator  $(\partial_x\cdot x\cdot \partial_x)^k$.

\section{Preliminaries}

\subsection{Notations}

For a complex number $a$, we denote its imaginary part by $\mathbf{Im}(a)$.
We denote by $\boldsymbol{I}_n$ the identity matrix of size $n$, and by $\boldsymbol{0}_{n\times d}$ the zero matrix of size $n\times d$.
When the dimensions are clear from context, we may omit the subscript and simply write $\boldsymbol{0}$ to denote a zero matrix of appropriate size.
For a collection of real numbers ${a}_1,\ldots,{a}_d$, we use $\mathrm{diag}({a}_1,\ldots,{a}_d)$ to denote the $d\times d$ diagonal matrix whose $i$-th diagonal entry is $a_i$. 

Let $\boldsymbol{A} \in \mathbb{R}^{n \times d}$ be a matrix. 
We denote by $\boldsymbol{A}_{S,W}$ the submatrix of $\boldsymbol{A}$ formed by selecting the rows indexed by the set $S$ and the columns indexed by the set $W$. 
For convenience, when $S = [n]$, we write $\boldsymbol{A}_{S,W}$ as $\boldsymbol{A}_{:,W}$.
Similarly, when $W = [d]$, we write $\boldsymbol{A}_{S,W}$ as $\boldsymbol{A}_{S,:}$. 
We use $\boldsymbol{A}^\dagger \in \mathbb{R}^{d \times n}$ to denote the Moore-Penrose pseudoinverse of $\boldsymbol{A}$, and let $\boldsymbol{A}(i,j)$ denote the $(i,j)$-th entry of $\boldsymbol{A}$.
For convenience, we set $\det[\boldsymbol{A}_{\emptyset,\emptyset}]=1$ for any matrix $\boldsymbol{A}$.

We denote by $\mathbb{R}[z_1,\ldots,z_n]$ the set of multivariate polynomials in the variables $z_1,\ldots,z_n$ with real coefficients. 
The partial derivative with respect to $z_i$ is denoted by $\partial_{z_i}:=\partial\slash\partial_{z_i}$. For each subset $S\subset[n]$, we define $\partial_{\boldsymbol{Z}^S}:=\prod_{i\in S}\partial_{z_i}$. 

For a univariate polynomial $p(x)\in\mathbb{R}[x]$, we use $\mathrm{deg}(p)$ to denote the degree of $p(x)$. If all the roots of $p(x)$ are real, we use $\lambda_i(p)$	to denote the $i$-th largest root of $p(x)$.
Moreover, we use $\lambda_{\max}^{\varepsilon}(p)\in \mathbb{R}$ to denote an $\varepsilon$-approximation to the largest root of  $p(x)$, i.e.,
\begin{equation*}
| \lambda_{\max}^{\varepsilon}(p)-\mathrm{maxroot}\ p|\leq \varepsilon.
\end{equation*}

Let $p(x)=\sum_{i=0}^{d}a_i\cdot x^i\in\mathbb{R}[x]$ be a univariate polynomial with degree at most $d$. We define the flip operator $\mathcal{R}_{x,d}^+:\mathbb{R}[x]\to \mathbb{R}[x]$ as
\begin{equation*}
\mathcal{R}_{x,d}^+\ p(x) :=x^d\cdot p(1/x)=\sum_{i=0}^{d}a_i\cdot x^{d-i}.
\end{equation*}
Now, let $f(z_1,\ldots,z_d)\in\mathbb{R}[z_1,\ldots,z_d]$ be a multivariate polynomial such that its partial degree with respect to each variable $z_i$ is at most $r_i$ for $i=1,\dots,d$. For each $i\in[d]$, we define the operator $\mathcal{R}_{z_i,r_i}^-:\mathbb{R}[z_1,\ldots,z_d]\to \mathbb{R}[z_1,\ldots,z_d]$ as
\begin{equation*}
\mathcal{R}_{z_i,r_i}^-\ f(z_1,\ldots,z_d) :=z_i^{r_i}\cdot f(z_1,\ldots,-z_i^{-1},\ldots,z_d).
\end{equation*}

\subsection{Linear algebra}

Let $\boldsymbol{A}\in\mathbb{R}^{d\times d}$ and
let $\boldsymbol{Z}=\mathrm{diag}(z_1,\ldots,z_d)$, where $z_1,\ldots,z_d$ are variables.
Then the determinant of $\boldsymbol{Z}+\boldsymbol{A}$ can be expanded as  
\begin{equation}\label{meq11}
\det[\boldsymbol{Z}+\boldsymbol{A}]=\sum_{S\subset[d]}  \det[\boldsymbol{A}_{S,S}]\cdot \prod_{i\notin S}z_i, 	
\end{equation}
(see \cite[Page 524]{ravi2}).
For any subset $S\subset[d]$, by differentiating both sides of \eqref{meq11} with respect to $z_j$ for each $j\notin S$, and then setting $z_1=\cdots=z_d=0$, we obtain the following identity 
\begin{equation}\label{meq18}
\det[\boldsymbol{A}_{S,S}]
=\Big(\prod_{i\notin S}\partial_{z_i}\Big) \det[\boldsymbol{Z}+\boldsymbol{A}]\ \bigg|_{z_i=0,\forall i\in[d]}.
\end{equation}

The following is the Schur determinantal formula.

\begin{lemma}\label{Linear algebra-lemma-schur}{\rm\cite[Page 24]{HJ12}}
Let $\boldsymbol{A} \in \mathbb{R}^{d \times d}$, $\boldsymbol{B} \in \mathbb{R}^{d \times n}$, $\boldsymbol{C} \in \mathbb{R}^{n \times d}$, and $\boldsymbol{D} \in \mathbb{R}^{n \times n}$ be real matrices. If $\boldsymbol{D}$ is invertible, then 
\begin{equation*}
\det\left[\begin{matrix}
\boldsymbol{A} & \boldsymbol{B}  \\
\boldsymbol{C}   & \boldsymbol{D} \\
\end{matrix}\right]
= \det(\boldsymbol{D}) \cdot \det(\boldsymbol{A}-\boldsymbol{B}\boldsymbol{D}^{-1}\boldsymbol{C}).
\end{equation*}
\end{lemma}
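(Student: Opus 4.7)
The plan is to prove the Schur determinantal formula by exhibiting an explicit block LDU factorization of the $2\times 2$ block matrix, then taking determinants term by term. This is the standard route and avoids any combinatorial manipulation of permutations.

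Concretely, I would introduce the three block matrices
\begin{equation*}
\boldsymbol{L}:=\begin{pmatrix}\boldsymbol{I}_d & \boldsymbol{B}\boldsymbol{D}^{-1}\\ \boldsymbol{0} & \boldsymbol{I}_n\end{pmatrix},\qquad
\boldsymbol{M}:=\begin{pmatrix}\boldsymbol{A}-\boldsymbol{B}\boldsymbol{D}^{-1}\boldsymbol{C} & \boldsymbol{0}\\ \boldsymbol{0} & \boldsymbol{D}\end{pmatrix},\qquad
\boldsymbol{U}:=\begin{pmatrix}\boldsymbol{I}_d & \boldsymbol{0}\\ \boldsymbol{D}^{-1}\boldsymbol{C} & \boldsymbol{I}_n\end{pmatrix},
\end{equation*}
each of which is well-defined thanks to the invertibility of $\boldsymbol{D}$. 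A direct block multiplication then shows that $\boldsymbol{L}\boldsymbol{M}\boldsymbol{U}=\begin{pmatrix}\boldsymbol{A} & \boldsymbol{B}\\ \boldsymbol{C} & \boldsymbol{D}\end{pmatrix}$: the middle product $\boldsymbol{L}\boldsymbol{M}$ produces a block upper-triangular matrix with $(1,2)$-block $\boldsymbol{B}$ and $(2,2)$-block $\boldsymbol{D}$, and right multiplication by $\boldsymbol{U}$ restores the $(2,1)$-block to $\boldsymbol{C}$ while the cancellation $(\boldsymbol{A}-\boldsymbol{B}\boldsymbol{D}^{-1}\boldsymbol{C})+\boldsymbol{B}\boldsymbol{D}^{-1}\boldsymbol{C}=\boldsymbol{A}$ recovers the top-left block.

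Taking determinants on both sides and using multiplicativity, the triangular factors $\boldsymbol{L}$ and $\boldsymbol{U}$ contribute determinant $1$ each (they are block triangular with identity diagonal blocks, hence their determinants are easily seen to equal $1$ by cofactor expansion along the identity rows), and the block diagonal matrix $\boldsymbol{M}$ contributes $\det(\boldsymbol{A}-\boldsymbol{B}\boldsymbol{D}^{-1}\boldsymbol{C})\cdot\det(\boldsymbol{D})$. Rearranging the two scalar factors yields the claimed identity.

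There is no genuine obstacle here; the only thing to be careful about is verifying the block products correctly and justifying that the determinant of a block triangular matrix with identity diagonal blocks equals $1$, both of which are routine. If one wished to avoid the explicit factorization, an equivalent approach is to perform block row operations directly on $\begin{pmatrix}\boldsymbol{A} & \boldsymbol{B}\\ \boldsymbol{C} & \boldsymbol{D}\end{pmatrix}$: subtract $\boldsymbol{B}\boldsymbol{D}^{-1}$ times the second block row from the first to obtain a block upper-triangular matrix whose determinant is immediately $\det(\boldsymbol{A}-\boldsymbol{B}\boldsymbol{D}^{-1}\boldsymbol{C})\cdot\det(\boldsymbol{D})$, with the row operation preserving the determinant.
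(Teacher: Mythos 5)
Your proof is correct. The paper itself does not prove this lemma; it simply cites it to Horn and Johnson, so there is no paper argument to compare against. Your block LDU factorization is the standard textbook derivation, and the verification $\boldsymbol{L}\boldsymbol{M}\boldsymbol{U}=\begin{pmatrix}\boldsymbol{A}&\boldsymbol{B}\\\boldsymbol{C}&\boldsymbol{D}\end{pmatrix}$ together with $\det\boldsymbol{L}=\det\boldsymbol{U}=1$ and $\det\boldsymbol{M}=\det(\boldsymbol{A}-\boldsymbol{B}\boldsymbol{D}^{-1}\boldsymbol{C})\det(\boldsymbol{D})$ is complete. One small slip in your alternative route: subtracting $\boldsymbol{B}\boldsymbol{D}^{-1}$ times the second block row from the first yields $\begin{pmatrix}\boldsymbol{A}-\boldsymbol{B}\boldsymbol{D}^{-1}\boldsymbol{C}&\boldsymbol{0}\\\boldsymbol{C}&\boldsymbol{D}\end{pmatrix}$, which is block \emph{lower}-triangular, not upper-triangular as you wrote; the determinant conclusion is unaffected, but the terminology should be fixed.
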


The following lemma presents the rank-one update formula for the residual matrix in the classical CUR problem.
{\color{black}
\begin{lemma}\label{Linear algebra-lemma-2}

Let $\boldsymbol{A}\in\mathbb{R}^{n\times d}$. Assume that $\boldsymbol{A}_{S,W}$ is invertible, where $S\subset[n]$ and $W\subset[d]$ satisfy $|S|=|W|$.
Let $\boldsymbol{B}:=\boldsymbol{A} -\boldsymbol{A}_{:,W}(\boldsymbol{A}_{S,W})^{-1}\boldsymbol{A}_{S,:}\in\mathbb{R}^{n\times d}$. 

\begin{itemize}
\item[\rm{(i)}] If $i\in S$ or $j\in  W$, then we have $\boldsymbol{B}(i,j)=0$. If  $i\in [n]\backslash S$ and $j\in [d]\backslash W$, then we have $\det[\boldsymbol{A}_{S\cup\{i\},W\cup\{j\}}]
= \det[\boldsymbol{A}_{S,W}]\cdot \boldsymbol{B}(i,j)$.
\item[\rm{(ii)}] Let $i\in [n]\backslash S$ and $j\in [d]\backslash W$.  Assume that $\boldsymbol{B}(i,j)\neq 0$. Then  $\boldsymbol{A}_{S\cup \{i\},W\cup \{j\}}$ is invertible and
\begin{equation}\label{meq61}
\begin{aligned}
\boldsymbol{A}-\boldsymbol{A}_{:,W\cup\{j\}}(\boldsymbol{A}_{S\cup \{i\},W\cup \{j\}})^{-1}\boldsymbol{A}_{S\cup \{i\},:}
=\boldsymbol{B} -\frac{1}{\boldsymbol{B}(i,j)}\cdot \boldsymbol{B}_{:,\{j\}}\cdot \boldsymbol{B}_{\{i\},:}.
\end{aligned}
\end{equation}	
\end{itemize}

\end{lemma}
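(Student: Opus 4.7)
The plan is to establish (i) first via a basis-row argument together with the Schur complement formula, and then use (i) along with the block-matrix inverse formula to derive (ii).

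\textbf{Part (i).} The two vanishing claims are immediate from the definition of $\boldsymbol{B}$. If $i\in S$, then the row $\boldsymbol{A}_{\{i\},W}$ is one of the rows of $\boldsymbol{A}_{S,W}$, so $\boldsymbol{A}_{\{i\},W}(\boldsymbol{A}_{S,W})^{-1}$ equals the corresponding standard-basis row $\boldsymbol{e}_m^T$ of the identity; right-multiplying by $\boldsymbol{A}_{S,:}$ recovers $\boldsymbol{A}_{\{i\},:}$, giving $\boldsymbol{B}_{\{i\},:}=\boldsymbol{0}$ and in particular $\boldsymbol{B}(i,j)=0$ for every $j$. The case $j\in W$ is symmetric. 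For the determinant identity when $i\notin S$, $j\notin W$, I would permute rows and columns of $\boldsymbol{A}_{S\cup\{i\},W\cup\{j\}}$ to bring $i$ to the last row and $j$ to the last column, producing the $2\times 2$ block matrix $\bigl[\begin{smallmatrix}\boldsymbol{A}_{S,W} & \boldsymbol{A}_{S,\{j\}}\\ \boldsymbol{A}_{\{i\},W} & \boldsymbol{A}(i,j)\end{smallmatrix}\bigr]$. Since $\boldsymbol{A}_{S,W}$ is invertible, Lemma~\ref{Linear algebra-lemma-schur} applied to this block form yields $\det[\boldsymbol{A}_{S,W}]\cdot(\boldsymbol{A}(i,j)-\boldsymbol{A}_{\{i\},W}(\boldsymbol{A}_{S,W})^{-1}\boldsymbol{A}_{S,\{j\}})=\det[\boldsymbol{A}_{S,W}]\cdot\boldsymbol{B}(i,j)$, and the sign contributions from the row and column permutations combine to reproduce the claimed identity.

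\textbf{Part (ii).} Invertibility of $\boldsymbol{A}_{S\cup\{i\},W\cup\{j\}}$ is immediate from (i), since its determinant equals $\det[\boldsymbol{A}_{S,W}]\cdot\boldsymbol{B}(i,j)\neq 0$. For the rank-one update \eqref{meq61}, my preferred route is a direct block-inverse computation. Applying the block-matrix inversion formula with Schur complement $s=\boldsymbol{B}(i,j)$ produces an explicit closed form for $(\boldsymbol{A}_{S\cup\{i\},W\cup\{j\}})^{-1}$ in terms of $(\boldsymbol{A}_{S,W})^{-1}$, $\boldsymbol{A}_{\{i\},W}$, $\boldsymbol{A}_{S,\{j\}}$, and $s$. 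Substituting this expression into $\boldsymbol{A}_{:,W\cup\{j\}}(\cdot)^{-1}\boldsymbol{A}_{S\cup\{i\},:}$ and grouping the four resulting products, the cross terms collapse to
\begin{equation*}
\boldsymbol{A}_{:,W\cup\{j\}}(\boldsymbol{A}_{S\cup\{i\},W\cup\{j\}})^{-1}\boldsymbol{A}_{S\cup\{i\},:}=\boldsymbol{A}_{:,W}(\boldsymbol{A}_{S,W})^{-1}\boldsymbol{A}_{S,:}+\frac{1}{\boldsymbol{B}(i,j)}\boldsymbol{B}_{:,\{j\}}\boldsymbol{B}_{\{i\},:},
\end{equation*}
which is equivalent to \eqref{meq61} after subtracting both sides from $\boldsymbol{A}$. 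A more conceptual alternative is to apply (i) to both $(S,W)$ and $(S\cup\{i\},W\cup\{j\})$ and then invoke the Desnanot--Jacobi (Dodgson condensation) identity on the submatrix $\boldsymbol{A}_{S\cup\{i,p\},W\cup\{j,q\}}$, reducing the desired entrywise equality, for $p\notin S\cup\{i\}$ and $q\notin W\cup\{j\}$, to the standard five-minor relation; the additional entries at $(p,q)\in(S\cup\{i\})\times[d]$ or $[n]\times(W\cup\{j\})$ are handled uniformly by the vanishing part of (i).

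\textbf{Main obstacle.} The substantive work lies in part (ii); the block-inverse substitution yields several cross-terms whose cancellation, while mechanical, requires careful bookkeeping in order to group everything back into the compact rank-one form $\boldsymbol{B}_{:,\{j\}}\boldsymbol{B}_{\{i\},:}/\boldsymbol{B}(i,j)$. A secondary subtlety is the permutation-sign tracking in the Schur-complement step of part (i), which must be reconciled with the identity as stated; both are elementary but easy to mishandle.
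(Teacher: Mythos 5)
Your preferred route matches the paper's: part (i) via the Schur determinantal formula applied to the $2\times 2$ block rearrangement of $\boldsymbol{A}_{S\cup\{i\},W\cup\{j\}}$, and part (ii) via an explicit block inverse (the paper obtains it from an LU-style factorization, which is the same computation) followed by substitution and cancellation. Your direct basis-row argument for the vanishing claim $\boldsymbol{B}(i,j)=0$ when $i\in S$ or $j\in W$ is a small simplification over the paper, which derives it from the determinant identity by noting a repeated row. The Desnanot--Jacobi alternative you sketch for (ii) is a genuinely different argument (reducing the matrix identity to the five-minor relation entrywise, with the vanishing part of (i) covering the degenerate entries) and is not what the paper does, though it does rely on the same invertibility facts and would need its own sign bookkeeping.

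One caution on the signs you wave away: the permutations taking the sorted $\boldsymbol{A}_{S\cup\{i\},W\cup\{j\}}$ to the block form $\left[\begin{smallmatrix}\boldsymbol{A}(i,j)&\boldsymbol{A}_{\{i\},W}\\\boldsymbol{A}_{S,\{j\}}&\boldsymbol{A}_{S,W}\end{smallmatrix}\right]$ (or to your arrangement with $i,j$ in the last row and column) do \emph{not} combine to $+1$ in general; the net sign is $(-1)^{r+c}$ with $r=|\{s\in S: s<i\}|$ and $c=|\{w\in W: w<j\}|$. For instance, $S=\{1,3\}$, $W=\{1,2\}$, $i=2$, $j=3$ gives $(-1)^{1+2}=-1$, and one can check concretely that $\det[\boldsymbol{A}_{S\cup\{i\},W\cup\{j\}}]$ and $\det[\boldsymbol{A}_{S,W}]\cdot\boldsymbol{B}(i,j)$ then disagree in sign. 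The paper's appendix makes the identical silent jump, so you are consistent with it, and nothing downstream is affected because the only use of the determinant identity is to detect non-vanishing. But as stated, (i) should either carry the factor $(-1)^{r+c}$ or fix a row/column ordering convention, and your assertion that the signs cancel should not be taken at face value.
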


\begin{proof}
See Appendix \ref{proof-Linear algebra-lemma-2}.	
\end{proof}

\subsection{The differential operator $\mathcal{R}_{x,d}^+\cdot (\partial_x\cdot x\cdot \partial_x )^k\cdot \mathcal{R}_{x,d}^+$}

\begin{lemma}\label{Linear algebra-lemma-6}
Assume that $p(x)=\sum_{i=0}^dc_i\cdot x^i$ is a polynomial of degree at most $d$, where each $c_i\in\mathbb{R}$.
Then for any integer $0\leq k\leq d$, we have
\begin{equation*}
\begin{aligned}
\mathcal{R}_{x,d}^+\cdot (\partial_x\cdot x\cdot \partial_x)^k \cdot \mathcal{R}_{x,d}^+ \ p(x)
&= x^k\cdot   \sum_{i=0}^{d-k} \Big(\frac{(d-i)!}{(d-i-k)!}\Big )^2\cdot c_{i}\cdot x^{i}.\\
\end{aligned}
\end{equation*}
In particular, 
if $p(x)=x^d$, then $\mathcal{R}_{x,d}^+\cdot \partial_x^k\cdot x^k\cdot \partial_x^k \cdot \mathcal{R}_{x,d}^+ \ p(x)	\equiv 0$ for each $1\leq k\leq d$.
\end{lemma}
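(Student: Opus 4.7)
The plan is to verify the identity by linearity, reducing to the monomial case. Since $\mathcal{R}_{x,d}^+$ is linear and the differential operator $(\partial_x \cdot x \cdot \partial_x)^k$ is linear, it suffices to establish the identity for each monomial $p(x) = x^j$ with $0 \le j \le d$, and then sum with the coefficients $c_j$.

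First I would compute $\mathcal{R}_{x,d}^+ x^j = x^d \cdot (1/x)^j = x^{d-j}$. Next, the key step is to iterate the Laguerre-type operator $L := \partial_x \cdot x \cdot \partial_x$ on a monomial. A direct calculation gives
\begin{equation*}
L\, x^m \;=\; \partial_x\bigl(x \cdot m x^{m-1}\bigr) \;=\; \partial_x\bigl(m x^m\bigr) \;=\; m^2 x^{m-1},
\end{equation*}
so an immediate induction on $k$ yields
\begin{equation*}
L^k x^m \;=\; \Bigl(\tfrac{m!}{(m-k)!}\Bigr)^{2} x^{m-k}
\end{equation*}
whenever $m \ge k$, and $L^k x^m = 0$ otherwise. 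Taking $m = d-j$, the operator $L^k$ therefore annihilates $\mathcal{R}_{x,d}^+ x^j$ unless $j \le d-k$, in which case it produces $\bigl(\tfrac{(d-j)!}{(d-j-k)!}\bigr)^2 x^{d-j-k}$. Since $d-j-k \le d$, the final application of $\mathcal{R}_{x,d}^+$ sends $x^{d-j-k}$ to $x^{j+k}$, and summing over $j$ with weights $c_j$ gives exactly $x^k \sum_{j=0}^{d-k}\bigl(\tfrac{(d-j)!}{(d-j-k)!}\bigr)^2 c_j x^j$, as desired.

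For the ``in particular'' statement, the calculation is immediate and does not even require the preceding computation: $\mathcal{R}_{x,d}^+ x^d = 1$, and the constant polynomial $1$ is annihilated by $\partial_x^k$ for every $k \ge 1$, so the entire expression $\mathcal{R}_{x,d}^+ \cdot \partial_x^k \cdot x^k \cdot \partial_x^k \cdot \mathcal{R}_{x,d}^+ \, x^d$ vanishes regardless of the intermediate operators.

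I do not anticipate any genuine obstacle here; the argument is a direct monomial-by-monomial computation. The only bookkeeping point is tracking when $(d-j)!/(d-j-k)!$ is defined — this is precisely why the sum in the statement truncates at $j = d-k$, and the monomials with $j > d-k$ are killed by $L^k$ as shown above.
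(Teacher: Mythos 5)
Your proof is correct, and it takes a slightly different route than the paper's. The paper's proof first invokes the operator identity $(\partial_x \cdot x \cdot \partial_x)^k = \partial_x^k \cdot x^k \cdot \partial_x^k$ (cited to Viskov's paper) to rewrite the Laguerre power as a composition of three simpler operators, and then computes the action of $\partial_x^k$, $x^k\cdot$, and $\partial_x^k$ on the coefficient vector of $\mathcal{R}_{x,d}^+p$ one step at a time. You bypass that identity entirely: the key observation that $L\,x^m = m^2\,x^{m-1}$, followed by a one-line induction $L^k x^m = \bigl(\tfrac{m!}{(m-k)!}\bigr)^2 x^{m-k}$, lets you iterate the Laguerre operator directly on monomials. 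Your route is more self-contained and elementary, since it avoids the external citation. What the paper's route buys is that the identity $(\partial_x x \partial_x)^k = \partial_x^k x^k \partial_x^k$ is recorded as equation \eqref{meq72} and reused later (in the proof of Lemma~\ref{th1-maxroot-lemma}, equation \eqref{2026-xu8}), and it also makes the ``in particular'' clause — which is phrased in terms of $\partial_x^k \cdot x^k \cdot \partial_x^k$ rather than $(\partial_x \cdot x \cdot \partial_x)^k$ — a literal specialization of the main formula. You instead prove the ``in particular'' statement directly, which is fine, but note that with your approach alone the reader has not been told that the two operator forms agree, so the main formula and the ``in particular'' formula are not obviously about the same operator; if you want them connected you should at least remark that $L^k = \partial_x^k x^k \partial_x^k$ (which in fact follows from your monomial computation by comparing with a parallel direct computation of $\partial_x^k x^k \partial_x^k$ on monomials).
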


\begin{proof}
Note that  the Laguerre derivative operator $\partial_x\cdot x\cdot \partial_x$ satisfies 
\begin{equation}\label{meq72}
(\partial_x\cdot x\cdot \partial_x)^l f(x)=\partial_x^l\cdot x^l\cdot \partial_x^l\ f(x)	
\end{equation}
for any positive integer $l$ and for any polynomial $f(x)$ \cite{Vis94}.
Hence, we have
\begin{equation}\label{meq70}
\mathcal{R}_{x,d}^+\cdot (\partial_x\cdot x\cdot \partial_x)^k \cdot \mathcal{R}_{x,d}^+ \ p(x)=
\mathcal{R}_{x,d}^+\cdot \partial_x^k\cdot x^k\cdot \partial_x^k \cdot \mathcal{R}_{x,d}^+ \ p(x).	
\end{equation}
Then a direct calculation gives
\begin{equation*}
\begin{aligned}
\mathcal{R}_{x,d}^+\cdot (\partial_x\cdot x\cdot \partial_x)^k \cdot \mathcal{R}_{x,d}^+ \ p(x) 
&=  \mathcal{R}_{x,d}^+\cdot \partial_x^k\cdot x^k\cdot \partial_x^k \cdot  \sum_{i=0}^dc_{d-i}\cdot x^i
= \mathcal{R}_{x,d}^+\cdot \partial_x^k \cdot  \sum_{i=k}^d \frac{i!}{(i-k)!}\cdot c_{d-i}\cdot x^i\\
&=  \mathcal{R}_{x,d}^+    \sum_{i=k}^d \Big(\frac{i!}{(i-k)!}\Big)^2\cdot c_{d-i}\cdot x^{i-k}
=  x^k\cdot   \sum_{i=0}^{d-k} \Big(\frac{(d-i)!}{(d-i-k)!}\Big )^2\cdot c_{i}\cdot x^{i}.\\
\end{aligned}	
\end{equation*}	
In particular, if $p(x)=x^d$, then $\mathcal{R}_{x,d}^+\ p(x)=1$ and $\partial_x \cdot\mathcal{R}_{x,d}^+\ p(x)\equiv 0$, so we have $\mathcal{R}_{x,d}^+\cdot  (\partial_x\cdot x\cdot \partial_x)^k \cdot \mathcal{R}_{x,d}^+ \ p(x)	\equiv 0$ for each $1\leq k\leq d$.

\end{proof}

\subsection{Real stability}

Recall that a univariate real polynomial  is said to be {\em real-rooted} if all of its roots are real. 
Here we introduce the concept of real stable polynomials, which extends the notion of real-rootedness from univariate to multivariate polynomials.

\begin{definition}\label{defstable}
A multivariate polynomial $p\in\mathbb{R}[z_1,\ldots, z_n]$ is called \emph{real stable} if it does not vanish for all $(z_1,\ldots,z_n)\in\mathbb{C}^n$ with $\mathbf{Im}(z_i)>0, i=1,\ldots,n$.
\end{definition}

A univariate polynomial is real stable if and only if it has only real roots. 
The following two lemmas will be instrumental in establishing the real stability of the polynomials considered in this paper.

\begin{lemma}\label{real stable}
{\rm {\cite[Proposition 2.4]{Branden}}} 
Let $\boldsymbol{B}\in\mathbb{C}^{d\times d}$ be a Hermitian matrix, and let $\boldsymbol{A}_1,\ldots,\boldsymbol{A}_n\in\mathbb{C}^{d\times d}$ be positive semidefinite Hermitian matrices. 
Then the polynomial
\begin{equation*}
\det[\boldsymbol{A}_1 z_1+\cdots+\boldsymbol{A}_n z_n+\boldsymbol{B}]\in\mathbb{R}[z_1,\ldots,z_n]
\end{equation*}
is either identically zero or real stable in $z_1,\ldots,z_n$.
\end{lemma}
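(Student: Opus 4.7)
The plan is to verify the definition of real stability directly: fix an arbitrary point $(z_1,\dots,z_n)\in\mathbb{C}^n$ with $\mathbf{Im}(z_i)>0$ for every $i$, assume the determinant polynomial is not identically zero, and show that $\det[\boldsymbol{A}_1 z_1+\cdots+\boldsymbol{A}_n z_n+\boldsymbol{B}]\neq 0$ at that point. Writing $z_j=x_j+i\,y_j$ with $y_j>0$, the matrix decomposes as
\begin{equation*}
\boldsymbol{M} \;=\; \underbrace{\Big(\sum_{j=1}^n x_j\boldsymbol{A}_j+\boldsymbol{B}\Big)}_{=:\,\boldsymbol{H}} \;+\; i\,\underbrace{\Big(\sum_{j=1}^n y_j\boldsymbol{A}_j\Big)}_{=:\,\boldsymbol{K}},
\end{equation*}
where $\boldsymbol{H}$ is Hermitian and $\boldsymbol{K}$ is positive semidefinite (a nonnegative linear combination of PSD matrices). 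The core observation is that whenever $\boldsymbol{H}$ is Hermitian and $\boldsymbol{K}$ is positive definite, $\boldsymbol{H}+i\boldsymbol{K}$ is invertible: if $(\boldsymbol{H}+i\boldsymbol{K})\boldsymbol{v}=\boldsymbol{0}$, then $\boldsymbol{v}^*(\boldsymbol{H}+i\boldsymbol{K})\boldsymbol{v}=0$; taking imaginary parts yields $\boldsymbol{v}^*\boldsymbol{K}\boldsymbol{v}=0$, hence $\boldsymbol{v}=\boldsymbol{0}$. So the remaining task is to handle the case when $\boldsymbol{K}$ is only PSD.

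The standard way to bridge this gap is a perturbation argument. I would replace each $\boldsymbol{A}_j$ by $\boldsymbol{A}_j^{(\varepsilon)}:=\boldsymbol{A}_j+\varepsilon\boldsymbol{I}$ with $\varepsilon>0$, so that each $\boldsymbol{A}_j^{(\varepsilon)}$ is now positive definite. Then $\boldsymbol{K}^{(\varepsilon)}=\sum_j y_j\boldsymbol{A}_j^{(\varepsilon)}$ is positive definite (since every $y_j>0$), so the argument above shows that
\begin{equation*}
p_\varepsilon(z_1,\dots,z_n):=\det\!\big[\boldsymbol{A}_1^{(\varepsilon)} z_1+\cdots+\boldsymbol{A}_n^{(\varepsilon)} z_n+\boldsymbol{B}\big]
\end{equation*}
has no zero in the open upper polydisc, i.e.\ $p_\varepsilon$ is real stable for every $\varepsilon>0$. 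As $\varepsilon\to 0^+$, the polynomials $p_\varepsilon$ converge coefficientwise (hence locally uniformly on $\mathbb{C}^n$) to $p_0:=\det[\sum_j\boldsymbol{A}_j z_j+\boldsymbol{B}]$. I would then invoke the multivariate Hurwitz theorem (the standard limit-closure fact for stable polynomials): a locally uniform limit of polynomials that are nowhere zero on an open connected set is either identically zero on that set or nowhere zero on it. Applied to the open upper polydisc, this gives exactly the dichotomy in the statement: $p_0$ is either identically zero or real stable.

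The main obstacle, and really the only subtle point, is the passage to the limit: one must be careful that the degrees of the $p_\varepsilon$ are uniformly bounded (which is automatic here, since each $p_\varepsilon$ has total degree at most $d$), so that coefficientwise convergence gives locally uniform convergence and the Hurwitz-type theorem applies. Everything else is the short linear-algebra computation $\boldsymbol{v}^*(\boldsymbol{H}+i\boldsymbol{K})\boldsymbol{v}=\boldsymbol{v}^*\boldsymbol{H}\boldsymbol{v}+i\,\boldsymbol{v}^*\boldsymbol{K}\boldsymbol{v}$ together with the elementary fact that a sum of PSD matrices with positive coefficients is PSD, and is positive definite as soon as one summand is. Since the result is quoted from Brändén \cite{Branden}, I would present this as a self-contained sketch rather than a full reproof.
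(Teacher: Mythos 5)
The paper states this lemma purely by citation to Borcea--Br\"and\'en and supplies no proof of its own, so there is no in-paper argument to compare against. Your sketch is correct and is the standard (essentially the original) proof: decompose $\boldsymbol{M}=\boldsymbol{H}+i\boldsymbol{K}$ with $\boldsymbol{H}$ Hermitian and $\boldsymbol{K}=\sum_j y_j\boldsymbol{A}_j$ PSD, note $\boldsymbol{v}^*(\boldsymbol{H}+i\boldsymbol{K})\boldsymbol{v}=0$ forces $\boldsymbol{v}^*\boldsymbol{K}\boldsymbol{v}=0$ and hence $\boldsymbol{v}=\boldsymbol{0}$ when $\boldsymbol{K}\succ 0$, then pass from the positive definite case to the PSD case by perturbing $\boldsymbol{A}_j\mapsto\boldsymbol{A}_j+\varepsilon\boldsymbol{I}$ and invoking the bounded-degree/coefficientwise version of Hurwitz's theorem for stable polynomials. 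All the subtleties you flag---positivity of $\boldsymbol{K}^{(\varepsilon)}$ because every $y_j>0$, uniform degree bound $d$ so that coefficientwise convergence is locally uniform, and the ``identically zero or nowhere zero'' dichotomy from Hurwitz---are exactly the right ones and are handled correctly.
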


\begin{lemma}\label{real stable2}
Let $r_1,\ldots,r_n$ be nonnegative integers, and let $p \in \mathbb{R}[z_1,\ldots, z_n]$ be a real stable polynomial such that the degree of  $z_i$ is at most $r_i$ for $i=1,\ldots,n$. Then the following polynomials are also real stable if they are not identically zero:
\begin{enumerate}[{\rm (i)}]
\item $p(z_1, z_2 ,\ldots, z_n)|_{z_1=a}\in\mathbb{R}[z_2,z_3,\ldots,z_n], \text{ for any } a\in\mathbb{R}$;
\item $p(z_1, z_2 ,\ldots, z_n)|_{z_1=z_2=x}\in\mathbb{R}[x,z_3,\ldots,z_n]$;
\item $(\sum_{i=1}^{n}a_i\cdot \partial_{z_i})\cdot p(z_1,\ldots,z_n),\ \text{ for any } a_1\geq 0,\ldots, a_n\geq 0$;
\item $\mathcal{R}_{z_i,r_i}^- p(z_1,\ldots,z_n),\quad\text{for any } i\in[n]$.
\end{enumerate}
\end{lemma}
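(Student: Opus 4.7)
The plan is to treat the four statements in sequence, with (i) serving as the base tool and (iii) reducing to (i) via an auxiliary variable. Throughout, I use that real stability is the statement of non-vanishing on the open product of upper half-planes $\{(z_1,\ldots,z_n):\mathrm{Im}(z_j)>0,\ j=1,\ldots,n\}$, so each closure property amounts to verifying that the corresponding operation preserves non-vanishing on this polydomain, possibly invoking Hurwitz's theorem for boundary limits.

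For (i), I would fix $(z_2,\ldots,z_n)$ with positive imaginary parts and consider the family $p(a+i\varepsilon,z_2,\ldots,z_n)$ for $\varepsilon>0$. Each member is nonzero by the real stability of $p$, and the family converges uniformly on compacta to $p(a,z_2,\ldots,z_n)$ as $\varepsilon\to 0^+$. By Hurwitz's theorem, the limit is either identically zero or non-vanishing on the connected upper half-space, which is exactly the claim (the former possibility is why the ``not identically zero'' proviso appears). Part (ii) is more direct: if $\mathrm{Im}(x)>0$ and $\mathrm{Im}(z_j)>0$ for $j\ge 3$, then the point $(x,x,z_3,\ldots,z_n)$ lies in the upper half-space of $p$, so $p(x,x,z_3,\ldots,z_n)\ne 0$ immediately.

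For (iii), I would introduce a fresh variable $s$ and define $q(s,z_1,\ldots,z_n):=p(z_1+a_1 s,\ldots,z_n+a_n s)$. Since each $a_i\ge 0$, whenever $\mathrm{Im}(s)>0$ and $\mathrm{Im}(z_j)>0$ we get $\mathrm{Im}(z_j+a_j s)>0$, so $q$ is real stable in its $n+1$ variables. Viewed as a univariate polynomial in $s$ with the remaining variables frozen in the upper half-plane, $q$ has all its roots in the closed lower half-plane; the Gauss--Lucas theorem then places the roots of $\partial_s q$ in the same closed lower half-plane, which shows $\partial_s q$ is real stable in $(s,z_1,\ldots,z_n)$. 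Specializing $s=0$ via part (i) and observing $(\partial_s q)|_{s=0}=\sum_{i=1}^n a_i\,\partial_{z_i}p$ finishes this part.

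For (iv), the key observation is that $z\mapsto -z^{-1}$ carries the open upper half-plane onto itself: if $z_i=r e^{i\theta}$ with $\theta\in(0,\pi)$, then $-z_i^{-1}=r^{-1}e^{i(\pi-\theta)}$, whose argument is again in $(0,\pi)$. Hence for any $(z_1,\ldots,z_n)$ with $\mathrm{Im}(z_j)>0$ for all $j$, the point $(z_1,\ldots,-z_i^{-1},\ldots,z_n)$ lies in the open polydomain, so $p(z_1,\ldots,-z_i^{-1},\ldots,z_n)\ne 0$, and multiplication by $z_i^{r_i}\ne 0$ preserves non-vanishing. The degree hypothesis $\deg_{z_i}p\le r_i$ is exactly what is needed to ensure $\mathcal{R}_{z_i,r_i}^- p$ is a polynomial and not merely a rational function. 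The only truly delicate step in the entire proof is the Hurwitz argument in (i); every other part is either a direct imaginary-part computation or a reduction to (i).
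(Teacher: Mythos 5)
Your approach differs from the paper's primarily in that you reconstruct parts (i) and (iii) from scratch, whereas the paper cites Wagner (Lemma~2.4(d) of \cite{wag}) and Borcea--Br{\"a}nd{\'e}n (Theorem~1.3 of \cite{Branden01}) for those statements; your treatments of (ii) and (iv) coincide with the paper's. The Hurwitz argument you sketch for (i) is the standard one, though the phrasing is slightly off: having ``fixed $(z_2,\ldots,z_n)$'' there is nothing to converge ``uniformly on compacta.'' The intended argument treats $p(a+i\varepsilon,\,\cdot\,)$ as a family of polynomials in the remaining variables, each non-vanishing on the open polyhalfplane $\{\mathbf{Im}(z_j)>0,\ j\geq 2\}$, and applies a multivariate Hurwitz theorem to the locally uniform limit as $\varepsilon\to 0^+$.

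For (iii), the auxiliary-variable reduction to Gauss--Lucas is a valid and well-known alternative to citing Borcea--Br{\"a}nd{\'e}n, but there is a small genuine gap. Gauss--Lucas constrains the roots of $\partial_s q(\,\cdot\,,z_1,\ldots,z_n)$ only when $q(\,\cdot\,,z_1,\ldots,z_n)$ is non-constant in $s$. If for some $(z_1^*,\ldots,z_n^*)$ in the open polyhalfplane that univariate polynomial degenerates to a nonzero constant, then $\partial_s q(s,z_1^*,\ldots,z_n^*)\equiv 0$ for every $s$, including those with $\mathbf{Im}(s)>0$, which would destroy real stability of $\partial_s q$ unless $\partial_s q$ is identically zero as a polynomial in all variables. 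You need to rule this degeneracy out. The standard fix is to observe that the leading $s$-coefficient of $q$ is itself real stable or identically zero; this follows by combining your own parts (iv) and (i): apply $\mathcal{R}_{s,m}^{-}$ (with $m$ the degree of $q$ in $s$) and then specialize $s=0$. Consequently that coefficient is non-vanishing on the polyhalfplane, so the $s$-degree of $q(\,\cdot\,,z_1,\ldots,z_n)$ is constant and equal to $m\geq 1$ whenever $\partial_s q\not\equiv 0$, and Gauss--Lucas applies everywhere it is needed. With that observation inserted, your argument is complete and is more self-contained than the paper's citation-based treatment.
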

\begin{proof}
Statements (i), (ii) and (iii) follow from \cite[Lemma 2.4 (d)]{wag}, Definition \ref{defstable} and \cite[Theorem 1.3]{Branden01}, respectively.
We next prove (iv).
Recall that $\mathcal{R}_{z_i,r_i}^-p=z_i^{r_i}\cdot p(z_1,\ldots,-z_i^{-1},\ldots,z_d)$. The real stability of $\mathcal{R}_{z_i,r_i}^-p$ then follows  from the fact that $\mathbf{Im}(z_i)>0$ if and only if $\mathbf{Im}(-z_i^{-1})>0$.
\end{proof}

\section{Generalized interlacing families}

In this section, we introduce the concept of generalized interlacing families. We first recall the definition of classical interlacing families.

\subsection{Interlacing families}
We start with introducing the concept of common interlacing for polynomials with positive leading coefficients. While our specific formulation may differ slightly from the definition in \cite[Page 355]{CS07}, it is fundamentally consistent and captures the same mathematical essence.
Recall that for a degree-$d$ real-rooted polynomial $p(x)$, its $i$-th largest root is denoted by $\lambda_i(p)$.

\begin{definition}{\rm \cite[Page 355]{CS07}}
\label{def2.1}
Let $f(x)$ and $g(x)$ be two real-rooted polynomials with positive leading coefficients, where $f(x)$ has degree $d$.
We say that $f(x)$ interlaces $g(x)$ if $\mathrm{deg}(g)\in\{d,d-1\}$, and 
the roots of $f(x)$ and $g(x)$ satisfy the following inequalities:
\begin{enumerate}[\rm (i)]
	\item If $\mathrm{deg}(g)=d$, then
	\begin{equation*}
\lambda_1(f)\geq \lambda_1(g)\geq \lambda_2(f)\geq \lambda_2(g)\geq \cdots 
\geq \lambda_d(f)\geq \lambda_d(g).
\end{equation*}
\item If $\mathrm{deg}(g)=d-1$, then
\begin{equation*}
\lambda_1(f)\geq \lambda_1(g)\geq \lambda_2(f)\geq \lambda_2(g)\geq \cdots \geq \lambda_{d-1}(f) \geq \lambda_{d-1}(g)
\geq \lambda_d(f).
\end{equation*}
\end{enumerate}
We say that real-rooted polynomials $p_1(x),\ldots,p_m(x)$ with positive leading coefficients have a common interlacing if there exists a polynomial $f(x)$ such that $f(x)$ interlaces $p_i(x)$ for each $i\in[m]$.
\end{definition}

Note that if a collection of real-rooted polynomials has a common interlacing, then they either have the same degree or have degrees differing by one. 
The following lemma has been discovered a number of times, which provides an equivalent characterization of common interlacing \cite{CS07,Ded92,Fell}.

\begin{lemma}{\rm \cite[Theorem 3.6]{CS07}}
\label{fell-interlacing}
Assume that $p_1(x),\ldots,p_m(x)$ are real-rooted polynomials with positive leading coefficients.
Then $p_1(x),\ldots,p_m(x)$ have a common interlacing if and only if $c_1p_1(x)+\cdots+c_mp_m(x)$ is real-rooted for all $c_i\geq 0$ with $\sum_{i=1}^{m}c_i=1$.
\end{lemma}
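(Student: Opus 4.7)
My plan is to prove the two implications separately, since the forward direction is a sign-counting argument and the backward direction requires a reduction from the general $m$-polynomial case down to the classical pairwise Obreschkoff statement.

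For the ($\Rightarrow$) direction, I would fix a common interlacer $f$ of degree $d$ with roots $\mu_1 \geq \mu_2 \geq \cdots \geq \mu_d$. The key observation is that Definition \ref{def2.1} together with the positive leading coefficient assumption pins down a uniform sign pattern at these roots: a direct check of both sub-cases of Definition \ref{def2.1} (whether $\deg p_i=d$ or $d-1$) yields
\begin{equation*}
(-1)^{j-1}\,p_i(\mu_j)\ \geq\ 0 \qquad \text{for every }j\in[d]\text{ and every }i\in[m].
\end{equation*}
Any nonnegative combination $p=\sum c_i p_i$ therefore satisfies $(-1)^{j-1}p(\mu_j)\geq 0$, yielding $d-1$ sign changes in the open intervals $(\mu_{j+1},\mu_j)$ and hence $d-1$ real roots of $p$ there. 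Since $p$ has positive leading coefficient and $\deg p\in\{d-1,d\}$, comparing its signs at $\pm\infty$ with those at $\mu_1$ and $\mu_d$ accounts for the one remaining root (if any), so $p$ is real-rooted. The degenerate case in which some $p_i(\mu_j)=0$ is handled by a standard multiplicity-counting refinement of this argument.

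For the ($\Leftarrow$) direction, I would first specialize the hypothesis to each pair $(p_i,p_j)$: every convex combination $c\,p_i+(1-c)p_j$ is real-rooted, so by the classical Obreschkoff theorem $p_i$ and $p_j$ admit a common interlacer. To lift this pairwise property to a single common interlacer of all $p_1,\ldots,p_m$, I would use a Helly-type argument on the real line: a common interlacer is determined by choosing its $d$ roots, and for each index $k$ the set of admissible values for the $k$-th root (consistent with interlacing every $p_i$) is a closed interval $I_{k,i}$. Pairwise common interlacers correspond to pairwise non-empty intersections $I_{k,i}\cap I_{k,j}$; Helly's theorem in $\mathbb{R}$ then gives $\bigcap_{i=1}^m I_{k,i}\neq\emptyset$ for every $k$, from which the desired common interlacer $f$ can be constructed.

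The main obstacle is the pairwise step, i.e.\ Obreschkoff's theorem. Establishing it rigorously requires a homotopy argument: tracking how the roots of $c\,p_1+(1-c)p_2$ vary continuously as $c$ sweeps $[0,1]$, noting that real-rootedness throughout forces the roots to partition into interlacing-compatible intervals, and then extracting an explicit polynomial whose roots sit between the roots of $p_1$ and $p_2$. A secondary subtlety is allowing $p_1$ and $p_2$ to have different degrees (one $d$, the other $d-1$), which forces an extra root to escape to infinity along the homotopy and must be tracked carefully to preserve the interlacing pattern in the limit.
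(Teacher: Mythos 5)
The paper does not prove this lemma; it cites it directly from Chudnovsky–Seymour \cite[Theorem 3.6]{CS07} (attributing it also to Dedieu and Fell), so there is no in-paper proof to compare against. That said, your sketch is a correct outline of the standard argument. The forward direction is right: with common interlacer $f$ having roots $\mu_1\ge\cdots\ge\mu_d$, the interlacing inequalities of Definition~\ref{def2.1} do force $(-1)^{j-1}p_i(\mu_j)\ge 0$ in both the degree-$d$ and degree-$(d-1)$ sub-cases, so the sign pattern is inherited by any nonnegative combination and the intermediate-value count (plus the sign at $-\infty$) supplies all $\deg p$ real roots; the equality/multiplicity case does need the refinement you flag, but that is routine. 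The backward direction via pairwise Hermite--Kakeya--Obreschkoff followed by a one-dimensional Helly argument on the intervals $I_{k,i}=[\lambda_k(p_i),\lambda_{k-1}(p_i)]$ (with the appropriate half-infinite endpoints when $\deg p_i=d-1$) is also the standard route, and your observation that the chosen $\mu_k\in\bigcap_i I_{k,i}$ are automatically ordered because consecutive intervals share the endpoint $\lambda_k(p_i)$ is the key point that makes Helly sufficient. Two details worth making explicit if you were to write this out fully: (a) the pairwise step already forces all $\deg p_i$ to lie in $\{d,d-1\}$ for a single $d$, which is needed before the intervals $I_{k,i}$ are even defined uniformly; and (b) the degree-$d$ versus degree-$(d+1)$ ambiguity for the pairwise interlacer should be resolved by fixing $d=\max_i\deg p_i$ at the outset, so that all pairwise interlacers are taken of that fixed degree.
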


Marcus, Spielman, and Srivastava showed that the existence of a common interlacing allows us to  relate the roots of the expected polynomial to those of specific polynomials within the collection.

\begin{lemma}\label{lemma2.6}{\rm\cite[Lemma 2.11]{inter3}}
Assume that $p_1(x),\ldots,p_m(x)$ are degree-$d$ real-rooted polynomials with positive leading coefficients. 
Assume that $p_1(x),\ldots,p_m(x)$ have a common interlacing. 
Then for every integer $k\in[d]$ and for every nonnegative $c_1,\ldots,c_m$ with $\sum_{i=1}^mc_i=1$, there exist two integers $j_1,j_2\in[m]$ such that
\begin{equation*}
\lambda_k( p_{j_1})\geq \lambda_k\Big( \sum\limits_{i=1}^{m}c_ip_i\Big)
\geq \lambda_k( p_{j_2}).
\end{equation*}

\end{lemma}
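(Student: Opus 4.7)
My plan is to locate $\lambda_k(p)$, where $p := \sum_{i=1}^m c_i p_i$, inside a single interval determined by the common interlacer $f$, and then use a sign-counting argument to sandwich it between $\min_i \lambda_k(p_i)$ and $\max_i \lambda_k(p_i)$. Without loss of generality I may assume $\deg(f) = d$: if instead $\deg(f) = d-1$, I would replace $f(x)$ by $f(x)(x-C)$ for a sufficiently negative constant $C$, which preserves interlacing with each $p_i$. Let $\beta_1 \geq \cdots \geq \beta_d$ be the roots of $f$, and extend the list by $\beta_0 := +\infty$ and $\beta_{d+1} := -\infty$. By the definition of interlacing, $\lambda_k(p_i) \in [\beta_{k+1}, \beta_k]$ for every $i \in [m]$.

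The first substantive step is to show that $\lambda_k(p)$ also lies in $[\beta_{k+1}, \beta_k]$. Using the factorization $p_i(x) = a_i \prod_{j=1}^d (x - \lambda_j(p_i))$ with $a_i > 0$, one checks that at $x = \beta_k$ exactly $k-1$ of the linear factors are nonpositive (those for $j \leq k-1$, since $\lambda_j(p_i) \geq \beta_k$) and $d-k+1$ are nonnegative, so $(-1)^{k-1} p_i(\beta_k) \geq 0$ for all $i$; summing over $i$ with nonnegative weights $c_i$ yields $(-1)^{k-1} p(\beta_k) \geq 0$, and the analogous statement holds at every $\beta_j$. Lemma~\ref{fell-interlacing} guarantees $p$ is real-rooted of degree $d$, and since $p$ alternates weakly in sign at the points $\beta_0, \beta_1, \ldots, \beta_{d+1}$, each of the $d$ intervals $[\beta_{j+1}, \beta_j]$ contains exactly one root of $p$; the $k$-th largest root is precisely the one in $[\beta_{k+1}, \beta_k]$.

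The second step refines the interval. Let $j_1$ be an index achieving $\lambda^+ := \max_i \lambda_k(p_i)$, which lies in $[\beta_{k+1}, \beta_k]$. At the point $\lambda^+$, the interlacing gives $\lambda_{k-1}(p_i) \geq \beta_k \geq \lambda^+$ while the definition of $\lambda^+$ gives $\lambda_k(p_i) \leq \lambda^+$, for every $i$. The same sign count then yields $(-1)^{k-1} p_i(\lambda^+) \geq 0$ for all $i$, and consequently $(-1)^{k-1} p(\lambda^+) \geq 0$. Combined with $(-1)^{k} p(\beta_{k+1}) \geq 0$, this forces the unique root of $p$ in $[\beta_{k+1}, \beta_k]$ to lie in $[\beta_{k+1}, \lambda^+]$, giving $\lambda_k(p) \leq \lambda_k(p_{j_1})$. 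A symmetric argument at $\lambda^- := \min_i \lambda_k(p_i)$ produces an index $j_2$ with $\lambda_k(p_{j_2}) \leq \lambda_k(p)$.

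The main obstacle I anticipate is the bookkeeping when the interlacing inequalities are not strict, for instance when some $\lambda_{k-1}(p_i)$ or $\lambda_k(p_i)$ coincides with $\beta_k$, or when $\lambda^+$ itself equals $\beta_k$ or $\beta_{k+1}$. In these degenerate cases the relevant $p_i$ vanishes at the point in question and automatically satisfies the required sign condition, so the convex-combination argument still goes through, but the write-up must track weak versus strict inequalities carefully to avoid circular reasoning about the number of roots in each interval.
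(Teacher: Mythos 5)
The paper states Lemma~\ref{lemma2.6} with a bare citation to \cite[Lemma 2.11]{inter3} and does not give a proof of its own, so your argument is effectively supplying the omitted proof; it lives in the same family as the MSS argument (constrain signs of $p=\sum_i c_i p_i$ via the common interlacer, then localize $\lambda_k(p)$). Two small corrections first. Under Definition~2.1, the common interlacer $f$ of degree-$d$ polynomials has degree $d$ or $d+1$ (never $d-1$), so your WLOG step points the wrong way; if $\deg f=d+1$ you may simply discard its smallest root, and your convention $\beta_{d+1}=-\infty$ is then a weaker but still valid bound. Also, the claim that ``each of the $d$ intervals $[\beta_{j+1},\beta_j]$ contains exactly one root of $p$'' is false as stated when $p$ vanishes at some $\beta_j$, since that root is shared between adjacent intervals; what you actually need is $\lambda_j(p)\in[\beta_{j+1},\beta_j]$ for every $j$, i.e.\ that $f$ interlaces $p$, which does follow from the sign pattern but requires a slightly more careful counting argument than the one you give.

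The more substantive issue is the boundary case you flag but do not close. The IVT step places \emph{a} root of $p$ in $[\beta_{k+1},\lambda^+]$, but when $\lambda^+=\beta_{k+1}$ (which forces $\lambda_k(p_i)=\beta_{k+1}$ for every $i$ and hence $p(\beta_{k+1})=0$), that root could a priori be $\lambda_{k+1}(p)$ rather than $\lambda_k(p)$, and IVT together with Step~2 alone does not rule out $\lambda_k(p)>\lambda^+$. Saying ``the convex-combination argument still goes through'' asserts the conclusion without showing it. A clean repair is to evaluate at an interior point rather than at $\lambda^+$ itself: suppose toward a contradiction that $\lambda_k(p)>\lambda^+$, pick any $\xi\in(\lambda^+,\lambda_k(p))$, and note that every $p_i$ has exactly $k-1$ roots strictly above $\xi$ (because $\lambda_{k-1}(p_i)\ge\beta_k>\xi>\lambda^+\ge\lambda_k(p_i)$), giving $(-1)^{k-1}p_i(\xi)>0$ strictly and therefore $(-1)^{k-1}p(\xi)>0$; but $p$ has exactly $k$ roots strictly above $\xi$ (since $\lambda_{k+1}(p)\le\beta_{k+1}\le\lambda^+<\xi$), which forces $(-1)^{k}p(\xi)>0$, a contradiction. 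With that replacement, and the analogous tweak on the other side, your proof is complete.
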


Marcus, Spielman, and Srivastava next introduced the concept of interlacing family, which can be viewed as a generalization of common interlacing.

\begin{definition}{\rm\cite[Definition 2.5]{inter3}}
An interlacing family consists of a finite rooted tree $T$, together with a labeling of each node $v\in T$ by a real-rooted degree-$d$ polynomial $f_v(x)\in  \mathbb{R}[x]$ with positive leading coefficient. 
The labeling satisfies the following two properties: 
\begin{enumerate}
\item [\rm (i)] For every non-leaf node $v$, the polynomial $f_v(x)$ is a convex combination of the polynomials associated with its children.

\item [\rm (ii)] For any non-leaf node $v$, the polynomials corresponding to the children of $v$ have a common interlacing.

\end{enumerate}
For simplicity, we call a set of polynomials an interlacing family if they are the labels of the leaves of such a tree.

\end{definition}

By applying Lemma \ref{lemma2.6} iteratively across the layers of the rooted tree associated with an interlacing family,  the following theorem is obtained in \cite{inter3}.

\begin{theorem}\label{classical interlacing family}
{\rm\cite[Theorem 2.7]{inter3}}
Let $\mathcal{F}=\{f_i(x)\}_{i=1}^m$ be an interlacing family of degree-$d$ polynomials with positive leading coefficients. 
The root of the family is labeled by $f_{\emptyset}(x)$.
Then for any $k\in[d]$, there exist two integers $j_1,j_2\in[m]$ such that
\begin{equation*}
\lambda_k( f_{j_1})\geq \lambda_k( f_{\emptyset})
\geq \lambda_k( f_{j_2}).
\end{equation*}

\end{theorem}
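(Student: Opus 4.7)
The plan is to prove the theorem by descending through the rooted tree $T$ of the interlacing family from the root to a leaf, invoking Lemma \ref{lemma2.6} at each internal node to control how the $k$-th largest root changes along the path. Since every leaf of $T$ is labeled by some $f_i(x)\in\mathcal{F}$, the endpoints of the two paths we construct will yield the desired indices $j_1$ and $j_2$.

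First I would set up the descent for the upper bound. Starting at the root node $\emptyset$, the definition of an interlacing family tells us two things: (a) $f_{\emptyset}(x)$ is a convex combination of the polynomials labeling its children, say $f_{\emptyset}=\sum_{v} c_v f_v$ with $c_v\geq 0$ and $\sum_v c_v=1$, and (b) these children's polynomials share a common interlacing. Both children and parent are degree-$d$ real-rooted polynomials with positive leading coefficients (the real-rootedness of $f_{\emptyset}$ follows from the common interlacing of the children together with Lemma \ref{fell-interlacing}). Applying Lemma \ref{lemma2.6} to the children and the convex combination $f_{\emptyset}$, there exists a child $v_1$ such that
\[
\lambda_k(f_{v_1})\ \geq\ \lambda_k(f_{\emptyset}).
\]

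Next I would iterate this step. At node $v_1$, the polynomial $f_{v_1}$ is itself a convex combination of the polynomials of its children, which again share a common interlacing. Applying Lemma \ref{lemma2.6} again, some child $v_2$ of $v_1$ satisfies $\lambda_k(f_{v_2})\geq \lambda_k(f_{v_1})\geq \lambda_k(f_{\emptyset})$. Repeating this descent, and using that $T$ is a finite tree, we eventually reach a leaf $v_\ell$; by construction $\lambda_k(f_{v_\ell})\geq \lambda_k(f_{\emptyset})$. Since $v_\ell$ is a leaf, $f_{v_\ell}=f_{j_1}$ for some $j_1\in[m]$, giving the desired upper direction. The symmetric descent, at each node selecting the child provided by the lower-bound half of Lemma \ref{lemma2.6}, produces a leaf $f_{j_2}$ with $\lambda_k(f_{j_2})\leq \lambda_k(f_{\emptyset})$.

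The main (and only mild) obstacle is verifying the hypotheses of Lemma \ref{lemma2.6} at every internal node encountered in the descent: one must check that the polynomial at each visited node is a degree-$d$ real-rooted polynomial with a positive leading coefficient, so that Lemma \ref{lemma2.6} can be legitimately applied to its children. This is immediate from the definition of an interlacing family, which forces this property on every node by the common-interlacing condition at each parent together with Lemma \ref{fell-interlacing}. With this observation the proof reduces to the iterative descent outlined above, and no additional machinery is required.
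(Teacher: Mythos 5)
Your argument is correct and is essentially the paper's (and MSS's) proof: iteratively apply Lemma \ref{lemma2.6} down the tree, choosing at each internal node the child whose $k$-th largest root dominates (resp.\ is dominated by) that of its parent, and terminate at a leaf. One minor simplification: the real-rootedness and common degree of the polynomial at each node is imposed directly by Definition \ref{Generalized interlacing family}'s precursor (the classical Definition of an interlacing family labels every node with a real-rooted degree-$d$ polynomial with positive leading coefficient), so the appeal to Lemma \ref{fell-interlacing} to recover real-rootedness of $f_\emptyset$ is not actually needed.
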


\subsection{Generalized interlacing families}

The definition of the classical interlacing family has the restriction that all polynomials in the family have the same degree.
This will prevent its application, as a set of polynomials of interest may not satisfy this condition.
To navigate this obstacle, we introduce the concept of the generalized  interlacing family, which allows the polynomials in the family to have different degrees.

\begin{definition}\label{Generalized interlacing family}
A generalized  interlacing family consists of a finite rooted tree $T$ and a labeling of the nodes $v\in T$ by real-rooted polynomials $f_v(x)\in  \mathbb{R}[x]$ with positive leading coefficients.
The labeling satisfies the following three properties: 
\begin{enumerate}
\item [\rm (i)] For every non-leaf node $v$, the polynomial $f_v(x)$ is a convex combination of the polynomials associated with its children. 
\item[\rm (ii)] 
The degree of the  polynomial labelled to the root node, denoted by $f_{\emptyset}(x)$, equals the maximal degree of the polynomials labelled to the leaf nodes of the tree.

\item [\rm (iii)] 

For any non-leaf node $v$, if $\mathrm{deg}(f_v)=\mathrm{deg}(f_{\emptyset})$, then  the nonzero polynomials corresponding to the children of $v$ have degrees $\mathrm{deg}(f_{\emptyset})$ or $\mathrm{deg}(f_{\emptyset})-1$, and they have a common interlacing.

\end{enumerate}
We say that a set of polynomials forms a {\em  generalized interlacing family} if they are the polynomials associated with the leaf nodes of such a tree $T$.

\end{definition}

As a consequence of Definition \ref{Generalized interlacing family}, the degree of a polynomial associated with any non-root node of $T$ can be any integer between $0$ and $\mathrm{deg}(f_{\emptyset})$.
Note that a classical interlacing family is a special case of a generalized  interlacing family where the polynomials labelling to the nodes of the tree have the same degree.

Lemma \ref{lemma2.6} applies to the polynomials of the same degree. We now establish a generalized version that relaxes this degree restriction.

\begin{lemma}\label{lemma2.67}
Assume that $p_1(x),\ldots,p_m(x)$ are real-rooted polynomials with positive leading coefficients.
Assume that there is a nonempty subset $S\subset[m]$ such that $p_i(x)$ has degree $d$ when $i\in S$ and has degree $d-1$ when $i\notin S$.
Assume that $p_1(x),\ldots,p_m(x)$ have a common interlacing. 
Then for any nonnegative $c_1,\ldots,c_m$ with $\sum_{i\in S}^{}c_i>0$, there exists an integer $j\in S$ such that
\begin{equation}\label{eqxu_2025:sep:2-1}
\mathrm{minroot}\ p_j(x)
\geq \mathrm{minroot}\ 
\sum\limits_{i=1}^{m}c_ip_i(x).
\end{equation}

\end{lemma}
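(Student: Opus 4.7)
The plan is to adapt the sign argument underlying Lemma~\ref{lemma2.6} (MSS's Lemma~2.11) to the mixed-degree setting, exploiting the fact that a degree-$d$ polynomial with positive leading coefficient has sign $(-1)^{d-1}$ just above its smallest root, while a degree-$(d-1)$ polynomial with positive leading coefficient has sign $(-1)^{d-1}$ at or below its smallest root---the same sign. Write $q(x) := \sum_{i=1}^m c_i p_i(x)$. Because $\sum_{i\in S} c_i > 0$ and the $p_i$ with $i\in S$ have positive leading coefficients, $q$ has degree exactly $d$ with positive leading coefficient; after normalizing the weights, Lemma~\ref{fell-interlacing} makes $q$ real-rooted, so $\alpha := \mathrm{minroot}\ q$ is well defined. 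Let $f$ be a common interlacer of $p_1,\ldots,p_m$, which has degree $d$ by Definition~\ref{def2.1}, and set $\beta := \lambda_d(f)$.

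The first step is the auxiliary inequality $\alpha \leq \beta$. Using the interlacing of $f$ with each $p_i$ (Definition~\ref{def2.1}), one checks that $\beta$ lies in $[\lambda_d(p_i), \lambda_{d-1}(p_i)]$ for $i \in S$ and in $(-\infty, \lambda_{d-1}(p_i)]$ for $i \notin S$, so in every case $p_i(\beta)$ has sign $(-1)^{d-1}$ or vanishes. Consequently $q(\beta)$ has sign in $\{0, (-1)^{d-1}\}$. If $\alpha$ were greater than $\beta$, then $\beta$ would lie strictly to the left of every root of $q$, forcing $q(\beta)$ to be nonzero with sign $(-1)^d$; this contradiction gives $\alpha \leq \beta$.

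The heart of the argument is then the following contradiction. Suppose, for contradiction, that $\lambda_d(p_j) < \alpha$ for every $j\in S$ with $c_j>0$, and set $\gamma := \max\{\lambda_d(p_j) : j\in S,\ c_j>0\} < \alpha$. For any $x \in (\gamma,\alpha)$ the chain $\lambda_d(p_j) \leq \gamma < x < \alpha \leq \beta \leq \lambda_{d-1}(p_j)$ places $x$ strictly between the two smallest roots of every $p_j$ with $j\in S$ and $c_j>0$, so $p_j(x)$ is strictly nonzero of sign $(-1)^{d-1}$; and for each $i\notin S$ with $c_i>0$, the inequality $x < \alpha \leq \beta \leq \lambda_{d-1}(p_i)$ gives $p_i(x)$ of sign $(-1)^{d-1}$ or zero. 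The nonnegative combination $q(x)$ is therefore nonzero with sign $(-1)^{d-1}$, while $x < \alpha = \mathrm{minroot}\ q$ forces the sign $(-1)^d$---a contradiction. The only obstacle I anticipate is bookkeeping around coincident roots and vanishing weights; both are handled by restricting attention to the strictly positive-weight indices in $S$ (nonempty by hypothesis) and by the fact that the chain above forces $\lambda_d(p_j) < \lambda_{d-1}(p_j)$ for every such $j$, so the chosen $x$ lies strictly inside the open interval between them.
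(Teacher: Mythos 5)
Your proof is correct, and it takes a genuinely different route from the paper's. The paper first invokes Lemma~\ref{lemma2.6} on the degree-$d$ subfamily $\{p_i\}_{i\in S}$ to produce $j\in S$ with $\mathrm{minroot}\ p_j \geq r := \mathrm{minroot}\ \sum_{i\in S} c_i p_i$, and then bridges from $r$ to $\alpha := \mathrm{minroot}\ \sum_{i=1}^m c_i p_i$ by a sign argument on the degree-$(d-1)$ terms alone, exploiting that $\sum_{i\in S} c_i p_i(r)=0$. You bypass Lemma~\ref{lemma2.6} entirely: anchoring to the barrier point $\beta = \lambda_d(f)$, you first establish $\alpha \leq \beta$ and then run one unified sign argument on $(\gamma,\alpha)$ that handles the degree-$d$ and degree-$(d-1)$ contributions together. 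Both are valid; the paper's is shorter because Lemma~\ref{lemma2.6} is already available, whereas yours is more self-contained and in effect re-derives the needed half of Lemma~\ref{lemma2.6} inside the mixed-degree setting. One small imprecision: when $S=[m]$ a common interlacer $f$ may have degree $d+1$, so ``$\deg f = d$ by Definition~\ref{def2.1}'' is not automatic---but this is harmless since your argument only uses $\lambda_d(p_i)\leq\lambda_d(f)\leq\lambda_{d-1}(p_i)$ for $i\in S$ and $\lambda_d(f)\leq\lambda_{d-1}(p_l)$ for $l\notin S$, which hold with $\beta:=\lambda_d(f)$ in either case (and one can always pass to a degree-$d$ interlacer by deleting the top root of $f$).
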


\begin{proof}
By assumption, there is a degree-$d$ polynomial $h(x)$ that interlaces  ${p}_i(x)$  for each $i\in[m]$.
By Definition \ref{def2.1}
we have 
\begin{equation}\label{claim-xu2}
\lambda_{d-1}(h)\geq  
\mathrm{minroot}\ p_l(x)\geq 
\lambda_{d}(h)\geq 
\mathrm{minroot}\  p_i(x),
\quad \text{ for all } i\in S, \text{ for all } l\notin S.
\end{equation}
Since $p_i(x), i\in S$ have the same degree and have a common interlacing,
by Lemma \ref{lemma2.6} there exists an integer $j\in S$ such that
\begin{equation}\label{eqxu_2025:sep:1-1}
\mathrm{minroot}\ p_j(x)\geq r:=
 \mathrm{minroot}\ \sum\limits_{i\in S}^{}c_ip_i(x).
\end{equation}
Combining with \eqref{claim-xu2}, we have
\begin{equation*}
\mathrm{minroot}\ p_l(x)\geq r,\quad \text{ for all }	l\notin S.
\end{equation*} 
For each $l\notin S$, note that $p_l(x)$ is a degree-$(d-1)$ polynomial with positive leading coefficient, so we have $(-1)^d\cdot p_l(r)\leq 0$. 
Hence, 
\begin{equation*}
(-1)^d \cdot\sum\limits_{i=1}^{m}c_ip_i(r)
=(-1)^d\cdot \sum\limits_{l\notin S}^{}c_lp_l(r)\leq 0,
\end{equation*}
which implies
\begin{equation}\label{eq:rzuixiao}
r\geq \mathrm{minroot}\ \sum\limits_{i=1}^{m}c_ip_i(x).
\end{equation}
Here, we use that $\sum_{i=1}^{m}c_ip_i(x)$ is a degree $d$ polynomial with positive leading coefficient.
Combining (\ref{eq:rzuixiao}) with \eqref{eqxu_2025:sep:1-1}, we arrive at \eqref{eqxu_2025:sep:2-1}.

\end{proof}

By applying Lemma \ref{lemma2.67} iteratively across the layers of the rooted tree associated with a generalized interlacing family, we obtain the following theorem.

\begin{theorem}\label{generalized interlacing family}
Let $\mathcal{F}=\{f_i(x)\}_{i=1}^m$ be a generalized interlacing family of polynomials with positive leading coefficients. 
The root of the family is labeled by $f_{\emptyset}(x)$.
There exists an integer $j\in[m]$ such that $\mathrm{deg}(f_j)=\mathrm{deg}(f_{\emptyset})$ and
\begin{equation}\label{eqxu_2025:sep:2-1-9}
\mathrm{minroot}\ f_{j}(x)\geq 
\mathrm{minroot}\ f_{\emptyset}(x).
\end{equation}

\end{theorem}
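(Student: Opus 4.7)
The plan is to mimic the classical inductive proof of Theorem \ref{classical interlacing family}, descending from the root of the underlying tree $T$ to a leaf one level at a time, while maintaining the invariant that the current node's polynomial has full degree $d := \mathrm{deg}(f_{\emptyset})$ and that its smallest root is at least $\mathrm{minroot}\ f_{\emptyset}$. The key new ingredient, compared to the classical argument, is Lemma \ref{lemma2.67}, which is tailor-made to handle the mixed-degree common-interlacing situation that arises in the generalized framework.

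Concretely, I would construct a path $v_0 = \emptyset, v_1, v_2, \ldots, v_N$ in $T$ such that each $v_{i+1}$ is a child of $v_i$, each $\mathrm{deg}(f_{v_i}) = d$, and $\mathrm{minroot}\ f_{v_{i+1}} \geq \mathrm{minroot}\ f_{v_i}$. Suppose $v_i$ has been chosen and is not a leaf. By Definition \ref{Generalized interlacing family}(i), $f_{v_i}$ is a convex combination $\sum_k c_k f_{w_k}$ of the polynomials of its children $w_k$. Since $\mathrm{deg}(f_{v_i}) = d = \mathrm{deg}(f_{\emptyset})$, property (iii) of the definition says that the children's (nonzero) polynomials have degrees in $\{d, d-1\}$ and share a common interlacing. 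Let $S$ denote the subset of children with degree exactly $d$; because all leading coefficients are positive, no cancellation in the leading term of $f_{v_i}$ is possible, so $\sum_{k \in S} c_k > 0$ and in particular $S$ is nonempty. Lemma \ref{lemma2.67} then supplies an index $k^{\star} \in S$ with $\mathrm{minroot}\ f_{w_{k^{\star}}} \geq \mathrm{minroot}\ f_{v_i}$, and I set $v_{i+1} := w_{k^{\star}}$. Since $T$ is finite, this process terminates at a leaf $v_N$; chaining the inequalities along the path yields $\mathrm{minroot}\ f_{v_N} \geq \mathrm{minroot}\ f_{\emptyset}$, and $\mathrm{deg}(f_{v_N}) = d$ by construction. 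Identifying $v_N$ with the corresponding index $j \in [m]$ gives \eqref{eqxu_2025:sep:2-1-9}.

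The main obstacle, really the only delicate point, is verifying that the invariant $\mathrm{deg}(f_{v_i}) = d$ can be propagated down every step of the descent. Property (iii) of Definition \ref{Generalized interlacing family} grants a common interlacing among the children only when the parent already has full degree, so we must never drift to a node of degree $d-1$ or lower; otherwise the hypothesis of Lemma \ref{lemma2.67} is unavailable and the induction collapses. Showing $S \neq \emptyset$ at each step, and consequently that Lemma \ref{lemma2.67} actually selects a full-degree child, is what makes the argument go through. This in turn rests on the elementary but essential observation that the leading term of a convex combination of polynomials with positive leading coefficients cannot vanish unless at least one summand itself attains the target degree, which is precisely the mechanism by which property (ii) of Definition \ref{Generalized interlacing family} (equating $\mathrm{deg}(f_{\emptyset})$ with the maximal leaf degree) gets used implicitly.
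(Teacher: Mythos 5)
Your proof is correct and follows essentially the same route as the paper's: repeated application of Lemma~\ref{lemma2.67} while descending from the root to a leaf, maintaining the invariant that the current node's polynomial has degree $\deg(f_{\emptyset})$. The paper phrases this as a formal induction on tree height while you unroll it into an iterative descent, and you make explicit the observation (left implicit in the paper) that positivity of leading coefficients forces the full-degree child subset to be nonempty with positive total weight at each step.
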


\begin{proof}
As a generalized interlacing family, $\mathcal{F}=\{f_i(x)\}_{i=1}^m$ constitutes the polynomial set corresponding to the leaf nodes of an associated tree $T$.
We prove the theorem by induction on the height of the tree.
The case of height 1 is established  by Lemma \ref{lemma2.67}.
 For the inductive hypothesis, assume the theorem holds for all trees of height less than $k$. We now consider a tree of height $k$.
Applying Lemma \ref{lemma2.67}, we deduce that for the root node, there exists a child $v$ and an associated polynomial $f_v(x)$ satisfying 
 $\mathrm{deg}(f_v)=\mathrm{deg}(f_{\emptyset})$ and 
\begin{equation*}
\mathrm{minroot}\ f_{v}(x)\geq 
\mathrm{minroot}\ f_{\emptyset}(x).
\end{equation*}
If $v$ is a leaf node, then we are done. If $v$ is not a leaf node, then we consider the subtree $T'$ rooted at $v$. 
Note that the polynomials labelled to the leaf nodes of $T'$ also form a generalized interlacing family. 
Also note that the leaf nodes of $T'$ is a subset of that of $T$.
Since $T'$ has height less than $k$, by assumption there is an $j\in[m]$, such that 
$\mathrm{deg}(f_j)=\mathrm{deg}(f_v)=\mathrm{deg}(f_{\emptyset})$ and 
\begin{equation*}
\mathrm{minroot}\ f_{j}(x)\geq \mathrm{minroot}\ f_{v}(x)\geq 
\mathrm{minroot}\ f_{\emptyset}(x).
\end{equation*}
Hence, we obtain the desired result.

\end{proof}

\begin{remark}
A key distinction between Theorem \ref{classical interlacing family} and Theorem \ref{generalized interlacing family} lies in the fact that the latter exclusively addresses the smallest root. This naturally raises the question of whether the conclusion of Theorem \ref{generalized interlacing family} can be extended to other roots. However, under the constraint $\deg(f_j) = \deg(f_{\emptyset})$, the analogous result to equation \eqref{eqxu_2025:sep:2-1-9} does not generally hold for the other roots.
To illustrate this, we examine the behavior of the largest root.
Consider $f_1(x)=x(x-2)(x-4)$ and $f_2(x)=(x-1)(x-3)$, which have a common interlacing and hence form a generalized interlacing family. The largest root of $f_1(x)+f_2(x)$ is approximately $3.7$, which is smaller than the largest root of $f_1(x)$, which is $4$. 
This example demonstrates that the inequality 
$
\mathrm{maxroot}\ f_1(x) \leq \mathrm{maxroot}\ f_1(x)+f_2(x)
$
 does not hold in general. Note that we have $\deg(f_1)=\deg(f_1+f_2)\neq \deg(f_2)$.
Conversely, if we choose $f_3(x)=(x-1)(x-5)$, the largest root of $f_1(x)+f_3(x)$ becomes approximately $4.25$, which exceeds the largest root of $f_1(x)$. This, in turn, shows that the inequality $\mathrm{maxroot}\ f_1(x) \geq \mathrm{maxroot}\ f_1(x)+f_3(x)$  is not always satisfied.
\end{remark}
\begin{remark}
The classical interlacing family requires all polynomials in the family to have the same degree, which may not be satisfied in practice.
For instance, a given set of degree-$d$ polynomials $f_1(x), \ldots, f_m(x)$ may not form an interlacing family under this definition, thereby precluding the use of classical interlacing polynomial method to estimate the maximum of their smallest roots.
The generalized interlacing family addresses this limitation by introducing auxiliary polynomials $h_1(x), \ldots, h_n(x)$ of lower degrees. 
If the combined collection $\{f_1(x), \ldots, f_m(x), h_1(x), \ldots, h_n(x)\}$ forms a generalized interlacing family,
then the smallest root of the expected polynomial $\sum_{i=1}^{m}f_i(x)+\sum_{j=1}^{n}h_j(x)$ can be used to estimate the maximum of the smallest roots among the original polynomials $f_1(x), \ldots, f_m(x)$.
\end{remark}

\section{Properties of the expected polynomials}

In this section, we present some basic properties of the polynomial $P_{k}(x;\boldsymbol{A},\boldsymbol{C},\boldsymbol{U},\boldsymbol{R})$ defined in Definition \ref{def-p-SW}.
These properties will be instrumental in our subsequent proofs.

\begin{definition}\label{def-H}
Let $\boldsymbol{A}\in\mathbb{R}^{n\times d}$, $\boldsymbol{C}\in\mathbb{R}^{n\times d_{{C}}}$,  $\boldsymbol{U}\in\mathbb{R}^{n_{{R}}\times d_C}$ and $\boldsymbol{R}\in\mathbb{R}^{n_{{R}}\times d}$. 
Let $\boldsymbol{Y}=\mathrm{diag}(y_1,\ldots,y_{d_C})$ and 
$\boldsymbol{Z}=\mathrm{diag}(z_1,\ldots,z_{n_R})$, where $y_i$ and $z_i$ are variables. 
We define the multivariate polynomial
\begin{equation*}
Q(x,\boldsymbol{Y},\boldsymbol{Z};\boldsymbol{A},\boldsymbol{C},\boldsymbol{U},\boldsymbol{R})
:=\det\left[\begin{matrix}
\boldsymbol{I}_n  & \boldsymbol{0} & \boldsymbol{C} & \boldsymbol{A}  \\
\boldsymbol{0}& \boldsymbol{Z} & \boldsymbol{U}  & \boldsymbol{R}  \\
\boldsymbol{C}^{T}  &  \boldsymbol{U}^T & \boldsymbol{Y} &  \boldsymbol{0} \\
\boldsymbol{A}^{T}  & \boldsymbol{R}^{T} & \boldsymbol{0} & x\cdot  \boldsymbol{I}_d
\end{matrix}\right]
\in\mathbb{R}[x,y_1,\ldots,y_{d_C},z_1,\ldots,z_{n_R}].
\end{equation*}
\end{definition}

In the following proposition, we show that 
the polynomial $p_{S,W}(x;\boldsymbol{A},\boldsymbol{C},\boldsymbol{U},\boldsymbol{R})$
defined in Definition \ref{def-p-SW} 
can be expressed in terms of the multivariate polynomial $Q(x,\boldsymbol{Y},\boldsymbol{Z};\boldsymbol{A},\boldsymbol{C},\boldsymbol{U},\boldsymbol{R})$.

\begin{prop}\label{P-exp-base-gcur1}
Let $\boldsymbol{A}\in\mathbb{R}^{n\times d}$, $\boldsymbol{C}\in\mathbb{R}^{n\times d_{{C}}}$,  $\boldsymbol{U}\in\mathbb{R}^{n_{{R}}\times d_C}$ and $\boldsymbol{R}\in\mathbb{R}^{n_{{R}}\times d}$. 
Let $\boldsymbol{Y}=\mathrm{diag}(y_1,\ldots,y_{d_C})$ and 
$\boldsymbol{Z}=\mathrm{diag}(z_1,\ldots,z_{n_R})$, where $y_i$ and $z_i$ are variables. 
Let $k$ be an integer satisfying $0\leq k\leq \mathrm{rank}(\boldsymbol{U})$. 
Let $S\in \binom{[n_R]}{k}$ and $W\in \binom{[d_C]}{k}$.

\begin{enumerate}
\item[\rm (i)] We have
\begin{equation*}
\begin{aligned}
p_{S,W}(-x;\boldsymbol{A},\boldsymbol{C},\boldsymbol{U},\boldsymbol{R})
=(-1)^{d-k}\cdot  \partial_{\boldsymbol{Y}^{W^C}} \partial_{\boldsymbol{Z}^{S^C}}\ 
Q(x,\boldsymbol{Y},\boldsymbol{Z};\boldsymbol{A},\boldsymbol{C},\boldsymbol{U},\boldsymbol{R})  \ \Bigg\vert_{\substack{y_i=0,\forall i\in[d_C]\\z_j=0,\forall j\in[n_R]}},
\end{aligned}
\end{equation*}
where $\partial_{\boldsymbol{Y}^{W^C}}=\prod_{i\in [d_C]\backslash W}\partial_{y_i}$, 
$\partial_{\boldsymbol{Z}^{S^C}}=\prod_{i\in [n_R]\backslash S}\partial_{z_i}$. 

\item[\rm (ii)]  If $\mathrm{rank}(\boldsymbol{U}_{S,W})=k$, then
\begin{equation*}
p_{S,W}(x;\boldsymbol{A},\boldsymbol{C},\boldsymbol{U},\boldsymbol{R})	
=\det[\boldsymbol{U}_{S,W}]^2\cdot 
\det[x\cdot \boldsymbol{I}_d+
(\boldsymbol{A}- \boldsymbol{C}_{:,W}( \boldsymbol{U}_{S,W})^{-1}  \boldsymbol{R}_{S,:})^T
(\boldsymbol{A}- \boldsymbol{C}_{:,W}( \boldsymbol{U}_{S,W})^{-1}  \boldsymbol{R}_{S,:})].
\end{equation*}

\item[\rm (iii)] If either  $
\begin{bmatrix}
\boldsymbol{C}_{:,W}\\
\boldsymbol{U}_{S,W}	
\end{bmatrix}$ or $\begin{bmatrix}
\boldsymbol{U}_{S,W} &	\boldsymbol{R}_{S,:}
\end{bmatrix}$ has rank less than $k$, then $p_{S,W}(x;\boldsymbol{A},\boldsymbol{C},\boldsymbol{U},\boldsymbol{R})$ is identically zero.
Otherwise, $p_{S,W}(x;\boldsymbol{A},\boldsymbol{C},\boldsymbol{U},\boldsymbol{R})$ is a degree $d-k+\mathrm{rank}(\boldsymbol{U}_{S,W})$ polynomial with  positive leading coefficient. 

\end{enumerate}
\end{prop}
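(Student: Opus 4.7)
The plan is to prove the three parts in sequence, with the bulk of the work in part~(iii).

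For part~(i), the key observation is that each $y_i$ and each $z_j$ appears exactly once in the determinant defining $Q$, at a single diagonal position of the $(3,3)$ or $(2,2)$ block, so $Q$ is multilinear in $(\boldsymbol{Y},\boldsymbol{Z})$. Expanding $Q$ via the identity~\eqref{meq11} applied block by block, the coefficient of $\prod_{i\notin W}y_i\prod_{j\notin S}z_j$ equals the determinant of the submatrix obtained by deleting the rows and columns of the $\boldsymbol{Y}$- and $\boldsymbol{Z}$-blocks indexed by $W^C$ and $S^C$ and setting the surviving $y_i,z_j$ to zero. Applying $\partial_{\boldsymbol{Y}^{W^C}}\partial_{\boldsymbol{Z}^{S^C}}$ and evaluating at $y=z=0$ isolates exactly this coefficient. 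The resulting matrix matches the one in Definition~\ref{def-p-SW} except that its $(4,4)$ block is $x\boldsymbol{I}_d$ rather than $-x\boldsymbol{I}_d$; substituting $x\mapsto -x$ and inserting the prefactor $(-1)^{d-k}$ gives the claimed identity.

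For part~(ii), abbreviate $\boldsymbol{\tilde C}=\boldsymbol{C}_{:,W}$, $\boldsymbol{\tilde U}=\boldsymbol{U}_{S,W}$, $\boldsymbol{\tilde R}=\boldsymbol{R}_{S,:}$, and assume $\boldsymbol{\tilde U}$ invertible. Swapping the second and third block rows and the second and third block columns (each of size~$k$) preserves the determinant since the total sign is $(-1)^{2k^2}=1$, and rearranges the upper-left $(n+2k)\times(n+2k)$ block into
\[
P=\begin{bmatrix}\boldsymbol{I}_n & \boldsymbol{\tilde C} & \boldsymbol{0}\\ \boldsymbol{\tilde C}^T & \boldsymbol{0} & \boldsymbol{\tilde U}^T\\ \boldsymbol{0} & \boldsymbol{\tilde U} & \boldsymbol{0}\end{bmatrix}.
\]
Applying Lemma~\ref{Linear algebra-lemma-schur} to $P$ with pivot $\boldsymbol{I}_n$ reduces it to $\begin{bmatrix}-\boldsymbol{\tilde C}^T\boldsymbol{\tilde C} & \boldsymbol{\tilde U}^T\\ \boldsymbol{\tilde U} & \boldsymbol{0}\end{bmatrix}$, whose determinant (via one further block row swap) equals $(-1)^k\det[\boldsymbol{\tilde U}]^2$. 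A second Schur complement separates $P$ from the $-x\boldsymbol{I}_d$ block, and solving $P\,[X_1;X_2;X_3]^T=[\boldsymbol{A};\boldsymbol{0};\boldsymbol{\tilde R}]^T$ yields $X_1=\boldsymbol{A}-\boldsymbol{\tilde C}\boldsymbol{\tilde U}^{-1}\boldsymbol{\tilde R}$, giving Schur complement $-x\boldsymbol{I}_d-X_1^TX_1$. Collecting all the signs against the prefactor $(-1)^{d-k}$ produces the stated formula.

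For part~(iii), the vanishing direction is immediate linear algebra: if $\begin{bmatrix}\boldsymbol{\tilde C}\\ \boldsymbol{\tilde U}\end{bmatrix}$ has rank less than $k$, any nonzero $v$ in its kernel produces a block kernel vector $(\boldsymbol{0},\boldsymbol{0},v,\boldsymbol{0})^T$ for the matrix defining $p_{S,W}$, forcing the determinant to vanish identically; the row-stack case is symmetric. For the nontrivial direction, set $r=\mathrm{rank}(\boldsymbol{\tilde U})$ and choose invertible $P_1,P_2\in\mathbb{R}^{k\times k}$ with $P_1\boldsymbol{\tilde U}P_2=\mathrm{diag}(\boldsymbol{I}_r,\boldsymbol{0}_{k-r})$. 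The congruence-style block operations on rows/columns~$2$ by $P_1,P_1^T$ and on rows/columns~$3$ by $P_2^T,P_2$ multiply $\det M$ by the positive factor $(\det P_1\det P_2)^2$, reducing to the canonical case in which $\boldsymbol{\tilde C}P_2=[\boldsymbol{\tilde C}_1,\boldsymbol{\tilde C}_2]$ and $P_1\boldsymbol{\tilde R}=[\boldsymbol{\tilde R}_1;\boldsymbol{\tilde R}_2]$, with $\boldsymbol{\tilde C}_2$ of full column rank $k-r$ and $\boldsymbol{\tilde R}_2$ of full row rank $k-r$ by the stacking hypotheses. In this canonical form the bottom $k-r$ rows of block row~$2$ are supported only in block column~$4$ (with entries $\boldsymbol{\tilde R}_2$) and the last $k-r$ columns of block column~$3$ are supported only in block row~$1$ (with entries $\boldsymbol{\tilde C}_2$); a double Laplace expansion along these sparse rows and columns factors $\det M$ as a Cauchy--Binet-type sum whose generic term is a product of a Cauchy--Binet minor of $\boldsymbol{\tilde C}_2$, a Cauchy--Binet minor of $\boldsymbol{\tilde R}_2$, and a residual CUR-type minor with the invertible $r\times r$ middle block $\boldsymbol{I}_r$. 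Applying part~(ii) to each residual minor produces $(\text{positive})\cdot\det[x\boldsymbol{I}_{d-(k-r)}+E_{\mathrm{res}}^TE_{\mathrm{res}}]$ of degree $d-k+r$, and Cauchy--Binet positivity (summing products of squared minors) shows the full sum has strictly positive $x^{d-k+r}$ coefficient.

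The technical heart of the argument lies in part~(iii): verifying that the double Laplace expansion in the canonical form really produces an invertible-middle CUR residual to which part~(ii) applies, and tracking the Cauchy--Binet positivity of the resulting leading coefficient. Parts~(i) and~(ii) reduce to essentially mechanical block-matrix bookkeeping once the correct Schur-complement and block-swap decompositions are identified.
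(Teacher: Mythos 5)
Parts (i) and (ii) of your proposal are correct and take essentially the same route as the paper: (i) is the identity \eqref{meq18} applied blockwise to $Q$, and (ii) is a block-row/column swap followed by Schur complements (the paper pivots on the diagonal block $\mathrm{diag}(\boldsymbol{U}_{S,W},\boldsymbol{U}_{S,W}^T)$ and then uses Lemma \ref{Linear algebra-lemma-0}; your version pivots first on $\boldsymbol{I}_n$ and then again on your $P$, which gives the same answer with the same toolkit).

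Part (iii), which you correctly identify as the technical heart, has a genuine gap. You claim that after reducing $\boldsymbol{U}_{S,W}$ to the canonical form $\mathrm{diag}(\boldsymbol{I}_r,\boldsymbol{0})$, a double Laplace expansion along the sparse $k-r$ rows and columns factors $\det M$ into a sum of terms, each a product of a $(k-r)\times(k-r)$ minor of $\boldsymbol{\tilde{R}}_2$, a $(k-r)\times(k-r)$ minor of $\boldsymbol{\tilde{C}}_2$, and ``a residual CUR-type minor with the invertible $r\times r$ middle block $\boldsymbol{I}_r$'' to which part (ii) applies. This last claim does not hold: the Laplace expansion along the sparse rows (supported in block column $4$) removes a $(k-r)$-subset $J_1$ of the $d$ columns of that block, and the subsequent expansion along the sparse columns (supported in block row $1$) removes a $(k-r)$-subset $J_2$ of the $n$ rows of that block. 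The residual minor then has $\boldsymbol{I}_n$ replaced by $\boldsymbol{I}_n[J_2^c,:]$ (a non-square slice of the identity), $-x\boldsymbol{I}_d$ replaced by a non-square slice missing the columns $J_1$, and the symmetric partner sparse row/column of blocks $3$ and $5$ still present. This is not a matrix of the form appearing in Definition~\ref{def-p-SW}, so part (ii) cannot be applied to it, and the ``Cauchy--Binet positivity'' of the leading coefficient does not follow from pairing squared minors. There is also no reason for the terms with row and column index sets mismatched to vanish or be nonnegative.

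The paper sidesteps this entirely with a cleaner limit argument: it substitutes the SVD $\boldsymbol{U}_{S,W}=\boldsymbol{P}\boldsymbol{\Lambda}\boldsymbol{Q}^T$, perturbs the zero singular values to $y$, applies the part-(ii) Schur complement (valid for $y\neq 0$), and writes $p_{S,W}(x)$ as a $y\to 0$ limit of $y^{2(k-r)}\det[x\boldsymbol{I}_d+(\boldsymbol{M}-\tfrac{1}{y}\boldsymbol{N})^T(\boldsymbol{M}-\tfrac{1}{y}\boldsymbol{N})]$. Lemma~\ref{meq15-lemma} (proved via Lemma~\ref{meq15-lemma1}) then gives the degree $d-\mathrm{rank}(\boldsymbol{N})$ and the positive leading coefficient $\prod\lambda_i^2$ in one stroke, and Sylvester's rank inequality shows $\mathrm{rank}(\boldsymbol{N})=k-r$ from the two full-rank stacking hypotheses. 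You should look at that argument; the key idea you are missing is to keep a single Schur complement formula valid by regularizing the singular middle block rather than trying to expand around its kernel combinatorially.
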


\begin{proof}
See Appendix \ref{proof-P-exp-base-gcur1}.	
\end{proof}

\begin{prop}\label{P-exp-base-gcur2}
Let $\boldsymbol{A}\in\mathbb{R}^{n\times d}$, $\boldsymbol{C}\in\mathbb{R}^{n\times d_{{C}}}$,  $\boldsymbol{U}\in\mathbb{R}^{n_{{R}}\times d_C}$ and $\boldsymbol{R}\in\mathbb{R}^{n_{{R}}\times d}$. 
Let $k$ be an integer satisfying $0\leq k\leq \mathrm{rank}(\boldsymbol{U})$. 
Then we have 
\begin{equation}
\label{eq:P-exp-base-qpsw}
\begin{aligned}
P_{k}(x;\boldsymbol{A},\boldsymbol{C},\boldsymbol{U},\boldsymbol{R})
=\frac{(-1)^{d-k}}{ (d_C-k)!\cdot(n_R-k)!}\cdot \partial_y^{d_C-k} \partial_z^{n_R-k}\ 
\det\left[\begin{matrix}
\boldsymbol{I}_n  & \boldsymbol{0} & \boldsymbol{C} & \boldsymbol{A}  \\
\boldsymbol{0}& z\cdot \boldsymbol{I}_{n_R} & \boldsymbol{U}  & \boldsymbol{R}  \\
\boldsymbol{C}^{T}  &  \boldsymbol{U}^T & y\cdot \boldsymbol{I}_{d_C} &  \boldsymbol{0} \\
\boldsymbol{A}^{T}  & \boldsymbol{R}^{T} & \boldsymbol{0} & -x\cdot  \boldsymbol{I}_d
\end{matrix}\right] 
\ \Bigg\vert_{y=z=0},
\end{aligned}
\end{equation}
where $P_{k}(x;\boldsymbol{A},\boldsymbol{C},\boldsymbol{U},\boldsymbol{R})$ is defined in Definition \ref{def-p-SW}.
Moreover, $P_{k}(x;\boldsymbol{A},\boldsymbol{C},\boldsymbol{U},\boldsymbol{R})$ is a degree-$d$ real-rooted polynomial with a  positive leading coefficient. 
\end{prop}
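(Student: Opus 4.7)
The plan is to establish the formula, the degree, and the real-rootedness in three steps, all building on Proposition~\ref{P-exp-base-gcur1} and the real-stability toolkit in Lemmas~\ref{real stable}--\ref{real stable2}.

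First, I would derive \eqref{eq:P-exp-base-qpsw} by a direct multilinearity argument. In the block matrix defining $Q(x,\boldsymbol{Y},\boldsymbol{Z};\boldsymbol{A},\boldsymbol{C},\boldsymbol{U},\boldsymbol{R})$, each variable $y_i$ appears in only one diagonal entry, and similarly for each $z_j$; hence $Q$ is multilinear in the $y_i$'s and in the $z_j$'s. Writing
\begin{equation*}
Q(x,\boldsymbol{Y},\boldsymbol{Z};\boldsymbol{A},\boldsymbol{C},\boldsymbol{U},\boldsymbol{R})=\sum_{T\subset[d_C],\,U\subset[n_R]}q_{T,U}(x)\prod_{i\in T}y_i\prod_{j\in U}z_j,
\end{equation*}
one reads off $\partial_{\boldsymbol{Y}^{W^C}}\partial_{\boldsymbol{Z}^{S^C}}Q\big|_{\boldsymbol{Y}=\boldsymbol{Z}=0}=q_{W^C,S^C}(x)$. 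Combining this with Proposition~\ref{P-exp-base-gcur1}(i) and summing over $S\in\binom{[n_R]}{k}$, $W\in\binom{[d_C]}{k}$ (so that $W^C$ ranges over $\binom{[d_C]}{d_C-k}$ and $S^C$ over $\binom{[n_R]}{n_R-k}$) yields $P_k(-x;\cdots)=(-1)^{d-k}\sum_{|T|=d_C-k,\,|U|=n_R-k}q_{T,U}(x)$. On the other hand, specializing $\boldsymbol{Y}=y\boldsymbol{I}_{d_C}$ and $\boldsymbol{Z}=z\boldsymbol{I}_{n_R}$ collapses the same multilinear expansion to $\sum_{T,U}q_{T,U}(x)y^{|T|}z^{|U|}$, so $\partial_y^{d_C-k}\partial_z^{n_R-k}|_{y=z=0}$ extracts exactly $(d_C-k)!(n_R-k)!$ times that same sum. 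Replacing $x$ by $-x$ then produces \eqref{eq:P-exp-base-qpsw}.

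Second, for the degree and leading coefficient, since $k\leq\mathrm{rank}(\boldsymbol{U})$ one can select $k$ linearly independent rows of $\boldsymbol{U}$ and then $k$ linearly independent columns of the resulting submatrix to obtain a pair $(S_0,W_0)\in\binom{[n_R]}{k}\times\binom{[d_C]}{k}$ with $\mathrm{rank}(\boldsymbol{U}_{S_0,W_0})=k$. Proposition~\ref{P-exp-base-gcur1}(iii) then guarantees that $p_{S_0,W_0}(x;\boldsymbol{A},\boldsymbol{C},\boldsymbol{U},\boldsymbol{R})$ has degree $d$ with positive leading coefficient, whereas every other summand in $P_k$ either vanishes identically or has degree $d-k+\mathrm{rank}(\boldsymbol{U}_{S,W})<d$ with a positive leading coefficient. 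Summing yields that $P_k(x;\boldsymbol{A},\boldsymbol{C},\boldsymbol{U},\boldsymbol{R})$ is a degree-$d$ polynomial with positive leading coefficient. For real-rootedness, observe that after substituting $x\mapsto -x$ the matrix in \eqref{eq:P-exp-base-qpsw} coincides with $Q(x,y\boldsymbol{I}_{d_C},z\boldsymbol{I}_{n_R};\boldsymbol{A},\boldsymbol{C},\boldsymbol{U},\boldsymbol{R})$, which decomposes as $\boldsymbol{B}+x\boldsymbol{E}_x+y\boldsymbol{E}_y+z\boldsymbol{E}_z$, where $\boldsymbol{B}$ is real symmetric and each $\boldsymbol{E}_\ast$ is a block-diagonal matrix with an identity in one diagonal block and zeros elsewhere, hence positive semidefinite. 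By Lemma~\ref{real stable} this determinant is real stable in $(x,y,z)$, and by the previous step it is not identically zero. Iteratively applying Lemma~\ref{real stable2}(iii) to carry out $\partial_y^{d_C-k}\partial_z^{n_R-k}$, and then Lemma~\ref{real stable2}(i) to set $y=z=0$, yields a univariate real-stable (hence real-rooted) polynomial in $x$; reversing the substitution $x\mapsto -x$ preserves real-rootedness, giving the claim for $P_k(x;\boldsymbol{A},\boldsymbol{C},\boldsymbol{U},\boldsymbol{R})$.

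The main obstacle is the bookkeeping in the first step: verifying that $Q$ is multilinear in each $y_i$ and $z_j$ (which follows from each variable appearing in only one position of the block matrix) and pinning down the precise correspondence between the complementary subsets $W^C,S^C$ used in Proposition~\ref{P-exp-base-gcur1}(i) and the index sets of sizes $d_C-k$ and $n_R-k$ that emerge from the Taylor expansion after specializing $\boldsymbol{Y}=y\boldsymbol{I}_{d_C}$, $\boldsymbol{Z}=z\boldsymbol{I}_{n_R}$. Once this combinatorial matching is in place, the remaining claims reduce to clean applications of Proposition~\ref{P-exp-base-gcur1}(iii) and the standard real-stability lemmas.
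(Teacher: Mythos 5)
Your proposal is correct and follows essentially the same route as the paper's proof: step one is an explicit unpacking of the paper's multiaffine-polynomial identity (the paper cites this as equation (4.4) without spelling out the multilinear expansion of $Q$), step two uses Proposition 4.1(iii) for the degree and leading-coefficient claim exactly as the paper does, and step three spells out the real-stability argument (decomposition $\boldsymbol{B}+x\boldsymbol{E}_x+y\boldsymbol{E}_y+z\boldsymbol{E}_z$, then Lemmas 2.3 and 2.4) that the paper states more tersely. The extra detail you provide is all accurate; there is no gap.
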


\begin{proof}
Note that for any multiaffine polynomial $f(x_1,\ldots,x_m)$ we have
\begin{equation}\label{meq31}
\sum_{\substack{S\subset[m], |S|=k}}\partial_{\boldsymbol{X}^{S^C}}	\ f(x_1,\ldots,x_m)\ \bigg|_{x_i=x,\forall i}=\frac{1}{(m-k)!}\cdot \partial_{x}^{m-k}	\ f(x,\ldots,x),
\end{equation}
where $\partial_{\boldsymbol{X}^{S^C}}=\prod_{i\in [m]\backslash S}\partial_{x_i}$.
Therefore, combining \eqref{meq31} with Proposition \ref{P-exp-base-gcur1} (i), 
we have
\begin{equation*}
\begin{aligned}
P_{k}(-x;\boldsymbol{A},\boldsymbol{C},\boldsymbol{U},\boldsymbol{R})
&=(-1)^{d-k}
\sum_{S\in\binom{[n_R]}{k},W\in\binom{[d_C]}{k}}
\partial_{\boldsymbol{Y}^{W^C}} \partial_{\boldsymbol{Z}^{S^C}}\ 
Q(x,\boldsymbol{Y},\boldsymbol{Z};\boldsymbol{A},\boldsymbol{C},\boldsymbol{U},\boldsymbol{R})  \ \Bigg\vert_{\substack{y_i=0,\forall i\in[d_C]\\z_j=0,\forall j\in[n_R]}}\\
&=\frac{(-1)^{d-k}}{ (d_C-k)!\cdot(n_R-k)!}\cdot \partial_y^{d_C-k} \partial_z^{n_R-k}\ 
\det\left[\begin{matrix}
\boldsymbol{I}_n  & \boldsymbol{0} & \boldsymbol{C} & \boldsymbol{A}  \\
\boldsymbol{0}& z\cdot \boldsymbol{I}_{n_R} & \boldsymbol{U}  & \boldsymbol{R}  \\
\boldsymbol{C}^{T}  &  \boldsymbol{U}^T & y\cdot \boldsymbol{I}_{d_C} &  \boldsymbol{0} \\
\boldsymbol{A}^{T}  & \boldsymbol{R}^{T} & \boldsymbol{0} & x\cdot  \boldsymbol{I}_d
\end{matrix}\right] 
\ \Bigg\vert_{y=z=0},\\
\end{aligned}	
\end{equation*}
which immediately implies \eqref{eq:P-exp-base-qpsw}.

Combining Lemma \ref{real stable} with Lemma \ref{real stable2}, we see that $P_{k}(-x;\boldsymbol{A},\boldsymbol{C},\boldsymbol{U},\boldsymbol{R})$ is real stable and hence real-rooted.
Therefore, $P_{k}(x;\boldsymbol{A},\boldsymbol{C},\boldsymbol{U},\boldsymbol{R})$ is also real-rooted.
Proposition \ref{P-exp-base-gcur1} shows that $p_{S,W}(x;\boldsymbol{A},\boldsymbol{C},\boldsymbol{U},\boldsymbol{R})$ is either a degree-$d$ polynomial with positive leading coefficient, or a polynomial of degree less than $d$. 
Since $k\leq \mathrm{rank}(\boldsymbol{U})$, there exists at least one polynomial $p_{S_0,W_0}(x;\boldsymbol{A},\boldsymbol{C},\boldsymbol{U},\boldsymbol{R})$ that has degree $d$.
Therefore, $P_{k}(x;\boldsymbol{A},\boldsymbol{C},\boldsymbol{U},\boldsymbol{R})$ is a degree-$d$ real-rooted polynomial with positive leading coefficient.
This completes the proof.

\end{proof}

In the special case when $\boldsymbol{C}=\boldsymbol{U}=\boldsymbol{R}=\boldsymbol{A}$, we show that $P_{k}(-x;\boldsymbol{A},\boldsymbol{C},\boldsymbol{U},\boldsymbol{R})$ can be expressed in terms of the flip operator $\mathcal{R}_{x,d}^+$ and the Laguerre derivative operator $\partial_x\cdot x\cdot \partial_x$.

\begin{prop}\label{P-exp-base-gcur-x}
Let $\boldsymbol{A}\in\mathbb{R}^{n\times d}$. 
Let $k$ be an integer satisfying $0\leq k\leq \mathrm{rank}(\boldsymbol{A})$. 
Then we have
\begin{equation}
\label{eq:P-exp-base-p1}
P_{k}(-x;\boldsymbol{A},\boldsymbol{A},\boldsymbol{A},\boldsymbol{A})
=\frac{(-1)^{d-k}}{(k!)^2}\cdot  \mathcal{R}_{x,d}^+\cdot  (\partial_x\cdot x\cdot \partial_x)^k \cdot \mathcal{R}_{x,d}^+\   \det[x\cdot \boldsymbol{I}_d-\boldsymbol{A}^T\boldsymbol{A}].
\end{equation}

\end{prop}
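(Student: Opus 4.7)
\textbf{Proof plan for Proposition \ref{P-exp-base-gcur-x}.} The plan is to compute both sides of \eqref{eq:P-exp-base-p1} explicitly in terms of the elementary symmetric functions $e_j(B)$ of the eigenvalues of $B := \boldsymbol{A}^T\boldsymbol{A}$, and then check that the two expansions coincide term by term. Starting from Proposition \ref{P-exp-base-gcur2} specialized to $\boldsymbol{C}=\boldsymbol{U}=\boldsymbol{R}=\boldsymbol{A}$ (and with $x$ replaced by $-x$, so the bottom-right block becomes $x\cdot \boldsymbol{I}_d$), we first simplify the $(2n+2d)\times(2n+2d)$ determinant
\begin{equation*}
\widetilde{M}(x,y,z):=\det\left[\begin{matrix}
\boldsymbol{I}_n & \boldsymbol{0} & \boldsymbol{A} & \boldsymbol{A}\\
\boldsymbol{0}  & z\boldsymbol{I}_n & \boldsymbol{A} & \boldsymbol{A}\\
\boldsymbol{A}^T & \boldsymbol{A}^T & y\boldsymbol{I}_d & \boldsymbol{0}\\
\boldsymbol{A}^T & \boldsymbol{A}^T & \boldsymbol{0}  & x\boldsymbol{I}_d
\end{matrix}\right].
\end{equation*}
Applying the Schur determinantal formula (Lemma \ref{Linear algebra-lemma-schur}) with the upper-left $2n\times 2n$ block (treating $z$ as a nonzero indeterminate, so the identity extends by polynomial continuation) yields
\begin{equation*}
\widetilde{M}(x,y,z)=z^n\cdot\det\!\begin{pmatrix} y\boldsymbol{I}_d-cB & -cB\\ -cB & x\boldsymbol{I}_d-cB\end{pmatrix},\qquad c=\tfrac{z+1}{z}.
\end{equation*}
Since all four $d\times d$ blocks are polynomials in $B$ and hence commute, the $2d\times 2d$ block determinant collapses to $\det\big((y\boldsymbol{I}_d-cB)(x\boldsymbol{I}_d-cB)-c^2B^2\big)=\det\big(xy\boldsymbol{I}_d-c(x+y)B\big)$.

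Next, I would expand the $d\times d$ determinant $\det(xy\boldsymbol{I}_d-c(x+y)B)=\sum_{j=0}^{d}(-1)^j c^j(xy)^{d-j}(x+y)^j e_j(B)$ using the standard characteristic-polynomial-style expansion. Multiplying through by $z^n$ and rewriting $z^n c^j=z^{n-j}(z+1)^j$ produces
\begin{equation*}
\widetilde{M}(x,y,z)=\sum_{j=0}^{d}(-1)^j (z+1)^j\,z^{n-j}\,(xy)^{d-j}(x+y)^j\,e_j(B),
\end{equation*}
where terms with $j>\mathrm{rank}(B)$ vanish, so no negative power of $z$ actually appears. Now apply $\partial_y^{d-k}$ at $y=0$: the factor $y^{d-j}(x+y)^j$ contributes $(d-k)!\binom{j}{k}x^k$ (for $j\geq k$, zero otherwise), since we need the coefficient of $y^{d-k}$. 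Similarly, $\partial_z^{n-k}$ at $z=0$ extracts $(n-k)!\binom{j}{k}$ from $(z+1)^j z^{n-j}$. Combining and dividing by $\frac{(d-k)!(n-k)!}{(-1)^{d-k}}$ as prescribed in \eqref{eq:P-exp-base-qpsw} gives
\begin{equation*}
P_k(-x;\boldsymbol{A},\boldsymbol{A},\boldsymbol{A},\boldsymbol{A})=(-1)^{d-k}x^k\sum_{j=k}^{d}(-1)^j\binom{j}{k}^2 e_j(B)\,x^{d-j}.
\end{equation*}

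For the right-hand side of \eqref{eq:P-exp-base-p1}, set $p(x)=\det(x\boldsymbol{I}_d-B)=\sum_{i=0}^d c_i x^i$ with $c_i=(-1)^{d-i}e_{d-i}(B)$, and apply Lemma \ref{Linear algebra-lemma-6}: the operator $\mathcal{R}_{x,d}^+(\partial_x x\partial_x)^k\mathcal{R}_{x,d}^+$ maps $p(x)$ to $x^k\sum_{i=0}^{d-k}\left(\tfrac{(d-i)!}{(d-i-k)!}\right)^2 c_i x^i$. Re-indexing via $j=d-i$ and using $\tfrac{j!}{(j-k)!}=k!\binom{j}{k}$ turns this into $(k!)^2 x^k\sum_{j=k}^{d}(-1)^j\binom{j}{k}^2 e_j(B)x^{d-j}$. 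Dividing by $(k!)^2$ and multiplying by $(-1)^{d-k}$ matches exactly the expansion of $P_k(-x;\boldsymbol{A},\boldsymbol{A},\boldsymbol{A},\boldsymbol{A})$ derived above, completing the proof.

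The main obstacle is bookkeeping rather than conceptual: keeping track of signs and of the two reindexings (once for $\det(xy\boldsymbol{I}_d-c(x+y)B)$ in terms of $e_j(B)$, once for $\det(x\boldsymbol{I}_d-B)$ as a sum $\sum c_i x^i$), and justifying that the Schur-complement identity derived under $z\neq 0$ remains valid as a polynomial identity. The symmetric emergence of $\binom{j}{k}^2$ from two independent derivative extractions (in $y$ and in $z$) is what makes the Laguerre-derivative operator $(\partial_x x\partial_x)^k=\partial_x^k x^k\partial_x^k$ appear on the right-hand side, paralleling the single $\binom{j}{k}$ that arises in the column-selection identity \eqref{meq33}.
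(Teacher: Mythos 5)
Your proof is correct. The block-determinant reduction via the Schur complement and the commuting-block lemma is identical to what the paper does (equation \eqref{eq:2025:xu43} reaches the same intermediate expression $z^n\det[xyI_d - \tfrac{z+1}{z}(x+y)\boldsymbol{A}^T\boldsymbol{A}]$, just with $y^d$ factored out), and both proofs conclude by invoking Lemma \ref{Linear algebra-lemma-6}. Where you diverge is in the middle: the paper twice applies the flip identity \eqref{eq:2025:xu42} to convert $\partial_y^{d-k}|_{y=0}$ and $\partial_z^{n-k}|_{z=0}$ into $\partial_y^k|_{y=0}$ and $\partial_z^k|_{z=0}$, then uses the chain rule to recenter the derivatives at $y=z=1$ acting on $\det[xI_d - yz\boldsymbol{A}^T\boldsymbol{A}]$, whereas you expand directly in the elementary symmetric functions $e_j(\boldsymbol{A}^T\boldsymbol{A})$ and extract the coefficient of $y^{d-k}$ from $y^{d-j}(x+y)^j$ and of $z^{n-k}$ from $(z+1)^jz^{n-j}$, each yielding a factor $\binom{j}{k}$. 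Your route trades the paper's flip-and-chain-rule reindexing for two routine binomial coefficient extractions, which makes the source of the $\binom{j}{k}^2$ (and hence of the Laguerre operator $\partial_x^k x^k\partial_x^k$) more visibly symmetric; the paper's route instead isolates the change of variable $(1+xy)(1+z)\mapsto yz$ as a clean intermediate step. Both are valid and of comparable length. One small presentational point: the paper's Lemma \ref{Linear algebra-lemma-schur} is stated with the invertible block in the lower-right corner, so using the upper-left block, as you do, tacitly relies on the permutation-symmetric variant $\det\left[\begin{smallmatrix}\boldsymbol{A}&\boldsymbol{B}\\\boldsymbol{C}&\boldsymbol{D}\end{smallmatrix}\right]=\det(\boldsymbol{A})\det(\boldsymbol{D}-\boldsymbol{C}\boldsymbol{A}^{-1}\boldsymbol{B})$; the paper handles this by explicitly permuting the blocks at the start of \eqref{eq:2025:xu43}, and you should either do the same or note the variant.
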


\begin{proof}
See Appendix \ref{proof-P-exp-base-gcur-x}.	
\end{proof}

We next simplify the formula of $P_{k}(-x;\boldsymbol{A},\boldsymbol{C},\boldsymbol{U},\boldsymbol{R})$ when $\boldsymbol{U}=\boldsymbol{C}\in\mathbb{R}^{n\times k}$ and $\boldsymbol{R}=\boldsymbol{A}\in\mathbb{R}^{n\times d}$.
\begin{prop}\label{P-exp-base-gcur-x-newcase}
Let $\boldsymbol{A}\in\mathbb{R}^{n\times d}$ and $\boldsymbol{C}\in\mathbb{R}^{n\times k}$ with $\mathrm{rank}(\boldsymbol{C})=k$. 
Then we have
\begin{equation}
\label{eq:P-exp-base-p1-newcase}
P_{k}(-x;\boldsymbol{A},\boldsymbol{C},\boldsymbol{C},\boldsymbol{A})
=\frac{(-1)^{d}\det\boldsymbol{C}^T\boldsymbol{C}}{k!}\cdot  \mathcal{R}_{x,d}^+\cdot  \partial_x^k\cdot x^k \cdot \mathcal{R}_{x,d}^+\   \det[x\cdot \boldsymbol{I}_d-\boldsymbol{A}^T(\boldsymbol{I}_n-\boldsymbol{C}\boldsymbol{C}^{\dagger})\boldsymbol{A}].
\end{equation}

\end{prop}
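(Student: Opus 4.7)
The plan is to apply the expected-polynomial formula of Proposition \ref{P-exp-base-gcur2} with $\boldsymbol{U}=\boldsymbol{C}\in\mathbb{R}^{n\times k}$ and $\boldsymbol{R}=\boldsymbol{A}$, where $d_C=k$ and $n_R=n$. Because $d_C-k=0$, the $y$-derivative disappears and the target identity reduces to
\begin{equation*}
P_{k}(-x;\boldsymbol{A},\boldsymbol{C},\boldsymbol{C},\boldsymbol{A})=\frac{(-1)^{d-k}}{(n-k)!}\,\partial_{z}^{n-k}M(x,z)\Big|_{z=0},
\end{equation*}
where $M(x,z)$ is the big determinant of Proposition \ref{P-exp-base-gcur2} after substituting $-x\mapsto x$ (so the bottom-right block is $x\boldsymbol{I}_d$) and setting $y=0$.

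The key step is to evaluate $M(x,z)$ in closed form by two nested applications of Lemma \ref{Linear algebra-lemma-schur}. Pivoting first on the invertible block $\mathrm{diag}(\boldsymbol{I}_n,z\boldsymbol{I}_n)$ produces, with $\alpha:=(z+1)/z$, the Schur complement
$\begin{bmatrix}-\alpha\boldsymbol{C}^{T}\boldsymbol{C} & -\alpha\boldsymbol{C}^{T}\boldsymbol{A}\\ -\alpha\boldsymbol{A}^{T}\boldsymbol{C} & x\boldsymbol{I}_d-\alpha\boldsymbol{A}^{T}\boldsymbol{A}\end{bmatrix}$,
and a second pivot on $-\alpha\boldsymbol{C}^{T}\boldsymbol{C}$ (invertible since $\mathrm{rank}(\boldsymbol{C})=k$) combined with $\boldsymbol{C}\boldsymbol{C}^{\dagger}=\boldsymbol{C}(\boldsymbol{C}^{T}\boldsymbol{C})^{-1}\boldsymbol{C}^{T}$ yields
\begin{equation*}
M(x,z)=(-1)^{k}\,z^{n-k}(z+1)^{k}\det(\boldsymbol{C}^{T}\boldsymbol{C})\cdot\det\!\bigl[x\boldsymbol{I}_{d}-\alpha\boldsymbol{A}^{T}(\boldsymbol{I}_n-\boldsymbol{C}\boldsymbol{C}^{\dagger})\boldsymbol{A}\bigr].
\end{equation*}
These identities are derived under the generic assumption $z\neq 0,-1$, but since both sides are polynomials in $(x,z)$ they extend to all values.

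Next, writing $q(x):=\det[x\boldsymbol{I}_d-\boldsymbol{A}^{T}(\boldsymbol{I}_n-\boldsymbol{C}\boldsymbol{C}^{\dagger})\boldsymbol{A}]=\sum_{i=0}^{d}b_{i}\,x^{i}$, the homogeneity $\det[x\boldsymbol{I}_d-\alpha\boldsymbol{B}]=\alpha^{d}q(x/\alpha)$ turns the above into
$M(x,z)=(-1)^{k}\det(\boldsymbol{C}^{T}\boldsymbol{C})\sum_{i=0}^{d}b_{i}\,x^{i}\,z^{n-k-d+i}(z+1)^{d-i+k}$.
The apparently negative powers of $z$ for small $i$ are harmless because $\mathrm{rank}(\boldsymbol{A}^{T}(\boldsymbol{I}_n-\boldsymbol{C}\boldsymbol{C}^{\dagger})\boldsymbol{A})\leq n-k$ forces $b_{i}=0$ whenever $i<d+k-n$. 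Applying $\partial_{z}^{n-k}$ via the Leibniz rule and evaluating at $z=0$, the only surviving term differentiates $z^{n-k-d+i}$ exactly $n-k-d+i$ times and $(z+1)^{d-i+k}$ the remaining $d-i$ times, contributing $\tfrac{(n-k)!\,(d-i+k)!}{k!\,(d-i)!}$ and thus
\begin{equation*}
P_{k}(-x;\boldsymbol{A},\boldsymbol{C},\boldsymbol{C},\boldsymbol{A})=\frac{(-1)^{d}\det(\boldsymbol{C}^{T}\boldsymbol{C})}{k!}\sum_{i=0}^{d}b_{i}\,\frac{(d-i+k)!}{(d-i)!}\,x^{i}.
\end{equation*}

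The proof is finished by a short direct calculation, parallel to Lemma \ref{Linear algebra-lemma-6}, verifying that $\mathcal{R}_{x,d}^{+}\cdot \partial_{x}^{k}\cdot x^{k}\cdot \mathcal{R}_{x,d}^{+}\,q(x)=\sum_{i=0}^{d}b_{i}\tfrac{(d-i+k)!}{(d-i)!}\,x^{i}$, which matches the displayed expression. I expect the main technical obstacle to be the careful bookkeeping of signs, dimensions and factorials through the two Schur reductions, together with the justification that the rational intermediate expressions extend to genuine polynomial identities in $z$ via the rank bound on $\boldsymbol{A}^{T}(\boldsymbol{I}_n-\boldsymbol{C}\boldsymbol{C}^{\dagger})\boldsymbol{A}$.
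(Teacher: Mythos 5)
Your proposal is correct, and while it follows the same overall skeleton as the paper's proof (reduce via Proposition~\ref{P-exp-base-gcur2}, evaluate the big determinant $h(x,z)$ in closed form, expand in coefficients of $q(x)$, and match against the operator $\mathcal{R}_{x,d}^+\cdot\partial_x^k\cdot x^k\cdot\mathcal{R}_{x,d}^+$), you execute the two substantive steps by different means.

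For the determinant evaluation, you apply two nested Schur complements directly---first pivoting on $\mathrm{diag}(\boldsymbol{I}_n,z\boldsymbol{I}_n)$ to reduce to a $2\times 2$ block matrix in $\boldsymbol{C}^T\boldsymbol{C}$ and $\boldsymbol{A}^T\boldsymbol{A}$, then pivoting on $-\alpha\boldsymbol{C}^T\boldsymbol{C}$ and using $\boldsymbol{C}\boldsymbol{C}^\dagger=\boldsymbol{C}(\boldsymbol{C}^T\boldsymbol{C})^{-1}\boldsymbol{C}^T$. The paper instead passes through the full SVD $\boldsymbol{C}=\boldsymbol{P}\boldsymbol{\Lambda}\boldsymbol{Q}^T$, rewrites the determinant in the SVD coordinates, and unwinds via a longer chain of block eliminations and row/column swaps. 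Both land on exactly the same expression $h(x,z)=(-1)^k z^{n-k}(z+1)^k\det(\boldsymbol{C}^T\boldsymbol{C})\det\bigl[x\boldsymbol{I}_d-(1+\tfrac1z)\boldsymbol{A}^T(\boldsymbol{I}_n-\boldsymbol{C}\boldsymbol{C}^\dagger)\boldsymbol{A}\bigr]$; your route is shorter and avoids the SVD, while the paper's keeps the argument parallel to the general-$\boldsymbol{U}$ analysis in Proposition~\ref{P-exp-base-gcur1}(iii) and Proposition~\ref{P-exp-base-gcur-x}, where SVD is unavoidable.

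For extracting the coefficient formula, the paper converts $\tfrac{1}{(n-k)!}\partial_z^{n-k}|_{z=0}$ into $\tfrac{1}{k!}\partial_z^k(z^n h(x,1/z))|_{z=0}$ via the flip identity \eqref{eq:2025:xu42} and then applies a chain-rule shift $z\mapsto 1+z$ to evaluate at $z=1$; you instead expand $h(x,z)$ directly in monomials $z^{n-k-d+i}(z+1)^{d-i+k}$ and apply Leibniz at $z=0$. Your direct expansion requires the extra observation that $b_i=0$ for $i<d+k-n$ (from $\mathrm{rank}(\boldsymbol{A}^T(\boldsymbol{I}_n-\boldsymbol{C}\boldsymbol{C}^\dagger)\boldsymbol{A})\le n-k$) to see that the negative powers of $z$ never actually appear; the paper's flip trick avoids this issue entirely by design. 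Both give $\sum_i b_i\tfrac{(d-i+k)!}{(d-i)!}x^i$ up to the common prefactor, and the final matching against the operator form is identical. In short: correct and sound, with a somewhat more elementary computation where the paper is more systematic.
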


\begin{proof}
See Appendix \ref{proof-P-exp-base-gcur-x-newcase}.	
\end{proof}

\section{Generalized CUR matrix decompositions: Proof of Theorem \ref{gcur-th2}}

The aim of this section is to prove Theorem \ref{gcur-th2}. We begin by defining $f_{\widetilde{S},\widetilde{W}}(x;\boldsymbol{A},\boldsymbol{C},\boldsymbol{U},\boldsymbol{R})$, which serves as a labeling of the nodes $v\in T$, where $T$ is the tree corresponding to a generalized interlacing family.
For convenience, we assume throughout this section that the sampling size $k$ is implicitly encoded within the definition of the polynomial $f_{\widetilde{S},\widetilde{W}}(x;\boldsymbol{A},\boldsymbol{C},\boldsymbol{U},\boldsymbol{R})$.

\begin{definition}
\label{def-f-SW}
Let $\boldsymbol{A}\in\mathbb{R}^{n\times d}$, $\boldsymbol{C}\in\mathbb{R}^{n\times d_{{C}}}$,  $\boldsymbol{U}\in\mathbb{R}^{n_{{R}}\times d_C}$ and $\boldsymbol{R}\in\mathbb{R}^{n_{{R}}\times d}$. 
Let $k$ be an integer satisfying $0\leq k\leq \mathrm{rank}(\boldsymbol{U})$.
For any two subsets $\widetilde{S}\subset[n_R]$ and $\widetilde{W}\subset[d_C]$ with $|\widetilde{S}|\leq k$ and $|\widetilde{W}|\leq k$, we define
\begin{equation}\label{summation1}
\begin{aligned}
f_{\widetilde{S},\widetilde{W}}(x;\boldsymbol{A},\boldsymbol{C},\boldsymbol{U},\boldsymbol{R})
=
\sum_{\substack{S\in\binom{[n_R]}{k}\\ \widetilde{S}\subset S}}
\sum_{\substack{W\in\binom{[d_C]}{k}\\ \widetilde{W}\subset W}}
p_{S,W}(x;\boldsymbol{A},\boldsymbol{C},\boldsymbol{U},\boldsymbol{R}),
\end{aligned}	
\end{equation}
where $p_{S,W}(x;\boldsymbol{A},\boldsymbol{C},\boldsymbol{U},\boldsymbol{R})$ is defined in Definition \ref{def-p-SW}.
\end{definition}

We first present several basic properties of $f_{\widetilde{S},\widetilde{W}}(x;\boldsymbol{A},\boldsymbol{C},\boldsymbol{U},\boldsymbol{R})$.

\begin{lemma}\label{prop-xujia1}
Let $\boldsymbol{A}\in\mathbb{R}^{n\times d}$, $\boldsymbol{C}\in\mathbb{R}^{n\times d_{{C}}}$,  $\boldsymbol{U}\in\mathbb{R}^{n_{{R}}\times d_C}$ and $\boldsymbol{R}\in\mathbb{R}^{n_{{R}}\times d}$. 
Let $k$ be an integer satisfying $0\leq k\leq \mathrm{rank}(\boldsymbol{U})$.
Let $\widetilde{S}\subset[n_R]$ and $\widetilde{W}\subset[d_C]$ be two subsets satisfying $|\widetilde{S}|\leq k$ and $|\widetilde{W}|\leq k$. Then we have the following results.
\begin{enumerate}[{\rm (i)}]
	\item The polynomial $f_{\widetilde{S},\widetilde{W}}(x;\boldsymbol{A},\boldsymbol{C},\boldsymbol{U},\boldsymbol{R})$ is identically zero if 
$\mathcal{T}=\emptyset$, where
\begin{equation*}
\mathcal{T}_{}:=
\left\{(S,W)\in \binom{[n_R]}{k}\times \binom{[d_C]}{k}:
\widetilde{S}\subset S, \widetilde{W}\subset W,
\mathrm{rank}
\left(
\begin{matrix}
\boldsymbol{C}_{:,W}\\
\boldsymbol{U}_{S,W}	
\end{matrix}
\right)
=\mathrm{rank}
\left(
\begin{matrix}
\boldsymbol{U}_{S,W} &	\boldsymbol{R}_{S,:}
\end{matrix}
\right)=k
\right\}.	
\end{equation*}
If $\mathcal{T}\neq\emptyset$, then
$f_{\widetilde{S},\widetilde{W}}(x;\boldsymbol{A},\boldsymbol{C},\boldsymbol{U},\boldsymbol{R})$ is a degree-$(d-k+t)$ polynomial with positive leading coefficient, where $t:=\max_{(S,W)\in\mathcal{T}} \mathrm{rank}(\boldsymbol{U}_{S,W})$.
In particular, $f_{\widetilde{S},\widetilde{W}}(x;\boldsymbol{A},\boldsymbol{C},\boldsymbol{U},\boldsymbol{R})$ has degree $d$ if and only if there is a nonsingular submatrix $\boldsymbol{U}_{S,W}\in\mathbb{R}^{k\times k}$ such that $\widetilde{S}\subset S$ and $\widetilde{W}\subset W$.

\item If $|\widetilde{S}|< k$ then
\begin{equation}\label{combination1-1}
\begin{aligned}
\sum_{i\notin\widetilde{S}}^{} f_{\widetilde{S}\cup \{{i}\},\widetilde{W}}(x;\boldsymbol{A},\boldsymbol{C},\boldsymbol{U},\boldsymbol{R})
&=(k-|\widetilde{S}|)\cdot f_{\widetilde{S},\widetilde{W}}(x;\boldsymbol{A},\boldsymbol{C},\boldsymbol{U},\boldsymbol{R}).
\end{aligned}	
\end{equation}
If $|\widetilde{W}|< k$ then
\begin{equation}\label{combination1-2}
\begin{aligned}
\sum_{i\notin\widetilde{W}}^{} f_{\widetilde{S},\widetilde{W}\cup \{{i}\}}(x;\boldsymbol{A},\boldsymbol{C},\boldsymbol{U},\boldsymbol{R})
=(k-|\widetilde{W}|)\cdot f_{\widetilde{S},\widetilde{W}}(x;\boldsymbol{A},\boldsymbol{C},\boldsymbol{U},\boldsymbol{R}).
\end{aligned}	
\end{equation}

\end{enumerate}

\end{lemma}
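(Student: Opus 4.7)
The plan is to treat parts (i) and (ii) separately, since (i) is a degree/leading-coefficient bookkeeping argument that reduces to Proposition \ref{P-exp-base-gcur1}(iii), while (ii) is a pure double-counting identity built directly from Definition \ref{def-f-SW}.

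For (i), I will invoke Proposition \ref{P-exp-base-gcur1}(iii) to classify each summand $p_{S,W}$ in \eqref{summation1}. That result says $p_{S,W}(x;\boldsymbol{A},\boldsymbol{C},\boldsymbol{U},\boldsymbol{R})$ is identically zero precisely when one of the block matrices $\bigl[\boldsymbol{C}_{:,W}^T,\boldsymbol{U}_{S,W}^T\bigr]^T$ or $\bigl[\boldsymbol{U}_{S,W},\boldsymbol{R}_{S,:}\bigr]$ has rank below $k$, and otherwise it is a polynomial of degree exactly $d-k+\mathrm{rank}(\boldsymbol{U}_{S,W})$ with a positive leading coefficient. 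If $\mathcal{T}=\emptyset$, every summand of \eqref{summation1} falls in the first case, so $f_{\widetilde S,\widetilde W}\equiv 0$. If $\mathcal{T}\ne\emptyset$, then every nonzero summand has a positive leading coefficient, so no cancellation can occur at the top degree; consequently the degree of the sum is the largest degree among nonzero summands, namely $d-k+t$ with $t=\max_{(S,W)\in\mathcal{T}}\mathrm{rank}(\boldsymbol{U}_{S,W})$, and the leading coefficient is positive. The ``in particular'' clause follows because $d-k+t=d$ iff $t=k$, i.e., some $\boldsymbol{U}_{S,W}$ indexed by $(S,W)\in\mathcal{T}$ is nonsingular (note that nonsingularity of $\boldsymbol{U}_{S,W}$ already forces the two rank-$k$ conditions defining $\mathcal{T}$).

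For (ii), I will prove \eqref{combination1-1} by swapping the order of summation; the proof of \eqref{combination1-2} is identical by symmetry between rows and columns. Substituting Definition \ref{def-f-SW} into the left-hand side gives
\begin{equation*}
\sum_{i\notin\widetilde S}\ \sum_{\substack{S\in\binom{[n_R]}{k}\\ \widetilde S\cup\{i\}\subset S}}\ \sum_{\substack{W\in\binom{[d_C]}{k}\\ \widetilde W\subset W}} p_{S,W}(x;\boldsymbol{A},\boldsymbol{C},\boldsymbol{U},\boldsymbol{R}).
\end{equation*}
For a fixed pair $(S,W)$ with $\widetilde S\subset S$ and $\widetilde W\subset W$, the polynomial $p_{S,W}$ is counted once for every $i\in S\setminus\widetilde S$, and $|S\setminus\widetilde S|=k-|\widetilde S|$ since $|S|=k$. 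Factoring this multiplicity out and reapplying Definition \ref{def-f-SW} yields $(k-|\widetilde S|)\cdot f_{\widetilde S,\widetilde W}$, as required.

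I do not anticipate a genuine obstacle: part (i) is a direct application of the per-term analysis in Proposition \ref{P-exp-base-gcur1}(iii), and the only subtlety is to note that positivity of the leading coefficients rules out degree collapse when summing. Part (ii) is elementary Fubini. The one place to be careful is the ``in particular'' statement in (i), where I must verify that invertibility of a square $k\times k$ submatrix $\boldsymbol{U}_{S,W}$ automatically implies the two rank-$k$ conditions on $[\boldsymbol{C}_{:,W};\boldsymbol{U}_{S,W}]$ and $[\boldsymbol{U}_{S,W},\boldsymbol{R}_{S,:}]$ that define $\mathcal{T}$, so that such a pair $(S,W)$ really belongs to $\mathcal{T}$.
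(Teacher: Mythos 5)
Your proposal is correct and matches the paper's own proof essentially line for line: part (i) follows by applying Proposition \ref{P-exp-base-gcur1}(iii) termwise and noting that positive leading coefficients preclude top-degree cancellation, and part (ii) is the same interchange-of-summation count giving multiplicity $k-|\widetilde{S}|$ (resp.\ $k-|\widetilde{W}|$). The caveat you flag about the ``in particular'' clause is handled correctly and is the only point requiring any care.
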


\begin{proof}
(i) The degree of $f_{\widetilde{S},\widetilde{W}}(x;\boldsymbol{A},\boldsymbol{C},\boldsymbol{U},\boldsymbol{R})$ is determined by the maximum degree among the polynomials $p_{S,W}(x;\boldsymbol{A},\boldsymbol{C},\boldsymbol{U},\boldsymbol{R})$ on the right-hand side of \eqref{summation1}.	
According to Proposition \ref{P-exp-base-gcur1} (iii), if $\mathcal{T}=\emptyset$, then  $f_{\widetilde{S},\widetilde{W}}(x;\boldsymbol{A},\boldsymbol{C},\boldsymbol{U},\boldsymbol{R})$ is identically zero. Otherwise,   $f_{\widetilde{S},\widetilde{W}}(x;\boldsymbol{A},\boldsymbol{C},\boldsymbol{U},\boldsymbol{R})$ is a degree-$(d-k+t)$ polynomial with positive leading coefficient.
Combining with Proposition \ref{P-exp-base-gcur1} (ii), we see that $f_{\widetilde{S},\widetilde{W}}(x;\boldsymbol{A},\boldsymbol{C},\boldsymbol{U},\boldsymbol{R})$ has degree $d$ precisely when there is a nonsingular submatrix $\boldsymbol{U}_{S,W}\in\mathbb{R}^{k\times k}$ such that $\widetilde{S}\subset S$ and $\widetilde{W}\subset W$.

(ii) We first prove \eqref{combination1-1}. A direct calculation shows that
\begin{equation*}
\begin{aligned}
\sum_{i\notin\widetilde{S}}^{} f_{\widetilde{S}\cup \{{i}\},\widetilde{W}}(x;\boldsymbol{A},\boldsymbol{C},\boldsymbol{U},\boldsymbol{R})
&=\sum_{i\notin\widetilde{S}}^{} 
\sum_{\substack{S\in\binom{[n_R]}{k}\\ \widetilde{S}\cup\{i\}\subset S}}
\sum_{\substack{W\in\binom{[d_C]}{k}\\ \widetilde{W}\subset W}}
p_{S,W}(x;\boldsymbol{A},\boldsymbol{C},\boldsymbol{U},\boldsymbol{R})\\
&=(k-|\widetilde{S}|)\cdot 
\sum_{\substack{S\in\binom{[n_R]}{k}\\ \widetilde{S}\subset S}}
\sum_{\substack{W\in\binom{[d_C]}{k}\\ \widetilde{W}\subset W}}
p_{S,W}(x;\boldsymbol{A},\boldsymbol{C},\boldsymbol{U},\boldsymbol{R})\\
&=(k-|\widetilde{S}|)\cdot f_{\widetilde{S},\widetilde{W}}(x;\boldsymbol{A},\boldsymbol{C},\boldsymbol{U},\boldsymbol{R}).
\end{aligned}	
\end{equation*}
We can prove \eqref{combination1-2} using a similar calculation. This completes the proof. 

\end{proof}

We next present several alternative expressions for $f_{\widetilde{S},\widetilde{W}}(x;\boldsymbol{A},\boldsymbol{C},\boldsymbol{U},\boldsymbol{R})$.

\begin{lemma}\label{lemma2-f-SW}
\begin{enumerate}
\item[\rm (i)] 
Let $\boldsymbol{A}\in\mathbb{R}^{n\times d}$, $\boldsymbol{C}\in\mathbb{R}^{n\times d_{{C}}}$,  $\boldsymbol{U}\in\mathbb{R}^{n_{{R}}\times d_C}$ and $\boldsymbol{R}\in\mathbb{R}^{n_{{R}}\times d}$. 
Let $k$ be an integer satisfying $0\leq k\leq \mathrm{rank}(\boldsymbol{U})$.
Let $\widetilde{S}\subset[n_R]$ and $\widetilde{W}\subset[d_C]$ be two subsets of size at most $k$.
Then $f_{\widetilde{S},\widetilde{W}}(x;\boldsymbol{A},\boldsymbol{C},\boldsymbol{U},\boldsymbol{R})$ is either identically zero or real-rooted. Moreover, we have 
\begin{equation}\label{meq30}
\begin{aligned}
f_{\widetilde{S},\widetilde{W}}(-x;\boldsymbol{A},\boldsymbol{C},\boldsymbol{U},\boldsymbol{R})
&=\frac{(-1)^{d-k}}{(k-|\widetilde{W}|)!\cdot (k-|\widetilde{S}|)!}
\partial_y^{k-|\widetilde{W}|}
\partial_z^{k-|\widetilde{S}|}
g(x,y,z)
\ \Bigg\vert_{y=z=0},\\ 
\end{aligned}	
\end{equation}
where
\begin{equation}\label{meq32}
\begin{aligned}
g(x,y,z)&:=	\partial_{\boldsymbol{Y}^{\widetilde{W}}}
\partial_{\boldsymbol{Z}^{\widetilde{S}}} 
\Big(\prod_{i=1}^{d_C}\mathcal{R}_{y_i,1}^-\Big)
\Big(\prod_{i=1}^{n_R}\mathcal{R}_{z_i,1}^-\Big)
Q(x,\boldsymbol{Y},\boldsymbol{Z};\boldsymbol{A},\boldsymbol{C},\boldsymbol{U},\boldsymbol{R})\ \bigg|_{\substack{y_i=y,\forall i\in[d_C]\\ z_i=z,\forall i\in[n_R]}},\\
\end{aligned}
\end{equation}

\item[\rm (ii)] 
Let $\boldsymbol{A}\in\mathbb{R}^{n\times d}$.
Let $l$ and $k$ be two integers satisfying $0\leq l\leq  k\leq \mathrm{rank}(\boldsymbol{A})$.
Let $\widetilde{S}\subset[n_R]$ and $\widetilde{W}\subset[d_C]$ be two $l$-subsets such that
 $\boldsymbol{A}_{\widetilde{S},\widetilde{W}}\in\mathbb{R}^{l\times l}$ is invertible.
Then 
\begin{equation}
\label{meq35}
\begin{aligned}
f_{\widetilde{S},\widetilde{W}}(-x;\boldsymbol{A},\boldsymbol{A},\boldsymbol{A},\boldsymbol{A})
=\frac{(-1)^{d-k+l}\det[\boldsymbol{A}_{\widetilde{S},\widetilde{W}}]^2  }{((k-l)!)^2}\cdot  
\mathcal{R}_{x,d}^+\cdot (\partial_x\cdot x \cdot \partial_x)^{k-l} \cdot \mathcal{R}_{x,d}^+\   \det[x\cdot \boldsymbol{I}_d-\boldsymbol{B}^T\boldsymbol{B}],
\end{aligned}	
\end{equation}
where $\boldsymbol{B}=\boldsymbol{A}-\boldsymbol{A}_{:,\widetilde{W}}(\boldsymbol{A}_{\widetilde{S},\widetilde{W}})^{-1}\boldsymbol{A}_{\widetilde{S},:}\in\mathbb{R}^{n\times d}$.
Consequently, we have
\begin{equation}\label{meq350}
\mathrm{minroot}\ f_{\widetilde{S},\widetilde{W}}(x;\boldsymbol{A},\boldsymbol{A},\boldsymbol{A},\boldsymbol{A})
=-	\mathrm{maxroot}\ \mathcal{R}_{x,d}^+\cdot (\partial_x\cdot x \cdot \partial_x)^{k-l} \cdot \mathcal{R}_{x,d}^+\   \det[x\cdot \boldsymbol{I}_d-\boldsymbol{B}^T\boldsymbol{B}].
\end{equation}

\end{enumerate}

\end{lemma}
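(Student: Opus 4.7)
The plan for part~(i) is to expand $f_{\widetilde{S},\widetilde{W}}(-x;\boldsymbol{A},\boldsymbol{C},\boldsymbol{U},\boldsymbol{R})$ via Proposition~\ref{P-exp-base-gcur1}(i) as a constrained sum of partial derivatives of $Q(x,\boldsymbol{Y},\boldsymbol{Z})$ evaluated at $\boldsymbol{Y}=\boldsymbol{Z}=\boldsymbol{0}$, with the summation indices $(S,W)\in\binom{[n_R]}{k}\times\binom{[d_C]}{k}$ restricted to $\widetilde{S}\subset S$ and $\widetilde{W}\subset W$. To encode the superset constraints while collapsing the sum into a single differential expression, I would exploit the multiaffinity of $Q$ in $(\boldsymbol{Y},\boldsymbol{Z})$ together with the elementary identity $\prod_{i=1}^{d_C}\mathcal{R}^-_{y_i,1}\,y_T=(-1)^{|T|}y_{T^C}$. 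After applying all the flips in $\boldsymbol{Y}$ and $\boldsymbol{Z}$, the application of $\partial_{\boldsymbol{Y}^{\widetilde{W}}}\partial_{\boldsymbol{Z}^{\widetilde{S}}}$ forces the surviving monomials to contain $y_{\widetilde{W}}$ and $z_{\widetilde{S}}$; the subsequent diagonal specializations $y_i=y$, $z_i=z$ together with the normalized derivatives $\frac{1}{(k-|\widetilde{W}|)!(k-|\widetilde{S}|)!}\partial_y^{k-|\widetilde{W}|}\partial_z^{k-|\widetilde{S}|}$ evaluated at $y=z=0$ single out the stratum $|W|=|S|=k$. This recovers the sum of $p_{S,W}(-x)$ over the desired pairs and so yields \eqref{meq30}, in direct analogy with the proof of Proposition~\ref{P-exp-base-gcur2}.

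For real-rootedness I would observe that $Q(-x,\boldsymbol{Y},\boldsymbol{Z})$ is the determinant of a Hermitian pencil whose coefficient matrices of $x$, each $y_i$, and each $z_j$ are positive semidefinite, so Lemma~\ref{real stable} gives real stability of $Q(-x,\boldsymbol{Y},\boldsymbol{Z})$ in $(x,\boldsymbol{Y},\boldsymbol{Z})$. Real stability is then preserved under every operation used to build $g$ and $f_{\widetilde{S},\widetilde{W}}(-x)$: Lemma~\ref{real stable2}(iv) handles the flips $\mathcal{R}^-_{y_i,1}$ and $\mathcal{R}^-_{z_i,1}$, Lemma~\ref{real stable2}(iii) handles each partial derivative, Lemma~\ref{real stable2}(ii) handles the collapses $y_1=\cdots=y_{d_C}=y$ and $z_1=\cdots=z_{n_R}=z$, and Lemma~\ref{real stable2}(i) handles the evaluation $y=z=0$. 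Since the resulting polynomial is univariate in $x$, real stability coincides with real-rootedness, giving the claim for $f_{\widetilde{S},\widetilde{W}}(x)$.

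For part~(ii), I would reduce to the classical-CUR expected polynomial for the residual matrix $\boldsymbol{B}$. The key step is the polynomial identity
\begin{equation*}
p_{S,W}(x;\boldsymbol{A},\boldsymbol{A},\boldsymbol{A},\boldsymbol{A})=\det[\boldsymbol{A}_{\widetilde{S},\widetilde{W}}]^2\cdot p_{S',W'}(x;\boldsymbol{B},\boldsymbol{B},\boldsymbol{B},\boldsymbol{B})
\end{equation*}
for every $S\supset\widetilde{S}$, $W\supset\widetilde{W}$ of size $k$, with $S'=S\backslash\widetilde{S}$, $W'=W\backslash\widetilde{W}$. On the Zariski dense subset where $\boldsymbol{A}_{S,W}$ is invertible, this follows by combining the explicit form in Proposition~\ref{P-exp-base-gcur1}(ii) with the iterated Schur complement identity $\boldsymbol{A}-\boldsymbol{A}_{:,W}(\boldsymbol{A}_{S,W})^{-1}\boldsymbol{A}_{S,:}=\boldsymbol{B}-\boldsymbol{B}_{:,W'}(\boldsymbol{B}_{S',W'})^{-1}\boldsymbol{B}_{S',:}$ (a consequence of Lemma~\ref{Linear algebra-lemma-schur}) together with the factorization $\det[\boldsymbol{A}_{S,W}]=\pm\det[\boldsymbol{A}_{\widetilde{S},\widetilde{W}}]\cdot\det[\boldsymbol{B}_{S',W'}]$, which squares to the scalar on the right. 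A polynomial continuation argument in the entries of $\boldsymbol{A}$ then extends the identity to all $(S,W)$. Summing over the admissible $(S,W)$, and using Lemma~\ref{Linear algebra-lemma-2}(i) to identify $\boldsymbol{B}_{S',W'}$ with $\boldsymbol{B}_{S,W}$ (since rows of $\widetilde{S}$ and columns of $\widetilde{W}$ in $\boldsymbol{B}$ vanish), produces $f_{\widetilde{S},\widetilde{W}}(x;\boldsymbol{A},\boldsymbol{A},\boldsymbol{A},\boldsymbol{A})=\det[\boldsymbol{A}_{\widetilde{S},\widetilde{W}}]^2\cdot P_{k-l}(x;\boldsymbol{B},\boldsymbol{B},\boldsymbol{B},\boldsymbol{B})$. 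Substituting $x\mapsto-x$ and invoking Proposition~\ref{P-exp-base-gcur-x} with parameters $\boldsymbol{B}$ and $k-l$ in place of $\boldsymbol{A}$ and $k$ yields \eqref{meq35} directly. Equation~\eqref{meq350} then follows at once: smallest roots of $f_{\widetilde{S},\widetilde{W}}(x)$ correspond bijectively to largest roots of $f_{\widetilde{S},\widetilde{W}}(-x)$, and the strictly positive prefactor in \eqref{meq35} does not affect root locations.

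The main obstacle I anticipate is the iterated Schur complement identity underlying part~(ii), specifically the rigorous handling of degenerate pairs $(S,W)$ where $\boldsymbol{A}_{S,W}$ or $\boldsymbol{B}_{S',W'}$ is singular: polynomial continuation is clean in principle but requires checking that both sides are genuine polynomial functions of the entries of $\boldsymbol{A}$. The careful bookkeeping of signs arising from the $\mathcal{R}^-$ flips and the constrained sums in part~(i) is also delicate, although the computation becomes essentially mechanical once the flip identity $\prod_i\mathcal{R}^-_{y_i,1}\,y_T=(-1)^{|T|}y_{T^C}$ is in place.
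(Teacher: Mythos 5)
Your plan for part (i) follows the paper's own route: starting from Proposition \ref{P-exp-base-gcur1}(i), you use multiaffinity of $Q$ together with the flip identity $\prod_i\mathcal{R}^-_{y_i,1}\,y_T=(-1)^{|T|}y_{T^C}$ to convert the constrained sum over $(S,W)\supset(\widetilde{S},\widetilde{W})$ into a single differential expression, and appeal to the real-stability-preservation lemmas for real-rootedness. One small slip: Lemma \ref{real stable} does not apply to $Q(-x,\boldsymbol{Y},\boldsymbol{Z})$ (the coefficient of $x$ there is negative semidefinite), but to $Q(x,\boldsymbol{Y},\boldsymbol{Z})$ itself. That is exactly what is needed: $g(x,y,z)$ in \eqref{meq32} is built from $Q(x,\boldsymbol{Y},\boldsymbol{Z})$ by real-stability-preserving operations, so $f_{\widetilde{S},\widetilde{W}}(-x)$ and hence $f_{\widetilde{S},\widetilde{W}}(x)$ is real-rooted.

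For part (ii) you take a genuinely different route. The paper specializes $\boldsymbol{C}=\boldsymbol{U}=\boldsymbol{R}=\boldsymbol{A}$ in the determinantal generating formula $h(x,y,z)$, performs a single Schur complement with respect to the $2l\times2l$ block involving $\boldsymbol{A}_{\widetilde{S},\widetilde{W}}$, recognizes the surviving determinant as the generating polynomial for $P_{k-l}(\cdot;\boldsymbol{N})$ with $\boldsymbol{N}=\boldsymbol{B}_{\widetilde{S}^C,\widetilde{W}^C}$, and then invokes Proposition \ref{P-exp-base-gcur-x}; no case distinction on $(S,W)$ is ever needed. You instead establish the term-wise identity $p_{S,W}(x;\boldsymbol{A},\boldsymbol{A},\boldsymbol{A},\boldsymbol{A})=\det[\boldsymbol{A}_{\widetilde{S},\widetilde{W}}]^2\,p_{S\setminus\widetilde{S},\,W\setminus\widetilde{W}}(x;\boldsymbol{B},\boldsymbol{B},\boldsymbol{B},\boldsymbol{B})$ for each admissible $(S,W)$ — first on the open locus where $\boldsymbol{A}_{S,W}$ is invertible (via the iterated rank-one update from Lemma \ref{Linear algebra-lemma-2}), then by a rational-function continuation in the entries of $\boldsymbol{A}$ (both sides have poles only along $\det\boldsymbol{A}_{\widetilde{S},\widetilde{W}}=0$, so equality on a Zariski-dense open set extends). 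Summing and discarding the terms that vanish (the rows $\widetilde{S}$ and columns $\widetilde{W}$ of $\boldsymbol{B}$ are zero, so pairs hitting them give identically zero $p_{S',W'}(\cdot;\boldsymbol{B})$) yields $f_{\widetilde{S},\widetilde{W}}(x;\boldsymbol{A})=\det[\boldsymbol{A}_{\widetilde{S},\widetilde{W}}]^2\,P_{k-l}(x;\boldsymbol{B})$, and Proposition \ref{P-exp-base-gcur-x} with $\boldsymbol{B}$ and parameter $k-l$ closes. This is valid and conceptually tighter: it exhibits $f_{\widetilde{S},\widetilde{W}}$ directly as a scaled expected polynomial for the residual matrix $\boldsymbol{B}$, which the paper's computation only reveals implicitly. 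The price is precisely the continuation argument across singular $(S,W)$, which you rightly flag as the delicate point; the paper's manipulation of the generating determinant sidesteps it entirely. One detail you should also check explicitly: applying Proposition \ref{P-exp-base-gcur-x} to $\boldsymbol{B}$ with parameter $k-l$ requires $k-l\leq\mathrm{rank}(\boldsymbol{B})$, which holds because $\mathrm{rank}(\boldsymbol{B})=\mathrm{rank}(\boldsymbol{A})-l\geq k-l$.
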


\begin{proof}
See Appendix \ref{proof-lemma2-f-SW}.	
\end{proof}

\begin{lemma}\label{lemma-f-SW}
Let $\boldsymbol{A}\in\mathbb{R}^{n\times d}$, $\boldsymbol{C}\in\mathbb{R}^{n\times d_{{C}}}$,  $\boldsymbol{U}\in\mathbb{R}^{n_{{R}}\times d_C}$ and $\boldsymbol{R}\in\mathbb{R}^{n_{{R}}\times d}$. 
Let $k$ be an integer satisfying $0\leq k\leq \mathrm{rank}(\boldsymbol{U})$.
Let $\widetilde{S}\subset[n_R]$ and $\widetilde{W}\subset[d_C]$ be two subsets of size at most $k$.
Assume that $f_{\widetilde{S},\widetilde{W}}(x;\boldsymbol{A},\boldsymbol{C},\boldsymbol{U},\boldsymbol{R})$ is not identically zero.

\begin{itemize}
\item[\rm (i)] 
If $|\widetilde{S}|<k$,
then the nonzero polynomials in the set $\{f_{\widetilde{S}\cup \{i\},\widetilde{W}}(x;\boldsymbol{A},\boldsymbol{C},\boldsymbol{U},\boldsymbol{R}) : i \in [n_R] \setminus \widetilde{S}\}$ have a common interlacing.

\item[\rm (ii)] 
If $|\widetilde{W}|<k$, then the nonzero polynomials in the set $\{f_{\widetilde{S},\widetilde{W}\cup \{i\}}(x;\boldsymbol{A},\boldsymbol{C},\boldsymbol{U},\boldsymbol{R}) : i \in [d_C] \setminus \widetilde{W}\}$ have a common interlacing.

\end{itemize}

\end{lemma}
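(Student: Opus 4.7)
The plan is to reduce the common-interlacing assertion to the real-rootedness of all nonnegative linear combinations via Lemma \ref{fell-interlacing}, and then to realize those combinations as the image of the multivariate real stable polynomial $Q(x,\boldsymbol{Y},\boldsymbol{Z})$ of Definition \ref{def-H} under stability-preserving operations. I will only treat (i); part (ii) follows by the same argument with the roles of rows and columns, and of $\boldsymbol{Z}/\widetilde{S}$ and $\boldsymbol{Y}/\widetilde{W}$, interchanged, using the symmetry of the block matrix defining $Q$. By Lemma \ref{prop-xujia1}(i) together with the real-rootedness part of Lemma \ref{lemma2-f-SW}(i), each nonzero $f_{\widetilde{S}\cup\{i\},\widetilde{W}}(x;\cdot)$ is real-rooted with positive leading coefficient. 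Therefore, by Lemma \ref{fell-interlacing}, the common interlacing will follow once I show that for every choice of $\mu_i\geq 0$, $i\in[n_R]\setminus\widetilde{S}$, the polynomial
\begin{equation*}
F_\mu(x):=\sum_{i\in[n_R]\setminus\widetilde{S}}\mu_i\, f_{\widetilde{S}\cup\{i\},\widetilde{W}}(x;\boldsymbol{A},\boldsymbol{C},\boldsymbol{U},\boldsymbol{R})
\end{equation*}
is real-rooted.

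Since real-rootedness of $F_\mu(x)$ is equivalent to that of $F_\mu(-x)$, I will apply \eqref{meq30}--\eqref{meq32} of Lemma \ref{lemma2-f-SW} term by term and use $\partial_{\boldsymbol{Z}^{\widetilde{S}\cup\{i\}}}=\partial_{z_i}\partial_{\boldsymbol{Z}^{\widetilde{S}}}$ to write
\begin{equation*}
F_\mu(-x)=\frac{(-1)^{d-k}}{(k-|\widetilde{W}|)!\,(k-|\widetilde{S}|-1)!}\,\partial_y^{k-|\widetilde{W}|}\,\partial_z^{k-|\widetilde{S}|-1}\,G_\mu(x,y,z)\Big|_{y=z=0},
\end{equation*}
where
\begin{equation*}
G_\mu(x,y,z):=\partial_{\boldsymbol{Y}^{\widetilde{W}}}\partial_{\boldsymbol{Z}^{\widetilde{S}}}\Bigl(\sum_{i\notin\widetilde{S}}\mu_i\partial_{z_i}\Bigr)\prod_{j=1}^{d_C}\mathcal{R}^-_{y_j,1}\prod_{j=1}^{n_R}\mathcal{R}^-_{z_j,1}\,Q(x,\boldsymbol{Y},\boldsymbol{Z})\Big|_{y_j=y,\,z_j=z}.
\end{equation*}
The matrix inside $Q$ is symmetric and decomposes as $M+xE+\sum_{j}y_jE^Y_j+\sum_iz_iE^Z_i$, with $E$, $E^Y_j$ and $E^Z_i$ diagonal positive-semidefinite projectors onto the $d$-, $d_C$- and $n_R$-blocks respectively, so Lemma \ref{real stable} gives that $Q$ is real stable on $(x,\boldsymbol{Y},\boldsymbol{Z})$. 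Every operation used to pass from $Q$ to $F_\mu(-x)$ preserves real stability by Lemma \ref{real stable2}: each flip $\mathcal{R}^-$ by (iv); each single-variable derivative and the nonnegative combination $\sum_{i\notin\widetilde{S}}\mu_i\partial_{z_i}$ by (iii); the iterated identifications $y_j=y$ (for $j\notin\widetilde{W}$) and $z_j=z$ (for $j\notin\widetilde{S}$) by repeated application of (ii); and the specializations $y=0$ and $z=0$ by (i). Hence $F_\mu(-x)$ is either identically zero or a real stable univariate polynomial, i.e., real-rooted, so Lemma \ref{fell-interlacing} yields the common interlacing asserted in (i).

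I expect the main technical hurdle to be the operator-identity for $G_\mu$: tracking the factorials, verifying that the pair $\partial_{z_i}\mathcal{R}^-_{z_i,1}$ acts on the multiaffine $Q$ as the section operator $z_i\mapsto 0$, and confirming that the newly introduced combination $\sum_i\mu_i\partial_{z_i}$ slots into \eqref{meq32} at precisely the position that reproduces the outer sum over $i$ in the definition of $F_\mu$. Once this bookkeeping is in place, the stability-preservation argument is a direct invocation of Lemma \ref{real stable2}, and the conclusion follows immediately from Lemma \ref{fell-interlacing}; the identically-zero children are automatically absorbed since the assertion is only about the nonzero members of the family.
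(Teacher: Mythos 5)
Your argument is correct and follows essentially the same route as the paper's proof: you reduce common interlacing to real-rootedness of nonnegative linear combinations via Lemma~\ref{fell-interlacing}, express that combination as $\partial_y^{k-|\widetilde W|}\partial_z^{k-|\widetilde S|-1}$ of a multivariate polynomial obtained by inserting $\sum_i\mu_i\partial_{z_i}$ into the operator expression \eqref{meq32}, and then invoke Lemmas~\ref{real stable} and~\ref{real stable2}. The only cosmetic differences are that the paper restricts the index set to $\mathcal{A}:=\{i : f_{\widetilde S\cup\{i\},\widetilde W}\not\equiv 0\}$ up front and normalizes $\sum c_i=1$; both choices are harmless in your version since the zero summands drop out and real-rootedness is scale-invariant.
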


\begin{proof}
(i) 
Define the set $\mathcal{A}:=\{i\in [n_R]\backslash\widetilde{S}: f_{\widetilde{S}\cup \{i\},\widetilde{W}}(x;\boldsymbol{A},\boldsymbol{C},\boldsymbol{U},\boldsymbol{R})\not\equiv0 \}$.
By Lemma \ref{lemma2-f-SW}, we have
\begin{equation*}
\begin{aligned}
&\sum_{i\in \mathcal{A} }	 c_i \cdot f_{\widetilde{S}\cup \{i\},\widetilde{W}}(x;\boldsymbol{A},\boldsymbol{C},\boldsymbol{U},\boldsymbol{R})
&=\frac{(-1)^{d-k}}{(k-|\widetilde{W}|)!(k-|\widetilde{S}|-1)!}
\partial_y^{k-|\widetilde{W}|}
\partial_z^{k-|\widetilde{S}|-1}
h(x,y,z)
\ \Bigg\vert_{y=z=0},\\ 
\end{aligned}	
\end{equation*}
where
\begin{equation*}
h(x,y,z):=	\partial_{\boldsymbol{Y}^{\widetilde{W}}}
\partial_{\boldsymbol{Z}^{\widetilde{S}}} 
\Big(\sum_{i\in  \mathcal{A}}c_i\partial_{\boldsymbol{z}_i}\Big)\cdot 
\Big(\prod_{i=1}^{d_C}\mathcal{R}_{y_i,1}^-\Big)
\Big(\prod_{i=1}^{n_R}\mathcal{R}_{z_i,1}^-\Big)
Q(x,\boldsymbol{Y},\boldsymbol{Z};\boldsymbol{A},\boldsymbol{C},\boldsymbol{U},\boldsymbol{R}) \ \bigg|_{\substack{y_i=y,\forall i\in[d_C]\\ z_i=z,\forall i\in[n_R]}}.
\end{equation*}
By Lemma \ref{real stable} and Lemma \ref{real stable2}, we see that $h(x,y,z)$ is identically zero or real stable for any $c_i\geq 0$. 
This implies that $\sum_{i\in \mathcal{A} }	 c_i \cdot f_{\widetilde{S}\cup \{i\},\widetilde{W}}(x;\boldsymbol{A},\boldsymbol{C},\boldsymbol{U},\boldsymbol{R})$ is real-rooted for any $c_i\geq 0$ with $\sum_{i\in \mathcal{A} }	 c_i=1$. 
Note that $f_{\widetilde{S}\cup \{i\},\widetilde{W}}(x;\boldsymbol{A},\boldsymbol{C},\boldsymbol{U},\boldsymbol{R})$ has positive leading coefficient for each $i\in\mathcal{A}$.
By Lemma \ref{fell-interlacing}, we see that the polynomials $f_{\widetilde{S}\cup \{i\},\widetilde{W}}(x;\boldsymbol{A},\boldsymbol{C},\boldsymbol{U},\boldsymbol{R})$, $i\in \mathcal{A}$, have a common interlacing.
This completes the proof of (i).

(ii) The proof is similar with that of  (i). For brevity, we omit the proof.

\end{proof}

\begin{lemma}\label{forms GIF}
Let $\boldsymbol{A}\in\mathbb{R}^{n\times d}$, $\boldsymbol{C}\in\mathbb{R}^{n\times d_{{C}}}$,  $\boldsymbol{U}\in\mathbb{R}^{n_{{R}}\times d_C}$ and $\boldsymbol{R}\in\mathbb{R}^{n_{{R}}\times d}$. 
Let $k$ be an integer satisfying $0\leq k\leq \mathrm{rank}(\boldsymbol{U})$.
The polynomial set
\begin{equation}\label{pk-set}	
\mathcal{P}_k=
\Big\{
p_{S,W}(x;\boldsymbol{A},\boldsymbol{C},\boldsymbol{U},\boldsymbol{R})
\Big\}_{S\in\binom{[n_R]}{k}, W\in\binom{[d_C]}{k}}
\end{equation}
forms a generalized interlacing family.	Here, $p_{S,W}(x;\boldsymbol{A},\boldsymbol{C},\boldsymbol{U},\boldsymbol{R})$ is defined in  Definition \ref{def-p-SW}.
\end{lemma}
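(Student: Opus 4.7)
The plan is to exhibit a rooted tree whose leaves are labeled by the elements of $\mathcal{P}_k$ and whose internal nodes are labeled by the polynomials $f_{\widetilde{S},\widetilde{W}}$ from Definition \ref{def-f-SW}. Index the nodes by pairs $(\widetilde{S},\widetilde{W})$ with $\widetilde{S}\subset[n_R]$, $\widetilde{W}\subset[d_C]$ and $|\widetilde{S}|,|\widetilde{W}|\leq k$. The root is $(\emptyset,\emptyset)$. To fix a branching rule, first grow $\widetilde{S}$ one index at a time: if $|\widetilde{S}|<k$, the children of $(\widetilde{S},\widetilde{W})$ are $\{(\widetilde{S}\cup\{i\},\widetilde{W}):i\in[n_R]\setminus\widetilde{S}\}$; once $|\widetilde{S}|=k$, grow $\widetilde{W}$ in the analogous way. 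The leaves are pairs $(S,W)\in\binom{[n_R]}{k}\times\binom{[d_C]}{k}$, and their labels are exactly $p_{S,W}$.

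For axiom (i), I rescale the label at the node $(\widetilde{S},\widetilde{W})$ by the positive scalar $\binom{n_R-|\widetilde{S}|}{k-|\widetilde{S}|}^{-1}\binom{d_C-|\widetilde{W}|}{k-|\widetilde{W}|}^{-1}$, which preserves roots and positive leading coefficients and leaves the leaf labels unchanged. Under this rescaling, identities \eqref{combination1-1} and \eqref{combination1-2} of Lemma \ref{prop-xujia1} translate into genuine uniform convex combinations of the children's labels, yielding axiom (i).

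For axiom (ii), Proposition \ref{P-exp-base-gcur2} gives $\deg P_k=d$, while Proposition \ref{P-exp-base-gcur1} (iii) shows that a leaf $p_{S,W}$ attains degree $d$ exactly when $\boldsymbol{U}_{S,W}$ is invertible, which holds for at least one $(S,W)$ since $k\leq\mathrm{rank}(\boldsymbol{U})$; hence the maximum leaf degree equals $d=\deg(f_{\emptyset})$. For axiom (iii), the nonzero polynomials among the children of any internal node already have a common interlacing by Lemma \ref{lemma-f-SW}, so the only remaining task is the degree constraint: when $\deg f_{\widetilde{S},\widetilde{W}}=d$, every nonzero child must have degree $d$ or $d-1$. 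Lemma \ref{prop-xujia1} (i) supplies some $(S,W)\supset(\widetilde{S},\widetilde{W})$ with $\boldsymbol{U}_{S,W}$ of rank $k$; for a child $(\widetilde{S}\cup\{i\},\widetilde{W})$ with $i\notin S$, exchanging one element of $S\setminus\widetilde{S}$ for $i$ produces a set $S'$ for which $\boldsymbol{U}_{S',W}$ has rank at least $k-1$ (deleting a single row drops the rank by at most one), forcing the child's degree into $\{d-1,d\}$ via the degree formula in Lemma \ref{prop-xujia1} (i). An analogous swap argument handles children obtained by augmenting $\widetilde{W}$.

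I expect the main obstacle to be precisely this rank-swap step. The heavy analytic machinery, namely real stability and the common interlacer, is already packaged in Lemmas \ref{prop-xujia1} and \ref{lemma-f-SW}, but axiom (iii) of Definition \ref{Generalized interlacing family} explicitly requires that the nonzero children of a degree-$d$ internal node have degrees in $\{d-1,d\}$, and this cannot be extracted from real stability or common interlacing alone. It must be established by the elementary combinatorial rank argument sketched above, which is the only genuinely new ingredient in the proof; everything else is bookkeeping over the tree structure and the chosen normalization.
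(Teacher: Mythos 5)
Your tree construction, labeling, axiom~(ii) check, and the binomial normalization making axiom~(i) a genuine convex combination are all fine (and the normalization is a useful clarification of the paper's terser ``by Lemma~\ref{prop-xujia1}''). Where you go astray is axiom~(iii). You state that the degree constraint ``cannot be extracted from real stability or common interlacing alone'' and then supply a rank-swap argument to fill the perceived hole. In fact the degree constraint \emph{is} a consequence of common interlacing together with axiom~(i), and the paper's proof uses exactly this. By Definition~\ref{def2.1}, if a collection of real-rooted polynomials with positive leading coefficients has a common interlacing, their degrees differ by at most one (the paper makes this explicit right after Definition~\ref{def2.1}). Since the parent $f_{\widetilde{S},\widetilde{W}}$ is a nonnegative combination of its children (axiom~(i), via Lemma~\ref{prop-xujia1}(ii)) and all nonzero children have positive leading coefficients, no cancellation occurs and $\deg f_{\widetilde{S},\widetilde{W}}$ equals the maximum degree of its nonzero children. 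Hence when $\deg f_{\widetilde{S},\widetilde{W}}=d$, the maximum nonzero-child degree is $d$, and the ``within one of each other'' constraint from the common interlacing forces all nonzero children into $\{d-1,d\}$. No separate combinatorial argument is needed.

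Moreover, your rank-swap argument as written has a real gap. Setting $S' = (S\setminus\{j_0\})\cup\{i\}$, you correctly note that $\mathrm{rank}(\boldsymbol{U}_{S',W})\geq k-1$, since $\boldsymbol{U}_{S',W}$ contains $\boldsymbol{U}_{S\setminus\{j_0\},W}$ as a submatrix. But to deduce $\deg f_{\widetilde{S}\cup\{i\},\widetilde{W}}\geq d-1$ from Lemma~\ref{prop-xujia1}(i) you need $(S',W)$ to lie in the relevant set $\mathcal{T}$, which additionally requires $\mathrm{rank}\begin{pmatrix}\boldsymbol{C}_{:,W}\\\boldsymbol{U}_{S',W}\end{pmatrix}=k$ and $\mathrm{rank}\begin{pmatrix}\boldsymbol{U}_{S',W}&\boldsymbol{R}_{S',:}\end{pmatrix}=k$. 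Deleting the row $j_0$ can drop the rank of the stacked matrix $\begin{pmatrix}\boldsymbol{C}_{:,W}\\\boldsymbol{U}_{S,W}\end{pmatrix}$ to $k-1$, and adjoining row $i$ need not restore it, so $p_{S',W}$ could be identically zero and contribute nothing to the degree of $f_{\widetilde{S}\cup\{i\},\widetilde{W}}$. You would then have to exhibit some \emph{other} valid pair achieving rank at least $k-1$, which the swap does not provide. So the rank-swap step is both unnecessary and, as stated, incomplete; the clean route is the one the paper uses, where the degree bound is a free byproduct of Lemma~\ref{lemma-f-SW} plus axiom~(i).
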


\begin{proof}

We construct a tree $T$ with levels indexed from $0$ to $2k$. Each node in the tree is a pair $(\widetilde{S},\widetilde{W})$, where $\widetilde{S} \subseteq [n_R]$ and $\widetilde{W} \subseteq [d_C]$ are subsets, each of size at most $k$.
The root node (level $0$) is $(\emptyset,\emptyset)$.
   Nodes at even levels $i \in \{0, 2, \ldots, 2k\}$ have the form $(\widetilde{S},\widetilde{W})$, where $\widetilde{S} \subseteq [n_R]$ and $\widetilde{W} \subseteq [d_C]$ satisfy $|\widetilde{S}|=|\widetilde{W}|=\frac{i}{2}$.
   Each such node $(\widetilde{S},\widetilde{W})$ has children of the form $(\widetilde{S}\cup\{j\},\widetilde{W})$, where $j \in [n_R]\setminus \widetilde{S}$. This transition generates children for the next (odd) level by adding an element to the first subset.
   Consequently, nodes at odd levels $i \in \{1, 3, \ldots, 2k-1\}$ have the form $(\widetilde{S},\widetilde{W})$, where $\widetilde{S} \subseteq [n_R]$ satisfies $|\widetilde{S}|=\frac{i+1}{2}$ and $\widetilde{W} \subseteq [d_C]$ satisfies $|\widetilde{W}|=\frac{i-1}{2}$.
   Each such node $(\widetilde{S},\widetilde{W})$ has children of the form $(\widetilde{S},\widetilde{W}\cup\{j\})$, where $j \in [d_C]\setminus \widetilde{W}$. This transition generates children for the next (even) level by adding an element to the second subset.

Every node $(\widetilde{S},\widetilde{W})$ is labeled with a polynomial $f_{\widetilde{S},\widetilde{W}}(x;\boldsymbol{A},\boldsymbol{C},\boldsymbol{U},\boldsymbol{R})$.
   Specifically, the root node $(\emptyset,\emptyset)$ is labeled with the polynomial $f_{\emptyset,\emptyset}(x;\boldsymbol{A},\boldsymbol{C},\boldsymbol{U},\boldsymbol{R})=P_k(x;\boldsymbol{A},\boldsymbol{C},\boldsymbol{U},\boldsymbol{R})$.
   The leaf nodes, which are at level $2k$ and thus have the form $(S,W)$ where $S\in\binom{[n_R]}{k}$ and $W\in\binom{[d_C]}{k}$, are labeled with $f_{S,W}(x;\boldsymbol{A},\boldsymbol{C},\boldsymbol{U},\boldsymbol{R})=p_{S,W}(x;\boldsymbol{A},\boldsymbol{C},\boldsymbol{U},\boldsymbol{R})$.

We now prove that the tree $T$ satisfies (i), (ii) and (iii)  in Definition \ref{Generalized interlacing family}.
By Lemma \ref{prop-xujia1} we see that the condition (i) is satisfied.
Since $k\leq \mathrm{rank}(\boldsymbol{U})$,  there exists a nonsingular submatrix $\boldsymbol{U}_{S_0,W_0}\in\mathbb{R}^{k\times k}$.
Therefore, by Lemma \ref{prop-xujia1}, we have
 \[
\mathrm{deg}(f_{\emptyset,\emptyset}(x;\boldsymbol{A},\boldsymbol{C},\boldsymbol{U},\boldsymbol{R}))=\mathrm{deg}(f_{S_0,W_0}(x;\boldsymbol{A},\boldsymbol{C},\boldsymbol{U},\boldsymbol{R}))=d.
\] 
Hence, the condition (ii) is satisfied.
Lemma \ref{lemma-f-SW} (i) and (ii) show that for any non-leaf node $(\widetilde{S},\widetilde{W})$ of the tree, if $f_{\widetilde{S},\widetilde{W}}(x;\boldsymbol{A},\boldsymbol{C},\boldsymbol{U},\boldsymbol{R})$ is not identically zero, then the nonzero polynomials corresponding to the children of $(\widetilde{S},\widetilde{W})$ have a common interlacing.
In particular, if $f_{\widetilde{S},\widetilde{W}}(x;\boldsymbol{A},\boldsymbol{C},\boldsymbol{U},\boldsymbol{R})$ has degree $d$, then the nonzero polynomials corresponding to the children of $(\widetilde{S},\widetilde{W})$ have a common interlacing.
This means that condition (iii) also holds.
Therefore, the polynomial set $\mathcal{P}_k$ forms a generalized interlacing family.

\end{proof}

Now we can prove Theorem \ref{gcur-th2}.

\begin{proof}[Proof of Theorem \ref{gcur-th2}]
Lemma \ref{forms GIF} shows that the polynomial set $\mathcal{P}_k$ in \eqref{pk-set} forms a generalized interlacing family.
By Theorem \ref{generalized interlacing family}, there is a pair $(\widehat{S},\widehat{W})\in \binom{[n_R]}{k}\times \binom{[d_C]}{k}$ such that $p_{\widehat{S},\widehat{W}}(x;\boldsymbol{A},\boldsymbol{C},\boldsymbol{U},\boldsymbol{R})$ has degree $d$ and
\begin{equation*}
\mathrm{minroot}\ p_{\widehat{S},\widehat{W}}(x;\boldsymbol{A},\boldsymbol{C},\boldsymbol{U},\boldsymbol{R})
\geq \mathrm{minroot}\ P_{k}(x;\boldsymbol{A},\boldsymbol{C},\boldsymbol{U},\boldsymbol{R}),
\end{equation*}
which implies 
\begin{equation}\label{meq19}
\mathrm{maxroot}\ p_{\widehat{S},\widehat{W}}(-x;\boldsymbol{A},\boldsymbol{C},\boldsymbol{U},\boldsymbol{R})
\leq \mathrm{maxroot}\ P_{k}(-x;\boldsymbol{A},\boldsymbol{C},\boldsymbol{U},\boldsymbol{R}).
\end{equation}
From Proposition \ref{P-exp-base-gcur1}, we have $\mathrm{rank}(\boldsymbol{U}_{\widehat{S},\widehat{W}})=k$ by (iii), and by (ii), the expression for $p_{\widehat{S},\widehat{W}}(x;\boldsymbol{A},\boldsymbol{C},\boldsymbol{U},\boldsymbol{R})$ is given by:
\begin{equation*}
p_{\widehat{S},\widehat{W}}(x;\boldsymbol{A},\boldsymbol{C},\boldsymbol{U},\boldsymbol{R})	
=\det[\boldsymbol{U}_{\widehat{S},\widehat{W}}]^2\cdot 
\det[x\cdot \boldsymbol{I}_d+
(\boldsymbol{A}- \boldsymbol{C}_{:,\widehat{W}}( \boldsymbol{U}_{\widehat{S},\widehat{W}})^{-1}  \boldsymbol{R}_{\widehat{S},:})^T
(\boldsymbol{A}- \boldsymbol{C}_{:,\widehat{W}}( \boldsymbol{U}_{\widehat{S},\widehat{W}})^{-1}  \boldsymbol{R}_{\widehat{S},:})].
\end{equation*}
Hence, we can rewrite \eqref{meq19} as
\begin{equation*}
\begin{aligned}
&\Vert \boldsymbol{A}- \boldsymbol{C}_{:,\widehat{W}}( \boldsymbol{U}_{\widehat{S},\widehat{W}})^{-1}  \boldsymbol{R}_{\widehat{S},:}\Vert_{2}^2
\leq \mathrm{maxroot}\ P_{k}(-x;\boldsymbol{A},\boldsymbol{C},\boldsymbol{U},\boldsymbol{R}).
\end{aligned}
\end{equation*}
This completes the proof.

\end{proof}

\begin{remark}\label{remark-tree}
Based on Theorem \ref{generalized interlacing family} and the tree structure of a generalized interlacing family, the subsets $\widehat{S}$ and $\widehat{W}$ in Theorem \ref{gcur-th2} can be iteratively constructed.
The detailed selection process is summarized in Algorithm \ref{alg-spr-CUR}. 
Developing a polynomial-time algorithm for computing each $f_{\widetilde{S}\cup\{i\},\widetilde{W}}(x;\boldsymbol{A},\boldsymbol{C},\boldsymbol{U},\boldsymbol{R})$ and $f_{\widetilde{S},\widetilde{W}\cup\{j\}}(x;\boldsymbol{A},\boldsymbol{C},\boldsymbol{U},\boldsymbol{R})$ remains an important direction for future research.
Nevertheless, for the classical CUR matrix decompositions, we develop a polynomial-time algorithm under certain matrix conditions (see Algorithm \ref{alg1} in Section \ref{section-alg} for more details). 
\end{remark}

\begin{algorithm}[!t]
\caption{Deterministic algorithm for the generalized CUR matrix problem.}\label{alg-spr-CUR}
\begin{algorithmic}[1]
\Require $\boldsymbol{A}\in\mathbb{R}^{n\times d}$, $\boldsymbol{C}\in\mathbb{R}^{n\times d_{{C}}}$,  $\boldsymbol{U}\in\mathbb{R}^{n_{{R}}\times d_C}$ and $\boldsymbol{R}\in\mathbb{R}^{n_{{R}}\times d}$;  sampling parameter $k$.
\Ensure A $k$-subset $\widehat{S}\subset[n_R]$ and a $k$-subset $\widehat{W}\subset[d_C]$ satisfying the error bound \eqref{meq71}.
\State Set $\widetilde{S}=\widetilde{W}=\emptyset$.

\For{$l=1,\ldots,k$}

\State Find $i_{l}=\mathop{\mathrm{argmax}}_{i\in \mathcal{I}} \mathrm{minroot}\  f_{\widetilde{S}\cup\{i\},\widetilde{W}}(x;\boldsymbol{A},\boldsymbol{C},\boldsymbol{U},\boldsymbol{R})$, where $\mathcal{I}:=\{i\in [n_R]\backslash \widetilde{S}: \mathrm{deg}(f_{\widetilde{S}\cup\{i\},\widetilde{W}}(x;\boldsymbol{A},\boldsymbol{C},\boldsymbol{U},\boldsymbol{R}))=d\}$. 

\State 
Update $\widetilde{S}\gets \widetilde{S}\cup \{i_{l}\}$.

\State Find $j_{l}=\mathop{\mathrm{argmax}}_{j\in \mathcal{J}} \mathrm{minroot}\  f_{\widetilde{S},\widetilde{W}\cup\{j\}}(x;\boldsymbol{A},\boldsymbol{C},\boldsymbol{U},\boldsymbol{R})$, where $\mathcal{J}:=\{j\in [d_C]\backslash \widetilde{W}: \mathrm{deg}(f_{\widetilde{S},\widetilde{W}\cup\{j\}}(x;\boldsymbol{A},\boldsymbol{C},\boldsymbol{U},\boldsymbol{R}))=d\}$.

\State 
Update $\widetilde{W}\gets \widetilde{W}\cup \{j_{l}\}$.
\EndFor 
\\
\Return $\widehat{S}=\{i_1,i_2,\ldots,i_{k}\}\subset[n_R]$ and $\widehat{W}=\{j_1,j_2,\ldots,j_{k}\}\subset[d_C]$.
\end{algorithmic}
\end{algorithm}

\section{Classical CUR matrix decompositions: Proof of Theorem \ref{th1} and Theorem \ref{th3}}

\subsection{Proof of Theorem \ref{th1}}

The aim of this subsection is to prove Theorem \ref{th1}. 
We start with presenting an upper bound on the largest root of $\mathcal{R}_{x,d}^+\cdot (\partial_x\cdot x\cdot \partial_x)^k\cdot \mathcal{R}_{x,d}^+\ \det[x\cdot \boldsymbol{I}_d-\boldsymbol{A}^T\boldsymbol{A}]$.
The proof of Theorem \ref{th1-maxroot-thm} is postponed to Section \ref{proof of thm51}.

\begin{theorem}\label{th1-maxroot-thm}
Assume that $\boldsymbol{A}\in\mathbb{R}^{n\times d}$ is a rank-$t$ matrix with $\|\boldsymbol{A}\|_2\leq 1$.
Let $\lambda_i$ be the $i$-th largest eigenvalue of $\boldsymbol{A}^T\boldsymbol{A}$.
Let $k$ be an integer satisfying $2\leq k\leq t-1$.
Assume that $\lambda_1>\lambda_k$.
Then
\begin{equation*}
\begin{aligned}
\mathrm{maxroot}\ 	\mathcal{R}_{x,d}^+\cdot (\partial_x\cdot x\cdot \partial_x)^k\cdot \mathcal{R}_{x,d}^+\ \det[x\cdot \boldsymbol{I}_d-\boldsymbol{A}^T\boldsymbol{A}]\,\,\leq \,\,
\frac{(t-k+1)t}{(1-\sqrt{\alpha_k}	)^2} \cdot \lambda_{k+1},
\end{aligned}
\end{equation*}
where
\begin{equation*}
\alpha_{k}=\frac{\frac{1}{\lambda_{k+1}}-\frac{1}{k}\sum_{i=1}^{k}\frac{1}{\lambda_i}}{\frac{1}{\lambda_{k+1}}-1}\in(0,1).
\end{equation*}

\end{theorem}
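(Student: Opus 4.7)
The plan is to bound the largest root of $Q(x) := \mathcal{R}_{x,d}^+ (\partial_x\cdot x\cdot \partial_x)^k \mathcal{R}_{x,d}^+ \det[xI_d - A^T A]$ by first recasting the composite operator as a finite free multiplicative convolution, and then invoking the barrier-function machinery developed by Marcus--Spielman--Srivastava in \cite{inter3,inter5}. Writing $p(x)=\det[xI_d-A^TA]=x^{d-t}\prod_{i=1}^t(x-\lambda_i)$ and applying Lemma~\ref{Linear algebra-lemma-6}, one obtains $Q(x)=(k!)^2 x^k \sum_i\binom{d-i}{k}^2 c_i x^i$, where $c_i$ are the coefficients of $p$. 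Since $c_i=0$ for $i<d-t$, a factor $x^{d-t+k}$ can be extracted, reducing the task to bounding the largest root of a degree-$(t-k)$ polynomial $\widetilde{Q}$. Equivalently, the problem becomes lower-bounding the smallest positive root of $(\partial_x\cdot x\cdot \partial_x)^k\,\prod_{i=1}^t(1-\lambda_i x)$, because the flip $\mathcal{R}_{x,d}^+$ interchanges max-root and inverse-min-root on the positive axis.

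The next step is to exploit the identity $(\partial_x\cdot x\cdot \partial_x)^k=\partial_x^k\, x^k\, \partial_x^k$ from equation~\eqref{meq72}, and to identify each occurrence of $\mathcal{R}^+\partial^k\mathcal{R}^+$, up to normalization, with finite free multiplicative convolution by the Laguerre-type polynomial $(x-1)^k x^{d-k}$, as in \cite{inter5}. With this identification, $\widetilde{Q}$ can be expressed as a double multiplicative convolution of $\prod_i(x-\lambda_i)$ with a ``squared'' Laguerre polynomial of degree $t$, together with a multiplicative rescaling coming from the interior $x^k$. This places the problem squarely within the scope of the multiplicative-convolution barrier-function estimates of \cite{inter5}.

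To conclude, I would apply a multiplicative barrier function $\Phi_r(z)=\sum_i \mu_i/(z-\mu_i)$, where $\mu_i$ are the roots of the current polynomial, and track how the maximum root shifts through each of the two $\partial^k$ convolutions. The key choice is to set the barrier point at $z^{\ast}=1/\lambda_{k+1}$ in the flipped picture; at this point the barrier function value is governed by the harmonic mean $(1/k)\sum_{i=1}^k 1/\lambda_i$, and the normalized discrepancy between $1/\lambda_{k+1}$ and this harmonic mean is exactly $\alpha_k\in(0,1)$. Each of the two $\partial^k$ convolutions then contributes a factor $(1-\sqrt{\alpha_k})^{-1}$ to the root-displacement estimate, giving the $(1-\sqrt{\alpha_k})^{-2}$ in the stated bound; the combinatorial prefactor $(t-k+1)\, t$ comes from the degree-$t$ normalization of the Laguerre kernel and the reduced degree $t-k$ of $\widetilde{Q}$.

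The main obstacle is that the MSS framework must be threaded \emph{twice} through the decomposition $\partial^k x^k \partial^k$, with a single barrier point that simultaneously balances both applications through the intermediate multiplicative shift induced by $x^k$. The precise algebraic identity that combines the two barrier displacements into the single clean factor $(1-\sqrt{\alpha_k})^{-2}$, and that forces $\alpha_k$ to take exactly the form of a normalized harmonic-mean deficit defined in the theorem statement, is the most delicate step. Establishing this requires adapting the additive-convolution barrier arguments of \cite{inter3} and the multiplicative-convolution arguments of \cite{inter5} to the Laguerre-derivative setting, and carrying out the two-step optimization with care so that the resulting estimate depends only on $\lambda_1,\dots,\lambda_k$ and $\lambda_{k+1}$ through the single quantity $\alpha_k$.
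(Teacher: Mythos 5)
There is a genuine gap in your plan, and it lies precisely where you flag ``the most delicate step.'' You propose to thread the barrier-function argument \emph{twice} through the decomposition $\partial_x^k\,x^k\,\partial_x^k$, claiming each $\partial^k$ contributes a factor $(1-\sqrt{\alpha_k})^{-1}$ and that the two multiply to give $(1-\sqrt{\alpha_k})^{-2}$. This does not match the algebra. A single barrier-shift analysis of $\partial_x^k$ applied to $\prod_{i=1}^t(x-\beta_i)$ with $\beta_i=1/\lambda_i$ (taking the barrier point $b<\beta_1$, propagating $\Phi_f(b)\le 1/\delta$ through $k$ derivatives, and optimizing over $b$ via Cauchy--Schwarz) \emph{already} produces the squared term $(\sqrt{1-\tfrac{k}{t}\alpha_k}-\sqrt{\alpha_k(1-\tfrac{k}{t})})^2$, which is lower-bounded by $(1-\sqrt{\alpha_k})^2$. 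The square arises from the one optimization step, not from iterating the argument. Moreover, the barrier function of the intermediate polynomial $x^k\cdot(\partial_x^k \mathcal{R}^+p)$ is not controlled in the same way---the multiplication by $x^k$ shifts $k$ roots to $0$, and the quantity $\alpha_k$ (which encodes $\lambda_1,\dots,\lambda_{k+1}$ of the \emph{original} polynomial) no longer has a natural meaning for the derived polynomial. Applying a second barrier argument at the same point $z^\ast=1/\lambda_{k+1}$ to this intermediate polynomial does not produce another factor of $(1-\sqrt{\alpha_k})^{-1}$, and I do not see how your sketch closes this.

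What the paper actually does is asymmetric. One of the two $\partial^k$'s is treated by a single barrier-function argument (Lemma \ref{minroot-new}, applied inside Lemma \ref{th1-maxroot-lemma2}), producing the entire $(1-\sqrt{\alpha_k})^{-2}$ factor and the prefactor $t/k$. The other $\partial^k$ is folded together with $x^k$ into the composite operator $\partial_x^k\,x^k$, which is analyzed via a finite free multiplicative convolution with the reference polynomial $\partial_x^k(x^k(x-1)^r)$ (Lemma \ref{lemma3}); a Vieta's-formula bound on that reference polynomial's smallest root (Lemma \ref{lemma4}) yields the factor $1+(t-k)k$ rather than another barrier-type factor. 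The stated prefactor $(t-k+1)t$ comes from combining $\tfrac{t}{k}$ and $1+(t-k)k$, i.e.\ $\tfrac{t}{k}(1+(t-k)k)=\tfrac{t}{k}+t(t-k)\le (t-k+1)t$, not from ``the degree-$t$ normalization of the Laguerre kernel'' as you describe. Your high-level starting observations (degree reduction by the vanishing low-order coefficients, the $\mathcal{R}^+$ flip exchanging max-root and reciprocal min-root, the identity $(\partial_x x\partial_x)^k=\partial_x^k x^k\partial_x^k$, and the use of multiplicative convolution and barriers) are all correct; it is the symmetric two-barrier design that is both different from the paper and, as stated, not workable.
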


Now we can give a proof of Theorem \ref{th1}.

\begin{proof}[Proof of Theorem \ref{th1}]
By Theorem \ref{gcur-th2}, there is a $k$-subset $\widehat{S}\subset[n]$ and a $k$-subset $\widehat{W}\subset[d]$ such that $\boldsymbol{A}_{\widehat{S},\widehat{W}}$ is invertible and
\begin{equation*}
\begin{aligned}
\Vert \boldsymbol{A}- \boldsymbol{A}_{:,\widehat{W}}( \boldsymbol{A}_{\widehat{S},\widehat{W}})^{-1}  \boldsymbol{A}_{\widehat{S},:}\Vert_{2}^2
&\leq \mathrm{maxroot}\ P_{k}(-x;\boldsymbol{A},\boldsymbol{A},\boldsymbol{A},\boldsymbol{A})\\
&=\mathrm{maxroot}\ 	\mathcal{R}_{x,d}^+\cdot (\partial_x\cdot x\cdot \partial_x)^k\cdot \mathcal{R}_{x,d}^+\ \det[x\cdot \boldsymbol{I}_d-\boldsymbol{A}^T\boldsymbol{A}],
\end{aligned}
\end{equation*} 
where the last equation follows from Proposition \ref{P-exp-base-gcur-x}.
Combining with Theorem \ref{th1-maxroot-thm}, we  obtain \eqref{xueq11}. This completes the proof.

\end{proof}

\subsubsection{Proof of Theorem \ref{th1-maxroot-thm}}
\label{proof of thm51}

To prove Theorem \ref{th1-maxroot-thm}, we need the following two lemmas. 
The proofs of Lemma \ref{th1-maxroot-lemma} and Lemma \ref{th1-maxroot-lemma2} are presented in Section \ref{proof of th1-maxroot-lemma} and Section \ref{proof of th1-maxroot-lemma2}, respectively. 

\begin{lemma}\label{th1-maxroot-lemma}
Assume that $\boldsymbol{A}\in\mathbb{R}^{n\times d}$ is a rank-$t$ matrix.
Then for any integer $1\leq k<t$ we have
\begin{equation}
\label{maxroot}
\begin{aligned}
&\mathrm{maxroot}\ 	\mathcal{R}_{x,d}^+\cdot (\partial_x\cdot x\cdot \partial_x)^k\cdot \mathcal{R}_{x,d}^+\ \det[x\cdot \boldsymbol{I}_d-\boldsymbol{A}^T\boldsymbol{A}]\\
&\leq (1+(t-k)k)\cdot  	\mathrm{maxroot}\ \mathcal{R}_{x,d}^+\cdot \partial_x^k\cdot \mathcal{R}_{x,d}^+ \ \det[x\cdot \boldsymbol{I}_d-\boldsymbol{A}^T\boldsymbol{A}].
\end{aligned}
\end{equation}

\end{lemma}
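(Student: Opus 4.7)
I would begin by using Lemma \ref{Linear algebra-lemma-6} to make both sides of the inequality completely explicit. Writing $f(x) = \det[xI_d - \boldsymbol{A}^T\boldsymbol{A}] = \sum_{i=0}^d c_i x^i$, the lemma yields
\begin{equation*}
\mathcal{R}_{x,d}^+(\partial_x x \partial_x)^k \mathcal{R}_{x,d}^+ f(x) = x^k\sum_{i=0}^{d-k}\Bigl(\tfrac{(d-i)!}{(d-i-k)!}\Bigr)^{\!2} c_i x^i, \quad
\mathcal{R}_{x,d}^+ \partial_x^k \mathcal{R}_{x,d}^+ f(x) = x^k\sum_{i=0}^{d-k} \tfrac{(d-i)!}{(d-i-k)!} c_i x^i.
\end{equation*}
Because $\boldsymbol{A}$ has rank $t$, $f$ vanishes to order $d-t$ at $0$, so both polynomials share the common factor $x^{d-t+k}$. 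Extracting it reduces the claim to showing $\mathrm{maxroot}(\widehat{q}_1) \leq (1+km)\,\mathrm{maxroot}(\widehat{q}_2)$, where $m := t-k$ and $\widehat{q}_1,\widehat{q}_2$ are polynomials of degree $m$ whose nonzero roots are precisely those of the polynomials above.

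The decisive identity is then an operator identity. Let $\theta := xD$ denote the Euler operator, so that $\theta x^j = j x^j$. A direct inspection of the coefficient formulas gives
\begin{equation*}
\widehat{q}_1 \;=\; \prod_{i=1}^{k}\bigl(m+i-\theta\bigr)\,\widehat{q}_2,
\end{equation*}
because $\prod_{i=1}^k(m+i-j) = (m+k-j)!/(m-j)!$ is exactly the ratio of the $j$-th coefficients. Lemmas \ref{real stable} and \ref{real stable2} ensure that both $\widehat{q}_1$ and $\widehat{q}_2$ are real-rooted, and since $\widehat{q}_2$ comes from applying $\partial_x^k$ to $\mathcal{R}_{x,d}^+ f$ (whose roots are the reciprocals of the nonzero eigenvalues of $\boldsymbol{A}^T\boldsymbol{A}$), both polynomials have only positive real roots.

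The single-factor estimate is the workhorse. For a polynomial $p$ of degree $m$ with positive roots $\rho_1 \le \cdots \le \rho_m$ and for $c > m$, a direct calculation yields $(c-\theta)p(y) = p(y)\bigl(c - \sum_{i=1}^{m} y/(y-\rho_i)\bigr)$; hence the largest root $y^*$ of $(c-\theta)p$ satisfies $\sum_{i=1}^{m} y^*/(y^* - \rho_i) = c$, and bounding each summand by $y^*/(y^*-\rho_m)$ gives $y^* \le \tfrac{c}{c-m}\rho_m$. The main obstacle is that \emph{naively} iterating this bound across the $k$ factors in $\widehat{q}_1 = \prod_{i=1}^k(m+i-\theta)\widehat{q}_2$ produces only the weaker bound $\binom{m+k}{k}\cdot\mathrm{maxroot}(\widehat{q}_2)$, which strictly exceeds $(1+km)\cdot\mathrm{maxroot}(\widehat{q}_2)$ for $k\ge 2$.

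My plan for overcoming this obstacle is to analyze the composite operator $\prod_{i=1}^k(m+i-\theta)$ as a single object. Writing $\widehat{q}_1(y^*)/\widehat{q}_2(y^*) = 0$ directly, and using Newton's identities to convert the ratios $\theta^j\widehat{q}_2/\widehat{q}_2$ into polynomial expressions in the power sums $p_l := \sum_i \xi_i^l$ with $\xi_i := y^*/(y^*-\rho_i)$, produces one scalar equation in $p_1,\ldots,p_k$. Combining the uniform bound $\xi_i \le s := y^*/(y^*-\rho_m)$ with Cauchy--Schwarz-type lower bounds $p_l \ge p_1^l/m^{l-1}$ then yields a polynomial inequality in $s$ forcing $s \ge 1 + 1/(km)$, equivalently $y^* \le (1+km)\rho_m$. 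The principal technical challenge lies in carrying out this power-sum bookkeeping cleanly for arbitrary $k$; the $k=1$ case collapses to the single-factor bound (which is tight), while the cancellation structure of $\prod_{i=1}^k(m+i-\theta)$ among the higher power sums is precisely what permits the combined factor $(1+km)$ rather than the product $\binom{m+k}{k}$.
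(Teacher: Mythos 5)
Your setup is correct and, after the flip $\mathcal{R}_{x,d}^+$, is essentially the same reduction the paper uses: with $q := \partial_x^k\,\mathcal{R}_{x,d}^+\det[xI_d-\boldsymbol{A}^T\boldsymbol{A}]$ (a degree-$m$ polynomial with positive roots, $m=t-k$), the inequality \eqref{maxroot} is exactly $\mathrm{minroot}\ \partial_x^k(x^k q(x)) \geq \frac{1}{1+km}\mathrm{minroot}\ q(x)$, and your identity $\widehat{q}_1 = \prod_{i=1}^k(m+i-\theta)\widehat{q}_2$ is the flipped version of this. Your single-factor estimate $\mathrm{maxroot}\,(c-\theta)p \le \frac{c}{c-m}\,\mathrm{maxroot}\,p$ is correct, and you correctly observe that iterating it gives only $\binom{m+k}{k}$, which overshoots $1+km$. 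But that is precisely where the proof stops: the ``power-sum bookkeeping'' that is supposed to recover the factor $1+km$ for general $k$ is stated only as an intention, not executed. As written there is no argument — no Newton-identity computation, no inequality chain, and no explanation of how the sign structure of the terms $p_1,\ldots,p_k$ (which enter the expansion of $\prod_{i=1}^k(m+i-\theta)$ with alternating signs via Bell-polynomial-type expressions) is controlled. The $k=2$ case can in fact be pushed through by hand, but the general case is nontrivial, and the burden you concede (``the principal technical challenge lies in carrying out this power-sum bookkeeping cleanly for arbitrary $k$'') is the entire content of the lemma.

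The paper avoids this bookkeeping altogether. Its Lemma \ref{lemma3} packages the same coefficient-rescaling you express via $\prod(m+i-\theta)$ as a multiplicative convolution $\partial_x^k(x^kq(x)) = q(x)\boxtimes_m \partial_x^k(x^k(x-1)^m)$, which makes the Marcus--Spielman--Srivastava bound (Lemma \ref{inter5multi}, $\mathrm{maxroot}(p\boxtimes q)\le \mathrm{maxroot}(p)\cdot\mathrm{maxroot}(q)$) applicable and reduces the whole problem to the single model polynomial $h(x)=\partial_x^k(x^k(x-1)^m)$. That polynomial is then handled by one application of Vieta's formula (Lemma \ref{lemma4}): the sum of the reciprocals of its roots is $m(k+1)$, and since every root lies in $[0,1]$, the smallest root is at least $1/(1+km)$. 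If you want to salvage your approach, the shortest route is to recognize that your product $\prod_{i=1}^k(m+i-\theta)$ applied to $(x-1)^m$ (the ``all roots equal'' polynomial) is exactly the flipped model polynomial, and then invoke Szeg\H{o}'s convolution inequality rather than attempting the general power-sum analysis directly.
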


\begin{lemma}
\label{th1-maxroot-lemma2}
Assume that $p(x)=x^{d-t}\prod_{i=1}^{t}(x-\lambda_i)$ is a real-rooted polynomial of degree $d$, where $t\leq d$ is a positive integer and $\lambda_1,\ldots,\lambda_t$ are positive  numbers satisfying $1\geq \lambda_1\geq\cdots\geq \lambda_t>0$. 
Let $k$ be an integer satisfying $2\leq k\leq t-1$.
Assume that $\lambda_1>\lambda_k$.
Then
\begin{equation}\label{thm3.1:eq1}
\text{\rm maxroot}\ \mathcal{R}_{x,d}^+\cdot \partial_x^k\cdot \mathcal{R}_{x,d}^+\ p(x)
\leq 
\frac{t}{k}\cdot 
\frac{1}{(1-\sqrt{\alpha_k}	)^2} \cdot \lambda_{k+1},
\end{equation}
where
\begin{equation*}
\alpha_{k}=\frac{\frac{1}{\lambda_{k+1}}-\frac{1}{k}\sum_{i=1}^{k}\frac{1}{\lambda_i}}{\frac{1}{\lambda_{k+1}}-1}\in(0,1).
\end{equation*}

\end{lemma}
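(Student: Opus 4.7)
The plan is to prove Lemma \ref{th1-maxroot-lemma2} in three stages: first reduce the maxroot of $\mathcal{R}_{x,d}^+\partial_x^k\mathcal{R}_{x,d}^+ p(x)$ to a minroot problem for a differentiated polynomial with positive real roots, then lower-bound that minroot via a barrier-function argument in the style of Marcus--Spielman--Srivastava, and finally extract the explicit constant $(1-\sqrt{\alpha_k})^2$ by a careful optimization at a specifically chosen base point.

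For the first stage, I would compute $\mathcal{R}_{x,d}^+ p(x) = \prod_{i=1}^t(1-\lambda_i x)$ by direct expansion: the trailing factor $x^{d-t}$ in $p$ contributes a power of $x$ that cancels with the flip degree, and one is left with $\prod_i x(1/x - \lambda_i) = \prod_i(1-\lambda_i x)$. After $\partial_x^k$ and a second flip, the reciprocal substitution inherent in $\mathcal{R}_{x,d}^+$ yields
\[
\mathrm{maxroot}\ \mathcal{R}_{x,d}^+\partial_x^k\mathcal{R}_{x,d}^+ p(x) \,=\, \frac{1}{r^*}, \qquad r^* \,:=\, \text{smallest positive root of } \partial_x^k \prod_{i=1}^t(1-\lambda_i x).
\]
Up to the nonzero leading factor $(-1)^t\prod_i\lambda_i$, this smallest root coincides with the smallest root of $\partial_x^k P(x)$ where $P(x) = \prod_{i=1}^t(x-\mu_i)$ with $\mu_i := 1/\lambda_i \geq 1$. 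Thus \eqref{thm3.1:eq1} is equivalent to the lower bound $r^* \geq (k/t)(1-\sqrt{\alpha_k})^2\,\mu_{k+1}$.

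For the second stage, I would introduce the barrier function $\Phi_j(y) := (\partial^{j+1}P)(y)/(\partial^j P)(y)$, which is well-defined and negative for $y$ strictly below the smallest root of $\partial^j P$, with $\Phi_0(y) = \sum_i 1/(y-\mu_i)$. By Rolle's theorem the smallest root of $\partial^j P$ lies in $[\mu_1,\mu_{j+1}]$. The standard MSS barrier-update identity from \cite{inter3} (in its asymmetric form, \cite{inter5}) controls how $\Phi_j$ evolves under one differentiation; iterating this $k$ times produces a lower bound for $r^*$ of the form ``base point $+$ shift determined by $k$ and by $\Phi_0$ at the base point'', valid for any admissible base point below $\mu_1$. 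For the third stage, I would take the base point $y_0 = 1$, which is legal because $\lambda_i\le 1$ forces $\mu_i \ge 1$, and the hypothesis $\lambda_1 > \lambda_k$ guarantees $\mu_k > 1$ and hence $\bar\mu_k > 1$, so $\Phi_0(1) = -\sum_i 1/(\mu_i - 1)$ is finite and strictly negative. Plugging this barrier value into the iterated bound gives a quadratic inequality in an auxiliary shift parameter whose normalized discriminant is exactly $\alpha_k = (\mu_{k+1} - \bar\mu_k)/(\mu_{k+1} - 1)$; solving the quadratic (equivalently, locating the largest shift at which the barrier just crosses the critical threshold) yields $r^* \geq (k/t)(1-\sqrt{\alpha_k})^2 \mu_{k+1}$, and inverting gives \eqref{thm3.1:eq1}.

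The principal obstacle lies in the third stage: the barrier-function framework controls root movement under $\partial_x$ well enough to give an order-of-magnitude bound, but producing the sharp constants $t/k$ and $(1-\sqrt{\alpha_k})^2$ requires that no slack be lost across the $k$ iterations. Typically this calls for either the refined asymmetric multiplicative-convolution bounds of \cite{inter5} or a single-shot identity that bypasses naive iteration, in which $\partial_x^k$ applied to $P$ is reinterpreted as the multiplicative convolution of $P$ with a suitably normalized monomial $x^{t-k}$. The role of the hypothesis $\lambda_1 > \lambda_k$ is precisely to guarantee $\alpha_k \in (0,1)$, so that the quadratic from which $(1-\sqrt{\alpha_k})^2$ emerges admits a meaningful, nondegenerate solution.
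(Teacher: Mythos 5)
Your stages 1 and 2 agree with the paper: the reduction to a minroot problem for $\partial_x^k\prod_{i=1}^t(x-\mu_i)$ with $\mu_i=1/\lambda_i$ is exactly equation \eqref{eq:2025:xu89}, and the iterated barrier argument (Lemma \ref{barrier-lemma}, packaged as Lemma \ref{minroot-new}) is the engine you sketch in stage 2.

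Stage 3 is where the proposal breaks down, and the problem is concrete. Fixing the base point at $y_0=1$ fails for two reasons. First, admissibility: Lemma \ref{barrier-lemma} requires the base point to lie \emph{strictly below} $\mathrm{minroot}\,P=\mu_1$, and the hypotheses of the lemma only give $\mu_1\ge 1$. Nothing excludes $\lambda_1=1$, i.e.\ $\mu_1=1$; in that case $\sum_i 1/(\mu_i-1)$ diverges and $y_0=1$ is inadmissible, contrary to your claim that $\Phi_0(1)$ is finite (your argument shows $\mu_k>1$, not $\mu_1>1$). Second, even when $\mu_1>1$ strictly, freezing $b=1$ yields only $\mathrm{minroot}\,\partial_x^kP\ge 1+k/\Phi_P(1)$, where $\Phi_P(b)=\sum_i 1/(\mu_i-b)$ is the paper's barrier, and this is in general strictly weaker than $\tfrac{k}{t}(1-\sqrt{\alpha_k})^2\mu_{k+1}$. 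For instance, with $t=3$, $k=2$, $\mu_1=1.001$, $\mu_2=\mu_3=101$, one gets $1+k/\Phi_P(1)\approx 1.002$ while the claimed lower bound is $\approx 5.8$. What the paper actually does in Lemma \ref{minroot-new} is to upper-bound $\Phi_P(b)$ for \emph{every} admissible $b<\mu_1$ by $k\bigl(\alpha_k/(\mu_1-b)+\gamma/(\mu_{k+1}-b)\bigr)$ with $\gamma=t/k-\alpha_k$, obtained by writing each $\mu_i$ with $i\le k$ as a convex combination of $\mu_1$ and $\mu_{k+1}$ and using convexity of $x\mapsto1/(x-b)$, and then to \emph{optimize} $b+\bigl(\alpha_k/(\mu_1-b)+\gamma/(\mu_{k+1}-b)\bigr)^{-1}$ over $b<\mu_1$ via a Cauchy--Schwarz argument. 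The maximizer $b^*$ is an interior point depending on $\mu_1$, $\mu_{k+1}$, $\alpha_k$ and $t/k$, and only this optimization produces $\bigl(\sqrt{1-\tfrac{k}{t}\alpha_k}-\sqrt{\alpha_k-\tfrac{k}{t}\alpha_k}\bigr)^2$, which is then relaxed to $(1-\sqrt{\alpha_k})^2$. Your ``quadratic inequality in an auxiliary shift'' gestures toward this Cauchy--Schwarz step, but the parameter to be optimized is the base point itself, which you have frozen. Lastly, the multiplicative-convolution machinery of \cite{inter5} is used by the paper one lemma earlier (Lemma \ref{th1-maxroot-lemma}, to compare $(\partial_x\cdot x\cdot\partial_x)^k$ with $\partial_x^k$); it plays no role in the proof of this particular lemma.
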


Now we can give a proof of Theorem \ref{th1-maxroot-thm}.

\begin{proof}[Proof of Theorem \ref{th1-maxroot-thm}]
Note that $\det[x\cdot \boldsymbol{I}_d-\boldsymbol{A}^T\boldsymbol{A}]=x^{d-t}\prod_{i=1}^{t}(x-\lambda_i)$, and hence we have
\begin{equation*}
\begin{aligned}
&\mathrm{maxroot}\ 	\mathcal{R}_{x,d}^+\cdot (\partial_x\cdot x\cdot \partial_x)^k\cdot \mathcal{R}_{x,d}^+\ \det[x\cdot \boldsymbol{I}_d-\boldsymbol{A}^T\boldsymbol{A}]\\
&\overset{(a)}\leq (1+(t-k)k)\cdot  	\mathrm{maxroot}\ \mathcal{R}_{x,d}^+\cdot \partial_x^k\cdot \mathcal{R}_{x,d}^+ \ \det[x\cdot \boldsymbol{I}_d-\boldsymbol{A}^T\boldsymbol{A}]\\
&\overset{(b)}\leq 
\frac{\frac{t}{k}+(t-k)t}{(1-\sqrt{\alpha_k}	)^2} \cdot \lambda_{k+1}
\leq 
\frac{(t-k+1)t}{(1-\sqrt{\alpha_k}	)^2} \cdot \lambda_{k+1},
\end{aligned}
\end{equation*}
where $(a)$ follows from Lemma \ref{th1-maxroot-lemma} and $(b)$ follows from Lemma \ref{th1-maxroot-lemma2}. This completes the proof.

\end{proof}

\subsubsection{Proof of Lemma \ref{th1-maxroot-lemma}}\label{proof of th1-maxroot-lemma}

To prove Lemma \ref{th1-maxroot-lemma}, we employ the multiplicative convolution of polynomials, which was originally studied by Szeg\"o \cite{Sze22} (see also \cite{inter5}).

\begin{definition}
{\rm \cite[Definition 1.4]{inter5}}
For two univariate polynomials $p(x)=\sum_{i=0}^{d}(-1)^ia_i\cdot x^{d-i}$ and $q(x)=\sum_{i=0}^{d}(-1)^ib_i\cdot x^{d-i}$ of degree $d$, where $a_i, b_i\in {\mathbb R}$, the $d$-th symmetric multiplicative convolution of $p(x)$ and $q(x)$ is defined as
\begin{equation}\label{def-multi}
p(x)\boxtimes_d q(x)=\sum\limits_{i=0}^{d}(-1)^i\frac{a_ib_i}{\binom{d}{i}}\cdot x^{d-i}.
\end{equation}

\end{definition}

\begin{remark}\label{eq:2025:xu91}
Let $p(x)$ and $q(x)$ be two polynomials of degree $d$. It is easy to check that $p(x)\boxtimes_d q(x)=q(x)\boxtimes_d p(x)$ and $\mathcal{R}_{x,d}^+(p)\boxtimes_d \mathcal{R}_{x,d}^+(q)
=(-1)^d\cdot \mathcal{R}_{x,d}^+( p\boxtimes_d q)$.
\end{remark}

\begin{lemma}{\rm \cite[Theorem 1.5]{inter5}}\label{inter5multi}
If $p(x)$ and $q(x)$ are degree-$d$ polynomials with only nonnegative real roots, then $p(x)\boxtimes_d q(x)$ also has only nonnegative real roots. Moreover, we have
\begin{equation*}
\text{\rm maxroot}\ p(x)\boxtimes_d q(x)\leq \text{\rm maxroot}\ p(x)\cdot \text{\rm maxroot}\ q(x).
\end{equation*}	

\end{lemma}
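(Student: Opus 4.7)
The plan is to prove both claims, real-rootedness and the product bound on the largest root, by exhibiting $p \boxtimes_d q$ through two complementary representations. Writing $p(x) = \prod_{i=1}^d(x - \alpha_i)$ and $q(x) = \prod_{i=1}^d(x - \beta_i)$ with all $\alpha_i, \beta_i \geq 0$, so that $a_i = e_i(\alpha_1,\ldots,\alpha_d)$ and $b_i = e_i(\beta_1,\ldots,\beta_d)$, the first representation is the symmetric-group average
\[
p(x) \boxtimes_d q(x) \;=\; \frac{1}{d!}\sum_{\sigma \in S_d} \prod_{i=1}^{d} \bigl(x - \alpha_i \beta_{\sigma(i)}\bigr),
\]
which I would verify by coefficient comparison together with the elementary identity $\frac{1}{d!}\sum_{\sigma \in S_d}\prod_{i \in S}\beta_{\sigma(i)} = b_{|S|}/\binom{d}{|S|}$. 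This expresses $p \boxtimes_d q$ as a uniform average of polynomials whose roots are all products $\alpha_i \beta_{\sigma(i)} \leq \mathrm{maxroot}(p)\cdot \mathrm{maxroot}(q)$; however, since arbitrary averages of real-rooted polynomials need not be real-rooted, a separate structural argument is required for real-rootedness.

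For that argument I would introduce the auxiliary bivariate polynomial
\[
R(x,y) \;=\; \prod_{i=1}^d (x + y\alpha_i) \;=\; \sum_{i=0}^{d} a_i\, y^i\, x^{d-i},
\]
which is real stable in $(x,y)$ because $\mathbf{Im}(x + y\alpha_i) > 0$ whenever $\mathbf{Im}(x), \mathbf{Im}(y) > 0$ and $\alpha_i \geq 0$. Its polarization in $y$, the symmetric multiaffine polynomial
\[
\tilde R(x, y_1, \ldots, y_d) \;=\; \sum_{i=0}^{d} a_i\,\frac{e_i(y_1,\ldots,y_d)}{\binom{d}{i}}\, x^{d-i},
\]
satisfies $\tilde R(x, y, \ldots, y) = R(x,y)$. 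The Grace--Walsh--Szeg\H{o} theorem then upgrades real stability of $R$ to real stability of $\tilde R$ in all $d+1$ variables. Specializing $y_i = -\beta_i$ (a real substitution, which preserves real stability in the remaining variable $x$ by Lemma~\ref{real stable2}(i)) gives
\[
\tilde R(x, -\beta_1,\ldots,-\beta_d) \;=\; \sum_{i=0}^{d}(-1)^i \frac{a_i b_i}{\binom{d}{i}}\, x^{d-i} \;=\; p(x)\boxtimes_d q(x),
\]
so $p\boxtimes_d q$ is real stable in $x$, hence real-rooted. Nonnegativity of the roots follows from the alternating sign pattern of the coefficients together with $a_i, b_i \geq 0$.

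For the maxroot bound, set $M := \mathrm{maxroot}(p) \cdot \mathrm{maxroot}(q)$. For every $x > M$ and every $\sigma \in S_d$, the factor $x - \alpha_i \beta_{\sigma(i)}$ is strictly positive since $\alpha_i \beta_{\sigma(i)} \leq M < x$. The symmetric-group representation from the first step therefore yields $p(x)\boxtimes_d q(x) > 0$ for all $x > M$. Combined with the fact (from the second step) that $p \boxtimes_d q$ is monic and real-rooted, this forces every root to lie in $[0, M]$, which is exactly the asserted bound.

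The main obstacle is invoking Grace--Walsh--Szeg\H{o} in the polarization step: it is the one classical tool that must be imported from outside the preliminaries of Section~2, though it fits naturally into the real-stability framework of Definition~\ref{defstable} and Lemma~\ref{real stable2}. A subsidiary point requiring care is the sign bookkeeping when specializing $y_i = -\beta_i$, so that the factors $(-1)^i$ in the definition~\eqref{def-multi} of $\boxtimes_d$ are recovered exactly.
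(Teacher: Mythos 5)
Your proof is correct. Since the paper only cites this result from \cite[Theorem 1.5]{inter5} and does not reprove it, there is no in-paper argument to compare against; I will evaluate the proposal on its own merits and against what is known.

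The permutation-average identity in your first step,
\[
p(x)\boxtimes_d q(x)=\frac{1}{d!}\sum_{\sigma\in S_d}\prod_{i=1}^d\bigl(x-\alpha_i\beta_{\sigma(i)}\bigr),
\]
is verified correctly via the count $\frac{1}{d!}\sum_{\sigma}\prod_{i\in S}\beta_{\sigma(i)}=e_{|S|}(\beta)/\binom{d}{|S|}$, and this is the discrete (permutation-matrix) specialization of the orthogonal-conjugation formula
$p\boxtimes_d q=\mathbb{E}_Q\det\!\bigl(x\boldsymbol{I}-\boldsymbol{A}Q\boldsymbol{B}Q^T\bigr)$
that underlies the treatment in \cite{inter5}. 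It immediately yields strict positivity of $p\boxtimes_d q$ on $(M,\infty)$ with $M=\mathrm{maxroot}(p)\cdot\mathrm{maxroot}(q)$, and also strict nonvanishing on $(-\infty,0)$ since each factor is negative there, so your alternating-sign remark can be replaced by (or read as) this cleaner observation. The real-rootedness step via polarization of $R(x,y)=\prod_i(x+y\alpha_i)$ and Grace--Walsh--Szeg\H{o} is sound: $R$ is real stable because each factor $x+y\alpha_i$ has positive imaginary part when $\mathbf{Im}(x),\mathbf{Im}(y)>0$ and $\alpha_i\geq 0$; the upper half-plane is convex, so GWS applies; the real substitutions $y_i=-\beta_i$ preserve stability by Lemma~\ref{real stable2}(i) and produce exactly $p\boxtimes_d q$. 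Together with monicity this pins every root to $[0,M]$.

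The only substantive critique is presentational: GWS is the one ingredient not available in Section~2's toolbox, and the original argument in \cite{inter5} leans instead on Szeg\H{o}'s 1922 composition theorem (already in the bibliography as \cite{Sze22}) for the real-rootedness half. These two classical inputs are interchangeable here --- Szeg\H{o}'s theorem is itself a corollary of GWS --- so your route is not wrong, just slightly less aligned with the citations the paper already carries. If you wanted a version with minimal imports, you could cite \cite{Sze22} for real-rootedness and then keep your permutation-average argument verbatim for nonnegativity and the maxroot bound; that second half is in fact simpler and more self-contained than what one finds in most expositions.
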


\begin{lemma}\label{lemma3}
Let $r\geq 1$ be an integer.
For any polynomial $q(x)$ of degree $r$, we have
\begin{equation*}
\partial_x^{k} (x^{k}q(x))= q_{}(x)\boxtimes_{r} \partial_x^{k} (x^{k}(x-1)^{r}).
\end{equation*}	
\end{lemma}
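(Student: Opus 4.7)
The plan is to prove Lemma~\ref{lemma3} by direct computation: expand both sides in the monomial basis, apply the formula $\partial_x^k x^{j+k} = \frac{(j+k)!}{j!} x^j$, and match coefficients using the definition of $\boxtimes_r$ in \eqref{def-multi}. Since both $\partial_x^k(x^k q(x))$ and $\partial_x^k(x^k (x-1)^r)$ are polynomials of degree exactly $r$, the convolution $\boxtimes_r$ is well-defined and the identity is one between degree-$r$ polynomials.

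First, I would write $q(x) = \sum_{j=0}^{r} c_j\, x^{j}$ and observe
\begin{equation*}
\partial_x^{k}\bigl(x^{k} q(x)\bigr) = \sum_{j=0}^{r} c_j \cdot \tfrac{(j+k)!}{j!}\cdot x^{j}.
\end{equation*}
Next, expanding $(x-1)^{r} = \sum_{j=0}^{r} \binom{r}{j}(-1)^{r-j} x^{j}$ and applying $\partial_x^k$ to $x^{k}(x-1)^r$ in the same way gives
\begin{equation*}
\partial_x^{k}\bigl(x^{k}(x-1)^{r}\bigr) = \sum_{j=0}^{r} \binom{r}{j}(-1)^{r-j}\cdot \tfrac{(j+k)!}{j!}\cdot x^{j}.
\end{equation*}

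To match this against the convention used in \eqref{def-multi}, I would reindex by $i = r-j$ so that both polynomials appear in the form $\sum_{i=0}^{r} (-1)^{i} \alpha_i\, x^{r-i}$. With this bookkeeping, $q(x)$ has coefficients $a_i := (-1)^{i} c_{r-i}$ and $\partial_x^{k}(x^{k}(x-1)^{r})$ has coefficients $b_i := \binom{r}{i} \tfrac{(r-i+k)!}{(r-i)!}$. The definition \eqref{def-multi} then yields
\begin{equation*}
q(x)\boxtimes_{r} \partial_x^{k}\bigl(x^{k}(x-1)^{r}\bigr) = \sum_{i=0}^{r} (-1)^{i}\, \tfrac{a_i b_i}{\binom{r}{i}}\, x^{r-i} = \sum_{i=0}^{r} (-1)^{i}\, a_i \cdot \tfrac{(r-i+k)!}{(r-i)!}\, x^{r-i},
\end{equation*}
because the factors $\binom{r}{i}$ cancel cleanly. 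Re-substituting $a_i = (-1)^{i} c_{r-i}$ and reverting $j = r-i$ recovers exactly $\sum_{j} c_j \tfrac{(j+k)!}{j!} x^{j} = \partial_x^{k}(x^{k} q(x))$, completing the identity.

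The proof is essentially bookkeeping, so there is no real conceptual obstacle; the only place to be careful is aligning the two indexing conventions (ascending powers on the differential-operator side versus the $(-1)^i a_i x^{r-i}$ convention in \eqref{def-multi}) so that the binomial coefficients $\binom{r}{i}$ generated by expanding $(x-1)^r$ cancel exactly against the $\binom{r}{i}$ in the denominator of the definition of $\boxtimes_r$. Once the sign and indexing conventions are fixed, the identity is coefficient-wise immediate.
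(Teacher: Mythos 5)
Your proof is correct and follows the same underlying strategy as the paper's: expand $\partial_x^k(x^kq(x))$ as $(\text{coefficient of }q)\times(\text{factor independent of }q)$, specialize to $q=(x-1)^r$ to see that the factors become the independent factors times $\binom{r}{j}$, and observe that the $\binom{r}{j}$ in the denominator of \eqref{def-multi} cancels exactly. The only difference is that you compute $\partial_x^k(x^{j+k})=\tfrac{(j+k)!}{j!}x^j$ directly, which gives the coefficient factor $\tfrac{(r-j+k)!}{(r-j)!}$ in closed form, whereas the paper reaches the same cancellation via Leibniz's rule and leaves the factor as an unsimplified sum $a_j$; your route is slightly shorter but not conceptually different.
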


\begin{proof}
Write $q(x)=\sum\limits_{j=0}^{r}(-1)^j\cdot b_j\cdot x^{r-j}$.
By Leibniz rule, we have
\begin{equation}\label{align1}
\begin{aligned}
\partial_x^{k} (x^{k}q(x))
&=\sum\limits_{i=0}^{\min\{k,r\}} \binom{k}{i} \cdot \bigg(\partial_x^{k-i}\ x^{k}\bigg)\cdot \bigg(\partial_x^{i}\ q(x)\bigg)\\
&=\sum\limits_{i=0}^{\min\{k,r\}} \binom{k}{i}\cdot \bigg( \frac{k!}{i!}\cdot x^{i}\bigg)\cdot 
\bigg(\sum\limits_{j=0}^{r-i}(-1)^j\cdot b_j\cdot \frac{(r-j)!}{(r-j-i)!}\cdot x^{r-j-i}\bigg)\\
&=\sum\limits_{j=0}^{r}(-1)^j\cdot b_j\cdot a_j\cdot x^{r-j},
\end{aligned}
\end{equation}
where 
\begin{equation*}
a_j:=\sum\limits_{i=0}^{\min\{k,r-j\}}\binom{k}{i}\cdot \frac{k!}{i!}\cdot \frac{(r-j)!}{(r-j-i)!},\quad j=0,1,\ldots,r.
\end{equation*}
Substituting $q(x)=(x-1)^r$ into equation \eqref{align1}, we obtain
\begin{equation}\label{align2}
\partial_x^{k} (x^{k}(x-1)^r)
=\sum\limits_{j=0}^{r}(-1)^j\cdot \binom{r}{j}\cdot a_j\cdot x^{r-j}.
\end{equation}
Then, combining \eqref{align2} with the definition of the symmetric multiplicative convolution in \eqref{def-multi} we arrive at
\begin{equation*}
\begin{aligned}
q(x) \boxtimes_r  \partial_x^{k} (x^{k}(x-1)^r)
=\sum\limits_{j=0}^{r}(-1)^j\cdot \frac{\binom{r}{j}\cdot a_j\cdot b_j}{\binom{r}{j}}\cdot x^{r-j}
=\sum\limits_{j=0}^{r}(-1)^j\cdot a_j\cdot b_j\cdot x^{r-j}
=\partial_x^{k} (x^{k}q(x)),	
\end{aligned}
\end{equation*}
where the last equation follows from \eqref{align1}.

\end{proof}

\begin{lemma}\label{lemma4}
Let $r\geq 1$ be an integer.
Then  we have
\begin{equation*}
\mathrm{minroot}\ \partial_x^{k} (x^{k}(x-1)^{r})
\geq \frac{1}{1+kr}.
\end{equation*}	
\end{lemma}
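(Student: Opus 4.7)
The plan is to combine an exact Vieta identity for the sum of reciprocals of roots of $\phi_k(x):=\partial_x^k[x^k(x-1)^r]$ with the a priori bound that every root of $\phi_k$ lies in $(0,1]$; the desired lower bound then falls out of a one-line inequality.

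To place all the roots of $\phi_k$ in $(0,1]$, I will first establish the multiplicative-convolution recursion
\[
\phi_k(x) \;=\; \psi_k(x) \boxtimes_r \phi_{k-1}(x), \qquad \psi_k(x)\;:=\;(x-1)^{r-1}\bigl((r+k)x-k\bigr),
\]
with base case $\phi_0(x)=(x-1)^r$. Using the explicit expansion in the proof of Lemma~\ref{lemma3} together with the Vandermonde identity, the convention coefficients of $\phi_k$ equal $a_i^{(\phi_k)} = \binom{r}{i}\frac{(r-i+k)!}{(r-i)!}$, so $a_i^{(\phi_k)}/a_i^{(\phi_{k-1})}=r+k-i$; and the algebraic identity $(r+k)(x-1)^r+r(x-1)^{r-1}=(x-1)^{r-1}((r+k)x-k)$ identifies $\psi_k$ as the unique degree-$r$ polynomial whose convention coefficients are $\binom{r}{i}(r+k-i)$, which matches the convolution identity $a_i^{(\phi_k)} = b_i^{(\psi_k)}\, a_i^{(\phi_{k-1})}/\binom{r}{i}$. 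Each $\psi_l$ has only nonnegative real roots (namely $1$ with multiplicity $r-1$ and $l/(r+l)\in(0,1]$) and $\mathrm{maxroot}\ \psi_l \leq 1$, so by induction on $k$ via Lemma~\ref{inter5multi}, $\phi_k$ has only nonnegative real roots with $\mathrm{maxroot}\ \phi_k\leq 1$. Since $\phi_k(0)=(-1)^r k!\neq 0$, all $r$ roots lie strictly in $(0,1]$.

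Letting $\lambda_1,\ldots,\lambda_r\in(0,1]$ denote these roots and reading off $c_0=(-1)^r k!$ and $c_1 = r(-1)^{r-1}(k+1)!$ from the explicit expansion, Vieta's formula gives
\[
\sum_{i=1}^{r}\frac{1}{\lambda_i} \;=\; -\frac{c_1}{c_0} \;=\; r(k+1).
\]
Since $\lambda_i\leq 1$ forces $1/\lambda_i\geq 1$ for every $i$, isolating the largest reciprocal yields
\[
\max_{1\leq i\leq r}\frac{1}{\lambda_i} \;\leq\; r(k+1) - (r-1) \;=\; 1+kr,
\]
which is equivalent to $\mathrm{minroot}\ \phi_k\geq 1/(1+kr)$. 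The main technical obstacle is the verification of the recursion, specifically identifying the correct convention coefficients of $\psi_k$ from the Vandermonde identity; once this is in hand and Lemma~\ref{inter5multi} confines the roots to $(0,1]$, the closing step is a one-line Vieta argument with no finer analytic tool required.
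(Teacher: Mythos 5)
Your proof is correct, and the closing Vieta argument is precisely the one the paper uses: from the explicit expansion $\partial_x^k(x^k(x-1)^r)=\sum_{i=0}^{r}\binom{r}{i}(-1)^{r-i}\frac{(i+k)!}{i!}x^i$ one reads $\sum_i 1/\lambda_i = r(k+1)$, and since the remaining $r-1$ reciprocals are each at least $1$, the largest reciprocal is at most $1+kr$. Where you diverge is in how you place the roots in $(0,1]$. The paper simply asserts $\lambda_i\in[0,1]$, the implicit justification being Rolle's theorem (or Gauss–Lucas): $x^k(x-1)^r$ has all its roots in $[0,1]$, differentiation keeps real roots inside the convex hull of the original roots, so after $k$ derivatives the roots stay in $[0,1]$; and since the constant term $(-1)^r k!$ is nonzero, $0$ is excluded. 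You instead build the multiplicative-convolution recursion $\phi_k=\psi_k\boxtimes_r\phi_{k-1}$ with $\psi_k(x)=(x-1)^{r-1}((r+k)x-k)$, verify the coefficient identity via Vandermonde, and invoke Lemma~\ref{inter5multi} inductively. I checked the combinatorics (the Vandermonde collapse giving $a_j^{(\phi_k)}=\binom{r}{j}(r-j+k)!/(r-j)!$, and the identity $(r+k)\binom{r-1}{l}+k\binom{r-1}{l-1}=\binom{r}{l}(r+k-l)$): they are all correct. The trade-off is that your route is self-contained within the machinery the paper already develops for multiplicative convolutions, but it costs nontrivial coefficient bookkeeping for a fact that a one-line appeal to Gauss–Lucas delivers immediately; the paper's choice is the lighter one.
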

\begin{proof}
Note that
\begin{equation*}
\begin{aligned}
\partial_x^{k} (x^{k}(x-1)^{r})
&=\partial_x^{k}\ \sum_{i=0}^{r}\binom{r}{i}\cdot (-1)^{r-i}\cdot x^{i+k}=\sum_{i=0}^{r}\binom{r}{i}\cdot (-1)^{r-i}\cdot \frac{(i+k)!}{i!}\cdot  x^{i}.	
\end{aligned}	
\end{equation*}
Let $\lambda_i$ be the $i$-th largest root of $\partial_x^{k} (x^{k}(x-1)^{r})$ for each  $i\in\{1,2,\ldots,r\}$. 
Then we have
\begin{equation*}
r(k+1)\,\,\overset{(a)}=\,\,\sum_{i=1}^{r}\frac{1}{\lambda_i}\,\,\overset{(b)}\geq\,\, \frac{1}{\lambda_{r}}+r-1,	
\end{equation*} 
where ($a$) follows from Vieta's formula, and ($b$) follows from the fact that each $\lambda_i\in[0,1]$.
Rearranging the terms, we arrive at 
\begin{equation*}
\mathrm{minroot}\ \partial_x^{k} (x^{k}(x-1)^{r})=\lambda_{r}\geq \frac{1}{1+kr}.	
\end{equation*}

\end{proof}

\begin{lemma}\label{lemma4-new}
Let $r\geq 1$ be an integer.	
For any degree-$r$ polynomial $q(x)$ which has only positive real roots, we have	
\begin{equation*}
\mathrm{minroot}\  \partial_x^{k} (x^{k}q(x))
\geq \frac{1}{1+kr}\cdot \mathrm{minroot}\ q(x).
\end{equation*}	
\end{lemma}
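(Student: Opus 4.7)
The plan is to reduce this to Lemma \ref{lemma3} and Lemma \ref{lemma4} via a ``minroot version'' of the multiplicative-convolution bound in Lemma \ref{inter5multi}. First, by Lemma \ref{lemma3} I would write
\begin{equation*}
\partial_x^k(x^k q(x))\;=\;q(x)\boxtimes_r \partial_x^k(x^k(x-1)^r),
\end{equation*}
so the task reduces to lower-bounding the smallest root of a multiplicative convolution in terms of the smallest roots of its factors.

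To obtain such a bound, I would apply Lemma \ref{inter5multi} after flipping. Concretely, if $p_1,p_2$ are degree-$r$ polynomials with only positive real roots, then $\mathcal{R}_{x,r}^+ p_1$ and $\mathcal{R}_{x,r}^+ p_2$ also have only positive real roots, with largest root equal to $1/\mathrm{minroot}(p_i)$. By Remark \ref{eq:2025:xu91},
\begin{equation*}
\mathcal{R}_{x,r}^+ p_1\boxtimes_r \mathcal{R}_{x,r}^+ p_2 \;=\;(-1)^r\cdot \mathcal{R}_{x,r}^+(p_1\boxtimes_r p_2),
\end{equation*}
so the largest root of the left-hand side equals $1/\mathrm{minroot}(p_1\boxtimes_r p_2)$. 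Combining with Lemma \ref{inter5multi} applied to the flipped polynomials yields
\begin{equation*}
\mathrm{minroot}(p_1\boxtimes_r p_2)\;\geq\;\mathrm{minroot}(p_1)\cdot \mathrm{minroot}(p_2).
\end{equation*}

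Next I would verify that Lemma \ref{inter5multi} and the above bound are legitimately applicable with $p_1=q$ and $p_2=\partial_x^k(x^k(x-1)^r)$. By hypothesis $q(x)$ has only positive real roots. For the second factor, Rolle's theorem applied successively to $x^k(x-1)^r$ (which has roots of multiplicity $k$ at $0$ and $r$ at $1$) shows that $\partial_x^k(x^k(x-1)^r)$ is a degree-$r$ polynomial with all roots in $(0,1]$. Plugging in the bound $\mathrm{minroot}\ \partial_x^k(x^k(x-1)^r)\geq \tfrac{1}{1+kr}$ from Lemma \ref{lemma4} gives
\begin{equation*}
\mathrm{minroot}\,\partial_x^k(x^k q(x))\;\geq\;\mathrm{minroot}(q)\cdot \frac{1}{1+kr},
\end{equation*}
which is the claim.

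The only mildly delicate point is making sure the flip operator cleanly converts largest-root statements into smallest-root statements, which requires that all roots in sight are strictly positive; this is why I would explicitly invoke Rolle to place every root of $\partial_x^k(x^k(x-1)^r)$ in $(0,1]$ and why the hypothesis on $q$ is stated with positive (not just nonnegative) roots. Once that bookkeeping is in place, the chain of inequalities is a direct application of the lemmas already established.
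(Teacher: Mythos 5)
Your proof is correct and takes essentially the same route as the paper: apply Lemma~\ref{lemma3} to write $\partial_x^k(x^kq(x))$ as $q\boxtimes_r \partial_x^k(x^k(x-1)^r)$, flip via Remark~\ref{eq:2025:xu91}, invoke the maxroot bound of Lemma~\ref{inter5multi} on the flipped polynomials, and finish with Lemma~\ref{lemma4}. The only cosmetic difference is that you package the intermediate step as a standalone ``minroot version'' of the multiplicative-convolution bound and add an explicit Rolle's-theorem check that $\partial_x^k(x^k(x-1)^r)$ has all roots in $(0,1]$, whereas the paper asserts positivity of roots directly and applies Lemma~\ref{inter5multi} to the flipped polynomials without isolating the lemma.
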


\begin{proof}
Let $h(x)=\partial_x^{k} (x^{k}(x-1)^{r})$.
By Lemma \ref{lemma3} and Remark \ref{eq:2025:xu91}, we have
\begin{equation*}
(-1)^r\cdot \mathcal{R}_{x,r}^+\cdot \partial_x^{k} (x^{k}q(x))
=\mathcal{R}_{x,r}^+ (q)\boxtimes_{r} \mathcal{R}_{x,r}^+(h).
\end{equation*}	
Note that $q(x)$, $h(x)$, $\mathcal{R}_{x,r}^+ (q)$ and $\mathcal{R}_{x,r}^+ (h)$ are degree-$r$ polynomials with only positive real roots.
Then by Lemma \ref{inter5multi} we have
\begin{equation*}
\text{\rm maxroot}\ \mathcal{R}_{x,r}^+\cdot \partial_x^{k} (x^{k}q(x))
\leq \text{\rm maxroot}\ \mathcal{R}_{x,r}^+ (q)\cdot \text{\rm maxroot}\ \mathcal{R}_{x,r}^+ (h),
\end{equation*}
implying that
\begin{equation*}
\text{\rm minroot}\ \partial_x^{k} (x^{k}q(x))
\geq \text{\rm minroot}\ q(x)\cdot \text{\rm minroot}\ h(x)
\geq \frac{1}{1+kr}\cdot \mathrm{minroot}\ q(x).
\end{equation*}
Here, the last inequality follows from Lemma \ref{lemma4}.
We arrive at our conclusion.

\end{proof}

Now we can give a proof of Lemma \ref{th1-maxroot-lemma}.

\begin{proof}[Proof of Lemma \ref{th1-maxroot-lemma}]
Let $p(x)= \det[x\cdot \boldsymbol{I}_d-\boldsymbol{A}^T\boldsymbol{A}]$.
Note that $\mathcal{R}_{x,d}^+ \ p(x)$ and $(\partial_x\cdot x\cdot \partial_x)^k\cdot \mathcal{R}_{x,d}^+ \ p(x)$ are real-rooted polynomials with only positive roots.
Also note that by \eqref{meq70} we have $(\partial_x\cdot x\cdot \partial_x)^k\cdot \mathcal{R}_{x,d}^+ \ p(x)=\partial_x^k\cdot x^k\cdot \partial_x^k\cdot \mathcal{R}_{x,d}^+ \ p(x)$.
Therefore, we have
\begin{equation}\label{2026-xu8}
\mathrm{maxroot}\ \mathcal{R}_{x,d}^+\cdot (\partial_x\cdot x\cdot \partial_x)^k\cdot \mathcal{R}_{x,d}^+ \ p(x)
=\frac{1}{\mathrm{minroot}\  \partial_x^k\cdot x^k\cdot \partial_x^k\cdot \mathcal{R}_{x,d}^+ \ p(x)}.	
\end{equation}
Note that $\partial_x^k\cdot \mathcal{R}_{x,d}^+ \ p(x)$ is a degree-$(t-k)$ polynomial with only positive real roots.
Hence, by Lemma \ref{lemma4-new} we have
\begin{equation}\label{2026-xu7}
\begin{aligned}
\mathrm{minroot}\  \partial_x^k\cdot x^k\cdot \partial_x^k\cdot \mathcal{R}_{x,d}^+ \ p(x)
&\geq \frac{1}{1+(t-k)k}\cdot \mathrm{minroot}\  \partial_x^k\cdot \mathcal{R}_{x,d}^+ \ p(x)\\
&=\frac{1}{1+(t-k)k}\cdot\frac{1}{\mathrm{maxroot}\ \mathcal{R}_{x,d}^+\cdot  \partial_x^k\cdot \mathcal{R}_{x,d}^+ \ p(x)}.	
\end{aligned}
\end{equation}
Substituting \eqref{2026-xu7} into \eqref{2026-xu8}, we arrive at \eqref{maxroot}.
\end{proof}

\subsubsection{Proof of Lemma \ref{th1-maxroot-lemma2}}\label{proof of th1-maxroot-lemma2}

To prove Lemma \ref{th1-maxroot-lemma2}, we need the following lemma.

\begin{lemma}\label{minroot-new}
Suppose that $f(x)=\prod_{i=1}^{t}(x-\beta_i)$ is a real-rooted degree-$t$ polynomial, where $\beta_1\leq \cdots \leq \beta_t$. 
Let $k$ be a positive integer satisfying $2\leq k\leq t-1$.
Assume that $\beta_{k}>\beta_1$.
Then
\begin{equation}\label{eq:2025:93}
\mathrm{minroot}\ \partial_x^k f(x)
\geq 
(1-c_k)\cdot \beta_1+
c_k\cdot \beta_{k+1},
\end{equation}
where
\begin{equation*}
\alpha_{k}=\frac{\beta_{k+1}-\frac{1}{k}\sum_{i=1}^{k}\beta_i}{\beta_{k+1}-\beta_1}\in(0,1)
\quad\text{and}\quad
c_k=\frac{k}{t}\cdot 
\bigg(\sqrt{1-\frac{k}{t}\cdot \alpha_{k}}-\sqrt{\alpha_{k}-\frac{k}{t}\cdot \alpha_{k}}	\bigg)^2\in[0,1].	
\end{equation*}
In particular, if $\beta_1\geq 0$ then we have
\begin{equation}\label{eq:2025:92}
\mathrm{minroot}\ \partial_x^k f(x)
\geq \frac{k}{t}\cdot 
(1-\sqrt{\alpha_{k}}	)^2 \cdot \beta_{k+1}.
\end{equation}	
\end{lemma}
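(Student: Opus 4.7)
The plan is to attack (5.91) via the barrier function method adapted for the minimum root under iterated differentiation, then obtain (5.92) as an elementary algebraic consequence. First, by the affine substitution $x\mapsto\beta_1+(\beta_{k+1}-\beta_1)x$, I would reduce to the normalization $\beta_1=0$ and $\beta_{k+1}=1$, under which $\alpha_k=1-\bar\beta_k$ with $\bar\beta_k=\tfrac1k\sum_{i=1}^k\beta_i$, and the target reads $\mathrm{minroot}(\partial_x^kf)\ge c_k$.

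The core analytic ingredient is the multi-step barrier inequality
\begin{equation*}
\mathrm{minroot}(\partial_x^k f)\ \ge\ x+\frac{k}{\Phi_f(x)},\qquad x<0,
\end{equation*}
where $\Phi_f(x)=\sum_{i=1}^t 1/(\beta_i-x)$. This would be obtained by iterating the trivial single-step bound $\mathrm{minroot}(\partial p)-x\ge 1/\Phi_p(x)$ (immediate from $\Phi_p(x)\ge 1/(\mathrm{minroot}(p)-x)$) together with the Marcus--Spielman--Srivastava-style barrier monotonicity $\Phi_{\partial p}(x+1/\Phi_p(x))\le\Phi_p(x)$, which rests on the interlacing of the roots of $p$ and $\partial p$ and the Cauchy--Schwarz bound $\Phi_p(y)^2\le(\deg p)\,\Phi_p'(y)$.

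The second ingredient is an upper bound on $\Phi_f(-s)$. Since $\beta\mapsto 1/(\beta+s)$ is convex on $[0,\infty)$, the sum $\sum_{i=1}^k 1/(\beta_i+s)$ (under the constraints $\beta_i\in[0,1]$ and $\sum_{i=1}^k\beta_i=k(1-\alpha_k)$) is maximized at the extremal configuration placing $k(1-\alpha_k)$ roots at $1$ and $k\alpha_k$ roots at $0$; for the tail $\beta_i\ge 1$, $i\ge k+1$, the bound $1/(\beta_i+s)\le 1/(1+s)$ contributes an additional $(t-k)/(1+s)$. Adding,
\begin{equation*}
\Phi_f(-s)\ \le\ \frac{t-k\alpha_k}{1+s}+\frac{k\alpha_k}{s}.
\end{equation*}
Substituting into the barrier inequality yields $\mathrm{minroot}(\partial_x^k f)\ge G(s)$ with $G(s)=-s+k/[\tfrac{t-k\alpha_k}{1+s}+\tfrac{k\alpha_k}{s}]$. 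Differentiating, $G'(s)=0$ reduces to the quadratic $t(t-k)s^2+2k(t-k)\alpha_k s-k^2\alpha_k(1-\alpha_k)=0$, whose unique positive root is $s^*=\tfrac{k}{t}\bigl(\sqrt{\alpha_k(t-k\alpha_k)/(t-k)}-\alpha_k\bigr)$. Using the identity $ts^*+k\alpha_k=k\sqrt{\alpha_k(t-k\alpha_k)/(t-k)}$ and collecting terms gives $G(s^*)=\tfrac{k}{t}\bigl(\sqrt{1-\tfrac{k}{t}\alpha_k}-\sqrt{(1-\tfrac{k}{t})\alpha_k}\bigr)^2=c_k$, and reversing the affine substitution delivers (5.91).

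For (5.92), since $c_k\in[0,1]$ and $\beta_1\ge 0$, the inequality $(1-c_k)\beta_1+c_k\beta_{k+1}\ge c_k\beta_{k+1}$ reduces matters to the elementary fact $c_k\ge\tfrac{k}{t}(1-\sqrt{\alpha_k})^2$, i.e. $(\sqrt{1-\tfrac{k}{t}\alpha_k}-\sqrt{(1-\tfrac{k}{t})\alpha_k})^2\ge(1-\sqrt{\alpha_k})^2$; squaring twice with $u=k/t$, $v=\alpha_k$ reduces this to $\sqrt{v}(\sqrt{1-uv}-\sqrt{1-u})\ge 0$, which holds because $v\le 1$. The main obstacle I anticipate is the multi-step barrier inequality itself: the naive induction requires $\Phi_{\partial p}(y)\le\Phi_p(x)$ at a point $y=x+j/\Phi_f(x)$ that may temporarily exceed $\mathrm{minroot}(\partial^j f)$, so a careful MSS-style barrier update is needed. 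The cleanest route seems to apply the estimate $-d(1/\Phi)/dy\ge 1/\deg$ inductively along the whole chain $f,\partial f,\dots,\partial^{k-1}f$, rather than through a brittle step-by-step loop.
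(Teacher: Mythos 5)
Your argument is correct and, modulo parametrization, it is the paper's proof: both establish $\mathrm{minroot}\,\partial_x^k f \geq \max_{b<\beta_1}\bigl(b + k/\Phi_f(b)\bigr)$ by iterating the MSS barrier lemma, bound $\Phi_f$ from above via convexity of $x\mapsto 1/(x-b)$ relative to the two node values $\beta_1,\beta_{k+1}$ (your "extremal configuration" upper bound is exactly the Jensen bound the paper writes with weights $s_i=(\beta_{k+1}-\beta_i)/(\beta_{k+1}-\beta_1)$), and then optimize the scalar bound over the barrier point $b$; your affine normalization to $\beta_1=0$, $\beta_{k+1}=1$ and direct stationarity calculation is interchangeable with the paper's substitution-plus-AM--GM, and your verified quadratic $t(t-k)s^2+2k(t-k)\alpha_k s-k^2\alpha_k(1-\alpha_k)=0$ yields the same optimizer. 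As for the obstacle you flag: the multi-step iteration is not fragile, precisely because the barrier lemma (Lemma \ref{barrier-lemma}) returns both $b+\delta\le \mathrm{minroot}\,\partial f$ and $\Phi_{\partial f}(b+\delta)\le \Phi_f(b)<\infty$; finiteness of the potential at $b+\delta$ forces $b+\delta<\mathrm{minroot}\,\partial f$ strictly, so the inductive hypothesis for the next step holds and no overshoot past the minimum root can occur. Your reduction of \eqref{eq:2025:92} via $c_k\ge \tfrac{k}{t}(1-\sqrt{\alpha_k})^2$ is a valid variant of the paper's rationalization $\sqrt{1-\tfrac{k}{t}\alpha_k}-\sqrt{(1-\tfrac{k}{t})\alpha_k}=(1-\alpha_k)/(\sqrt{1-\tfrac{k}{t}\alpha_k}+\sqrt{(1-\tfrac{k}{t})\alpha_k})\ge (1-\alpha_k)/(1+\sqrt{\alpha_k})$.
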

\begin{proof}
See Appendix \ref{proof-minroot-new}.	
\end{proof}

\begin{remark}
By Rolle's theorem, we see  that $\partial_x f(x)$ inerlaces  $f(x)$. In particular, the smallest root of $\partial_x^k f(x)$ is between $\beta_{1}$ and $\beta_{2}$.
Repeating this argument, we can obtain 
\begin{equation}\label{eqxu10}
\beta_{1}\leq \mathrm{minroot}\ \partial_x^k f(x)\leq \beta_{k+1},\ \forall 1\leq k\leq t-1.
\end{equation}
Lemma \ref{minroot-new} improves the lower bound in \eqref{eqxu10} by showing that the smallest root of $\partial_x^k f(x)$ is no smaller than a convex combination of $\beta_1$ and $\beta_{k+1}$.

\end{remark}

Now we can give a proof of Lemma \ref{th1-maxroot-lemma2}.
\begin{proof}[Proof of Lemma \ref{th1-maxroot-lemma2}]
Set $\beta_i:=\frac{1}{\lambda_i}$ for each $1\leq i\leq t$. Given that $\det[x\cdot \boldsymbol{I}_d-\boldsymbol{A}^T\boldsymbol{A}]=x^{d-t}\prod_{i=1}^{t}(x-\lambda_i)=p(x)$, a direct computation reveals: 
\begin{equation*}
\begin{aligned}
\mathcal{R}_{x,d}^+\cdot \partial_x^k\cdot \mathcal{R}_{x,d}^+ \ \det[x\cdot \boldsymbol{I}_d-\boldsymbol{A}^T\boldsymbol{A}]
=(-1)^t\cdot 
\Big(\prod_{i=1}^{t}\lambda_i\Big)\cdot 
\mathcal{R}_{x,d}^+\cdot \partial_x^k\   \prod_{i=1}^{t}(x-\beta_i).
\end{aligned}	
\end{equation*}
It follows that
\begin{equation}\label{eq:2025:xu89}
\mathrm{maxroot}\ p(x)\,\,=\,\,\mathrm{maxroot}\ \mathcal{R}_{x,d}^+\cdot \partial_x^k\cdot \mathcal{R}_{x,d}^+ \ \det[x\cdot \boldsymbol{I}_d-\boldsymbol{A}^T\boldsymbol{A}]\,\,=\,\,\frac{1}{\mathrm{minroot}\ \partial_x^k\   \prod_{i=1}^{t}(x-\beta_i)}	.
\end{equation}
Note that each $\beta_i\geq 1$ since we assume $\|\boldsymbol{A}\|_2^2=\lambda_1\leq 1$. By Lemma \ref{minroot-new} we obtain that 
\begin{equation}\label{eq:2025:xu90}
\mathrm{minroot}\ \partial_x^k\   \prod_{i=1}^{t}(x-\beta_i)
\geq \frac{k}{t}\cdot 
(1-\sqrt{\alpha_k}	)^2 \cdot \beta_{k+1}
=\frac{k}{t}\cdot 
(1-\sqrt{\alpha_k}	)^2 \cdot \ \frac{1}{\lambda_{k+1}}.
\end{equation}
Substituting \eqref{eq:2025:xu90} into \eqref{eq:2025:xu89}, we arrive at \eqref{thm3.1:eq1}.
\end{proof}

\subsection{Proof of Theorem \ref{th3}}\label{section-alg}
In this subsection we provide a deterministic polynomial-time algorithm for classical CUR matrix problem and prove  Theorem \ref{th3}.
Recall that we use $\lambda_{\max}^{\varepsilon}(p)\in \mathbb{R}$ to denote an $\varepsilon$-approximation to the largest root of a real-rooted polynomial $p(x)$, i.e.,
\begin{equation}\label{eps-approx}
| \lambda_{\max}^{\varepsilon}(p)-\mathrm{maxroot}\ p|\leq \varepsilon	.
\end{equation}
Throughout this subsection, we assume that each square submatrix of $\boldsymbol{A}$ with size at most $k$ is invertible. 
For convenience, for each ${S}\subset[n]$ and ${W}\subset[d]$ with $|S|=|{W}|\leq k$ we denote the degree-$d$ polynomial
\begin{equation*}
h_{S,W}(x):=\det[x\cdot \boldsymbol{I}_d- (\boldsymbol{A}-\boldsymbol{A}_{:,{W}}(\boldsymbol{A}_{{S},{W}})^{-1}\boldsymbol{A}_{{S},:})^T(\boldsymbol{A}-\boldsymbol{A}_{:,{W}}(\boldsymbol{A}_{{S},{W}})^{-1}\boldsymbol{A}_{{S},:})].	
\end{equation*}

\begin{algorithm}[t]
\caption{Deterministic polynomial-time algorithm for the classcial CUR matrix problem}\label{alg1}
\begin{algorithmic}[1]
\Require 
Target matrix $\boldsymbol{A}\in\mathbb{R}^{n\times d}$ that satisfies the assumption of Theorem \ref{th3}; sampling parameter $1\leq k\leq \mathrm{rank}(\boldsymbol{A})$; error parameter $\varepsilon>0$.
\Ensure A $k$-subset $\widehat{S}\subset[n]$ and a $k$-subset $\widehat{W}\subset[d]$ satisfying the error bound \eqref{eq:2025:xu32}.
\State 
Set $\widetilde{S}=\widetilde{W}=\emptyset$. 
Set $\boldsymbol{B}=\boldsymbol{A}$. Compute $\boldsymbol{B}^T\boldsymbol{B}$.

\For{$l=1,2,\ldots,k$}

\For{$i\in[n]$ and $j\in[d]$ such that $\boldsymbol{B}(i,j)\neq 0$}
\State Compute the matrix 
\[
\boldsymbol{M}_{i,j}:=(\boldsymbol{B}-\frac{1}{\boldsymbol{B}(i,j)} \boldsymbol{B}_{:,\{j\}}\boldsymbol{B}_{\{i\},:})^T(\boldsymbol{B}-\frac{1}{\boldsymbol{B}(i,j)} \boldsymbol{B}_{:,\{j\}}\boldsymbol{B}_{\{i\},:}) \in\mathbb{R}^{d\times d}.
\]
\State Compute the characteristic polynomial $h_{\widetilde{S}\cup\{i\},\widetilde{W}\cup\{j\}}(x)= \det[x\cdot \boldsymbol{I}_d-
\boldsymbol{M}_{i,j}]$.
\State Using the standard technique of binary search with a Sturm sequence, compute an $\varepsilon$-approximation 
\begin{equation*}
\lambda_{\max}^{\varepsilon}\Big(  \mathcal{R}_{x,d}^+\cdot (\partial_x\cdot x\cdot \partial_x)^{k-l} \cdot \mathcal{R}_{x,d}^+\ h_{\widetilde{S}\cup\{i\},\widetilde{W}\cup\{j\}}(x)\Big).	
\end{equation*}
\EndFor
\State Find $(i_{l},j_{l})=\mathop{\mathrm{argmin}}
\limits_{(i,j): \boldsymbol{B}(i,j)\neq 0}
\lambda_{\max}^{\varepsilon}\Big( 
\mathcal{R}_{x,d}^+\cdot (\partial_x\cdot x\cdot \partial_x)^{k-l} \cdot \mathcal{R}_{x,d}^+\ h_{\widetilde{S}\cup\{i\},\widetilde{W}\cup\{j\}}(x)
\Big)$.
\label{line 14}
\State Update $\widetilde{S}\gets \widetilde{S}\cup\{i_l\}$, $\widetilde{W}\gets \widetilde{W}\cup\{j_l\}$ and $\boldsymbol{B}\gets \boldsymbol{B}-\frac{1}{\boldsymbol{B}(i_{l},j_{l})} \boldsymbol{B}_{:,\{j_{l}\}}\boldsymbol{B}_{\{i_{l}\},:} \in\mathbb{R}^{n\times d}$.
\EndFor \\
\Return $\widehat{S}=\{i_1,i_2,\ldots,i_k\},\widehat{W}=\{j_1,j_2,\ldots,j_k\}$.
\end{algorithmic}
\end{algorithm}

Here we give a proof of Theorem \ref{th3}.

\begin{proof}[Proof of Theorem \ref{th3}]

We first prove \eqref{eq:2025:xu32}.
Note that the polynomial $f_{\widetilde{S},\widetilde{W}}(x)$ has degree-$d$ for each $\widetilde{S}\subset[n]$ with $|\widetilde{S}|\leq k$ and for each $\widetilde{W}\subset[d]$ with $|\widetilde{W}|\leq k$. 
Moreover, 
we have $\boldsymbol{A}(i,j)\neq 0$ for each $i\in[n]$ and for each $j\in[d]$.
By Lemma \ref{lemma2-f-SW}, for each $i\in[n]$ and for each $j\in[d]$ we have
\begin{equation}\label{meq54}
\mathrm{minroot}\ f_{\{i\},\{j\}}(x;\boldsymbol{A},\boldsymbol{A},\boldsymbol{A},\boldsymbol{A})
=-	\mathrm{maxroot}\ \mathcal{R}_{x,d}^+\cdot (\partial_x\cdot x\cdot \partial_x)^{k-1} \cdot \mathcal{R}_{x,d}^+\   h_{\{i\},\{j\}}(x).	
\end{equation}
Define
\begin{equation*}
\begin{aligned}
(i^*_1,j^*_1)
&=\mathop{\mathrm{argmax}}_{i\in[n],j\in[d] } 
\mathrm{minroot}\ f_{\{ {i}\},\{ {j}\}}(x;\boldsymbol{A},\boldsymbol{A},\boldsymbol{A},\boldsymbol{A})
\\&
=\mathop{\mathrm{argmin}}_{i\in[n],j\in[d]} 
\mathrm{maxroot}\ \mathcal{R}_{x,d}^+\cdot (\partial_x\cdot x\cdot \partial_x)^{k-1}  \cdot \mathcal{R}_{x,d}^+\   h_{\{i\},\{j\}}(x),
\end{aligned}
\end{equation*}
where the last equation follows from \eqref{meq54}.
By Lemma \ref{lemma-f-SW} and Lemma \ref{lemma2.67}, we have
\begin{equation}\label{meq53}
\mathrm{minroot}\ f_{\{ {i}^*_1\},\{ {j}^*_1\}}(x;\boldsymbol{A},\boldsymbol{A},\boldsymbol{A},\boldsymbol{A})\geq
\mathrm{minroot}\  f_{\emptyset,\emptyset}(x;\boldsymbol{A},\boldsymbol{A},\boldsymbol{A},\boldsymbol{A}).
\end{equation}
By our choice of $i_{1}$ and $j_{1}$ in Line 8 of Algorithm \ref{alg1}, we have
\begin{equation}\label{meq55}
\begin{aligned}
&\mathrm{maxroot}\ \mathcal{R}_{x,d}^+\cdot (\partial_x\cdot x\cdot \partial_x)^{k-1}  \cdot \mathcal{R}_{x,d}^+\   h_{\{i_1\},\{j_1\}}(x)\\
&\overset{(a)}  \leq \varepsilon+ \lambda_{\max}^{\varepsilon}\Big( \mathcal{R}_{x,d}^+\cdot (\partial_x\cdot x\cdot \partial_x)^{k-1}  \cdot \mathcal{R}_{x,d}^+\ h_{\{i_1\},\{j_1\}}(x)\Big)\\
& \leq \varepsilon+ \lambda_{\max}^{\varepsilon}\Big( \mathcal{R}_{x,d}^+\cdot (\partial_x\cdot x\cdot \partial_x)^{k-1}  \cdot \mathcal{R}_{x,d}^+\ h_{\{i_1^*\},\{j_1^*\}}(x)\Big)\\
&\overset{(b)}  \leq 2\varepsilon+ \mathrm{maxroot}\ \mathcal{R}_{x,d}^+\cdot (\partial_x\cdot x\cdot \partial_x)^{k-1}  \cdot \mathcal{R}_{x,d}^+\   h_{\{i_1^*\},\{j_1^*\}}(x),\\
\end{aligned}
\end{equation}
where ($a$) and ($b$) follow from \eqref{eps-approx}.
Combining \eqref{meq55} with \eqref{meq53} and \eqref{meq54}, we have
\begin{equation}\label{meq59}
\begin{aligned}
\mathrm{minroot}\ f_{\{i_1\},\{j_1\}}(x;\boldsymbol{A},\boldsymbol{A},\boldsymbol{A},\boldsymbol{A})
&\geq -2\varepsilon+ \mathrm{minroot}\ f_{\{i^*_1\},\{j^*_1\}}(x;\boldsymbol{A},\boldsymbol{A},\boldsymbol{A},\boldsymbol{A})\\
&\geq  	-2\varepsilon+\mathrm{minroot}\  f_{\emptyset,\emptyset}(x;\boldsymbol{A},\boldsymbol{A},\boldsymbol{A},\boldsymbol{A}).
\end{aligned}
\end{equation}
Define 
\begin{equation*}
\boldsymbol{B}=\boldsymbol{A}-\frac{1}{\boldsymbol{A}(i_{1},j_{1})} \boldsymbol{A}_{:,\{j_{1}\}}\boldsymbol{A}_{\{i_{1}\},:} \in\mathbb{R}^{n\times d}.	
\end{equation*}
Recall that each square submatrix of $\boldsymbol{A}$ with size at most $k$ is invertible.
By Lemma \ref{lemma2-f-SW}, for each $i\in[n]\backslash \{i_1\}$ and for each $j\in[d]\backslash \{j_1\}$ we have
\begin{equation}\label{meq58}
\mathrm{minroot}\ f_{\{i_1,i\},\{j_1,j\}}(x;\boldsymbol{A},\boldsymbol{A},\boldsymbol{A},\boldsymbol{A})
=-	\mathrm{maxroot}\ \mathcal{R}_{x,d}^+\cdot (\partial_x\cdot x\cdot \partial_x)^{k-2}  \cdot \mathcal{R}_{x,d}^+\   h_{\{i_1,i\},\{j_1,j\}}(x).	
\end{equation}
By Lemma \ref{Linear algebra-lemma-2}, we have $\boldsymbol{B}(i_{},j_{})\neq 0$ precisely when $i\in[n]\backslash \{i_1\}$ and $j\in[d]\backslash \{j_1\}$.
Moreover, for any $i\in[n]\backslash \{i_1\}$ and $j\in[d]\backslash \{j_1\}$ we have  
\begin{equation}\label{meq60}
\boldsymbol{A}-\boldsymbol{A}_{:,\{j_1,j\}}(\boldsymbol{A}_{\{i_1,i\},\{j_1,j\}})^{-1}\boldsymbol{A}_{\{i_1,i\},:}=
\boldsymbol{B}-\frac{1}{\boldsymbol{B}(i_{},j_{})}\boldsymbol{B}_{:,\{j\}} \boldsymbol{B}_{\{i\},:}.	
\end{equation}
Hence, we can rewrite each $h_{\{i,i_1\},\{j,j_1\}}(x)$ as
\begin{equation*}
h_{\{i_1,i\},\{j_1,j\}}(x)=\det[x\cdot \boldsymbol{I}_d-(\boldsymbol{B}-\frac{1}{\boldsymbol{B}(i_{},j_{})}\boldsymbol{B}_{:,\{j\}} \boldsymbol{B}_{\{i\},:})^T(\boldsymbol{B}-\frac{1}{\boldsymbol{B}(i_{},j_{})}\boldsymbol{B}_{:,\{j\}} \boldsymbol{B}_{\{i\},:})].	
\end{equation*} 
Define
\begin{equation*}
\begin{aligned}
(i^*_2,j^*_2)
&=\mathop{\mathrm{argmax}}_{(i,j):\boldsymbol{B}(i,j)\neq 0 } 
\mathrm{minroot}\ f_{\{ {i_1,i}\},\{ {j_1,j}\}}(x;\boldsymbol{A},\boldsymbol{A},\boldsymbol{A},\boldsymbol{A})\\
&=\mathop{\mathrm{argmin}}_{(i,j):\boldsymbol{B}(i,j)\neq 0 } 
\mathrm{maxroot}\ \mathcal{R}_{x,d}^+\cdot (\partial_x\cdot x\cdot \partial_x)^{k-2}  \cdot \mathcal{R}_{x,d}^+\ h_{\{i_1,i\},\{j_1,j\}}(x).
\end{aligned}
\end{equation*}
By Lemma \ref{lemma-f-SW} and Lemma \ref{lemma2.67}, we have
\begin{equation}\label{meq56}
\mathrm{minroot}\ f_{\{i_1, {i}^*_2\},\{j_1, {j}^*_2\}}(x;\boldsymbol{A},\boldsymbol{A},\boldsymbol{A},\boldsymbol{A})\geq
\mathrm{minroot}\  f_{\{i_1 \},\{j_1 \}}(x;\boldsymbol{A},\boldsymbol{A},\boldsymbol{A},\boldsymbol{A}).
\end{equation}
Using a similar calculation as in \eqref{meq55}, and based on our choice of $i_{2}$ and $j_{2}$ in Line 8 of Algorithm~\ref{alg1}, we have
\begin{equation}\label{meq57}
\begin{aligned}
&\mathrm{maxroot}\ \mathcal{R}_{x,d}^+\cdot (\partial_x\cdot x\cdot \partial_x)^{k-2}  \cdot \mathcal{R}_{x,d}^+\   h_{\{i_1,i_2\},\{j_1,j_2\}}(x)\\
&\leq 2\varepsilon+ \mathrm{maxroot}\ \mathcal{R}_{x,d}^+\cdot (\partial_x\cdot x\cdot \partial_x)^{k-2}  \cdot \mathcal{R}_{x,d}^+\    h_{\{i_1,i_2^*\},\{j_1,j_2^*\}}(x).\\
\end{aligned}
\end{equation}
Hence, we have
\begin{equation*}
\begin{aligned}
\mathrm{minroot}\ f_{\{i_1,i_2\},\{j_1,j_2\}}(x;\boldsymbol{A},\boldsymbol{A},\boldsymbol{A},\boldsymbol{A})
&\overset{(a)}\geq -2\varepsilon+ \mathrm{minroot}\ f_{\{i_1,i^*_2\},\{j_1,j^*_2\}}(x;\boldsymbol{A},\boldsymbol{A},\boldsymbol{A},\boldsymbol{A})\\
&\overset{(b)}\geq  	-2\varepsilon+\mathrm{minroot}\  f_{\{i_1\},\{j_1\}}(x;\boldsymbol{A},\boldsymbol{A},\boldsymbol{A},\boldsymbol{A})\\
&\overset{(c)}\geq -4\varepsilon+\mathrm{minroot}\  f_{\emptyset,\emptyset}(x;\boldsymbol{A},\boldsymbol{A},\boldsymbol{A},\boldsymbol{A}).	
\end{aligned}
\end{equation*}
Here, ($a$) follows from \eqref{meq58} and \eqref{meq57}, ($b$) follows from \eqref{meq56},  and ($c$) follows from \eqref{meq59}.
Then, we update  
\begin{equation*}
\boldsymbol{B}\gets \boldsymbol{B}-\frac{1}{\boldsymbol{B}(i_{2},j_{2})} \boldsymbol{B}_{:,\{j_{2}\}}\boldsymbol{B}_{\{i_{2}\},:}.
\end{equation*}
By \eqref{meq60}, we can rewrite $\boldsymbol{B}$ as $\boldsymbol{B}=
\boldsymbol{A}-\boldsymbol{A}_{:,\{j_1,j_2\}}(\boldsymbol{A}_{\{i_1,i_2\},\{j_1,j_2\}})^{-1}\boldsymbol{A}_{\{i_1,i_2\},:}$.  
Moreover, by Lemma \ref{Linear algebra-lemma-2}, we have $\boldsymbol{B}(i_{},j_{})\neq 0$ precisely when $i\in[n]\backslash \{i_1,i_2\}$ and $j\in[d]\backslash \{j_1,j_2\}$.
Since each square submatrix of $\boldsymbol{A}$ with size at most $k$ is invertible, we can repeat the above argument $k-2$ times.
Consequently, we obtain 
a subset $\widehat{S}=\{i_1,\ldots,i_k\}\subset[n]$ and a subset $\widehat{W}=\{j_1,\ldots,j_k\}\subset[d]$ such that
\begin{equation*}
\mathrm{minroot}\ f_{\widehat{S},\widehat{W}}(x;\boldsymbol{A},\boldsymbol{A},\boldsymbol{A},\boldsymbol{A})	
\geq -2k\varepsilon+ \mathrm{minroot}\ f_{\emptyset,\emptyset}(x;\boldsymbol{A},\boldsymbol{A},\boldsymbol{A},\boldsymbol{A}).
\end{equation*}
Note that
\begin{equation*}
\mathrm{minroot}\  f_{\widehat{S},\widehat{W}}(x;\boldsymbol{A},\boldsymbol{A},\boldsymbol{A},\boldsymbol{A})=-\Vert \boldsymbol{A}- \boldsymbol{A}_{:,\widehat{W}}( \boldsymbol{A}_{\widehat{S},\widehat{W}})^{-1}  \boldsymbol{A}_{\widehat{S},:}\Vert_{2}^2
\end{equation*}
and
\begin{equation*}
\mathrm{minroot}\ f_{\emptyset,\emptyset}(x;\boldsymbol{A},\boldsymbol{A},\boldsymbol{A},\boldsymbol{A})=-\mathrm{maxroot}\ P_k(-x;\boldsymbol{A},\boldsymbol{A},\boldsymbol{A},\boldsymbol{A}).	
\end{equation*}
Hence, we arrive at \eqref{eq:2025:xu32}.

Next, we will analyze the time complexity of Algorithm \ref{alg1}.
We need to compute the matrix $\boldsymbol{B}^{T}\boldsymbol{B}$ in Line 1 of Algorithm \ref{alg1} with a time complexity of $O(nd^2)$. 
We claim that the time complexity for computing each $\lambda_{\max}^{\varepsilon}(\mathcal{R}_{x,d}^+\cdot (\partial_x\cdot x\cdot \partial_x)^{k-l} \cdot \mathcal{R}_{x,d}^+\ h_{\widetilde{S}\cup\{i\},\widetilde{W}\cup\{j\}}(x))$ at each iteration is $O( dn+d^w\log(d\vee\frac{1}{\varepsilon}))$. Thus, the total running time of Algorithm \ref{alg1} is
\[
O(nd^2)+k\cdot dn\cdot O( dn+d^w\log(d\vee\frac{1}{\varepsilon}) )=O( kn^2d^2+knd^{w+1}\log(d\vee\frac{1}{\varepsilon})).
\]

It remains to prove that the time complexity for each $\lambda_{\max}^{\varepsilon}(\mathcal{R}_{x,d}^+\cdot (\partial_x\cdot x\cdot \partial_x)^{k-l} \cdot \mathcal{R}_{x,d}^+\ h_{\widetilde{S}\cup\{i\},\widetilde{W}\cup\{j\}}(x))$ is $O(dn+d^w\log(d\vee\frac{1}{\varepsilon}) )$.
At Line 4, we first compute 
$\boldsymbol{B}^T\boldsymbol{B}_{:,\{j\}}\in\mathbb{R}^{d}$, 
$\boldsymbol{B}^T\boldsymbol{B}_{:,\{j\}}\cdot \boldsymbol{B}_{\{i\},:}\in\mathbb{R}^{d\times d}$, 
$\|\boldsymbol{B}_{:,\{j\}}\|_2^2$, 
and $\boldsymbol{B}_{\{i\},:}^T\boldsymbol{B}_{\{i\},:}\in\mathbb{R}^{d\times d}$ in time $O(dn)$, $O(d^2)$, $O(n)$ and $O(d^2)$, respectively.
Since $\boldsymbol{B}^T\boldsymbol{B}$ is known through the last iteration, we can compute $\boldsymbol{M}_{i,j}$ through 
\begin{equation*}
\begin{aligned}
\boldsymbol{M}_{i,j}
&=\boldsymbol{B}^T\boldsymbol{B}-	\frac{\boldsymbol{B}^T\boldsymbol{B}_{:,\{j\}}\cdot \boldsymbol{B}_{\{i\},:}+(\boldsymbol{B}^T\boldsymbol{B}_{:,\{j\}}\cdot \boldsymbol{B}_{\{i\},:})^T}{\boldsymbol{B}(i,j)}+\frac{\|\boldsymbol{B}_{:,\{j\}}\|_2^2}{\boldsymbol{B}(i,j)^2}\cdot \boldsymbol{B}_{\{i\},:}^T\boldsymbol{B}_{\{i\},:}
\end{aligned}	
\end{equation*}
in time $O(d^2)$.
Then, it takes time $O(d^w\log d)$ to compute the degree-$d$ characteristic polynomial $h_{\widetilde{S}\cup\{i\},\widetilde{W}\cup\{j\}}(x)$ in Line 5 of Algorithm \ref{alg1} \cite{KG85}.
By employing  Lemma \ref{Linear algebra-lemma-6}, the time complexity for calculating the  the coefficients of the polynomial $\mathcal{R}_{x,d}^+\cdot (\partial_x\cdot x\cdot \partial_x)^{k-l} \cdot \mathcal{R}_{x,d}^+\ h_{\widetilde{S}\cup\{i\},\widetilde{W}\cup\{j\}}(x)$ is $O(1)$.
Note that $\mathcal{R}_{x,d}^+\cdot (\partial_x\cdot x\cdot \partial_x)^{k-l} \cdot \mathcal{R}_{x,d}^+\ h_{\widetilde{S}\cup\{i\},\widetilde{W}\cup\{j\}}(x)$ has at most $d$ nonzero roots, so the time complexity to  obtain an $\varepsilon$-approximation to its largest root is $O(d^2\log{\frac{1}{\varepsilon}} )$   (see \cite[Section 4.1]{inter3}).
Hence, the time complexity to obtain each $\lambda_{\max}^{\varepsilon}(\mathcal{R}_{x,d}^+\cdot (\partial_x\cdot x\cdot \partial_x)^{k-l} \cdot \mathcal{R}_{x,d}^+\ h_{\widetilde{S}\cup\{i\},\widetilde{W}\cup\{j\}}(x))$ is
\begin{equation*}
\begin{aligned}
&O(dn)+O(d^2)+O(n)+O(d^2)+O(d^2)+O(d^w\log d)+O(1)+O(d^2\log {\frac{1}{\varepsilon}})\\
&=O(dn+d^w\log(d\vee\frac{1}{\varepsilon})).	
\end{aligned}
\end{equation*}
This completes the proof.

\end{proof}

\section{The row subset selection problem: Proof of Theorem \ref{th1-newcase}}

In this section, we give a proof of Theorem \ref{th1-newcase} based on Theorem \ref{gcur-th2}, Proposition \ref{P-exp-base-gcur-x-newcase} and Lemma \ref{lemma4-new}.

\begin{proof}[Proof of Theorem \ref{th1-newcase}]
Let $p(x)=\det[x\cdot \boldsymbol{I}_d-\boldsymbol{A}^T(\boldsymbol{I}_n-\boldsymbol{C}\boldsymbol{C}^{\dagger})\boldsymbol{A}]$.
By Theorem \ref{gcur-th2}, there is a $k$-subset $\widehat{S}\subset[n]$ such that $\boldsymbol{C}_{\widehat{S},:}\in\mathbb{R}^{k\times k}$ is invertible and
\begin{equation}\label{2026-xu10}
\begin{aligned}
\Vert \boldsymbol{A}- \boldsymbol{C}_{}( \boldsymbol{C}_{\widehat{S},:})^{-1}  \boldsymbol{A}_{\widehat{S},:}\Vert_{2}^2
&\leq \mathrm{maxroot}\ P_{k}(-x;\boldsymbol{A},\boldsymbol{C},\boldsymbol{C},\boldsymbol{A})\\
&\overset{(a)}=\mathrm{maxroot}\ 	\mathcal{R}_{x,d}^+\cdot \partial_x^k\cdot x^k \cdot \mathcal{R}_{x,d}^+\ p(x)\\
&=\frac{1}{\mathrm{minroot}\ 	\partial_x^k\cdot x^k \cdot \mathcal{R}_{x,d}^+\ p(x)},
\end{aligned}
\end{equation} 
where ($a$) follows from Proposition \ref{P-exp-base-gcur-x-newcase}.
Note that $r=\mathrm{rank}(\boldsymbol{A}-\boldsymbol{C}\boldsymbol{C}^{\dagger}\boldsymbol{A})=\mathrm{deg}(\mathcal{R}_{x,d}^+\ p(x))$.
By Lemma \ref{lemma4-new} we have
\begin{equation}\label{2026-xu9}
\begin{aligned}
\mathrm{minroot}\  \partial_x^k\cdot x^k \cdot \mathcal{R}_{x,d}^+\ p(x)
&\geq \frac{1}{1+kr}\cdot \mathrm{minroot}\   \mathcal{R}_{x,d}^+ \ p(x)\\
&=\frac{1}{1+ kr}\cdot \frac{1}{\mathrm{maxroot}\  p(x)}\\
&=\frac{1}{1+ kr}\cdot \frac{1}{\|\boldsymbol{A}-\boldsymbol{C}\boldsymbol{C}^{\dagger}\boldsymbol{A}\|_2^2}.
\end{aligned}
\end{equation}
Substituting \eqref{2026-xu9} into \eqref{2026-xu10}, we arrive at our conclusion.
	
\end{proof}

\section{Acknowledgments}
{Jian-Feng Cai is supported by HKRGC GRF grants 16307325, 16306124, and 16307023.
Zhiqiang Xu is supported by the National Science Fund for Distinguished Young Scholars (12025108) and NSFC (12471361, 12021001, 12288201)}.
Zili Xu is supported by NSFC grant (12501121, 12571105) and the Natural Science Foundation of Shanghai grant  (25ZR1402131).
}

\Addresses

\newpage

\appendix
\section{Appendix}

\subsection{Proof of Lemma \ref{Linear algebra-lemma-2}}\label{proof-Linear algebra-lemma-2}

\begin{proof}[Proof of Lemma \ref{Linear algebra-lemma-2}]
(i) Using the Schur determinantal formula Lemma \ref{Linear algebra-lemma-schur}, we have
\begin{equation}\label{eq:2025:21}
\det\left[
\begin{matrix}
\boldsymbol{A}(i,j)& \boldsymbol{A}_{\{i\},W}\\
\boldsymbol{A}_{S,\{j\}}& \boldsymbol{A}_{S,W}	
\end{matrix}\right]
=\det[\boldsymbol{A}_{S,W}]\cdot (\boldsymbol{A}(i,j)-\boldsymbol{A}_{\{i\},W}(\boldsymbol{A}_{S,W})^{-1}\boldsymbol{A}_{S,\{j\}})= \det[\boldsymbol{A}_{S,W}]\cdot \boldsymbol{B}(i,j).	
\end{equation}	
For the case where $i\in S$ or $j\in  W$,  a simple observation is that the left hand side of \eqref{eq:2025:21} is zero. 
Hence, combining with   $\det[\boldsymbol{A}_{S,W}]\neq 0$, we have $\boldsymbol{B}(i,j)=0$  for $i\in S$ or $j\in  W$.
If  $i\in [n]\backslash S$ and $j\in [d]\backslash W$, then by \eqref{eq:2025:21} we arrive at
\begin{equation*}
\det[\boldsymbol{A}_{S\cup\{i\},W\cup\{j\}}]
=
\det\left[
\begin{matrix}
\boldsymbol{A}(i,j)& \boldsymbol{A}_{\{i\},W}\\
\boldsymbol{A}_{S,\{j\}}& \boldsymbol{A}_{S,W}	
\end{matrix}\right]=\det[\boldsymbol{A}_{S,W}]\cdot \boldsymbol{B}(i,j).
\end{equation*}

(ii) When $\boldsymbol{B}(i,j)\neq 0$, by (i) we have $\boldsymbol{A}_{S\cup\{i\},W\cup\{j\}}$ is invertible, i.e. $\det[\boldsymbol{A}_{S\cup\{i\},W\cup\{j\}}]\neq 0$, 
For simplicity, denote $|S|=|W|=k$.
Note that
\begin{equation*}
\boldsymbol{A}_{S\cup\{i\},W\cup\{j\}}
=
\begin{pmatrix}
\boldsymbol{A}_{S,W}& \boldsymbol{A}_{S,\{j\}}\\
\boldsymbol{A}_{\{i\},W}& \boldsymbol{A}(i,j)	
\end{pmatrix}=
\begin{pmatrix}
\boldsymbol{A}_{S,W}& \boldsymbol{0}  \\
\boldsymbol{A}_{\{i\},W} & 1	
\end{pmatrix}\cdot 
\begin{pmatrix}
\boldsymbol{I}_k& (\boldsymbol{A}_{S,W})^{-1}\boldsymbol{A}_{S,\{j\}}\\
\boldsymbol{0}  & \boldsymbol{B}(i,j)
\end{pmatrix}.	
\end{equation*}
Hence,
\begin{equation*}
\begin{aligned}
(\boldsymbol{A}_{S\cup\{i\},W\cup\{j\}})^{-1}
&=\begin{pmatrix}
\boldsymbol{I}_k& (\boldsymbol{A}_{S,W})^{-1}\boldsymbol{A}_{S,\{j\}}\\
\boldsymbol{0}  &\boldsymbol{B}(i,j)
\end{pmatrix}^{-1}
\begin{pmatrix}
\boldsymbol{A}_{S,W}& \boldsymbol{0}  \\
\boldsymbol{A}_{\{i\},W} & 1	
\end{pmatrix}^{-1}\\
&=\begin{pmatrix}
\boldsymbol{I}_k& -\frac{1}{\boldsymbol{B}(i,j)}\cdot (\boldsymbol{A}_{S,W})^{-1}\boldsymbol{A}_{S,\{j\}}\\
\boldsymbol{0}  &  \frac{1}{\boldsymbol{B}(i,j)}
\end{pmatrix}\begin{pmatrix}
(\boldsymbol{A}_{S,W})^{-1}& \boldsymbol{0}  \\
-\boldsymbol{A}_{\{i\},W}(\boldsymbol{A}_{S,W})^{-1}& 1	
\end{pmatrix}\\
&=	\begin{pmatrix}
(\boldsymbol{A}_{S,W})^{-1}+\frac{1}{\boldsymbol{B}(i,j)}\cdot (\boldsymbol{A}_{S,W})^{-1}\boldsymbol{A}_{S,\{j\}}\boldsymbol{A}_{\{i\},W}(\boldsymbol{A}_{S,W})^{-1}& -\frac{1}{\boldsymbol{B}(i,j)}\cdot (\boldsymbol{A}_{S,W})^{-1}\boldsymbol{A}_{S,\{j\}} \\
-\frac{1}{\boldsymbol{B}(i,j)}\boldsymbol{A}_{\{i\},W}(\boldsymbol{A}_{S,W})^{-1} & \frac{1}{\boldsymbol{B}(i,j)}	
\end{pmatrix}.
\end{aligned}
\end{equation*}	
Substituting the above expression into the left hand side of \eqref{meq61} yields the right hand side, thereby completing the proof.
\end{proof}

\subsection{Proof of Proposition \ref{P-exp-base-gcur1}}\label{proof-P-exp-base-gcur1}

We first prove the following lemmas.

\begin{lemma}\label{Linear algebra-lemma-0}
Assume that $\boldsymbol{A}, \boldsymbol{B}\in\mathbb{R}^{n\times d}$. Then we have
\begin{equation*}
\det\left[\begin{matrix}
x\cdot  \boldsymbol{I}_{d} & \boldsymbol{B}^T  \\
\boldsymbol{A}   & y\cdot \boldsymbol{I}_{n} \\
\end{matrix}\right]=y^{n-d}\cdot \det[xy\cdot \boldsymbol{I}_{d}-\boldsymbol{B}^{ T}\boldsymbol{A}].
\end{equation*}
\end{lemma}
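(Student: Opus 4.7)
The plan is to reduce the claim to a direct application of the Schur determinantal formula stated in Lemma~\ref{Linear algebra-lemma-schur}, followed by a scalar factorization argument.

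First I would treat the case $y \neq 0$, where the bottom-right block $y\cdot \boldsymbol{I}_n$ is invertible. Applying Lemma~\ref{Linear algebra-lemma-schur} with $\boldsymbol{D} = y\cdot \boldsymbol{I}_n$ gives
\begin{equation*}
\det\left[\begin{matrix}
x\cdot  \boldsymbol{I}_{d} & \boldsymbol{B}^T  \\
\boldsymbol{A}   & y\cdot \boldsymbol{I}_{n} \\
\end{matrix}\right]
= \det(y\cdot \boldsymbol{I}_n)\cdot \det\!\left(x\cdot \boldsymbol{I}_d - \boldsymbol{B}^T\cdot \tfrac{1}{y}\boldsymbol{I}_n\cdot \boldsymbol{A}\right)
= y^n \cdot \det\!\left(x\cdot \boldsymbol{I}_d - \tfrac{1}{y}\boldsymbol{B}^T\boldsymbol{A}\right).
\end{equation*}
I then pull a factor of $1/y$ out of each of the $d$ columns of the inner matrix, obtaining
\begin{equation*}
y^n \cdot \tfrac{1}{y^d}\cdot \det(xy\cdot \boldsymbol{I}_d - \boldsymbol{B}^T\boldsymbol{A}) = y^{n-d}\cdot \det(xy\cdot \boldsymbol{I}_d - \boldsymbol{B}^T\boldsymbol{A}),
\end{equation*}
which is the desired identity.

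To finish, I would extend the identity to all $y\in\mathbb{R}$ (in particular $y=0$) by observing that both sides are polynomials in $x$ and $y$ that agree on the Zariski-dense set $\{y\neq 0\}$, hence agree identically. The only mild subtlety is the sign/factorization bookkeeping when $n < d$, where the factor $y^{n-d}$ formally has a negative exponent; but since we only use the identity after clearing denominators on the $\{y\neq 0\}$ open set and then invoke the polynomial identity principle on $y^{d-n}\cdot(\text{LHS}) = \det(xy\cdot \boldsymbol{I}_d - \boldsymbol{B}^T\boldsymbol{A})\cdot y^{n-d+d-n}$-type rearrangements, no real obstacle arises. I do not expect any genuinely hard step here; the whole argument is a two-line Schur-complement computation plus polynomial continuation.
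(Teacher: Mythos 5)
Your proof is correct and takes essentially the same approach as the paper, which simply remarks that the identity follows from the Schur determinantal formula of Lemma~\ref{Linear algebra-lemma-schur}. Your additional remarks about pulling out the scalar $1/y$ column-by-column and extending from $\{y\neq 0\}$ by polynomial continuation (noting that when $n<d$ the matrix $\boldsymbol{B}^T\boldsymbol{A}$ is singular so both sides of the cleared identity vanish at $y=0$) are exactly the routine bookkeeping the paper leaves implicit.
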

\begin{proof}
Follows from the Schur determinantal formula in Lemma \ref{Linear algebra-lemma-schur}. 	
\end{proof}

\begin{lemma}\label{meq15-lemma1}
Let $\boldsymbol{M},\boldsymbol{N}\in\mathbb{R}^{d\times d}$ be symmetric matrices, where $\boldsymbol{N}$ has rank $t$.
Then
\begin{equation*}
f(x):=y^{t}\cdot \det[x\cdot \boldsymbol{I}_d+
\boldsymbol{M}-\frac{1}{y}\cdot \boldsymbol{N}]\ \bigg|_{y=0}
\end{equation*} 
is a degree-$(d-t)$ polynomial with the leading coefficient $(-1)^t \cdot \prod_{i=1}^t\lambda_i$.
Here, $\lambda_i$ is the $i$-th largest nonzero eigenvalue of $\boldsymbol{N}$.	
\end{lemma}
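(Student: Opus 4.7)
The plan is to reduce the problem to a diagonal situation via orthogonal diagonalization of $\boldsymbol{N}$ and then extract the leading behaviour in $y$ by a row-scaling trick. First I would diagonalize: since $\boldsymbol{N}$ is symmetric of rank $t$, there exists an orthogonal $\boldsymbol{Q}$ such that $\boldsymbol{Q}^T\boldsymbol{N}\boldsymbol{Q}=\boldsymbol{\Lambda}=\mathrm{diag}(\lambda_1,\ldots,\lambda_t,0,\ldots,0)$, where $\lambda_1,\ldots,\lambda_t$ are the nonzero eigenvalues of $\boldsymbol{N}$. Setting $\tilde{\boldsymbol{M}}:=\boldsymbol{Q}^T\boldsymbol{M}\boldsymbol{Q}$ and using the invariance of determinant under conjugation by an orthogonal matrix, I would replace $\det[x\boldsymbol{I}_d+\boldsymbol{M}-\tfrac{1}{y}\boldsymbol{N}]$ by $\det[x\boldsymbol{I}_d+\tilde{\boldsymbol{M}}-\tfrac{1}{y}\boldsymbol{\Lambda}]$, which now has all the $1/y$ singularities on the diagonal, confined to the first $t$ entries.

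Next, to absorb the $y^t$ factor cleanly, I would use multilinearity of the determinant to multiply the first $t$ rows of the matrix $x\boldsymbol{I}_d+\tilde{\boldsymbol{M}}-\tfrac{1}{y}\boldsymbol{\Lambda}$ by $y$. Writing the matrix in block form with the top-left $t\times t$ block $\tilde{\boldsymbol{M}}_{11}$ and bottom-right $(d-t)\times(d-t)$ block $\tilde{\boldsymbol{M}}_{22}$, this produces
\begin{equation*}
y^t\det\!\left[x\boldsymbol{I}_d+\tilde{\boldsymbol{M}}-\tfrac{1}{y}\boldsymbol{\Lambda}\right]
=\det\!\begin{bmatrix}
y(x\boldsymbol{I}_t+\tilde{\boldsymbol{M}}_{11})-\boldsymbol{\Lambda}_t & y\tilde{\boldsymbol{M}}_{12}\\
\tilde{\boldsymbol{M}}_{21} & x\boldsymbol{I}_{d-t}+\tilde{\boldsymbol{M}}_{22}
\end{bmatrix},
\end{equation*}
where $\boldsymbol{\Lambda}_t=\mathrm{diag}(\lambda_1,\ldots,\lambda_t)$. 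The key observation is that the right-hand side is now a polynomial in $y$ (no poles), so the evaluation at $y=0$ is meaningful and yields a genuine polynomial identity.

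Finally, I would set $y=0$: the top-right block becomes $\boldsymbol{0}$, the top-left becomes $-\boldsymbol{\Lambda}_t$, and the matrix becomes block lower triangular. Applying the block-triangular determinant formula gives
\begin{equation*}
f(x)=\det[-\boldsymbol{\Lambda}_t]\cdot\det[x\boldsymbol{I}_{d-t}+\tilde{\boldsymbol{M}}_{22}]
=(-1)^t\Big(\prod_{i=1}^t\lambda_i\Big)\cdot\det[x\boldsymbol{I}_{d-t}+\tilde{\boldsymbol{M}}_{22}].
\end{equation*}
Since $\det[x\boldsymbol{I}_{d-t}+\tilde{\boldsymbol{M}}_{22}]$ is a monic polynomial in $x$ of degree $d-t$, this shows $f(x)$ has degree exactly $d-t$ with leading coefficient $(-1)^t\prod_{i=1}^t\lambda_i$, as required.

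No step is really an obstacle here; the only subtle point is the legitimacy of evaluating $y^t\det[\,\cdot\,]$ at $y=0$, which is why I first want to make the factor $y^t$ disappear into the matrix entries by row-scaling, thereby turning the expression into a polynomial in $y$ before substituting $y=0$.
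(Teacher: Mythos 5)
Your proof is correct, and it takes a cleaner route than the paper's. Both arguments begin with the same orthogonal diagonalization of $\boldsymbol{N}$, reducing to a matrix of the form $x\boldsymbol{I}_d+\tilde{\boldsymbol{M}}-\tfrac{1}{y}\boldsymbol{\Lambda}$ with $\boldsymbol{\Lambda}=\mathrm{diag}(\lambda_1,\ldots,\lambda_t,0,\ldots,0)$. From there the paper expands the determinant by the subset formula
\begin{equation*}
\det[\boldsymbol{Z}+\boldsymbol{A}]=\sum_{S\subset[d]}\det[\boldsymbol{A}_{S,S}]\cdot\prod_{i\notin S}z_i
\end{equation*}
with $\boldsymbol{Z}=\mathrm{diag}(x-y\lambda_1,\ldots,x-y\lambda_t,x,\ldots,x)$, then substitutes $y\mapsto 1/y$, multiplies by $y^t$, and observes that only the terms with $S=[t]$ survive at $y=0$. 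This yields $f(x)=(-1)^t\prod_{i=1}^t\lambda_i\cdot\sum_{R\subset\{t+1,\ldots,d\}}x^{|R|}\det[\boldsymbol{H}_{([t]\cup R)^C,([t]\cup R)^C}]$, and the leading coefficient is read off from $R=\{t+1,\ldots,d\}$. Your approach replaces the subset expansion with row multilinearity (absorbing $y^t$ by scaling the first $t$ rows, which removes all poles in $y$ before substitution) followed by block-triangular factorization at $y=0$. This is a little more structural and, as a bonus, gives the tidier closed form $f(x)=(-1)^t\big(\prod_{i=1}^t\lambda_i\big)\cdot\det[x\boldsymbol{I}_{d-t}+\tilde{\boldsymbol{M}}_{22}]$, from which the degree and leading coefficient are immediate; the paper's sum over $R$ is in fact exactly the subset expansion of this same $\det[x\boldsymbol{I}_{d-t}+\tilde{\boldsymbol{M}}_{22}]$, so the two answers agree. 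Your concern about the legitimacy of evaluating at $y=0$ is well placed and your row-scaling step resolves it cleanly; the paper handles the same issue implicitly by working at the level of the polynomial $h(x,y)$ in $y$ rather than $1/y$.
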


\begin{proof}
Let $\boldsymbol{N}=\boldsymbol{P}\boldsymbol{\Lambda}\boldsymbol{P}^T$ be the eigenvalue decomposition of $\boldsymbol{N}$,
where $\boldsymbol{P}\in\mathbb{R}^{d\times d}$ satisfies $\boldsymbol{P}^T\boldsymbol{P}=\boldsymbol{I}_{d}$, and $\boldsymbol{\Lambda}=\mathrm{diag}(\lambda_1,\ldots,\lambda_{t},0,\ldots,0)\in\mathbb{R}^{n\times n}$.
Denote  $\boldsymbol{H}=\boldsymbol{P}^T\boldsymbol{M}\boldsymbol{P}\in\mathbb{R}^{d\times d}$ and $h(x,y):=\det[x\cdot \boldsymbol{I}_d+
\boldsymbol{M}-y\cdot \boldsymbol{N}]$.
By \eqref{meq11} we have
\begin{equation*}
\begin{aligned}
h(x,y)
&=\det[x\cdot \boldsymbol{I}_d+
\boldsymbol{P}\boldsymbol{H}\boldsymbol{P}^T-y\cdot \boldsymbol{P}\boldsymbol{\Lambda}\boldsymbol{P}^T]
=\det[x\cdot \boldsymbol{I}_d+
\boldsymbol{H}-y\cdot \boldsymbol{\Lambda}]\\
&=\det[\mathrm{diag}(x-y\cdot \lambda_1,\ldots,x-y\cdot \lambda_t,x,\ldots,x)+\boldsymbol{H}]\\
&
=\sum_{S\subset[t]}\sum_{R\subset\{t+1,\ldots,d\}} 
\Big(\prod_{i\in S}(x-y\cdot \lambda_i)\Big)\cdot 
x^{|R|}\cdot 
\det[\boldsymbol{H}_{(S\cup R)^C,(S\cup R)^C}].\\
\end{aligned}	
\end{equation*}
Hence, we have
\begin{equation*}
\begin{aligned}
f(x)
&=y^t\cdot  h(x,1/y)\ \big|_{{y=0}}
=(-1)^t \cdot \Big(\prod_{i=1}^t\lambda_i\Big)\cdot
\sum_{R\subset\{t+1,\ldots,d\}} 
x^{|R|}\cdot 
\det[\boldsymbol{H}_{([t]\cup R)^C,([t]\cup R)^C}].\\
\end{aligned}
\end{equation*}
Note that the leading term in $f(x)$ is $x^{d-t}\cdot (-1)^t \cdot \prod_{i=1}^t\lambda_i$. 
Hence, we arrive at our conclusion.
\end{proof}

\begin{lemma}\label{meq15-lemma}
Let $\boldsymbol{M},\boldsymbol{N}\in\mathbb{R}^{n\times d}$, where $\boldsymbol{N}$ has rank $t$.
Then
\begin{equation*}
f(x):=y^{2t}\cdot \det[x\cdot \boldsymbol{I}_d+
(\boldsymbol{M}-\frac{1}{y}\cdot \boldsymbol{N})^T
(\boldsymbol{M}-\frac{1}{y}\cdot \boldsymbol{N})]\ \bigg|_{y=0}
\end{equation*} 
is a degree-$(d-t)$ polynomial with positive leading coefficient $\prod_{i=1}^t\lambda_i^2$.	
Here, $\lambda_i$ is the $i$-th largest nonzero singular value of $\boldsymbol{N}$.
\end{lemma}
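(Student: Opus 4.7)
The overall strategy is to rewrite the expression inside the determinant so that the singularity at $y=0$ is absorbed into a regular block-matrix form, and then evaluate the resulting limit via the Schur complement. The key algebraic observation is the factorization $\boldsymbol{M}-\tfrac{1}{y}\boldsymbol{N}=\tfrac{1}{y}(y\boldsymbol{M}-\boldsymbol{N})$, which immediately yields
\begin{equation*}
y^{2t}\cdot \det\bigl[x\boldsymbol{I}_d+(\boldsymbol{M}-\tfrac{1}{y}\boldsymbol{N})^{T}(\boldsymbol{M}-\tfrac{1}{y}\boldsymbol{N})\bigr]\,\,=\,\,\frac{1}{y^{2(d-t)}}\cdot \det\bigl[y^{2}x\boldsymbol{I}_d+(y\boldsymbol{M}-\boldsymbol{N})^{T}(y\boldsymbol{M}-\boldsymbol{N})\bigr].
\end{equation*}
Hence $f(x)$ is obtained by showing that the polynomial entry inside the determinant on the right is divisible by $y^{2(d-t)}$ and reading off the quotient at $y=0$.

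To extract this divisibility cleanly, I would apply the SVD of $\boldsymbol{N}$ to reduce to the canonical case $\boldsymbol{N}=\bigl(\begin{smallmatrix}\boldsymbol{D}&\boldsymbol{0}\\ \boldsymbol{0}&\boldsymbol{0}\end{smallmatrix}\bigr)$ with $\boldsymbol{D}=\mathrm{diag}(\lambda_1,\ldots,\lambda_t)$ invertible; the orthogonal factors enter the determinant only through $\det(\boldsymbol{V})^2=1$, so this reduction is without loss of generality. Partitioning $\boldsymbol{M}=\bigl(\begin{smallmatrix}\boldsymbol{A}&\boldsymbol{B}\\ \boldsymbol{C}&\boldsymbol{E}\end{smallmatrix}\bigr)$ conformally, a direct block computation of $y^{2}x\boldsymbol{I}_d+(y\boldsymbol{M}-\boldsymbol{N})^{T}(y\boldsymbol{M}-\boldsymbol{N})$ shows that the $(1,1)$-block equals $\boldsymbol{D}^{2}+O(y)$, the $(1,2)$- and $(2,1)$-blocks are of the form $y\cdot(\cdot)$, and the $(2,2)$-block is $y^{2}\cdot(\cdot)$. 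Consequently, one factor of $y$ can be pulled from each of the last $d-t$ rows and another factor of $y$ from each of the last $d-t$ columns, accounting for exactly $y^{2(d-t)}$ and leaving a matrix that is regular at $y=0$ with value
\begin{equation*}
\begin{pmatrix}\boldsymbol{D}^{2} & -\boldsymbol{D}\boldsymbol{B}\\ -\boldsymbol{B}^{T}\boldsymbol{D} & x\boldsymbol{I}_{d-t}+\boldsymbol{B}^{T}\boldsymbol{B}+\boldsymbol{E}^{T}\boldsymbol{E}\end{pmatrix}.
\end{equation*}
Applying the Schur complement formula (Lemma \ref{Linear algebra-lemma-schur}) with the invertible top-left block $\boldsymbol{D}^{2}$ and using the identity $\boldsymbol{B}^{T}\boldsymbol{D}(\boldsymbol{D}^{2})^{-1}\boldsymbol{D}\boldsymbol{B}=\boldsymbol{B}^{T}\boldsymbol{B}$, the off-diagonal contribution exactly cancels $\boldsymbol{B}^{T}\boldsymbol{B}$ and the determinant collapses to
\begin{equation*}
f(x)\,\,=\,\,\det(\boldsymbol{D})^{2}\cdot\det\bigl[x\boldsymbol{I}_{d-t}+\boldsymbol{E}^{T}\boldsymbol{E}\bigr]\,\,=\,\,\Bigl(\prod_{i=1}^{t}\lambda_{i}^{2}\Bigr)\cdot\det\bigl[x\boldsymbol{I}_{d-t}+\boldsymbol{E}^{T}\boldsymbol{E}\bigr].
\end{equation*}
Since $\det[x\boldsymbol{I}_{d-t}+\boldsymbol{E}^{T}\boldsymbol{E}]$ is monic of degree $d-t$ in $x$, both the degree claim and the positive leading coefficient $\prod_{i=1}^{t}\lambda_{i}^{2}$ follow.

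The main obstacle is the careful bookkeeping in the block expansion: one must verify that the $(1,2)$, $(2,1)$, and $(2,2)$ blocks carry \emph{exactly} the factors of $y$, $y$, and $y^2$ asserted above, so that the $y$-regularization succeeds without leaving a residual pole and without over-counting the power of $y$ that gets cancelled by the $1/y^{2(d-t)}$ prefactor. A secondary subtlety, mirroring the structure of Lemma \ref{meq15-lemma1}, is that the Schur-complement cancellation must wipe out all dependence on the top-row blocks $\boldsymbol{A}$ and $\boldsymbol{C}$ of $\boldsymbol{M}$, leaving only $\boldsymbol{E}$ and the singular values of $\boldsymbol{N}$ in the final expression; this is precisely what the algebraic identity $\boldsymbol{B}^{T}\boldsymbol{D}(\boldsymbol{D}^{2})^{-1}\boldsymbol{D}\boldsymbol{B}=\boldsymbol{B}^{T}\boldsymbol{B}$ encodes.
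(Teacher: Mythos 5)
Your proof is correct and takes a genuinely different route from the paper's. The paper lifts the $n \times d$ problem to an $(n+d)\times(n+d)$ symmetric one via Lemma \ref{Linear algebra-lemma-0}: it sets $\widehat{\boldsymbol{M}} = \bigl(\begin{smallmatrix}\boldsymbol{0} & \boldsymbol{M}^T \\ \boldsymbol{M} & \boldsymbol{0}\end{smallmatrix}\bigr)$, $\widehat{\boldsymbol{N}} = \bigl(\begin{smallmatrix}\boldsymbol{0} & \boldsymbol{N}^T \\ \boldsymbol{N} & \boldsymbol{0}\end{smallmatrix}\bigr)$, observes $\mathrm{rank}(\widehat{\boldsymbol{N}}) = 2t$ with eigenvalues $\pm\lambda_i$, and then simply invokes the already-established symmetric case Lemma \ref{meq15-lemma1}. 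Your argument instead clears the pole directly via $\boldsymbol{M} - \tfrac{1}{y}\boldsymbol{N} = \tfrac{1}{y}(y\boldsymbol{M}-\boldsymbol{N})$, reduces to $\boldsymbol{N}$ of canonical form by SVD (the orthogonal factors indeed disappear since $\det(\boldsymbol{V})^2 = 1$), reads off the exact $y$-degrees of each block of $(y\widetilde{\boldsymbol{M}}-\boldsymbol{\Sigma})^T(y\widetilde{\boldsymbol{M}}-\boldsymbol{\Sigma}) + y^2 x\boldsymbol{I}_d$, extracts the factor $y^{2(d-t)}$ row-by-column, and finishes with a Schur complement. I verified the block bookkeeping: the $(1,1)$, $(1,2)$, $(2,1)$, $(2,2)$ blocks carry $y$-orders $0$, $1$, $1$, $2$ respectively, and the Schur cancellation $\boldsymbol{B}^T\boldsymbol{D}(\boldsymbol{D}^2)^{-1}\boldsymbol{D}\boldsymbol{B} = \boldsymbol{B}^T\boldsymbol{B}$ wipes out all dependence on $\boldsymbol{A}$, $\boldsymbol{B}$, $\boldsymbol{C}$, yielding $f(x) = \bigl(\prod_{i=1}^t\lambda_i^2\bigr)\det[x\boldsymbol{I}_{d-t} + \boldsymbol{E}^T\boldsymbol{E}]$, which is monic of degree $d-t$ after dividing by the leading constant. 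What the two approaches buy is complementary: the paper's route is shorter because it reuses Lemma \ref{meq15-lemma1} as a black box; yours is longer but fully self-contained and yields an \emph{explicit} formula for $f(x)$, not merely its degree and leading coefficient. One small imprecision: Lemma \ref{Linear algebra-lemma-schur} as stated in the paper inverts the bottom-right block, while you invert the top-left; this is a routine permutation-of-blocks variant of the same identity, so your citation is acceptable but slightly loose.
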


\begin{proof}
By Lemma \ref{Linear algebra-lemma-0}, we have
\begin{equation}\label{meq14}
(-1)^d\cdot x^{n-d}\cdot f(-x^2)= y^{2t}\cdot \det[x\cdot \boldsymbol{I}_{d+n}+
\widehat{\boldsymbol{M}}-\frac{1}{y}\cdot \widehat{\boldsymbol{N}}]\ \bigg|_{y=0},
\end{equation}
where
\begin{equation*}
\widehat{\boldsymbol{M}}=	\begin{pmatrix}
\boldsymbol{0}& \boldsymbol{M}^T\\
\boldsymbol{M} 	 &  \boldsymbol{0}  
\end{pmatrix}
\quad\text{and}\quad
\widehat{\boldsymbol{N}}=	\begin{pmatrix}
\boldsymbol{0}& \boldsymbol{N}^T\\
\boldsymbol{N} 	 & \boldsymbol{0}   
\end{pmatrix}.
\end{equation*}
Note that $\mathrm{rank}(\widehat{\boldsymbol{N}})=2\cdot \mathrm{rank}(\boldsymbol{N})=2t$. 
By Lemma \ref{Linear algebra-lemma-0} we see that the nonzero eigenvalues of $\widehat{\boldsymbol{N}}$ are $\pm\lambda_1,\ldots,\pm\lambda_t$.
It follows from Lemma \ref{meq15-lemma1} that the right hand side of \eqref{meq14} is a degree-$(d+n-2t)$ polynomial with leading coefficient $(-1)^t\cdot \prod_{i=1}^t\lambda_i^2$.
Hence, $f(x)$ is a degree-$(d-t)$ polynomial with positive leading coefficient $\prod_{i=1}^t\lambda_i^2$. This completes the proof.
\end{proof}

Now we can present a proof of Proposition \ref{P-exp-base-gcur1}.

\begin{proof}[Proof of Proposition \ref{P-exp-base-gcur1}]
(i) Follows from equation \eqref{meq18}.

(ii) Note that the sign of the determinant changes when interchanging two rows of a square matrix, so we have
\begin{equation}
\label{eq:2025:xu41}
(-1)^{d}\cdot p_{S,W}(x;\boldsymbol{A},\boldsymbol{C},\boldsymbol{U},\boldsymbol{R})
= 
\det\left[\begin{matrix}
-x\cdot \boldsymbol{I}_d  & \boldsymbol{A}^T & \boldsymbol{0}_{d\times k} & (\boldsymbol{R}_{S,:})^T  \\
\boldsymbol{A}&   \boldsymbol{I}_n & \boldsymbol{C}_{:,W}  & \boldsymbol{0}_{n\times k}  \\
\boldsymbol{R}_{S,:} &  \boldsymbol{0}_{k\times n} & \boldsymbol{U}_{S,W} &  \boldsymbol{0}_{k\times k} \\
\boldsymbol{0}_{k\times d}  & (\boldsymbol{C}_{:,W})^T & \boldsymbol{0}_{k\times k} &   (\boldsymbol{U}_{S,W})^T
\end{matrix}\right].
\end{equation}
Since $\boldsymbol{U}_{S,W}$ is invertible, using the Schur determinantal formula in Lemma \ref{Linear algebra-lemma-schur} we further obtain
\begin{equation*}
\begin{aligned}
&(-1)^{d}\cdot p_{S,W}(x;\boldsymbol{A},\boldsymbol{C},\boldsymbol{U},\boldsymbol{R})\\
&=
\det[\boldsymbol{U}_{S,W}]^2\cdot
\det\left[
\begin{pmatrix}
-x\cdot \boldsymbol{I}_d & \boldsymbol{A}^T\\
\boldsymbol{A}	 &   \boldsymbol{I}_n
\end{pmatrix}
-
\begin{pmatrix}
\boldsymbol{0}_{d\times k} &  (\boldsymbol{R}_{S,:})^T\\
\boldsymbol{C}_{:,W} & \boldsymbol{0}_{n\times k}
\end{pmatrix}
\begin{pmatrix}
\boldsymbol{U}_{S,W} & \boldsymbol{0}_{k\times k}  \\
\boldsymbol{0}_{k\times k} & ( \boldsymbol{U}_{S,W})^{T} 
\end{pmatrix}^{-1}
\begin{pmatrix}
\boldsymbol{R}_{S,:} & \boldsymbol{0}_{k\times n} \\
\boldsymbol{0}_{k\times d} & (\boldsymbol{C}_{:,W})^T 
\end{pmatrix}
\right]\\
&=\det[\boldsymbol{U}_{S,W}]^2\cdot 
\det
\begin{bmatrix}
-x\cdot \boldsymbol{I}_d & (\boldsymbol{A}-\boldsymbol{C}_{:,W}( \boldsymbol{U}_{S,W})^{-1}  \boldsymbol{R}_{S,:}	)^T\\
\boldsymbol{A}-\boldsymbol{C}_{:,W}( \boldsymbol{U}_{S,W})^{-1}  \boldsymbol{R}_{S,:}	 &   \boldsymbol{I}_n
\end{bmatrix}\\
&=(-1)^d\cdot \det[\boldsymbol{U}_{S,W}]^2\cdot 
\det[x\cdot \boldsymbol{I}_d+
(\boldsymbol{A}- \boldsymbol{C}_{:,W}( \boldsymbol{U}_{S,W})^{-1}  \boldsymbol{R}_{S,:})^T
(\boldsymbol{A}- \boldsymbol{C}_{:,W}( \boldsymbol{U}_{S,W})^{-1}  \boldsymbol{R}_{S,:})],
\end{aligned}
\end{equation*}
where the last equation follows from Lemma \ref{Linear algebra-lemma-0}.
Hence, we arrive at the desired conclusion.

(iii) If $\mathrm{rank}
\left(
\begin{matrix}
\boldsymbol{U}_{S,W} &	\boldsymbol{R}_{S,:}
\end{matrix}\right)<k$, 
then the rows of $\left(
\begin{matrix}
\boldsymbol{0} & \boldsymbol{0} &\boldsymbol{U}_{S,W} &	\boldsymbol{R}_{S,:}
\end{matrix}\right)$ are linear dependent. 
Hence, $p_{S,W}(x;\boldsymbol{A},\boldsymbol{C},\boldsymbol{U},\boldsymbol{R})$ is identically zero.
Similarly, if $\mathrm{rank}
\left(
\begin{matrix}
\boldsymbol{C}_{:,W}\\
\boldsymbol{U}_{S,W}	
\end{matrix}\right)<k$, then $p_{S,W}(x;\boldsymbol{A},\boldsymbol{C},\boldsymbol{U},\boldsymbol{R})$ is also identically zero.

We now turn to the case where $\mathrm{rank}
\left(
\begin{matrix}
\boldsymbol{C}_{:,W}\\
\boldsymbol{U}_{S,W}	
\end{matrix}
\right)
=\mathrm{rank}
\left(
\begin{matrix}
\boldsymbol{U}_{S,W} &	\boldsymbol{R}_{S,:}
\end{matrix}
\right)=k$.
Denote $t=\mathrm{rank}(\boldsymbol{U}_{S,W})$.
If $t=k$, then the conclusion follows from (ii).
We next consider the case when $0\leq t\leq k-1$.
Let $\boldsymbol{U}_{S,W}=\boldsymbol{P}\boldsymbol{\Lambda}\boldsymbol{Q}^T$ be the full SVD of $\boldsymbol{U}_{S,W}$, 
where $\boldsymbol{P},\boldsymbol{Q}\in\mathbb{R}^{k\times k}$ satisfies $\boldsymbol{P}^T\boldsymbol{P}=\boldsymbol{Q}^T\boldsymbol{Q}=\boldsymbol{I}_k$, and $\boldsymbol{\Lambda}=\mathrm{diag}(\lambda_1,\ldots,\lambda_t,0,\ldots,0)\in\mathbb{R}^{k\times k}$.
Here, $\lambda_i$ is the $i$-th largest nonzero singular value of $\boldsymbol{U}_{S,W}$.
Define $\boldsymbol{\Lambda}_y=\mathrm{diag}(\lambda_1,\ldots,\lambda_t,y,\ldots,y)$.
By equation \eqref{eq:2025:xu41} we have
\begin{equation}\label{mmeq3}
\begin{aligned}
(-1)^{d}\cdot p_{S,W}(x;\boldsymbol{A},\boldsymbol{C},\boldsymbol{U},\boldsymbol{R})
&=\det\left[\begin{matrix}
-x\cdot \boldsymbol{I}_d  & \boldsymbol{A}^T & \boldsymbol{0}_{d\times k} & (\boldsymbol{R}_{S,:})^T  \\
\boldsymbol{A}&   \boldsymbol{I}_n & \boldsymbol{C}_{:,W}  & \boldsymbol{0}_{n\times k}  \\
\boldsymbol{R}_{S,:} &  \boldsymbol{0}_{k\times n} & \boldsymbol{P}\boldsymbol{\Lambda}_y\boldsymbol{Q}^T &  \boldsymbol{0}_{k\times k} \\
\boldsymbol{0}_{k\times d}  & (\boldsymbol{C}_{:,W})^T & \boldsymbol{0}_{k\times k} &   \boldsymbol{Q}\boldsymbol{\Lambda}_y\boldsymbol{P}^T
\end{matrix}\right]\ \bigg|_{y=0}\\
&=\det\left[\begin{matrix}
-x\cdot \boldsymbol{I}_d  & \boldsymbol{A}^T & \boldsymbol{0}_{d\times k} & (\boldsymbol{R}_{S,:})^T\boldsymbol{P}  \\
\boldsymbol{A}&   \boldsymbol{I}_n & \boldsymbol{C}_{:,W}\boldsymbol{Q}  & \boldsymbol{0}_{n\times k}  \\
\boldsymbol{P}^T\boldsymbol{R}_{S,:} &  \boldsymbol{0}_{k\times n} & \boldsymbol{\Lambda}_y &  \boldsymbol{0}_{k\times k} \\
\boldsymbol{0}_{k\times d}  & \boldsymbol{Q}^T(\boldsymbol{C}_{:,W})^T & \boldsymbol{0}_{k\times k} &  \boldsymbol{\Lambda}_y
\end{matrix}\right]\ \bigg|_{y=0}\\
&=\lambda_1^2\cdots\lambda_t^2\cdot y^{2(k-t)}\cdot h(x,y)\ \bigg|_{y=0}.
\end{aligned}	
\end{equation}
Here, the last equation follows from Lemma \ref{Linear algebra-lemma-schur}, and $h(x,y)$ is defined as 
\begin{equation}\label{mmeq2}
\begin{aligned}
h(x,y)
&:=\det\left[
\begin{pmatrix}
-x\cdot \boldsymbol{I}_d & \boldsymbol{A}^T\\
\boldsymbol{A}	 &   \boldsymbol{I}_n
\end{pmatrix}
-
\begin{pmatrix}
\boldsymbol{0}_{d\times k} &  (\boldsymbol{R}_{S,:})^T\boldsymbol{P}\\
\boldsymbol{C}_{:,W}\boldsymbol{Q} & \boldsymbol{0}_{n\times k}
\end{pmatrix}
\begin{pmatrix}
\boldsymbol{\Lambda}_{y}^{-1} & \boldsymbol{0}_{k\times k}  \\
\boldsymbol{0}_{k\times k} & \boldsymbol{\Lambda}_{y}^{-1} 
\end{pmatrix}
\begin{pmatrix}
\boldsymbol{P}^T\boldsymbol{R}_{S,:} & \boldsymbol{0}_{k\times n} \\
\boldsymbol{0}_{k\times d} & \boldsymbol{Q}^T(\boldsymbol{C}_{:,W})^T 
\end{pmatrix}
\right]\\	
&=(-1)^d\cdot 
\det[x\cdot \boldsymbol{I}_d+
(\boldsymbol{A}- \boldsymbol{C}_{:,W}\boldsymbol{Q}\boldsymbol{\Lambda}_{y}^{-1} \boldsymbol{P}^T \boldsymbol{R}_{S,:})^T
(\boldsymbol{A}- \boldsymbol{C}_{:,W}\boldsymbol{Q}\boldsymbol{\Lambda}_{y}^{-1} \boldsymbol{P}^T \boldsymbol{R}_{S,:})].\\
\end{aligned}
\end{equation}
Write $\boldsymbol{P}=[\boldsymbol{P}_1, \boldsymbol{P}_2]$ and $\boldsymbol{Q}=[\boldsymbol{Q}_1, \boldsymbol{Q}_2]$, where $\boldsymbol{P}_1,\boldsymbol{Q}_1\in\mathbb{R}^{k\times t}$ and $\boldsymbol{P}_2,\boldsymbol{Q}_2\in\mathbb{R}^{k\times (k-t)}$.
Note that
\begin{equation}\label{mmeq1}
\boldsymbol{Q}\boldsymbol{\Lambda}_{y}^{-1} \boldsymbol{P}^T=
\boldsymbol{Q}_1\cdot \mathrm{diag}(\lambda_1^{-1},\ldots,\lambda_t^{-1})\cdot  \boldsymbol{P}_1^T +	\frac{1}{y}\cdot \boldsymbol{Q}_2  \boldsymbol{P}_2^T=(\boldsymbol{U}_{S,W})^{\dagger}+\frac{1}{y}\cdot \boldsymbol{Q}_2  \boldsymbol{P}_2^T.
\end{equation}
Hence, substituting \eqref{mmeq2} and \eqref{mmeq1} into \eqref{mmeq3} we have
\begin{equation*}
p_{S,W}(x;\boldsymbol{A},\boldsymbol{C},\boldsymbol{U},\boldsymbol{R})
=\lambda_1^2\cdots\lambda_t^2\cdot y^{2(k-t)}\cdot \det[x\cdot \boldsymbol{I}_d+
(\boldsymbol{M}-\frac{1}{y}\cdot \boldsymbol{N})^T
(\boldsymbol{M}-\frac{1}{y}\cdot \boldsymbol{N})]\ \bigg|_{y=0},
\end{equation*}
where 
$\boldsymbol{M}=\boldsymbol{A}- \boldsymbol{C}_{:,W}(\boldsymbol{U}_{S,W})^{\dagger} \boldsymbol{R}_{S,:}\in\mathbb{R}^{n\times d}$ and
$\boldsymbol{N}=\boldsymbol{C}_{:,W}\boldsymbol{Q}_2   \boldsymbol{P}_2^T\boldsymbol{R}_{S,:}\in\mathbb{R}^{n\times d}$.
Then it follows from Lemma \ref{meq15-lemma} that
$p_{S,W}(x;\boldsymbol{A},\boldsymbol{C},\boldsymbol{U},\boldsymbol{R})$ is a degree-$(d-\mathrm{rank}(\boldsymbol{N}))$ polynomial with positive leading coefficient.

To prove the lemma, it remains to prove that $\mathrm{rank}(\boldsymbol{N})=k-t$. 
Note that $\boldsymbol{N}=\boldsymbol{C}_{:,W}\boldsymbol{Q}_2   \boldsymbol{P}_2^T\boldsymbol{R}_{S,:}\in\mathbb{R}^{n\times d}$, where $\boldsymbol{C}_{:,W}\boldsymbol{Q}_2\in\mathbb{R}^{n\times (k-t)}$ and $\boldsymbol{P}_2^T\boldsymbol{R}_{S,:}\in\mathbb{R}^{(k-t)\times d}$.
By Sylvester rank inequality \cite[Page 13]{HJ12}, we have
\begin{equation}\label{meq17}
\mathrm{rank}(\boldsymbol{C}_{:,W}\boldsymbol{Q}_2)+\mathrm{rank}(\boldsymbol{P}_2^T\boldsymbol{R}_{S,:})-(k-t)\leq \mathrm{rank}(\boldsymbol{N})\leq k-t.
\end{equation}
Since 
\begin{equation*}
k=\mathrm{rank}
\left(
\begin{matrix}
\boldsymbol{C}_{:,W}\\
\boldsymbol{U}_{S,W}	
\end{matrix}
\right)
=\mathrm{rank}
\left(
\begin{pmatrix}
\boldsymbol{I}_{n}&\\
&\boldsymbol{P^T}	
\end{pmatrix}\cdot 
\left(
\begin{matrix}
\boldsymbol{C}_{:,W}\\
\boldsymbol{U}_{S,W}	
\end{matrix}
\right)\cdot 	\boldsymbol{Q}\right)	
=\mathrm{rank}
\left(
\begin{matrix}
\boldsymbol{C}_{:,W}\boldsymbol{Q}\\
\boldsymbol{\Lambda}	
\end{matrix}
\right),
\end{equation*}
we have $\mathrm{rank}(\boldsymbol{C}_{:,W}\boldsymbol{Q}_2)=k-t$.
Similarly, we obtain $\mathrm{rank}(\boldsymbol{P}_2^T\boldsymbol{R}_{S,:})=k-t$. 
Combining this with \eqref{meq17}, we arrive at $\mathrm{rank}(\boldsymbol{N})= k-t$. This completes the proof.

\end{proof}

\subsection{Proof of Proposition \ref{P-exp-base-gcur-x}}\label{proof-P-exp-base-gcur-x}

We first introduce the following lemma.

\begin{lemma}{\rm \cite[Page 27]{HJ12}}\label{lemmas: det}
Let $d$ be a positive integer. Let $\boldsymbol{A},\boldsymbol{B},\boldsymbol{C}$ and $\boldsymbol{D}\in\mathbb{R}^{d\times d}$. If $\boldsymbol{C}\boldsymbol{D}=\boldsymbol{D}\boldsymbol{C}$, then 
\begin{equation*}
\det\left[\begin{matrix}
\boldsymbol{A}_{} & \boldsymbol{B}  \\
\boldsymbol{C}   &  \boldsymbol{D}_{} \\
\end{matrix}\right]= \det[\boldsymbol{A}\boldsymbol{D}-\boldsymbol{B}\boldsymbol{C}].
\end{equation*}

\end{lemma}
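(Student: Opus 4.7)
The plan is to reduce to the invertible case via a block row-reduction and then handle the singular case by a continuity/perturbation argument.

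First I would assume $\boldsymbol{D}$ is invertible. The standard block LU factorization gives
\begin{equation*}
\left[\begin{matrix} \boldsymbol{A} & \boldsymbol{B}\\ \boldsymbol{C} & \boldsymbol{D}\end{matrix}\right]
=\left[\begin{matrix} \boldsymbol{I}_d & \boldsymbol{B}\boldsymbol{D}^{-1}\\ \boldsymbol{0} & \boldsymbol{I}_d\end{matrix}\right]
\left[\begin{matrix} \boldsymbol{A}-\boldsymbol{B}\boldsymbol{D}^{-1}\boldsymbol{C} & \boldsymbol{0}\\ \boldsymbol{C} & \boldsymbol{D}\end{matrix}\right],
\end{equation*}
so by the Schur determinantal formula (Lemma \ref{Linear algebra-lemma-schur}) the determinant equals $\det(\boldsymbol{D})\cdot \det(\boldsymbol{A}-\boldsymbol{B}\boldsymbol{D}^{-1}\boldsymbol{C})$. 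I would then use the hypothesis $\boldsymbol{C}\boldsymbol{D}=\boldsymbol{D}\boldsymbol{C}$: multiplying both sides by $\boldsymbol{D}^{-1}$ on the left and right gives $\boldsymbol{D}^{-1}\boldsymbol{C}=\boldsymbol{C}\boldsymbol{D}^{-1}$. Combining the two $d\times d$ determinants into one yields
\begin{equation*}
\det(\boldsymbol{D})\cdot \det(\boldsymbol{A}-\boldsymbol{B}\boldsymbol{D}^{-1}\boldsymbol{C})
=\det\bigl((\boldsymbol{A}-\boldsymbol{B}\boldsymbol{D}^{-1}\boldsymbol{C})\boldsymbol{D}\bigr)
=\det(\boldsymbol{A}\boldsymbol{D}-\boldsymbol{B}\boldsymbol{D}^{-1}\boldsymbol{C}\boldsymbol{D})
=\det(\boldsymbol{A}\boldsymbol{D}-\boldsymbol{B}\boldsymbol{C}),
\end{equation*}
as desired.

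For the general case where $\boldsymbol{D}$ may be singular, I would use a density argument. Consider the perturbation $\boldsymbol{D}_t:=\boldsymbol{D}+t\cdot \boldsymbol{I}_d$ for $t\in\mathbb{R}$. Since $\boldsymbol{D}$ has only finitely many eigenvalues, $\boldsymbol{D}_t$ is invertible for all but finitely many $t$, and $\boldsymbol{C}\boldsymbol{D}_t=\boldsymbol{D}_t\boldsymbol{C}$ still holds because $\boldsymbol{C}$ commutes with $\boldsymbol{I}_d$. Applying the invertible case to $\boldsymbol{D}_t$ yields
\begin{equation*}
\det\left[\begin{matrix} \boldsymbol{A} & \boldsymbol{B}\\ \boldsymbol{C} & \boldsymbol{D}_t\end{matrix}\right]
=\det(\boldsymbol{A}\boldsymbol{D}_t-\boldsymbol{B}\boldsymbol{C})
\end{equation*}
for infinitely many $t$. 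Both sides are polynomials in $t$, so they agree as polynomials; setting $t=0$ gives the claim.

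The only subtlety is justifying $\boldsymbol{D}^{-1}\boldsymbol{C}=\boldsymbol{C}\boldsymbol{D}^{-1}$ and keeping track of the order of multiplication when merging the two determinants; neither is a real obstacle. The continuity/polynomial-identity step in the singular case is entirely routine, so I expect no significant difficulties in carrying out this plan.
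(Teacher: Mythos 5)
Your proof is correct. The paper does not actually prove this lemma — it cites it as a known result from Horn and Johnson (page 27) — so there is no in-paper argument to compare against, but your proof is the standard textbook one: reduce to invertible $\boldsymbol{D}$ via the Schur complement, use commutativity (specifically $\boldsymbol{D}^{-1}\boldsymbol{C}\boldsymbol{D}=\boldsymbol{C}$, equivalently $\boldsymbol{D}^{-1}\boldsymbol{C}=\boldsymbol{C}\boldsymbol{D}^{-1}$) to merge the two factors into $\det(\boldsymbol{A}\boldsymbol{D}-\boldsymbol{B}\boldsymbol{C})$, and extend to singular $\boldsymbol{D}$ by the polynomial-identity (density) argument with $\boldsymbol{D}_t=\boldsymbol{D}+t\boldsymbol{I}_d$. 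All steps check out; nothing to add.
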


Now we can give a proof of Proposition \ref{P-exp-base-gcur-x}.

\begin{proof}[Proof of Proposition \ref{P-exp-base-gcur-x}]
A direct calculation shows that
\begin{equation}\label{eq:2025:xu43}
\begin{aligned}
\det\left[\begin{matrix}
\boldsymbol{I}_n  & \boldsymbol{0} & \boldsymbol{A} & \boldsymbol{A}  \\
\boldsymbol{0}& z\cdot \boldsymbol{I}_{n} & \boldsymbol{A}  & \boldsymbol{A}  \\
\boldsymbol{A}^{T}  &  \boldsymbol{A}^T & y\cdot \boldsymbol{I}_{d} &  \boldsymbol{0} \\
\boldsymbol{A}^{T}  & \boldsymbol{A}^{T} & \boldsymbol{0} & x\cdot  \boldsymbol{I}_d
\end{matrix}\right]
&=\det\left[\begin{matrix}
y\cdot \boldsymbol{I}_{d} &  \boldsymbol{0}&\boldsymbol{A}^{T}  &  \boldsymbol{A}^T  \\
\boldsymbol{0} & x\cdot  \boldsymbol{I}_d & \boldsymbol{A}^{T}  & \boldsymbol{A}^{T} \\
\boldsymbol{A} & \boldsymbol{A} & \boldsymbol{I}_n  & \boldsymbol{0}  \\
\boldsymbol{A}  & \boldsymbol{A} & \boldsymbol{0}& z\cdot \boldsymbol{I}_{n}  \\
\end{matrix}\right] \\
&\overset{(a)}=z^{n}
\det\left[
\begin{pmatrix}
y\cdot \boldsymbol{I}_{d} & \boldsymbol{0}   \\
\boldsymbol{0}& x\cdot \boldsymbol{I}_d   \\
\end{pmatrix}
-
\begin{pmatrix}
\boldsymbol{A}^{T}  &  \boldsymbol{A}^T  \\
\boldsymbol{A}^{T}  & \boldsymbol{A}^{T}   \\
\end{pmatrix}
\begin{pmatrix}
\boldsymbol{I}_{n} & \boldsymbol{0}   \\
\boldsymbol{0}& \frac{1}{z}\cdot \boldsymbol{I}_n  \\
\end{pmatrix}
\begin{pmatrix}
\boldsymbol{A} & \boldsymbol{A}  \\
\boldsymbol{A}  & \boldsymbol{A}  \\
\end{pmatrix}
\right]\\
&=z^{n}
\det\left[\begin{matrix}
y\cdot \boldsymbol{I}_{d} -(1+\frac{1}{z})\cdot \boldsymbol{A}^{T}\boldsymbol{A}  & -(1+\frac{1}{z})\cdot \boldsymbol{A}^{T}\boldsymbol{A}    \\
-(1+\frac{1}{z})\cdot \boldsymbol{A}^{T}\boldsymbol{A} & x\cdot \boldsymbol{I}_d -(1+\frac{1}{z})\cdot \boldsymbol{A}^{T}\boldsymbol{A}   \\
\end{matrix}
\right]\\
&\overset{(b)}=z^{n}
\det[(y\cdot \boldsymbol{I}_{d} -(1+\frac{1}{z}) \boldsymbol{A}^{T}\boldsymbol{A})(x\cdot \boldsymbol{I}_d -(1+\frac{1}{z}) \boldsymbol{A}^{T}\boldsymbol{A})- (1+\frac{1}{z})^2 (\boldsymbol{A}^{T}\boldsymbol{A})^2]\\
&=z^{n}y^d
\det[x\cdot \boldsymbol{I}_{d} - (1+\frac{x}{y}) (1+\frac{1}{z})\cdot \boldsymbol{A}^{T}\boldsymbol{A}],\\
\end{aligned}
\end{equation}	
where $(a)$ follows from Lemma \ref{Linear algebra-lemma-schur} and $(b)$ follows from Lemma \ref{lemmas: det}.
Note that for any multivariate polynomial $f(x,y,z)$ that has degree at most $n$ in $z$ we have
\begin{equation}\label{eq:2025:xu42}
\frac{1}{(n-k)!}\partial_{z}^{n-k}\ f(x,y,z)\ \bigg|_{z=0}
=\frac{1}{k!}\partial_{z}^{k}\Big( z^{n}f(x,y,1/z)\Big)\  \bigg|_{z=0}.	
\end{equation}
Hence, substituting \eqref{eq:2025:xu43} into \eqref{eq:P-exp-base-qpsw}, 
   and replacing $x$ with $-x$ therein,
 we obtain
\begin{equation}\label{eq:2025:xu43-12}
\begin{aligned}
P_{k}(-x;\boldsymbol{A},\boldsymbol{A},\boldsymbol{A},\boldsymbol{A})
&= \frac{(-1)^{d-k}}{ (d-k)!\cdot(n-k)!}\cdot \partial_y^{d-k} \partial_z^{n-k}\
z^{n}y^d
\det[x\cdot \boldsymbol{I}_{d} - (1+\frac{x}{y}) (1+\frac{1}{z})\cdot \boldsymbol{A}^{T}\boldsymbol{A}]  
\ \Big\vert_{y=z=0}\\
&\overset{(a)}=	 \frac{(-1)^{d-k}}{ (k!)^2}\cdot \partial_y^{k} \partial_z^{k}\  
\det[x\cdot \boldsymbol{I}_{d} -(1+x y) (1+z)\cdot \boldsymbol{A}^{T}\boldsymbol{A}]  \ \Big\vert_{y=z=0}\\
&\overset{(b)}=	 \frac{(-1)^{d-k}x^k}{ (k!)^2}\cdot \partial_y^{k} \partial_z^{k}\  
\det[x\cdot \boldsymbol{I}_{d} - yz\cdot \boldsymbol{A}^{T}\boldsymbol{A}]  \ \Big\vert_{y=z=1},\\
\end{aligned}
\end{equation}
where ($a$) follows from \eqref{eq:2025:xu42} and ($b$) follows from the chain rule.
Let us write $\det[x\cdot \boldsymbol{I}_{d} -\boldsymbol{A}^{T}\boldsymbol{A}]=\sum_{i=0}^{d}c_ix^i	$.
Then we have
\begin{equation}\label{eq:2025:xu43-11}
\det[x\cdot \boldsymbol{I}_{d} - yz\cdot \boldsymbol{A}^{T}\boldsymbol{A}]=
y^dz^d\det[\frac{x}{yz}\cdot \boldsymbol{I}_{d} -  \boldsymbol{A}^{T}\boldsymbol{A}]=
\sum_{i=0}^{d}c_ix^iy^{d-i}z^{d-i}.	
\end{equation}
Substituting \eqref{eq:2025:xu43-11} into \eqref{eq:2025:xu43-12}, we obtain
\begin{equation*}
\begin{aligned}
P_{k}(-x;\boldsymbol{A},\boldsymbol{A},\boldsymbol{A},\boldsymbol{A})
&= \frac{(-1)^{d-k}x^k}{ (k!)^2} \partial_y^{k} \partial_z^{k}\  
\sum_{i=0}^{d}c_ix^iy^{d-i}z^{d-i}  \ \Big\vert_{y=z=1}\\
&= \frac{(-1)^{d-k}x^k}{ (k!)^2}  
\sum_{i=0}^{d-k} \Big(\frac{(d-i)!}{(d-i-k)!}\Big)^2 c_i x^i\\
&=\frac{(-1)^{d-k}}{(k!)^2}\cdot  \mathcal{R}_{x,d}^+\cdot  (\partial_x\cdot x\cdot \partial_x)^k \cdot \mathcal{R}_{x,d}^+\   \det[x\cdot \boldsymbol{I}_d-\boldsymbol{A}^T\boldsymbol{A}],
\end{aligned}
\end{equation*}
where the last equation follows from Lemma \ref{Linear algebra-lemma-6}. Thus, we arrive at \eqref{eq:P-exp-base-p1}.

\end{proof}

\subsection{Proof of Proposition \ref{P-exp-base-gcur-x-newcase}}\label{proof-P-exp-base-gcur-x-newcase}

\begin{proof}[Proof of Proposition \ref{P-exp-base-gcur-x-newcase}]
Note that by Proposition \ref{P-exp-base-gcur2}, we have
\begin{equation}\label{2026-xu1}
P_{k}(-x;\boldsymbol{A},\boldsymbol{C},\boldsymbol{C},\boldsymbol{A})
=\frac{(-1)^{d-k}}{(n-k)!}\cdot  \partial_z^{n-k}\ 
h(x,z) 
\ \Big\vert_{z=0}
=\frac{(-1)^{d-k}}{k!}\cdot  \partial_z^{k}\ 
\Big( z^n\cdot h(x,1/z)\Big) 
\ \Big\vert_{z=0},	
\end{equation}
where the last equation follows from \eqref{eq:2025:xu42} and
\begin{equation*}
h(x,z):=\det\left[\begin{matrix}
\boldsymbol{I}_n  & \boldsymbol{0} & \boldsymbol{C} & \boldsymbol{A}  \\
\boldsymbol{0}& z\cdot \boldsymbol{I}_{n} & \boldsymbol{C}  & \boldsymbol{A}  \\
\boldsymbol{C}^{T}  &  \boldsymbol{C}^T &  \boldsymbol{0}_{} &  \boldsymbol{0} \\
\boldsymbol{A}^{T}  & \boldsymbol{A}^{T} & \boldsymbol{0} & x\cdot  \boldsymbol{I}_d
\end{matrix}\right].	
\end{equation*}
We first simplify the formula of $h(x,z)$.
Let $\boldsymbol{C}=\boldsymbol{P}\boldsymbol{\Lambda}\boldsymbol{Q}^T$ be the full SVD of $\boldsymbol{C}$, 
where $\boldsymbol{P}\in\mathbb{R}^{n\times n},\boldsymbol{Q}\in\mathbb{R}^{k\times k}$ satisfies $\boldsymbol{P}^T\boldsymbol{P}=\boldsymbol{I}_n$, $\boldsymbol{Q}^T\boldsymbol{Q}=\boldsymbol{I}_k$, and $\boldsymbol{\Lambda}=(\boldsymbol{\Lambda}_1,\boldsymbol{0}_{k\times (n-k)})^T\in\mathbb{R}^{n\times k}$ with $\boldsymbol{\Lambda}_1:=\mathrm{diag}(\lambda_1,\ldots,\lambda_k)\in\mathbb{R}^{k\times k}$.
Here, $\lambda_i$ is the $i$-th largest nonzero singular value of $\boldsymbol{C}_{}$.
Write $\boldsymbol{P}=(\boldsymbol{P}_1, \boldsymbol{P}_2)$, where $\boldsymbol{P}_1\in\mathbb{R}^{n\times k}$ and $\boldsymbol{P}_2\in\mathbb{R}^{n\times (n-k)}$.
A direct calculation shows that
\begin{equation}\label{eq:2025:xu43}
\begin{aligned}
h(x,z)
&=\det\left[\begin{matrix}
\boldsymbol{I}_n  & \boldsymbol{0} & \boldsymbol{P}\boldsymbol{\Lambda}\boldsymbol{Q}^T & \boldsymbol{A}  \\
\boldsymbol{0}& z\cdot \boldsymbol{I}_{n} & \boldsymbol{P}\boldsymbol{\Lambda}\boldsymbol{Q}^T  & \boldsymbol{A}  \\
\boldsymbol{Q}\boldsymbol{\Lambda}^T\boldsymbol{P}^T  &  \boldsymbol{Q}\boldsymbol{\Lambda}^T\boldsymbol{P}^T &  \boldsymbol{0}_{} &  \boldsymbol{0} \\
\boldsymbol{A}^{T}  & \boldsymbol{A}^{T} & \boldsymbol{0} & x\cdot  \boldsymbol{I}_d
\end{matrix}\right]
\\&
=\det\left[\begin{matrix}
\boldsymbol{I}_n  & \boldsymbol{0} & \boldsymbol{\Lambda} & \boldsymbol{P}^T\boldsymbol{A}  \\
\boldsymbol{0}& z\cdot \boldsymbol{I}_{n} & \boldsymbol{\Lambda}  & \boldsymbol{P}^T\boldsymbol{A}  \\
\boldsymbol{\Lambda}^T  &  \boldsymbol{\Lambda}^T &  \boldsymbol{0}_{} &  \boldsymbol{0} \\
\boldsymbol{A}^{T}\boldsymbol{P}  & \boldsymbol{A}^{T}\boldsymbol{P} & \boldsymbol{0} & x\cdot  \boldsymbol{I}_d
\end{matrix}\right]
\overset{(a)}=(-1)^k \det\left[\begin{matrix}
\boldsymbol{M}_1  & \boldsymbol{M}_2  \\
\boldsymbol{M}_3& \boldsymbol{M}_{4} \\
\end{matrix}\right],\\
\end{aligned}
\end{equation}	
where
\begin{equation*}
\boldsymbol{M}_1:=\begin{pmatrix}
\boldsymbol{0}  & \boldsymbol{0} & \boldsymbol{P}_1^T\boldsymbol{A} & \boldsymbol{0}  \\
\boldsymbol{0}& \boldsymbol{I}_{n-k} & \boldsymbol{P}_2^T\boldsymbol{A}  & \boldsymbol{0}  \\
\boldsymbol{A}^T\boldsymbol{P}_1  &  \boldsymbol{A}^T\boldsymbol{P}_2 &  x\cdot \boldsymbol{I}_{d} &  \boldsymbol{A}^T\boldsymbol{P}_2 \\
\boldsymbol{0}  & \boldsymbol{0} & \boldsymbol{P}_2^T\boldsymbol{A} & z\cdot  \boldsymbol{I}_{n-k}
\end{pmatrix},
\quad 
\boldsymbol{M}_2:=\begin{pmatrix}
\boldsymbol{\Lambda}_1  & z\cdot \boldsymbol{I}_k \\
\boldsymbol{0}&  \boldsymbol{0}  \\
\boldsymbol{0} & \boldsymbol{A}^T\boldsymbol{P}_1   \\
\boldsymbol{0}  & \boldsymbol{0}
\end{pmatrix},	
\end{equation*}
\begin{equation*}
\boldsymbol{M}_3:=\begin{pmatrix}
\boldsymbol{\Lambda}_1  & \boldsymbol{0} & \boldsymbol{0}  & \boldsymbol{0}  \\
\boldsymbol{I}_k& \boldsymbol{0} & \boldsymbol{P}_1^T\boldsymbol{A}  & \boldsymbol{0}  \\
\end{pmatrix}
\quad\text{and}\quad
\boldsymbol{M}_4:=\begin{pmatrix}
\boldsymbol{0}  & \boldsymbol{\Lambda}_1 \\
\boldsymbol{\Lambda}_1 &  \boldsymbol{0}  \\
\end{pmatrix}.	
\end{equation*}
Here, equality ($a$) follows from the fact that the sign of the determinant changes when interchanging two rows of a square matrix. 
Then we have
\begin{equation}\label{2026-xu2}
\begin{aligned}
h(x,z)&\overset{(b)}=(-1)^k\cdot \det[\boldsymbol{M}_4]\cdot \det[\boldsymbol{M}_1-\boldsymbol{M}_2\boldsymbol{M}_4^{-1}\boldsymbol{M}_3]\\
&=(\det \boldsymbol{\Lambda}_1)^2\cdot 
\det \left[\begin{matrix}
-(z+1)\cdot \boldsymbol{I}_k  & \boldsymbol{0} & \boldsymbol{0} & \boldsymbol{0}  \\
\boldsymbol{0}& \boldsymbol{I}_{n-k} & \boldsymbol{P}_2^T\boldsymbol{A}  & \boldsymbol{0}  \\
\boldsymbol{0}  &  \boldsymbol{A}^T\boldsymbol{P}_2 &  x\cdot \boldsymbol{I}_{d} &  \boldsymbol{A}^T\boldsymbol{P}_2 \\
\boldsymbol{0}  & \boldsymbol{0} & \boldsymbol{P}_2^T\boldsymbol{A} & z\cdot  \boldsymbol{I}_{n-k}
\end{matrix}\right]
\\&
=(\det \boldsymbol{\Lambda}_1)^2\cdot 
\det \left[\begin{matrix}
 x\cdot \boldsymbol{I}_{d} &  \boldsymbol{A}^T\boldsymbol{P}_2 &\boldsymbol{0}  &  \boldsymbol{A}^T\boldsymbol{P}_2  \\
\boldsymbol{P}_2^T\boldsymbol{A} & z\cdot  \boldsymbol{I}_{n-k}&\boldsymbol{0}  & \boldsymbol{0} \\
 \boldsymbol{0} & \boldsymbol{0}&-(z+1)\cdot \boldsymbol{I}_k  & \boldsymbol{0}   \\
\boldsymbol{P}_2^T\boldsymbol{A}  & \boldsymbol{0} &\boldsymbol{0}& \boldsymbol{I}_{n-k} 
\end{matrix}\right]\\
&\overset{(c)}=(\det \boldsymbol{\Lambda}_1)^2\cdot (-1)^k\cdot z^{n-k}\cdot  (z+1)^k\cdot 	\det [x\cdot \boldsymbol{I}_d-(1+\frac{1}{z})\cdot\boldsymbol{A}^T\boldsymbol{P}_2\boldsymbol{P}_2^T\boldsymbol{A}],\\
&\overset{(d)}=\det\boldsymbol{C}^T\boldsymbol{C}\cdot (-1)^k\cdot z^{n-k}\cdot  (z+1)^k\cdot 	\det [x\cdot \boldsymbol{I}_d-(1+\frac{1}{z})\cdot\boldsymbol{A}^T(\boldsymbol{I}_n-\boldsymbol{C}\boldsymbol{C}^{\dagger})\boldsymbol{A}],\\
\end{aligned}	
\end{equation}
where ($b$), ($c$) follows from the Schur determinantal formula in Lemma \ref{Linear algebra-lemma-schur}, and ($d$) follows from
\begin{equation*}
\boldsymbol{A}^T\boldsymbol{P}_2\boldsymbol{P}_2^T\boldsymbol{A}=\boldsymbol{A}^T(\boldsymbol{I}_n-\boldsymbol{P}_1\boldsymbol{P}_1^T)\boldsymbol{A}=\boldsymbol{A}^T(\boldsymbol{I}_n-\boldsymbol{C}\boldsymbol{C}^{\dagger})\boldsymbol{A}
\quad\text{and}\quad
(\det \boldsymbol{\Lambda}_1)^2=\det\boldsymbol{C}^T\boldsymbol{C}.	
\end{equation*}
Substituting \eqref{2026-xu2} into \eqref{2026-xu1},
we obtain
\begin{equation}\label{2026-xu4}
\begin{aligned}
P_{k}(-x;\boldsymbol{A},\boldsymbol{C},\boldsymbol{C},\boldsymbol{A})
&=\frac{(-1)^{d}\det\boldsymbol{C}^T\boldsymbol{C}}{k!}\cdot  \partial_z^{k}\ 
\bigg( (z+1)^k\cdot 	\det [x\cdot \boldsymbol{I}_d-(1+z)\cdot\boldsymbol{A}^T(\boldsymbol{I}_n-\boldsymbol{C}\boldsymbol{C}^{\dagger})\boldsymbol{A}] \bigg) 
\ \Bigg\vert_{z=0}\\
&=\frac{(-1)^{d}\det\boldsymbol{C}^T\boldsymbol{C}}{k!}\cdot  \partial_z^{k}\ 
\bigg( z^k\cdot 	\det [x\cdot \boldsymbol{I}_d-z\cdot \boldsymbol{A}^T(\boldsymbol{I}_n-\boldsymbol{C}\boldsymbol{C}^{\dagger})\boldsymbol{A}] \bigg) 
\ \Bigg\vert_{z=1},	
\end{aligned}	
\end{equation}
where the last equation follows from the chain rule.
Write $\det[x\cdot \boldsymbol{I}_{d} -\boldsymbol{A}^T(\boldsymbol{I}_n-\boldsymbol{C}\boldsymbol{C}^{\dagger})\boldsymbol{A}]=\sum_{i=0}^{d}c_ix^i	$.
Then we have
\begin{equation}\label{2026-xu3}
\det [x\cdot \boldsymbol{I}_d-z\cdot \boldsymbol{A}^T(\boldsymbol{I}_n-\boldsymbol{C}\boldsymbol{C}^{\dagger})\boldsymbol{A}]=
z^d\det[\frac{x}{z}\cdot \boldsymbol{I}_{d} -  \boldsymbol{A}^T(\boldsymbol{I}_n-\boldsymbol{C}\boldsymbol{C}^{\dagger})\boldsymbol{A}]=
\sum_{i=0}^{d}c_ix^iz^{d-i}.	
\end{equation}
Substituting \eqref{2026-xu3} into \eqref{2026-xu4}, we obtain
\begin{equation}\label{2026-xu5}
\begin{aligned}
P_{k}(-x;\boldsymbol{A},\boldsymbol{C},\boldsymbol{C},\boldsymbol{A})
&= \frac{(-1)^{d}\det\boldsymbol{C}^T\boldsymbol{C}}{k!}\cdot  \partial_z^{k}\ 
\bigg(z^k  \sum_{i=0}^{d}c_ix^iz^{d-i} \bigg) 
\ \Bigg\vert_{z=1}\\
&= \frac{(-1)^{d}\det\boldsymbol{C}^T\boldsymbol{C}}{k!}   \sum_{i=0}^{d}\frac{c_i(d-i+k)!}{(d-i)!}x^i.\\
\end{aligned}
\end{equation}
On the other hand, a direct calculation shows that
\begin{equation}\label{2026-xu6}
\begin{aligned}
&\mathcal{R}_{x,d}^+ \cdot   \partial_x^k\cdot x^k \cdot \mathcal{R}_{x,d}^+\   \det[x\cdot \boldsymbol{I}_d-\boldsymbol{A}^T(\boldsymbol{I}_n-\boldsymbol{C}\boldsymbol{C}^{\dagger})\boldsymbol{A}]
\\&
=\mathcal{R}_{x,d}^+ \cdot \partial_x^k\cdot x^k \cdot \mathcal{R}_{x,d}^+\  \sum_{i=0}^{d}c_ix^i 
=\mathcal{R}_{x,d}^+ \cdot \partial_x^k \cdot \sum_{i=0}^{d}c_ix^{d-i+k}\\
&=\mathcal{R}_{x,d}^+\cdot\sum_{i=0}^{d}\frac{c_i(d-i+k)! }{(d-i)!}x^{d-i}
=\sum_{i=0}^{d}\frac{c_i(d-i+k)! }{(d-i)!}x^{i}.
\end{aligned}
\end{equation}
Substituting \eqref{2026-xu6} into \eqref{2026-xu5}, we  arrive at \eqref{eq:P-exp-base-p1-newcase}.

\end{proof}


\subsection{Proof of Lemma \ref{lemma2-f-SW}}\label{proof-lemma2-f-SW}

\begin{proof}[Proof of Lemma \ref{lemma2-f-SW}]
(i) We first prove \eqref{meq30}.
By Proposition \ref{P-exp-base-gcur1} (i), we have
\begin{equation}\label{meq40}
\begin{aligned}
&f_{\widetilde{S} ,\widetilde{W}}(-x;\boldsymbol{A},\boldsymbol{C},\boldsymbol{U},\boldsymbol{R})\\
&=(-1)^{d-k}\cdot\bigg( 
\sum_{\substack{S\in\binom{[n_R]}{k},\widetilde{S} \subset S}} \partial_{\boldsymbol{Z}^{S^C}}\bigg)
\cdot \bigg(
\sum_{\substack{W\in\binom{[d_C]}{k},\widetilde{W}\subset W\\}}
\partial_{\boldsymbol{Y}^{W^C}}\bigg) \ 
Q(x,\boldsymbol{Y},\boldsymbol{Z};\boldsymbol{A},\boldsymbol{C},\boldsymbol{U},\boldsymbol{R})  \ \Bigg\vert_{\substack{y_i=0,\forall i\in[d_C]\\z_j=0,\forall j\in[n_R]}}.\\
\end{aligned}	
\end{equation}
Note that for any multiaffine polynomial $f(x_1,\ldots,x_m)$ and any subset $S'\subset[m]$ of size at most $k$, we have
\begin{equation*}
\begin{aligned}
\sum_{\substack{S\in\binom{[m]}{k},S'\subset S\\ }}	
\partial_{\boldsymbol{X}^{S^C}} \ f(x_1,\ldots,x_m)\ \bigg|_{x_i=0,\forall i}
&=(-1)^k\cdot  \sum_{\substack{S\in\binom{[m]}{k}, S'\subset S}}	
\partial_{\boldsymbol{X}^{S}}\ \widetilde{f}(x_1,\ldots,x_m)\ \bigg|_{x_i=0,\forall i}\\
&=(-1)^k\cdot \Big(\sum_{\substack{S\subset[m], |S|=k-|S'|}}	
\partial_{\boldsymbol{X}^{S}}\Big) \cdot \partial_{\boldsymbol{X}^{S'}}\cdot  \widetilde{f}(x_1,\ldots,x_m)\ \bigg|_{x_i=0,\forall i},\\
\end{aligned}
\end{equation*}
where $\widetilde{f}(x_1,\ldots,x_m):=(\prod_{i=1}^{m}\mathcal{R}_{x_i,1}^-)f=x_1\cdots x_m\cdot  f(-1/x_1,\ldots,-1/x_m)$.
Therefore,
\begin{equation*}
\begin{aligned}
f_{\widetilde{S} ,\widetilde{W}}(-x;\boldsymbol{A},\boldsymbol{C},\boldsymbol{U},\boldsymbol{R})
&=(-1)^{d-k}\cdot 
\Big(\sum_{S\subset[n_R], |S|=k-|\widetilde{S}|}	\partial_{\boldsymbol{Z}^{S}}\Big) 
\Big(\sum_{W\subset[d_C], |W|=k-|\widetilde{W}|}	\partial_{\boldsymbol{Y}^{W}}\Big) 
\\
&\ \ \partial_{\boldsymbol{Y}^{\widetilde{W}}}
\partial_{\boldsymbol{Z}^{\widetilde{S}}} 
\Big(\prod_{i=1}^{d_C}\mathcal{R}_{y_i,1}^-\Big)
\Big(\prod_{i=1}^{n_R}\mathcal{R}_{z_i,1}^-\Big)
Q(x,\boldsymbol{Y},\boldsymbol{Z};\boldsymbol{A},\boldsymbol{C},\boldsymbol{U},\boldsymbol{R})  \ \Bigg\vert_{\substack{y_i=0,\forall i\in[d_C]\\z_j=0,\forall j\in[n_R]}}\\
&=\frac{(-1)^{d-k}}{(k-|\widetilde{W}|)!(k-|\widetilde{S}|)!}
\partial_y^{k-|\widetilde{W}|}
\partial_z^{k-|\widetilde{S}|}
g(x,y,z)
\ \Bigg\vert_{y=z=0},\\ 
\end{aligned}	
\end{equation*}
where the last equation follows from \eqref{meq31}. We arrive at \eqref{meq30}.

We next prove that $f_{\widetilde{S},\widetilde{W}}(x;\boldsymbol{A},\boldsymbol{C},\boldsymbol{U},\boldsymbol{R})$ is either identically zero or real-rooted.
By Lemma \ref{real stable} and Lemma \ref{real stable2}, we obtain that $g(x,y,z)$ defined in \eqref{meq32} is identically zero or real stable in $x,y$ and $z$.
Since differential operator $\partial_{y}$ and $\partial_{z}$ preserve real stability, the univariate polynomial $f_{\widetilde{S},\widetilde{W}}(-x;\boldsymbol{A},\boldsymbol{C},\boldsymbol{U},\boldsymbol{R})$ is identically zero or real-rooted. 
Hence, $f_{\widetilde{S},\widetilde{W}}(x;\boldsymbol{A},\boldsymbol{C},\boldsymbol{U},\boldsymbol{R})$ is either identically zero or real-rooted. 
This completes the proof.  

(ii)  
Note that for any multiaffine polynomial $f(x_1,\ldots,x_m)$ and any subset $S'\subset[m]$ of size at most $k$, we have
\begin{equation*}
\begin{aligned}
\sum_{\substack{S\subset[m]\\ |S|=k,S'\subset S}}	
\partial_{\boldsymbol{X}^{S^C}}\ f(x_1,\ldots,x_m)\ \bigg|_{x_i=0,\forall i}
&=\sum_{\substack{S\subset[m]\backslash S'\\ |S|=m-k }}	
\partial_{\boldsymbol{X}^{S}}\ {f}(x_1,\ldots,x_m)\ \bigg|_{x_i=0,\forall i}\\
&=\frac{1}{(m-k)!}\cdot  
\partial_{x}^{m-k}\bigg( {f}(x_1,\ldots,x_m)\ \bigg|_{\substack{x_i=0,\forall i\in S' \\ x_i=x,\forall i\notin S'}}\bigg)\ \Bigg\vert_{x=0}.\\
\end{aligned}
\end{equation*}
Combining with \eqref{meq40} we have
\begin{equation}\label{meq50}
\begin{aligned}
f_{\widetilde{S},\widetilde{W}}(-x;\boldsymbol{A},\boldsymbol{A},\boldsymbol{A},\boldsymbol{A})
=\frac{(-1)^{d-k}}{(n-k)!(d-k)!}\cdot  
\partial_{z}^{n-k}\partial_{y}^{d-k}
\ h(x,y,z)
\ \Bigg\vert_{y=z=0},\\ 
\end{aligned}	
\end{equation}
where 
\begin{equation*}
\begin{aligned}
h(x,y,z)
&:=	
\det\left[\begin{matrix}
\boldsymbol{I}_n  & \boldsymbol{0} & \boldsymbol{A} & \boldsymbol{A}  \\
\boldsymbol{0}& {\boldsymbol{Z}}_{} & \boldsymbol{A}  & \boldsymbol{A}  \\
\boldsymbol{A}^{T}  &  \boldsymbol{A}^T & {\boldsymbol{Y}} &  \boldsymbol{0} \\
\boldsymbol{A}^{T}  & \boldsymbol{A}^{T} & \boldsymbol{0} & x\cdot  \boldsymbol{I}_d
\end{matrix}\right] 
\ \Bigg\vert_{\substack{y_i=0,\forall i\in\widetilde{W},y_i=y,\forall i\notin\widetilde{W}\\z_i=0,\forall i\in\widetilde{S},z_i=z,\forall i\notin\widetilde{S}}}.
\end{aligned}
\end{equation*}
Here, $\boldsymbol{Z}=\mathrm{diag}(z_1,\ldots,z_n)$ and $\boldsymbol{Y}=\mathrm{diag}(y_1,\ldots,y_d)$.
Note that the sign of the determinant changes when interchanging two rows of a square matrix, so we have
\begin{equation}
\label{eq:2025:xu41}
\begin{aligned}
h(x,y,z)
&=\det\left[\begin{matrix}
\boldsymbol{M}_1 & \boldsymbol{M}_2 \\
\boldsymbol{M}_2^T &  \boldsymbol{M}_3 \\
\end{matrix}\right],
\end{aligned}
\end{equation}
where 
\begin{equation*}
\boldsymbol{M}_2:=
\begin{pmatrix}
\boldsymbol{A}_{:,\widetilde{W}}&   \boldsymbol{0} \\
\boldsymbol{A}_{\widetilde{S}^C,\widetilde{W}} &  \boldsymbol{0}  \\
\boldsymbol{0} &  (\boldsymbol{A}_{\widetilde{S},\widetilde{W}^C})^T\\
\boldsymbol{0} & (\boldsymbol{A}_{\widetilde{S},:})^T   \\
\end{pmatrix}
\quad 
\boldsymbol{M}_3:=
\begin{pmatrix}
\boldsymbol{0} & (\boldsymbol{A}_{\widetilde{S},\widetilde{W}})^T \\
\boldsymbol{A}_{\widetilde{S},\widetilde{W}}& \boldsymbol{0}\\
\end{pmatrix},	
\end{equation*}
and
\begin{equation*}
\boldsymbol{M}_1:=\begin{pmatrix}
\boldsymbol{I}_n & \boldsymbol{0}   & \boldsymbol{A}_{:,\widetilde{W}^C} & \boldsymbol{A}   \\
\boldsymbol{0}  &  z\cdot \boldsymbol{I}_{n-l} & \boldsymbol{A}_{\widetilde{S}^C,\widetilde{W}^C} &  \boldsymbol{A}_{\widetilde{S}^C,:}    \\
(\boldsymbol{A}_{:,\widetilde{W}^C})^T & (\boldsymbol{A}_{\widetilde{S}^C,\widetilde{W}^C})^T & y\cdot \boldsymbol{I}_{d-l} & \boldsymbol{0}    \\
\boldsymbol{A}^T & (\boldsymbol{A}_{\widetilde{S}^C,:})^T & \boldsymbol{0}  & x\cdot \boldsymbol{I}_d    \\
\end{pmatrix}.
\end{equation*}
Since $\boldsymbol{A}_{\widetilde{S},\widetilde{W}}$ is invertible and $\det \boldsymbol{M}_3=(-1)^l\cdot \det[\boldsymbol{A}_{\widetilde{S},\widetilde{W}}]^2$, using Lemma \ref{Linear algebra-lemma-schur} we obtain
\begin{equation*}
\begin{aligned}
h(x,y,z)&=\det \boldsymbol{M}\cdot \det [\boldsymbol{M}_1-\boldsymbol{M}_2\boldsymbol{M}_3^{-1}\boldsymbol{M}_2^T]\\
&=(-1)^l\cdot \det[\boldsymbol{A}_{\widetilde{S},\widetilde{W}}]^2\cdot 
\det\left[\begin{matrix}
\boldsymbol{I}_n & \boldsymbol{0}   & \boldsymbol{B}_{:,\widetilde{W}^C} &\boldsymbol{B} \\
\boldsymbol{0}  &  z\cdot \boldsymbol{I}_{n-l} & \boldsymbol{B}_{\widetilde{S}^C,\widetilde{W}^C}&\boldsymbol{B}_{\widetilde{S}^C,:}  \\
(\boldsymbol{B}_{:,\widetilde{W}^C})^T & (\boldsymbol{B}_{\widetilde{S}^C,\widetilde{W}^C})^T & y\cdot \boldsymbol{I}_{d-l} &\boldsymbol{0}    \\
\boldsymbol{B}^T & (\boldsymbol{B}_{\widetilde{S}^C,:})^T & \boldsymbol{0}&x\cdot \boldsymbol{I}_d     \\
\end{matrix}\right]\\
&=(-1)^l\cdot \det[\boldsymbol{A}_{\widetilde{S},\widetilde{W}}]^2\cdot x^l\cdot  
\det\left[\begin{matrix}
\boldsymbol{I}_{n-l} & \boldsymbol{0}   & \boldsymbol{N} &\boldsymbol{N} \\
\boldsymbol{0}  &  z\cdot \boldsymbol{I}_{n-l} & \boldsymbol{N}&\boldsymbol{N}  \\
\boldsymbol{N}^T & \boldsymbol{N}^T & y\cdot \boldsymbol{I}_{d-l} &\boldsymbol{0}    \\
 \boldsymbol{N}^T & \boldsymbol{N}^T & \boldsymbol{0}&x\cdot \boldsymbol{I}_{d-l}     \\
\end{matrix}\right],\\
\end{aligned}
\end{equation*}
where $\boldsymbol{N} :=\boldsymbol{B}_{\widetilde{S}^C,\widetilde{W}^C}\in \mathbb{R}^{(n-l)\times (d-l)}$.
Here, in the last equation we use the fact that $\boldsymbol{B}(i,j)=0$ if $i\in \widetilde{S}$ or $j\in \widetilde{W}$, which follows from Lemma \ref{Linear algebra-lemma-2} (i).
Substituting above equation into \eqref{meq50}, we obtain
\begin{equation*}
\begin{aligned}
&f_{\widetilde{S},\widetilde{W}}(-x;\boldsymbol{A},\boldsymbol{A},\boldsymbol{A},\boldsymbol{A})\\
&=\frac{(-1)^{d-k+l}\cdot \det[\boldsymbol{A}_{\widetilde{S},\widetilde{W}}]^2\cdot  x^l}{(n-k)!(d-k)!}\cdot  
\partial_{z}^{n-k}\partial_{y}^{d-k}
\ \det\left[\begin{matrix}
\boldsymbol{I}_{n-l} & \boldsymbol{0}   & \boldsymbol{N} &\boldsymbol{N} \\
\boldsymbol{0}  &  z\cdot \boldsymbol{I}_{n-l} & \boldsymbol{N}&\boldsymbol{N}  \\
\boldsymbol{N}^T & \boldsymbol{N}^T & y\cdot \boldsymbol{I}_{d-l} &\boldsymbol{0}    \\
 \boldsymbol{N}^T & \boldsymbol{N}^T & \boldsymbol{0}&x\cdot \boldsymbol{I}_{d-l}     \\
\end{matrix}\right]
\ \Bigg\vert_{y=z=0},\\ 
&\overset{(a)}=(-1)^{l}\cdot \det[\boldsymbol{A}_{\widetilde{S},\widetilde{W}}]^2\cdot  x^l\cdot  
P_{k-l}(x;\boldsymbol{N},\boldsymbol{N},\boldsymbol{N},\boldsymbol{N})\\
&\overset{(b)}=\frac{(-1)^{d-k+l}\cdot \det[\boldsymbol{A}_{\widetilde{S},\widetilde{W}}]^2\cdot  x^l}{(k-l)!(k-l)!}\cdot  
\mathcal{R}_{x,d-l}^+\cdot (\partial_x\cdot x \cdot \partial_x)^{k-l} \cdot \mathcal{R}_{x,d-l}^+\   \det[x\cdot \boldsymbol{I}_{d-l}-\boldsymbol{N}^T\boldsymbol{N}],\\
\end{aligned}	
\end{equation*}
where ($a$) follows from Proposition \ref{P-exp-base-gcur2} and ($b$) follows from Proposition \ref{P-exp-base-gcur-x}.
Recall that $\boldsymbol{B}(i,j)=0$ if $i\in \widetilde{S}$ or $j\in \widetilde{W}$, so we have
\begin{equation*}
x^{l}\cdot \det[x\cdot \boldsymbol{I}_{d-l}-\boldsymbol{N}^T\boldsymbol{N}]
=\det[x\cdot \boldsymbol{I}_{d}-\boldsymbol{B}^T\boldsymbol{B}].	
\end{equation*}
Hence, we obtain that
\begin{equation*}
\begin{aligned}
f_{\widetilde{S},\widetilde{W}}(-x;\boldsymbol{A},\boldsymbol{A},\boldsymbol{A},\boldsymbol{A})=\frac{(-1)^{d-k+l}\cdot \det[\boldsymbol{A}_{\widetilde{S},\widetilde{W}}]^2}{(k-l)!(k-l)!}\cdot  
\mathcal{R}_{x,d}^+\cdot (\partial_x \cdot x \cdot \partial_x)^{k-l} \cdot \mathcal{R}_{x,d}^+\   \det[x\cdot \boldsymbol{I}_{d}-\boldsymbol{B}^T\boldsymbol{B}].\\
\end{aligned}	
\end{equation*}
We arrive at \eqref{meq35}.
Comparing the roots of the polynomials on the both sides of \eqref{meq35}, we obtain \eqref{meq350}. This completes the proof.

\end{proof}

\subsection{Proof of Lemma \ref{minroot-new}}\label{proof-minroot-new}

To prove Lemma \ref{minroot-new}, we use the barrier function method introduced in \cite{inter2,inter3}. We first introduce the following definition. 

\begin{definition}
For a real-rooted degree-$t$ polynomial $f(x)=\prod_{i=1}^{t}(x-\beta_i)$, we define the lower barrier function
\begin{equation*}
\Phi_f(x):=\sum_{i=1}^{t}\frac{1}{\beta_i-x}.
\end{equation*}	
\end{definition}

\begin{lemma}{\rm (see \cite[Lemma 4.3]{inter3} and \cite[Lemma 2.11]{XX21})}\label{barrier-lemma}
Suppose that $f(x)$ is a real-rooted polynomial and $\delta> 0$. Suppose that $b < \mathrm{minroot}\ f(x)$ and $\Phi_f(b)\leq\frac{1}{\delta}$.
Then $b+\delta\leq \mathrm{minroot}\ \partial_xf(x)$ and
\begin{equation*}
\Phi_{\partial_x f}(b+\delta)\leq 	\Phi_f(b).
\end{equation*}

\end{lemma}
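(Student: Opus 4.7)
The plan is to treat the two conclusions separately. Let $\beta_1 \leq \cdots \leq \beta_t$ be the roots of $f$, so $\mathrm{minroot}\ f = \beta_1$, and write $\alpha := \beta_1$.

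First I would prove $b + \delta \leq \mathrm{minroot}\ \partial_x f$. Since the single term $1/(\beta_1 - b)$ is already a lower bound for $\Phi_f(b) = \sum_i 1/(\beta_i - b)$ (each summand being positive because $b < \alpha$), the hypothesis $\Phi_f(b) \leq 1/\delta$ immediately gives $\beta_1 - b \geq \delta$, i.e., $\alpha \geq b + \delta$. Combined with Rolle's theorem, which guarantees $\mathrm{minroot}\ \partial_x f \geq \mathrm{minroot}\ f = \alpha$, this settles the first assertion.

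The heart of the argument is the barrier inequality $\Phi_{\partial_x f}(b+\delta) \leq \Phi_f(b)$. I would proceed in three steps. Step A: derive the closed-form identity
\begin{equation*}
\Phi_{\partial_x f}(y) \;=\; \Phi_f(y) \;-\; \frac{\Phi_f'(y)}{\Phi_f(y)}, \qquad y < \alpha,
\end{equation*}
by logarithmically differentiating $f'/f = -\Phi_f$ twice. Step B: show that $1/\Phi_f$ is concave on $(-\infty,\alpha)$. A direct computation gives $(1/\Phi_f)'' = -(\Phi_f \Phi_f'' - 2(\Phi_f')^2)/\Phi_f^3$, and the nonnegativity of the numerator $\Phi_f \Phi_f'' - 2(\Phi_f')^2 \geq 0$ follows by Cauchy--Schwarz applied to the sums defining $\Phi_f$, $\Phi_f'$, $\Phi_f''$. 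Step C: with $y = b + \delta$, the tangent-above-graph inequality for the concave function $F := 1/\Phi_f$ reads $F(b) \leq F(y) + (b-y)F'(y)$; substituting $F' = -\Phi_f'/\Phi_f^2$ and rearranging yields
\begin{equation*}
\Phi_f(y) - \Phi_f(b) \;\leq\; \delta\,\Phi_f'(y)\cdot \frac{\Phi_f(b)}{\Phi_f(y)}.
\end{equation*}
Using the hypothesis $\delta \Phi_f(b) \leq 1$ on the right-hand side upgrades this to $\Phi_f(y) - \Phi_f(b) \leq \Phi_f'(y)/\Phi_f(y)$, which by Step A is exactly $\Phi_{\partial_x f}(y) \leq \Phi_f(b)$.

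The main obstacle I anticipate is pinpointing the correct monotone/convex quantity in Step B. A first attempt that simply uses convexity of $\Phi_f$ itself (each summand $1/(\beta_i-y)$ is manifestly convex) produces only $\Phi_f(b+\delta) - \Phi_f(b) \leq \delta\,\Phi_f'(b+\delta)$, which falls short of the target by a factor of $\Phi_f(b+\delta)$ and cannot be closed using only $\Phi_f(b) \leq 1/\delta$. The key insight is that the Cauchy--Schwarz bound $\Phi_f \Phi_f'' \geq 2(\Phi_f')^2$ lets one upgrade to \emph{concavity of the reciprocal} $1/\Phi_f$; its tangent-line inequality at $b+\delta$ produces precisely the extra factor $\Phi_f(b)/\Phi_f(y)$ that, together with the hypothesis $\delta \Phi_f(b) \leq 1$, exactly cancels the loss.
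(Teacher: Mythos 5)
The paper does not prove Lemma~\ref{barrier-lemma} at all --- it is stated with citations to \cite[Lemma~4.3]{inter3} and \cite[Lemma~2.11]{XX21} and no in-text argument. Your proof is therefore filling a gap the authors chose to delegate, and it is correct: the reduction $\Phi_f(b)\geq 1/(\beta_1-b)$ gives $b+\delta\leq\beta_1\leq\mathrm{minroot}\,\partial_x f$; the identity $\Phi_{\partial_x f}=\Phi_f-\Phi_f'/\Phi_f$ follows from differentiating $f'=-\Phi_f\,f$; the Cauchy--Schwarz inequality $(\sum a_i)(\sum a_i^3)\geq(\sum a_i^2)^2$ with $a_i=1/(\beta_i-y)$ yields $\Phi_f\Phi_f''\geq 2(\Phi_f')^2$ and hence concavity of $1/\Phi_f$; and the tangent-line inequality at $y=b+\delta$ together with $\delta\Phi_f(b)\leq 1$ closes the argument. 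This is the standard ``concavity of the reciprocal barrier'' proof (the same structure appears in the restricted-invertibility literature and in Tao's exposition of the Kadison--Singer solution), and your remark correctly diagnoses why the naive convexity-of-$\Phi_f$ bound is off by a factor of $\Phi_f(y)/\Phi_f(b)$. One cosmetic slip: the identity in Step~A is not obtained by ``logarithmically differentiating twice'' but by differentiating $f'=-\Phi_f f$ once more and dividing; your displayed formula is nonetheless exactly right, so this affects only the narration, not the mathematics.
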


Now we can give a proof of Lemma \ref{minroot-new}.

\begin{proof}
[Proof of Lemma \ref{minroot-new}]
Let $b<\beta_1$ to be determined later.	Let $\delta=\frac{1}{\Phi_f(b)}$. 
By Lemma \ref{barrier-lemma} we have $b+\delta\leq \mathrm{minroot}\ \partial_xf(x)$ and
\begin{equation*}
\Phi_{\partial_x f}(b+\delta)\leq 	\Phi_f(b)=\frac{1}{\delta}.
\end{equation*} 
Applying Lemma \ref{barrier-lemma} again, we have $b+2\delta\leq \mathrm{minroot}\ \partial_x^2f(x)$ and
\begin{equation*}
\Phi_{\partial_x^2 f}(b+2\delta)\leq \Phi_{\partial_x f}(b+\delta)\leq  	\Phi_f(b)=\frac{1}{\delta}.
\end{equation*}
Repeating this argument $k-2$ times, we obtain $b+k\delta\leq \mathrm{minroot}\ \partial_x^kf(x)$. 
Therefore, we have
\begin{equation}\label{eq:2025:xu86}
\mathrm{minroot}\ \partial_x^kf(x)\geq \max_{b<\beta_1}\ b+\frac{k}{\Phi_f(b)}.	
\end{equation}

It remains to derive a lower bound on the right hand side of \eqref{eq:2025:xu86}.
Since $1\leq \beta_1\leq\cdots\leq\beta_t$, we have
\begin{equation*}
\begin{aligned}
\Phi_f(b)
=\sum_{i=1}^{t}\frac{1}{\beta_i-b}
\leq  \sum_{i=1}^{k}\frac{1}{\beta_i- b}+\sum_{i=k+1}^t \frac{1}{\beta_{k+1}-b} 
=\frac{t-k}{\beta_{k+1}-b}+\sum_{i=1}^{k}g(\beta_i),
\end{aligned}
\end{equation*}
where $g(x):=\frac{1}{x-b}$.
Note that $g(x)$ is convex on $(b,+\infty)$. Therefore,  for each $1\leq i\leq k$, we have
\begin{equation*}
g({\beta_i})=g(s_i\cdot \beta_1+(1-s_i)\cdot {\beta_{k+1}})
\leq s_i\cdot g(\beta_1)+(1-s_i)\cdot g(\beta_{k+1}),
\end{equation*}
where $s_i:=\frac{\beta_{k+1}-\beta_i}{\beta_{k+1}-\beta_1}\in[0,1]$.
Then,
\begin{equation*}
\Phi_f(b) 
\leq  \frac{t-k}{\beta_{k+1}-b} +\sum_{i=1}^{k}\left( s_i\cdot g(\beta_1)+(1-s_i)\cdot g(\beta_{k+1})\right)
=k\cdot\Big(\frac{\alpha_{k}}{\beta_1-b}+\frac{\gamma}{\beta_{k+1}-b}\Big),
\end{equation*}
where $\alpha_{k}=\frac{\beta_{k+1}-\frac{1}{k}\sum_{i=1}^{k}\beta_i}{\beta_{k+1}-\beta_1}\in(0,1)$ and $\gamma=\frac{t}{k}-\alpha_{k}>0$.
Combining with \eqref{eq:2025:xu86}, we have
\begin{equation}\label{eq:2025:xu87}
\mathrm{minroot}\ \partial_x^kf(x)\geq \max_{b<\beta_1}\ h(b).
\end{equation}
Here,
\begin{equation*}
\begin{aligned}
h(b)
&:=b+\frac{1}{\frac{\alpha_{k}}{\beta_1-b}+\frac{\gamma}{\beta_{k+1}-b} }
=\beta_1+\frac{k^2}{t^2}\cdot \Big(c_0-(\frac{t-k}{k}\cdot \theta +\frac{c_1}{\theta })\Big),
\end{aligned}
\end{equation*}
where $\theta =\alpha_{k}\beta_{k+1}+\gamma\beta_1-\frac{t}{k}\cdot b$, $c_0=(\beta_{k+1}-\beta_1)((\frac{t}{k}-1)\alpha_{k}+\gamma)$ and
$c_1=(\beta_{k+1}-\beta_1)^2\alpha_{k}\gamma$.
Note that $c_1\geq 0$. Thus, by Cauchy inequality we have
\begin{equation*}
h(b)
\overset{(a)}\leq \beta_1+\frac{k^2}{t^2}\cdot \Big(c_0-2\sqrt{\frac{t-k}{k}\cdot c_1}\Big)
=\beta_1+\frac{k}{t}\cdot (\beta_{k+1}-\beta_1)\cdot 
\bigg(\sqrt{1-\frac{k}{t}\cdot \alpha_{k}}-\sqrt{\alpha_{k}-\frac{k}{t}\cdot \alpha_{k}}	\bigg)^2,
\end{equation*}
where the equality in ($a$) holds if and only if $\theta=\sqrt{\frac{kc_1}{t-k}}$, i.e.,
\begin{equation*}
b=\beta_1-\frac{(1-\alpha_{k})(\beta_{k+1}-\beta_1)}{\frac{t}{k}-1+\sqrt{(\frac{t}{k}-1)\frac{\gamma}{\alpha_{k}}}}	
<\beta_1.
\end{equation*}
Combining with \eqref{eq:2025:xu87}, we obtain
\begin{equation*}
\begin{aligned}
\mathrm{minroot}\ \partial_x^kf(x)
\geq \max_{b<\beta_1}\ h(b)
&=\beta_1+\frac{k}{t}\cdot (\beta_{k+1}-\beta_1)\cdot 
\bigg(\sqrt{1-\frac{k}{t}\cdot \alpha_{k}}-\sqrt{\alpha_{k}-\frac{k}{t}\cdot \alpha_{k}}	\bigg)^2\\
&= (1-c_k)\cdot \beta_1+c_k\cdot \beta_{k+1}.
\end{aligned}
\end{equation*}
which implies \eqref{eq:2025:93}. 

We now proceed to prove \eqref{eq:2025:92}. Note that
\begin{equation*}
1
>\sqrt{1-\frac{k}{t}\cdot \alpha_{k}}-\sqrt{\alpha_{k}-\frac{k}{t}\cdot \alpha_{k}}
=\frac{1-\alpha_{k}}
{\sqrt{1-\frac{k}{t}\cdot \alpha_{k}}+\sqrt{\alpha_{k}-\frac{k}{t}\cdot \alpha_{k}}}	
\geq \frac{1-\alpha_{k}}{1+\sqrt{\alpha_{k}}}=1-\sqrt{\alpha_{k}}.
\end{equation*}
Thus, we can further derive
\begin{equation*}
\begin{aligned}
\mathrm{minroot}\ \partial_x^kf(x)
&\geq  \beta_1+\frac{k}{t}\cdot (\beta_{k+1}-\beta_1)\cdot 
(1-\sqrt{\alpha_{k}}	)^2\geq \frac{k}{t}\cdot 
(1-\sqrt{\alpha_{k}}	)^2 \cdot \beta_{k+1}.\\
\end{aligned}
\end{equation*}
This completes the proof.

\end{proof}

\end{document}